\documentclass{amsart}

 \usepackage[fontsize=10pt]{fontsize}  

  \usepackage[a4paper, left=1.5cm, right=1.4cm, top=0.5cm,bottom=0.7cm]{geometry}  

  \usepackage{yhmath}  
\newtheorem{introthm}{Theorem}

\usepackage{amsmath,amsthm} 
\usepackage{latexsym,amssymb}  
\usepackage{amsfonts,mathrsfs} 
\usepackage{amsopn}

\usepackage{stmaryrd} 

\usepackage{color}
\usepackage{graphicx}
\usepackage{epstopdf} 
\usepackage{indentfirst} 
\usepackage{url} 
\usepackage[linktocpage=true, hidelinks, colorlinks, linkcolor=blue, citecolor=magenta, bookmarksopen=true, urlcolor=brown]{hyperref}

\usepackage{enumitem}
 \usepackage[normalem]{ulem} 
 \usepackage{verbatim, comment} 
 \usepackage{svn, mathtools, mathscinet} 

 \usepackage{relsize} 
\usepackage{lmodern} 
\usepackage{setspace} 

 \newcommand{\shorturl}[1]{\href{#1}{link}}  

\usepackage[all]{xy} 
\usepackage{tikz-cd, amscd}
 \usepackage{array}
 \usepackage{tabu} 
 
 \usepackage{tikz}
 \usetikzlibrary{decorations.markings}
 
\tikzset{
    closed/.style={
        decoration={
            markings,
            mark=at position 0.5 with {\node[transform shape, xscale=.8, yscale=.4] {/};}
        },
        postaction={decorate}
    }
}  

\tikzset{open/.style = {decoration = {markings, mark = at position 0.5 with { \node[transform shape, scale = .7] {$\circ$}; } }, postaction = {decorate} }
}   

 \label{Theorems environ}
 \newtheorem{thm}{Theorem}[section]
\newtheorem{theorem}[thm]{Theorem}

\newtheorem{prop}[thm]{Proposition}
\newtheorem{proposition}[thm]{Proposition}
\newtheorem{lem}[thm]{Lemma}
\newtheorem{lemma}[thm]{Lemma}

\newtheorem*{conjecture*}{Conjecture}

\newtheorem{cor}[thm]{Corollary}

\newtheorem{lem-def}[thm]{Lemma-Definition}

\theoremstyle{definition}
\newtheorem{defn}[thm]{Definition}

\newtheorem{rmk}[thm]{Remark}
\newtheorem{rem}[thm]{Remark}
\newtheorem{remark}[thm]{Remark}

 \newtheorem{construction}[thm]{Construction}

\newtheorem{axiom}[thm]{Axiom}

\newtheorem{assumption}[thm]{Assumption}

\newtheorem{setup}[thm]{Set-up}

\newtheorem{example}[thm]{Example}

\newtheorem{notation}[thm]{Notation}

\newtheorem{convention}[thm]{Convention}

\numberwithin{equation}{section}
\label{••••••••••3}
\label{From Min-Wang}

\newcommand{\Zp}{{\bZ_p}}

\newcommand{\Qp}{{\bQ_p}}

\DeclareMathOperator{\colim}{{\mathrm{colim}}}       

\newcommand{\Gal}{{\mathrm{Gal}}}           
\newcommand{\Hom}{{\mathrm{Hom}}}           
\newcommand{\Mod}{{\mathrm{Mod}}}           
       
\newcommand{\Rep}{{\mathrm{Rep}}}           

\newcommand{\GL}{{\mathrm{GL}}}             

\newcommand{\an}{{\mathrm{an}}}             
\newcommand{\dR}{{\mathrm{dR}}}             
\newcommand{\proet}{{\mathrm{pro\text{\'e}t}}}

\label{••••••••••5}

\label{site: prismatic}

\usepackage{relsize}
\usepackage[bbgreekl]{mathbbol}
\DeclareSymbolFontAlphabet{\mathbb}{AMSb} 
\DeclareSymbolFontAlphabet{\mathbbl}{bbold}

\newcommand{\ya}{{\rangle}}
\newcommand{\za}{{\langle}}

\label{to move around}

\label{Topology, THH, TC, TP}

 \label{colors}

\label{arrows}

\newcommand \into {\hookrightarrow }
\renewcommand \to {\rightarrow}
\newcommand \onto {\twoheadrightarrow}
\renewcommand{\projlim}{\varprojlim}
\renewcommand{\injlim}{\varinjlim}

\label{•••••••••4}
 
 \label{matrix, groups}

\DeclareMathOperator{\Tr}{\operatorname*{Tr}}
 
\DeclareMathOperator{\vect}{\mathrm{Vect}}

\def\Mat{\mathrm{Mat}}
\def\mat{\mathrm{Mat}}
\def\det{\mathrm{det}}

\label{module, homological alg}
 
\newcommand{\rep}{{\mathrm{Rep}}}

\def\cont{\mathrm{cont}}
\def\inf{{\mathrm{inf}}}

\renewcommand{\max}{{\mathrm{max}}}

\newcommand{\gal}{{\mathrm{Gal}}}

\renewcommand{\Im}{\textnormal{Im}}

\def\an{\mathrm{an}}

\def\proet{{\mathrm{pro\acute{e}t}}}

\DeclareMathOperator{\Lie}{\mathrm{Lie}}

\newcommand{\hk}{{\mathrm{HK}}}

\newcommand{\ur}{\mathrm{ur}}

\label{condesend, solid math}

\label{!•••••••••}
\label{ALL: p-adic Hodge}

\label{LAV}

\newcommand{\la}{{\mathrm{la}}}

\newcommand{\dan}{\text{$\mbox{-}\mathrm{an}$}}

\newcommand{\dla}{\text{$\mbox{-}\mathrm{la}$}}

\newcommand{\dpa}{\text{$\mbox{-}\mathrm{pa}$}}
\renewcommand{\log}{\mathrm{log}}

\newcommand{\kinfty}{{K_{\infty}}}
\newcommand{\hatkinfty}{{\widehat{K_{\infty}}}}

\label{LAV: Lie gp Lie alg}

\newcommand{\gammak}{{\Gamma_K}}

\newcommand{\gk}{{G_K}}

\label{period ring: Fontaine}

\newcommand\HT{{\mathrm{HT}}}

\newcommand\rig{{\mathrm{rig}}}

\newcommand{\ainf}{{\mathbf{A}_{\mathrm{inf}}}}

\newcommand{\bdrplus}{{\mathbf{B}^+_{\mathrm{dR}}}}
\newcommand{\bdr}{{\mathbf{B}_{\mathrm{dR}}}}

\label{period ring: relative}

\label{period ring: Gao-Min-Wang}

\newcommand{\mic}{\mathrm{MIC}}

\label{period ring: bb font}

\newcommand*{\wh}[1]{\widehat{#1}}

\label{period ring: AB ring}

\newcommand{\wta}{   {\widetilde{{\mathbf{A}}}}  }

\newcommand{\wtb}{   {\widetilde{{\mathbf{B}}}}  }

\label{period ring: integral Hodge}

\def \ok {{\mathcal{O}_K}}

\newcommand{\ocflat}{{\mathcal{O}_C^\flat}}

\label{peirod module: D}

\label{imperf residue}

\label{Cats: Fontaine}

\label{Cats: Rep and Higgs}

\label{••••••••1}
\label{ALL: site, sheaves}

\newcommand{\rg}{\mathrm{R}\Gamma}

\label{site: etale proetale}

\label{syntomic coho}

\label{font: roman rm}

\newcommand{\rH}{{\mathrm H}}

\label{font: mathbb}

\newcommand{\bD}{{\mathbb D}}

\newcommand{\bQ}{{\mathbb Q}}

\newcommand{\bZ}{{\mathbb Z}}

     \newcommand{\bbn}{\mathbb{N}}

 \newcommand{\bbq}{{\mathbb{Q}}}
 \newcommand{\bbr}{{\mathbb{R}}}

  \newcommand{\bbz}{{\mathbb{Z}}}

\newcommand{\zp}{{\mathbb{Z}_p}}
\newcommand{\qp}{{\mathbb{Q}_p}}
\newcommand{\fp}{{\mathbb{F}_p}}

\newcommand{\qpbar}{{\overline{\mathbb{Q}}_p}}

\label{font: mathcal}

\newcommand{\calO}{{\mathcal O}}

 \renewcommand{\o}{{{\mathcal{O}}}}

\newcommand{\calg}{{\mathcal{G}}}

\label{font: mathfrak}

 \label{font: hat}

\label{font: bar}

\newcommand{\barK}{{\overline{K}}}

\label{font: bold}

 \label{font: mathbf}

\newcommand{\bfa}{{\mathbf{A}}}
\newcommand{\bfB}{{\mathbf{B}}}
\newcommand{\bfb}{{\mathbf{B}}}

\newcommand{\bfL}{{\mathbf{L}}}
\newcommand{\bfl}{{\mathbf{L}}}

\newcommand{\bfO}{{\mathbf{O}}}
\newcommand{\bfo}{{\mathbf{O}}}

\label{••••••••2}

\label{stack proj}

\label{words}

\label{Gao added to dR paper}

 \newcommand{\MIC}{{\mathrm{MIC}}}

\newcommand{\bdrpd}{{\bfb_\dR^{+,\dagger}}}

\newcommand{\ldrpd}{{\mathbf{L}_{\dR,K}^{+,\dagger}}}

\newcommand{\bdrp}{{\bfb_\dR^{ +}}}

\newcommand{\ldrplusk}{{\mathbf{L}_{\dR,K}^{ +}}}

\newcommand{\bbdrpd}{{\mathbb{B}_\dR^{+,\dagger}}}

\newcommand{\bbdrp}{{\mathbb{B}_\dR^{ +}}}

 \renewcommand{\hatkinfty}{{\hat{K}_\infty}}

\renewcommand{\hk}{{H_K}}

\newcommand{\type}{{\mathrm{type}}}

\newcommand{\ldrplus}{{\mathbf{L}}_{\dR,K}^{+}}

\newcommand{\rmmod}{{\mathrm{Mod}}}
\newcommand{\binf}{{\mathbf{B}_\inf}}

\newcommand{\vr}{{v^{(r)}}}
\newcommand{\hatbr}{{\hat{B}^{(r)}}}

\newcommand{\rr}{{(r)}}

 \newcommand{\qbar}{{\overline{\mathbb Q}}}

\begin{document}
\title[]{Galois representations over  convergent de Rham period ring}

\author[]{Hui Gao}
\address{Department of Mathematics and Shenzhen International Center for Mathematics, Southern University of Science and Technology, Shenzhen 518055, China}   \email{gaoh@sustech.edu.cn}

\author[]{Yupeng Wang}
 \address{Shanghai Center for Mathematical Sciences, Fudan University, 2005 Songhu Road, Shanghai, 200438, China.}
 \email{wangyupeng@fudan.edu.cn}

 \begin{abstract}
    \normalsize{  
    Let $\mathbf{B}_{\mathrm{dR}}^{+, \dagger} \subset \mathbf{B}_{\mathrm{dR}}^{+}$ be the ``convergent" de Rham period ring which is the (un-completed) stalk at  the de Rham point  of the Fargues--Fontaine curve. 
We develop a  Tate--Sen formalism to relate Galois representations over $\mathbf{B}_{\mathrm{dR}}^{+, \dagger}$   to regular connections over  convergent functions.  
As a consequence, when the Sen weights (of the mod $t$ reduction) satisfy a $p$-adic non-Liouville condition, Galois cohomology of a  $\mathbf{B}_{\mathrm{dR}}^{+, \dagger}$-representation compares to that of its $\mathbf{B}_{\mathrm{dR}}^{+}$-base change, and hence  is finite. 
In addition, restricted to objects  whose   Sen weights are algebraic numbers,  the categories of  $\mathbf{B}_{\mathrm{dR}}^{+, \dagger}$-representations and   $\mathbf{B}_{\mathrm{dR}}^{+}$-representations are equivalent.   
    }
    \end{abstract}
    

\subjclass[2020]{Primary 11F80; 14F30.}
 \keywords{}
 
 \date{\today} 
\maketitle
\setcounter{tocdepth}{1}
\tableofcontents 


 \newcommand{\bdrr}{{\bfb_{\dR}^\rr}}
\newcommand{\bdrrcirc}{{\bfb_{\dR}^{\rr, \circ}}} 
 
 \newcommand{\rrcirc}{{(r), \circ}}

\newcommand{\ldrf}{{\bfl_{\dR, F}^+}}
\newcommand{\ldrk}{{\bfl_{\dR, K}^+}}

\newcommand{\ldrr}{{\bfl_{\dR}^\rr}}
\newcommand{\ldrkr}{{\bfl_{\dR, K}^\rr}}
\newcommand{\ldrfr}{{\bfl_{\dR, F}^\rr}}

\newcommand{\ldrrcirc}{{\bfl_{\dR}^{\rr, \circ}}}
\newcommand{\ldrkrcirc}{{\bfl_{\dR, K}^{\rr, \circ}}}
\newcommand{\ldrfrcirc}{{\bfl_{\dR, F}^{\rr, \circ}}}

\newcommand{\hatb}{{\hat{B}}}
\newcommand{\hatbrcirc}{{\hat{B}^{\rr, \circ}}}

\newcommand{\whbfo}{{\widehat{{\mathbf{O}}}}}

\newcommand{\bfodagger}{{\mathbf{O}^\dagger}}
\newcommand{\bfod}{{\mathbf{O}^\dagger}}

\newcommand{\barF}{{\overline{F}}}

\newcommand{\Art}{{\mathrm{Art}}}

\newcommand{\pdr}{{\mathrm{pdR}}}

\newcommand{\bdrd}{{\mathbf{B}_{\dR}^\dagger}}

\newcommand{\bdrdrig}{{\bfb_{\dR, \rig}^{\dagger}}}

\newcommand{\zptimes}{{\mathbb{Z}_p^\times}}


\section{Introduction}

This paper studies the $\bdrpd$-representations, which are certain  ``(over)-convergent" variants of the $\bdrplus$-representations. 
The convergent subring $\bdrpd \subset \bdrplus$ first appears in the work of Colmez \cite{Col08a} in his study of ramification theory and Artin conductors; 
 recently, this ring (and its sheaf version on the pro-\'etale site) is used in Wiersig's work \cite{Wie1, Wie2} with applications to $\mathcal{\wideparen{D}}$-modules. 
 In modern geometric language, $\bdrpd$ is exactly the (un-completed) stalk at  the de Rham point  of the Fargues--Fontaine curve (whose completion is   $\bdrplus$); we defer to Construction \ref{cons:intro_bdrpd} for more details of this ring.

To motivate, we shall first quickly explain Sen's theory \cite{Sen81} of $C$-representations and Fontaine's theory \cite{Fon04} of $\bdrplus$-representations.
Then in \S \ref{subsec:intro_decomp}, 
we   explain an analogous \emph{decompletion} result    for the $\bdrpd$-representations, with one caveat on the ``mis-matching" of \emph{cohomology theory} with the classical $\bdrplus$-theory. 
In \S \ref{subsec:intro_conv}, we show that  when the Sen weights satisfy certain \emph{$p$-adic non-Liouville} conditions, then there are  ``matching" comparisons between the $\bdrpd$-representations and the $\bdrp$-representations.
Indeed, the discussions in \S \ref{subsec:intro_conv}   bear some resemblances with the study of \emph{overconvergent $(\varphi, \Gamma)$-modules}  constructed by Cherbonnier--Colmez \cite{CC98}, yet with subtle and important differences;  an extensive comparison will be given in \S \ref{subsec:intro_compare}: see in particular the (subtle) relations between   ``decompletion" and ``(over)-convergence".




Let us   fix some notations. 
Let $K$ be a mixed characteristic complete discrete valuation field with perfect residue field of characteristic $p$. Fix an algebraic closure $\overline{K}$ and denote $\gk=\gal(\overline{K}/K)$;  let $C$ be the $p$-adic completion of $\overline K$. 
Let $\kinfty/K$ be the cyclotomic extension by adjoining all $p$-power roots of unity, let $\hk:=\gal(\barK/\kinfty)$ and   $\gammak:=\gal(\kinfty/K)$; let $\hatkinfty$ be the $p$-adic completion of $\kinfty$.

   Sen \cite{Sen81} studies the category $\rep_\gk(C)$ of semi-linear $C$-representations (cf. Def \ref{defsemilinrep}, also for other similar category notations in the following); this is a foundational work as these representations naturally appear in the  Hodge--Tate comparison theorem. 
 In \cite{Fon82}, Fontaine introduces the ring $\bdrplus$ which is an infinitesimal thickening of $C$, with one of the aims to study  de Rham comparison theorem. 
 In \cite{Fon04}, Fontaine carries out an extensive study of the category $\rep_\gk(\bdrplus)$, and in particular establishes   the \emph{decompletion} results which lift the results of Sen. As a rough summary,  Sen \cite{Sen81} shows that given an object $W \in \rep_\gk(C)$, then the invariant
   $W^\hk$ is an object in $\rep_\gammak(\hatkinfty)$  with full rank; more importantly, this object then   further \emph{decompletes} to an object $D \in \rep_\gammak(\kinfty)$.
After decompletion, the $\gammak$-action on $D$ is  locally analytic, hence in particular there is a  $\kinfty$-linear  operator $\phi: D \to D$ by taking derivative of the $\gammak$-action; this is nowadays called the \emph{Sen operator}, with its eigenvalues called the  Sen weights.

We now summarize Fontaine's work   lifting Sen's results.
Denote
$ \ldrplusk:=(\bdrplus)^{H_K}$.
This is a Fr\'echet representation of the $p$-adic Lie group $\gammak$. One can consider its pro-analytic vectors (cf. Def \ref{def:LAV}) 
$\bfl_\kinfty^+: =(\ldrplusk)^{\gammak\dpa} =\kinfty[[t]]$.

\begin{theorem}[\emph{\cite{Fon04}}, Decompletion of $\bdrplus$-representations]\label{thm:intro_fon}
  \hfill
\begin{enumerate}
    \item We have equivalences of tensor categories
\[\Rep_{G_K}(\bdrplus) \simeq \Rep_{\Gamma_K}(\ldrplus) \simeq \Rep_{\Gamma_K}(\mathbf{L}_\kinfty^{+}).   \]
For $W\in \Rep_{G_K}(\bdrplus)$, the corresponding object in $\Rep_{\Gamma_K}(\ldrplus)$ is  $W^{H_K}$, the corresponding object in $\Rep_{\Gamma_K}(\mathbf{L}_\kinfty^{+})$ is $D=(W^\hk)^{\gammak\dpa}$.
The functors from right to left are always base-change functors.

\item Using above notations, we have quasi-isomorphisms concentrated in degree $[0,1]$:
\[ \rg(\gk, W) \simeq \rg(\gammak, W^\hk) \simeq \rg(\gammak, D) \simeq \rg(\phi, D)^{\gammak=1}. \] 
Here the first three complexes are continuous group cohomologies, and $\rg(\phi, D)$ denotes the complex $[D\xrightarrow{\phi} D]$ where $\phi$ is the Lie algebra operator by taking derivative of the pro-analytic action of $\gammak$ on $D$, and ``$\gammak=1$" signifies taking $\gammak$-invariants. In addition
\[\rg(\phi, D) \simeq \rg(\gk, W)\otimes_K \kinfty.\]

\item We have $\dim_K H^0(\gk, W)=\dim_K H^1(\gk, W)<+\infty$.
 
\end{enumerate}
\end{theorem}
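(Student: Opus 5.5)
The plan is to reduce everything to Sen's theory by dévissage along the $t$-adic filtration, and then pass to the limit using $t$-adic completeness.

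\textbf{Step 1: the first equivalence.} For $W\in\Rep_{G_K}(\bdrplus)$, each quotient $W/t^nW$ is a successive extension of $C$-representations. Write $\bfb_n:=\bdrplus/t^n$. By almost étale descent for $H_K$ (Tate--Sen), $H^1(H_K,\GL_d(C))$ is trivial. We then lift this along the square-zero extensions $\bfb_{n+1}\to\bfb_n$: the kernel is $t^n\bfb_{n+1}\simeq C(n)$, and the obstruction groups are $H^1(H_K,\Mat_d(C(n)))=0$. Hence $H^1(H_K,\GL_d(\bfb_n))=1$ for every $n$. We also know $(\bfb_n)^{H_K}=\ldrplus/t^n$, with vanishing higher $H_K$-cohomology. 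It follows that $(W/t^n)^{H_K}$ is free of rank $d$ over $\ldrplus/t^n$ and that $\bfb_n\otimes(W/t^n)^{H_K}\simeq W/t^n$. Taking the inverse limit over $n$ gives the first equivalence, since the Mittag-Leffler condition holds because the transition maps are surjective. The same vanishing gives $\rg(G_K,W)\simeq\rg(\Gamma_K,W^{H_K})$ by Hochschild--Serre, first at each finite level and then after $\rg\lim$.

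\textbf{Step 2: the second equivalence (decompletion).} We work again modulo $t^n$ and induct. For $n=1$ this is Sen's decompletion: $\hatkinfty\otimes D_1\simeq W^{H_K}/t$, where $D_1$ is the $\Gamma_K$-locally analytic vectors. For the inductive step we use the exact sequence $0\to t^nW^{H_K}/t^{n+1}\to W^{H_K}/t^{n+1}\to W^{H_K}/t^n\to0$. We need that taking locally analytic (equivalently pro-analytic) vectors is exact here. This amounts to vanishing of $R^1$ of the locally analytic vectors functor on $\hatkinfty(n)$-representations, which follows from Tate's normalized traces and the formula $(\hatkinfty)^{\Gamma_K\dla}=\kinfty$ (Berger--Colmez). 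Lifting a basis then shows $D_n:=(W^{H_K}/t^n)^{\Gamma\dpa}$ is free over $\kinfty[t]/t^n$ and that its base change recovers $W^{H_K}/t^n$. Passing to the limit uses $(\ldrplus)^{\Gamma\dpa}=\kinfty[[t]]$ and the fact that pro-analytic vectors commute with these limits. This is the main obstacle: one must check that the $\varprojlim$ of the $D_n$ equals the pro-analytic vectors of $W^{H_K}$, which requires uniformity in $n$ of the analyticity radius. We handle this via Fréchet topology arguments on the inverse system.

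\textbf{Step 3: cohomology.} Statement (2) is proved modulo $t^n$ and then passed to the limit. On $\hatkinfty$-representations, Tate's normalized traces give $\rg(\Gamma_K,\hatkinfty\otimes D)\simeq\rg(\Gamma_K,D)$, because the complement $X$ of $\kinfty$ has $\gamma-1$ invertible with bounded inverse. Dévissage extends this to $W^{H_K}$. For $\rg(\Gamma_K,D)\simeq\rg(\phi,D)^{\Gamma_K=1}$, write $\Gamma_K\supset\Gamma'\simeq\zp$ with finite index. The key fact is that for a finite-dimensional locally analytic $\Gamma'$-representation, $\gamma-1$ and $\phi$ differ by an operator that is invertible on the generalized eigenspaces that matter. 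Combined with taking invariants under the open subgroups, this shows $\rg(\Gamma_n,D)\otimes\kinfty\to\rg(\phi,D)$ is an isomorphism in the colimit. Here $D$ is a union of finite-dimensional $K_m$-stable pieces, and on each piece $\phi$ has kernel and cokernel realized by the generalized $0$-eigenspace. Galois descent along $\kinfty/K$ then gives both $\rg(\phi,D)\simeq\rg(G_K,W)\otimes_K\kinfty$ and the identification of $\Gamma_K$-invariants.

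\textbf{Step 4: finiteness and equal dimensions.} Restrict to $D_n$, on which $\phi$ is a $\kinfty$-linear endomorphism of a finite-dimensional space. Its kernel and cokernel therefore have equal dimension. Together with the compatibility with the limit, this gives (3). For the limit, observe that $t^nD$ has Sen weights shifted by $n$. So $\phi$ is bijective on $t^ND$ for $N\gg0$, which makes both $H^0$ and $H^1$ be computed on $D/t^N$.
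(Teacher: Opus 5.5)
Your route is the one the paper has in mind: dévissage along the $t$-adic filtration down to Sen's theory, then passage to the limit, with finiteness coming from bijectivity of $\phi$ on $t^ND$ for $N\gg0$. The paper itself only cites \cite{Fon04}, noting that everything holds modulo $\xi^n$ with $n=1$ being Sen's theory, and your Step 4 is the argument of Prop~\ref{prop:formal_perf_cplx}. However, two steps would fail as you formulate them.

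\emph{Step 2.} Your ``main obstacle'' is not an obstacle, and the way you propose to handle it cannot work. By Def~\ref{def:LAV}(4), an element of $W^{\hk}=\varprojlim_n W^{\hk}/t^n$ is pro-analytic exactly when its image in each Banach quotient $W^{\hk}/t^n$ is locally analytic. So $(W^{\hk})^{\gammak\dpa}=\varprojlim_n D_n$ holds by definition. What remains is that $\varprojlim_n D_n$ is free over $\kinfty[[t]]$ and base-changes to $W^{\hk}$, and this follows from surjectivity of $D_{n+1}\to D_n$ by choosing compatible bases.

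No uniformity of the analyticity radius is needed, and in fact uniformity fails. Take $x=\sum_{i\geq1}\varepsilon_i t^i\in\kinfty[[t]]=(\ldrplus)^{\gammak\dpa}$: its truncation $x \bmod t^n$ is $\Gamma_{K,m}$-analytic only for $m\geq n-1$, so $x\notin\colim_m K_m[[t]]$. This is precisely why the statement uses pro-analytic rather than locally analytic vectors (contrast Theorem~\ref{thm:intro_decomp}).

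Separately, the vanishing of $R^1$ of locally analytic vectors does not follow from $(\hatkinfty)^{\gammak\dla}=\kinfty$; that is only the degree-$0$ statement. You need the Tate--Sen estimates, as in Lem~\ref{lem:ring_nohigherlav}. Alternatively, apply Sen's decompletion directly to $W^{\hk}/t^{n+1}$ viewed as a $\hatkinfty$-representation of rank $d(n+1)$. Surjectivity of $D_{n+1}\to D_n$ and freeness over $\kinfty[t]/t^{n+1}$ then follow by faithful flatness of $\hatkinfty$ over $\kinfty$.

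\emph{Step 3.} The claim that $D$ is a union of finite-dimensional $K_m$-stable pieces is false. Take $W=\bdrplus$, so $D=\kinfty[[t]]$ and $\phi=t\frac{d}{dt}$, and let $f=\sum_{i\geq0}t^i$. Then $P(\phi)f=\sum_i P(i)t^i\neq0$ for every nonzero polynomial $P$. So $f$ lies in no finite-dimensional $\phi$-stable subspace, hence in no finite-dimensional $\gammak$-stable one. Only the truncations $D/t^N$ have that form (e.g.\ by Lem~\ref{lem:colim_rep} or Cor~\ref{cor:fontain_rational}).

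So the comparison of $\gamma-1$ with $\phi$ must be made after truncating:
\begin{itemize}
\item Choose $N$ with $\lambda_i+k\neq0$ for all $i$ and all $k\geq N$.
\item Then $\rg(\phi,t^ND)=0$.
\item Also $\rg(\Gamma_{K,n},t^ND)=0$ for every $n$: the graded pieces $t^kD/t^{k+1}D$ have Sen weights $\lambda_i+k\neq0$; then use dévissage and $\mathrm{R}\varprojlim$.
\item This reduces both sides to $D/t^N$, where your argument applies piece by piece: write $\gamma-1=a\phi\cdot u(a\phi)$ with $u(a\phi)$ invertible on a small enough $\Gamma_{K,n}$.
\end{itemize}
Your Step 4 contains the $\phi$-half of this reduction. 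It must come before Step 3 and be supplemented by the group-cohomology half; otherwise you are silently commuting $\colim_n$ with $\varprojlim_N$.
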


 \subsection{Decompletion results} \label{subsec:intro_decomp}
 We now introduce the central player of this paper. 
 
\begin{construction} \label{cons:intro_bdrpd}
 Let $\bdrpd \subset \bdrplus$ be the \emph{convergent} (cf. Rem \ref{rem:OC_vs_conv} for discussion of terminology vs. ``overconvergent") sub-ring; there are several ways to define/view it.
 \begin{enumerate}
 \item Geometrically, $\bdrpd$ can be defined as  the (un-completed) stalk at  the de Rham point  of the Fargues--Fontaine curve; this appears in \cite[Thm 6.5.2(5)]{FF18}. As an explicit presentation  of the   geometric interpretation, we have
 \[ \bdrpd =\colim_{r \geq 1} \binf\langle{\frac{\xi}{p^r}} \rangle\]
 where $\binf=\ainf[1/p]$, $\xi$ is a generator of kernel of $\theta: \ainf \to \o_C$, and $\langle{\cdot} \rangle$ signifies  taking Tate algebra using $p$-adic topology on $\binf$.
   That is: it consists of elements ``converging" in some neighbourhood near the de Rham point (with ``coordinate" $\xi$).
 
 \item  There is also a \emph{highly non-canonical} yet very illustrative interpretation by Colmez \cite[\S 3.3]{Col08a} (this is where $\bdrpd$ (essentially) first appeared in the literature, being the direct limit of ``$\bfb_{\dR, K}^{(r)}$" (for $r<+\infty$) above \cite[\S 3.3]{Col08a}).
  Colmez shows that one can construct a (non-unique) \emph{continuous}  additive   \emph{group}   homomorphism $s: \o_C \to \ainf$ which is a section of $\theta$ (caution: $s$ is not a ring morphism and not $\gk$-equivariant); this then extends to a  \emph{homeomorphism} of additive topological abelian groups:
 \[ s_\xi: C[[\xi]] \simeq \bdrplus.\]
 Note the left hand side consists of   \emph{formal} functions with coordinate $\xi$; its  subset of convergent functions, that is, $\colim_r C\langle{\frac{\xi}{p^r}} \rangle$, will map   \emph{homeomorphically} (though not ring-theoretically) via $s_\xi$ to $\bdrpd$. This (somewhat  strange)  interpretation of $\bdrpd$  plays a central role in \cite{Col08a}. For details of this construction, cf.~ \S \ref{sec:axiom_stalk_unit}, in particular, Example \ref{ex:section_val} and Prop \ref{prop:set_section_sz}.
 \end{enumerate}
 We quickly mention that $\bdrpd$ is a $\barK$-algebra; it is a DVR with $t$ (resp. $\xi$) being a uniformizer; its $t$-adic completion is exactly $\bdrplus$; in particular $\bdrpd/t \simeq \bdrplus/t \simeq C$. The inclusion map $\bdrpd \to \bdrp$ is   continuous  (cf.~ Rem \ref{rem:bdrpd_cont_map}).
\end{construction}

 In the next Thm \ref{thm:intro_decomp}, we establish the convergent version of above Thm \ref{thm:intro_fon}, \emph{except} the cohomology finiteness part.
Similar to the notations in  Thm \ref{thm:intro_fon}, let 
$$\ldrpd:=(\bdrpd)^\hk,   \text{ and then } \bfl_{K, \infty}^{+,\dagger}:=(\ldrpd)^{\gammak\dla}.$$
  Caution: we are taking locally analytic vectors here instead of the pro-analytic vectors in Thm \ref{thm:intro_fon} (a  minor subtlety that can ignored at this point). We have (cf. Lem \ref{lem:colim_stalk}), 
\[ \bfl_{K,\infty}^{+, \dagger} =\colim_{m,r}  K_m \langle \frac{t}{p^r} \rangle  \subsetneq   \colim_r K_\infty \langle \frac{t}{p^r} \rangle;\]
here $K_m/K$ is the field by adjoining all $p^m$-th roots of unity.


\begin{introthm}[Decompletion of $\bdrpd$-representations, cf.~Thm \ref{thm:TS_conv}]  \label{thm:intro_decomp} \hfill
\begin{enumerate}
    \item We have equivalences of tensor categories
\[\Rep_{G_K}(\bdrpd) \simeq \Rep_{\Gamma_K}(\ldrpd) \simeq \Rep_{\Gamma_K}(\bfl_{K, \infty}^{+,\dagger}).   \]
For $W^\dagger\in \Rep_{G_K}(\bdrpd)$, the corresponding object in $\Rep_{\Gamma_K}(\ldrpd)$ is $(W^{\dagger})^\hk$, the corresponding object in $\Rep_{\Gamma_K}(\bfl_{K, \infty}^{+,\dagger})$ is $D^\dagger=(W^{\dagger,\hk})^{\gammak\dla}$.
The functors from right to left are always base-change functors.

\item  We have quasi-isomorphisms  concentrated in degree $[0,1]$:
\[ \rg(\gk, W^\dagger) \simeq \rg(\gammak, W^{\dagger,\hk}) \simeq \rg(\gammak, D^\dagger) \simeq \rg(\phi, D^\dagger)^{\gammak=1}. \]
In addition, we have
\[  \rg(\phi, D^\dagger)\simeq \rg(\gk, W^\dagger) \otimes_K \kinfty. \]

\item We have $\dim_K H^0(\gk, W^\dagger) <+\infty$; but it is possible that $\dim_K H^1(\gk, W^\dagger)=+\infty$. 
\end{enumerate}
\end{introthm}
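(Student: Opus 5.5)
The plan is to run an axiomatic Tate--Sen formalism in the style of Colmez and Berger--Colmez for the ring $\bdrpd$. The usual difficulty is that $\bdrpd$ is not complete for a single Banach norm. I will therefore work at each finite level $r$ with the Banach ring $\bdrr:=\binf\langle \xi/p^r\rangle$ (more precisely, with its image in $\bdrplus$ equipped with the Gauss-type norm in the $\xi/p^r$ coordinate), and pass to the colimit over $r$ at the end.

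\textbf{Step 1 (Tate--Sen axioms at level $r$).} I first check the Tate--Sen axioms (TS1)--(TS3) for $\bdrr$ with its $\gk$-action, uniformly enough in $r$.
\begin{itemize}
\item For (TS1), almost \'etale descent along $\hk$, I expect to reduce via the filtration by powers of $\xi/p^r$ to the corresponding statement for $C$. Colmez's section $s_\xi$ is useful here, since it identifies $\bdrr$ topologically with $C\langle \xi/p^r\rangle$ while preserving the norm up to bounded constants.
\item For (TS2), the normalized traces $R_m:\ldrkr\to \bfl^{(r)}_{\dR,K_m}$, I would build them by applying Tate's normalized traces on $\hatkinfty$ coefficientwise in the $t$-adic expansion. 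Here one uses that $t=\xi\cdot(\text{unit})$, and that $\gammak$ acts on $t$ via $\chi$.
\item For (TS3), the inverse of $\gamma-1$ on the kernel of $R_m$, the same coefficientwise argument works: on the $t^i$ coefficient the operator becomes $\chi(\gamma)^i\gamma-1$, and Tate's bounds on $\ker R_m$ are uniform in $i$.
\end{itemize}

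\textbf{Step 2 (the equivalences in (1)).} With the axioms in hand, I apply the standard Tate--Sen machinery to a cocycle with values in $\GL_d(\bdrpd)$. Such a cocycle lives at some level $r$. The machinery gives first $\hk$-descent to $\ldrkr$, and then descent to $\bfl^{(r)}_{\dR,K_m}=K_m\langle t/p^r\rangle$ for $m\gg 0$, possibly after enlarging $r$. This yields the equivalences $\Rep_{G_K}(\bdrpd)\simeq\Rep_{\gammak}(\ldrpd)\simeq\Rep_{\gammak}(\bfl^{+,\dagger}_{K,\infty})$. Full faithfulness comes from the usual argument with $\hk$-invariants and the traces. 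I then identify the descended module with the $\gammak$-locally analytic vectors $D^\dagger=(W^{\dagger,\hk})^{\gammak\dla}$ by using Lem~\ref{lem:colim_stalk}: on $K_m\langle t/p^r\rangle$-modules the action is locally analytic, while the $\ker R_m$ part contributes no locally analytic vectors.

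\textbf{Step 3 (cohomology in (2)).}
\begin{itemize}
\item I compare $\rg(\gk,W^\dagger)\simeq\rg(\gammak,W^{\dagger,\hk})$ using the vanishing of higher $\hk$-cohomology from (TS1).
\item I compare $\rg(\gammak,W^{\dagger,\hk})\simeq\rg(\gammak,D^\dagger)$ using the invertibility of $\gamma-1$ on the complement $\ker R_m$ from (TS3).
\item The identification with $\rg(\phi,D^\dagger)^{\gammak=1}$ is the standard locally analytic argument. An open subgroup $\Gamma'\subset\gammak$ acts through $\exp(\phi)$, so $\gamma-1=\phi\cdot u$ with $u$ invertible on $D^\dagger$. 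The finite quotient then contributes no higher cohomology in characteristic $0$.
\item The base change statement $\rg(\phi,D^\dagger)\simeq\rg(\gk,W^\dagger)\otimes_K\kinfty$ follows by Galois descent of the $\gammak$-action on the smooth vectors of $\ker\phi$ and $\operatorname{coker}\phi$.
\item Everything lies in degrees $[0,1]$ because $\gammak$ is procyclic up to a finite group.
\end{itemize}

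\textbf{Step 4 (finiteness in (3)).} For $H^0$, I use that $H^0\subset H^0(\gk,W^\dagger\otimes\bdrplus)$, which is finite-dimensional by Thm~\ref{thm:intro_fon}, together with the injectivity of $\bdrpd\to\bdrplus$. For the possible infiniteness of $H^1$, I give an example: a rank-one object $D^\dagger$ with $\phi=$ multiplication by a $p$-adic Liouville number $\alpha$. Then $\phi(t^i)=(i+\alpha)t^i$, and the cokernel of $\phi$ on $\colim_r K_m\langle t/p^r\rangle$ contains the series $\sum a_i t^i$ for which $a_i/(i+\alpha)$ fails to converge. This cokernel is infinite-dimensional.

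The main obstacle is Step 1, specifically getting the Tate--Sen constants uniform in $r$. Multiplication by $t/\xi$ and the $\gammak$-action on $t$ distort the $p^r$-Gauss norms. I would first try to control this by choosing the norm on $\bdrr$ via the basis $t^i/p^{ri}$ rather than $\xi^i/p^{ri}$, and then absorbing the loss into a shift $r\mapsto r+c$ that is allowed in the colimit.
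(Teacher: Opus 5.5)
Your overall architecture matches the paper's: Tate--Sen at each level $r$, descent of cocycles after passing to the colimit, the $H^0$ bound via $\bdrplus$, and a rank-one example with $\type(-\lambda)=0$. The gap is in Step~1, and it is not the norm distortion you anticipate. Your (TS2) and (TS3) assume that $x\in\ldrkr$ can be written as $\sum_i a_it^i$ with $a_i\in\hatkinfty$ so that $\gammak$ acts coefficientwise, making $\gamma-1$ into $\chi(\gamma)^i\gamma-1$ on the $t^i$-coefficient. Such an expansion needs a continuous, additive, $\gammak$-equivariant section of $\theta$ over $\hatkinfty$, and none exists. Its restriction to $K_m$ would land in $(\ldrplusk)^{\Gamma_{K,m}}=K_m$, so it would have to be the canonical embedding. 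But $\kinfty\hookrightarrow\ldrplusk$ is not $p$-adically continuous (Rem~\ref{rem:bdrpd_cont_map}). For fixed $r$ it does not even land in $\ldrkr$: by Colmez's theorem (Thm~\ref{thm:col08a_ram}), $F_m\subset\bdrr$ forces $m<r$. Colmez's $s_\xi$ is only an additive, non-equivariant bijection, so $\gamma$ mixes coefficients and your diagonal formula fails. For the same reason $s_\xi$ cannot produce the $H_M$-invariant elements you need for (TS1). A d\'evissage along the $t$-adic filtration does not rescue this, because passing to the limit lands in the $t$-adic completion of $\bdrpd$, which is $\bdrplus$. Relatedly, at fixed $r$ with $m\gg r$ (the regime Tate--Sen needs), the target $\bfl_{K,m}^\rr=K_m[[\xi]]\cap\bdrr$ is not $K_m\langle t/p^r\rangle$. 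The stalk description only holds after the colimit over $r$ (Lem~\ref{lem:colim_stalk}).

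The missing ingredient is a decomposition that is genuinely $\Gamma$-stable, with direct estimates on it; this is how the paper proceeds. (TS1) uses Teichm\"uller lifts $[\beta]$ with $\beta\in(C^\flat)^{H_M}$ of trace one and small valuation. Since $[p^\flat]/p$ is a unit in $\bdrrcirc$ for $r>1$, $\Tr([\beta])=1+\sum_{k\geq1}p^k[x_k]$ is a unit (Prop~\ref{prop:verify_TS1}). The normalized traces are truncations along Colmez's decomposition $\bfl_{\dR,F}^\rr=\widehat{\bigoplus}_{i\in I}\bfl_F^\rr\cdot[\varepsilon^i]$, whose pieces are $\Gamma_F$-stable (Construction~\ref{cons:decomp_conv}). (TS2) reduces to Sen's estimate on $\hat F_\infty$, using $s_\xi$ only to compare valuations, never actions. (TS3) is an explicit integral computation showing that $H^1(\Gamma_{F,m},\bfl_{\dR,F}^{\rr,\circ}/\bfl_{F,m}^{\rr,\circ})$ is killed by $p$ (Lem~\ref{lem:USE_u_torsion}). (TS4), which you omit but which drives the identification with locally analytic vectors, is checked separately (Prop~\ref{prop:verify TS4}). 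The ramified case is reduced to $F$ via Lem~\ref{lem:LdRK}. A smaller point in Step~3: no single open subgroup acts as $\exp(\log\chi(\gamma)\phi)$ on all of $D^\dagger$, since $\phi$ kills $\kinfty$. You need the colimit over the $\Gamma_{K,n}$, as in the paper's appeal to Tamme.
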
 

\begin{proof}[Idea of proof]  
This \emph{decompletion} theorem follows from a familiar technique, namely the Tate--Sen(--Colmez) formalism. Let us first emphasize the \emph{difference} with Thm \ref{thm:intro_fon}. 
The proof of Thm \ref{thm:intro_fon} follows a natural \emph{d\'evissage} argument: indeed, all results of \emph{loc. cit.} hold for representations over $\bdrplus/\xi^n$ with   $n \geq 1$, where  the  $n=1$ case  is exactly the theory of Sen. 
However, for our Thm \ref{thm:intro_decomp}, there is \emph{no} such d\'evissage since the $\xi$-completion of $\bdrpd$ would exactly be $\bdrplus$! As it turns out, and somewhat surprisingly, the proof of Thm \ref{thm:intro_decomp}  makes use of computations   closer in spirit to that  in the construction of  overconvergent $(\varphi, \Gamma)$-modules.
\end{proof}

  \subsection{Convergence results} \label{subsec:intro_conv}
We now discuss when the cohomology groups in Thm \ref{thm:intro_decomp} are finite. 
As we shall see, it is intimately related with Clark's theorem \cite{Cla66} on the study of  \emph{convergent regular connections}. Some ``$p$-adic non-Liouville type" conditions will naturally appear.

\begin{defn}
Following convention in \cite[Def. 13.1.1]{Ked22}, for $\lambda \in C$, let $\type(\lambda)$  be the radius of convergence of  
\[ \sum_{m \geq 0, m\neq \lambda} \frac{z^m}{\lambda -m}\]
\end{defn}

One can easily check that $\type(\lambda) \in [0,1]$; other basic properties are reviewed in \S \ref{sec:conv_reg1}; we just mention that there exists $\lambda$ such that   $\type(\lambda)=0$ but $\type(-\lambda)=1$. 
Such $p$-adic convergence properties are first considered in \cite{Cla66}, in analogy with the classical (real) Liouville numbers; we   mention that the \emph{type zero} elements are those that can be \emph{very} well-approximated ($p$-adically) by integers: in particular, $\type(\lambda)=0$ implies $\lambda \in \zp\setminus \bbz$.
In contrast, for  $\lambda \in \overline{\mathbb Q}$ (caution: not $\qpbar$!), $\type(\lambda)=1$.

The following theorem generalizes a result by Wiersig \cite[Thm 5.7]{Wie2} which treated the case $W^\dagger=\bdrpd$ (the trivial representation), cf.~ Rem \ref{rem:coho_trivial_case}.

\begin{introthm}[Cohomology comparison, cf.~Thm \ref{thm:galois_coho_compa}] \label{thm:intro_Clark}
Let $W^\dagger \in \Rep_{G_K}(\bdrpd)$ be of rank $r$; let $W= W^\dagger\otimes_\bdrpd \bdrplus$.
 Denote the Sen weights of the reduction $W^\dagger/tW^\dagger=W/tW$ as $\lambda_1, \cdots, \lambda_r$. If  $\type(-\lambda_i)>0$ for each $i$ (for example,  $\lambda_i \in \qbar$), then we have 
\[ \rg(\gk, W^\dagger) \simeq   \rg(\gk, W) \]
and hence all cohomology groups  are finite dimensional $K$-vector spaces. 
\end{introthm}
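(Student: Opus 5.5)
Via Theorem \ref{thm:intro_decomp}(2) and Theorem \ref{thm:intro_fon}(2), both sides reduce to the Lie algebra complexes of the decompleted modules. More precisely, it suffices to show that the natural map
\[ \rg(\phi, D^\dagger) \to \rg(\phi, D) \]
is a quasi-isomorphism, compatibly with the $\gammak$-action. Here $D^\dagger$ is the decompleted module over $\bfl_{K,\infty}^{+,\dagger}=\colim_{m,r} K_m\langle t/p^r\rangle$, and $D$ is the one over $\kinfty[[t]]$. Taking $\gammak$-invariants, which is exact on these $\kinfty$-semilinear smooth objects by the descent statements in both theorems, then gives $\rg(\gk,W^\dagger)\simeq\rg(\gk,W)$. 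To compare the two decompleted modules, I first check that $D^\dagger\otimes_{\bfl_{K,\infty}^{+,\dagger}}\kinfty[[t]]\simeq D$. This should follow from uniqueness of decompletion: base change $D^\dagger$ up to $\bdrplus$, recover $W$, and apply the equivalence of Theorem \ref{thm:intro_fon}(1). The point here is that locally analytic vectors of $W^\dagger$ are pro-analytic vectors of $W$, and a basis of $D^\dagger$ generates $D$.

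Next I make the operator explicit. Choose $m,r$ and a $K_m\langle t/p^r\rangle$-lattice $D_{m,r}$ that is stable under $\phi$; then $D^\dagger=\colim D_{m,r}\otimes\kinfty$. The $\gammak$-action satisfies $\gamma(t)=\chi(\gamma)t$, so the derivative acts as $\phi = t\frac{d}{dt}+ A(t)$, where $A(t)$ is a matrix with entries in $K_m\langle t/p^r\rangle$. Thus $(D^\dagger,\phi)$ is a convergent regular singular connection at $t=0$, namely a module with an action of the derivation $t\,d/dt$, and its residue $A(0)$ is the Sen operator of $W/tW$, with eigenvalues $\lambda_i$. The formal completion of this connection is $(D,\phi)$. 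The statement is therefore a comparison between the de Rham cohomology of a regular singular connection with convergent coefficients and that of its formal completion, taken over the colimit of shrinking discs.

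The key input is Clark's theorem in the form recalled in \S\ref{sec:conv_reg1}: if no two exponents differ by a "Liouville" amount, namely under $\type(-\lambda_i)>0$ or the appropriate type condition, then (a) formal horizontal sections, and more generally formal solutions of $\phi x=y$ with convergent $y$, are convergent on some smaller disc. The relevant recursion is $(m+\lambda)x_m=\ldots$, and it is controlled by $\sum z^m/(m+\lambda)$, whose radius is $\type(-\lambda)$. I would prove the quasi-isomorphism in two steps. For $H^0$, the claim is $\ker\phi|_{D^\dagger}=\ker\phi|_D$. For $H^1$, I need that $D^\dagger/\phi D^\dagger\to D/\phi D$ is an isomorphism. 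Surjectivity holds because $D=D^\dagger+t^N D$ for every $N$, and for $N$ large $\phi$ is bijective on $t^N D$ once $N$ exceeds all integers $n$ with $n+\lambda_i=0$. Injectivity is exactly the convergence statement (a), applied to the equation $\phi x = y$ with $y\in D^\dagger$. To prove (a), I would first reduce $A$ to its residue modulo high powers of $t$, or triangularize it after passing to a finite extension of $\kinfty$ containing the $\lambda_i$, and then solve the recursion coefficientwise. The estimate $|x_m|\le C\,\rho^{-m}\max_{k\le m}|1/(k+\lambda_i)|\cdots$ produces a positive radius precisely when $\type(-\lambda_i)>0$.

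The main obstacle is this convergence estimate with non-constant $A(t)$, because the off-diagonal terms accumulate products of the factors $1/(k+\lambda_i)$. I would follow Clark and Kedlaya \cite[Ch.~13]{Ked22}. The type condition gives, for each $i$, the bound $\prod_{k\le m}\max(1,|k+\lambda_i|^{-1})\le c_\epsilon(\type(-\lambda_i)-\epsilon)^{-m}$, and this bound is uniform enough for a majorant series argument on a disc of radius $p^{-r'}$. Finiteness of cohomology then follows from Theorem \ref{thm:intro_fon}(3).
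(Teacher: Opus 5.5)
Your proposal is correct and is essentially the paper's argument. You reduce to $\rg(\phi,D^\dagger)\simeq\rg(\phi,D)$ via Theorems \ref{thm:intro_decomp} and \ref{thm:intro_fon}, and then use Clark's theorem, which the paper proves exactly as you outline: bijectivity of $\phi$ on $t^ND$, plus a direct recursion estimate for non-constant $A$ using cofactors and Lem \ref{lem:nonLradius}. No triangularization or convergent simplification of $A$ is needed, and a convergent simplification would in fact require (N-T0-LD). The one extra point in your direct version is that a formal solution has coefficients in a single $K_{m'}$, since the recursion determines it from finitely many initial terms; the paper handles this instead via $\colim_m$ and Prop \ref{prop:formal_perf_cplx}(2).

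Two slips to fix. First, the hypothesis is (N-T0-L) on the $\lambda_i$ themselves, not a condition on differences of exponents. Second, your product bound is false as stated: for $\lambda_i=1$ the product is $|(m+1)!|^{-1}\approx p^{m/(p-1)}$. Any bound of the form $C\rho^{-m}$ with $\rho>0$ suffices, and that is exactly what Lem \ref{lem:nonLradius} gives.
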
 
\begin{proof}[Idea of proof.]
By known results in  Thm \ref{thm:intro_fon} and Thm  \ref{thm:intro_decomp}, it is equivalent to prove  that:
\[ \rg(\phi, D^\dagger) \simeq \rg(\phi, D).\]
By \cite{Fon04}, it is known that  the pair $(D, \phi)$ defines a (formal) regular connection: namely, $\phi$ satisfies Leibniz rule with respect to the differentiation $t\frac{d}{dt}$ on the (formal) ring $\kinfty[[t]]$.
 Similarly, we can show that $(D^\dagger, \phi)$ defines a \emph{convergent regular connection} with respect to the differentiation $t\frac{d}{dt}$ on the \emph{convergent} ring $\bfl_{K, \infty}^{+,\dagger}$.
 Thus the above desired quasi-isomorphism is a ``re-packaging" of Clark's theorem \cite{Cla66} (cf. also Thm  \ref{thm:Clark_coho}), which holds under the hypothesis $\type(-\lambda_i)>0$ for each $i$. (See Example \ref{ex:intro} for possible ``pathologies" without positive type assumptions).
 
 
 As an additional remark, we shall give a very \emph{extensive} discussion of Clark's theorem and related topics, in \S \ref{sec:formal_reg_conn}--\ref{sec:conv_reg3}; we also try to clear (quite) some confusions in the literature, cf.~ Remarks \ref{rem:nonL_def}, \ref{rem:clark_plusminus}  and \ref{rem:NLD_confuse}.
 \end{proof} 

\begin{remark} \label{rem:coho_trivial_case}
As mentioned above, the trivial representation case of Thm \ref{thm:intro_Clark}, that is, 
\[ \rg(\gk, \bdrpd) \simeq \rg(\gk, \bdrplus), \]
is first obtained by Wiersig \cite[Thm 5.7]{Wie2}.  
Indeed, Wiersig constructs a certain   filtration  on  certain \emph{integral} variants of  $\bdrpd$, where the cohomology of the gradeds can be computed by the cohomology of the \emph{integral} Tate twists $\o_C(n)$'s; these integral cohomologies do have torsion, but nonetheless behave  in a controlled way as shown by Barthel--Schlank--Stapleton--Weinstein  \cite{BSSW}; this controlled behavior leads to \cite[Thm 5.7]{Wie2}. 


\end{remark}

In Theorem  \ref{thm:intro_fon} resp. Theorem  \ref{thm:intro_decomp}, we \emph{separately} discuss representations over $\bdrplus$ resp. $\bdrpd$; then in Theorem \ref{thm:intro_Clark}, we compare their cohomologies for one \emph{single} representation  under non-Liouville condition.  It is natural to ask the following question, in analogy with the overconvergence theorem of  $(\varphi, \Gamma)$-modules \cite{CC98}:
\begin{itemize}
\item When is a $\bdrplus$-representation  ``convergent", i.e., when can it descend to a  representation   over $\bdrpd$?
\end{itemize}
However, exactly because of the appearances of $p$-adic non-Liouville type conditions, we cannot expect the theory for $\bdrpd$-representations to be completely parallel to that of $(\varphi, \Gamma)$-modules. It turns out we can establish both a ``non-categorical" and a ``categorical" convergence theorem.


\begin{introthm}[Non-categorical convergence, cf.~Thm \ref{thm:noncat_conv}] \label{thm:intro_non_cat}
Let $W \in \rep_\gk(\bdrplus)$ with rank $r$.  
 Denote the Sen weights of the reduction $W/tW$ as $\lambda_1, \cdots, \lambda_r$.   
\begin{enumerate}
\item There always exists some (possibly non-unique)   \emph{$\gk$-stable $\bdrpd$-lattice} $W^\dagger$ in  the sense that  $W^\dagger \subset W$ is a $\gk$-stable sub-$\bdrpd$-module such that the natural map $W^\dagger \otimes_\bdrpd \bdrplus \to W$ is an isomorphism. 

\item If    $\type(-\lambda_i)>0$ for each $i$, then for any possible $\gk$-stable $\bdrpd$-lattice $W^\dagger$, one has  
$ \rg(\gk, W^\dagger) \simeq \rg(\gk, W)$. 

\item If    $\type(\lambda_i -\lambda_j)>0$ for all pairs of $i, j$ (but not necessarily  $\type(-\lambda_i)>0$), then the $\gk$-stable  $\bdrpd$-lattice $W^\dagger$  is  \emph{unique} (not just unique up to isomorphism).
\end{enumerate} 
\end{introthm}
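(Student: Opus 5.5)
By Theorem \ref{thm:intro_fon} and Theorem \ref{thm:intro_decomp}, the whole statement reduces to a question about $\Gamma_K$-modules with connection over $\mathbf{L}_{K_\infty}^+=K_\infty[[t]]$ versus over $\bfl_{K,\infty}^{+,\dagger}=\colim_{m,r}K_m\langle t/p^r\rangle$. Let $D=(W^{H_K})^{\Gamma_K\dpa}$; this is a finite free $K_\infty[[t]]$-module. Via the Sen operator $\phi$ it is a formal regular connection for $t\frac{d}{dt}$, and its residue $D/tD$ has eigenvalues $\lambda_1,\dots,\lambda_r$.

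For part (1), I construct a convergent lattice $D^\dagger\subset D$ that is $\Gamma_K$-stable and descends $D$. Since $\Gamma_K$ acts locally analytically, I can first descend to some $K_m[[t]]$ on which an open subgroup $\Gamma_m\simeq\zp$ acts, with $\phi$ $K_m$-linear up to Leibniz. The key input is the classical theory of formal regular connections in \S\ref{sec:formal_reg_conn}. After shifting exponents, any regular formal connection is formally isomorphic to one with \emph{constant} matrix, namely the residue matrix modified so that no two eigenvalues differ by a nonzero integer. Concretely, this is a shearing and gauge transform with entries in $K_m[t]$ or $K_m[t,1/t]$, and these are algebraic. I therefore take $D^\dagger$ to be the $K_m\langle t/p^r\rangle$-span of a basis in which $\phi$ has constant matrix $A$, then check $\Gamma_K$-stability. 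For $\gamma$ near $1$ one has $\gamma=\exp(\log\chi(\gamma)\,\phi)$ on $D$, and $\exp(s A)$ combined with $t\mapsto \chi(\gamma)t$ visibly preserves the convergent span. Since $D^\dagger$ is preserved by $\Gamma_m$, averaging over finitely many coset representatives (taking the sum of translates) gives a $\Gamma_K$-stable lattice still of full rank. Theorem \ref{thm:intro_decomp}(1) then yields $W^\dagger=(D^\dagger\otimes\bdrpd)^{\dots}$, i.e.\ the base change $D^\dagger\otimes_{\bfl^{+,\dagger}_{K,\infty}}\bdrpd\subset W$, which is a $\gk$-stable lattice.

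Part (2) is immediate from Theorem \ref{thm:intro_Clark}, applied to any such lattice $W^\dagger$, since its Sen weights coincide with those of $W/tW$.

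For part (3), let $W_1^\dagger,W_2^\dagger$ be two lattices. Consider $\mathrm{Hom}(W_1^\dagger,W_2^\dagger)=(W_1^\dagger)^\vee\otimes W_2^\dagger$, which is a $\bdrpd$-representation with Sen weights $\lambda_j-\lambda_i$. The identity of $W$ is a $\gk$-invariant element of $\mathrm{Hom}(W_1,W_2)=\mathrm{Hom}(W_1^\dagger,W_2^\dagger)\otimes\bdrplus$. At the level of $H^0$ I need
\[H^0(\gk,\mathrm{Hom}(W_1^\dagger,W_2^\dagger))\simeq H^0(\gk,\mathrm{Hom}(W_1,W_2)).\]
This requires only the $H^0$ part of Clark's theorem: a formal horizontal section of a convergent regular connection converges when $\type(\mu)>0$ for the relevant differences. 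This is Clark's theorem in the $H^0$ form of Theorem \ref{thm:Clark_coho}, whose hypothesis only involves differences of exponents; I expect this to be the main obstacle, namely checking that $H^0$ needs only $\type(\lambda_i-\lambda_j)>0$ and not $\type(-\lambda)>0$. Granting this, the identity lies in $\mathrm{Hom}(W_1^\dagger,W_2^\dagger)$, so $W_1^\dagger\subset W_2^\dagger$. By symmetry $W_1^\dagger=W_2^\dagger$.
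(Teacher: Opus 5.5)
\textbf{Verdict.} Your parts (2) and (3) are fine. Part (1), however, has a genuine gap: the $\Gamma_K$-stability argument fails, and the module you build is not a lattice of $D$.

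\textbf{What goes wrong in part (1).} (i) The identity $\gamma=\exp(\log\chi(\gamma)\phi)$ is false on $D$ for every $\gamma\neq 1$ near $1$, because $\Gamma_K$ acts on $D$ only pro-analytically. Already for $W=\bdrplus$ one has $D=K_\infty[[t]]$ and $\phi=t\frac{d}{dt}$. The element $x=\sum_{i\geq 1}\varepsilon_i t^i\in D$ satisfies $\gamma(x)=\sum_i\gamma(\varepsilon_i)\chi(\gamma)^it^i\neq\sum_i\varepsilon_i\chi(\gamma)^it^i=\exp(\log\chi(\gamma)\phi)(x)$. For the same reason, local analyticity does not let you descend $D$ to a $\Gamma_K$-stable $K_m[[t]]$-form, since $K_\infty[[t]]\neq\colim_m K_m[[t]]$. (ii) The gauge transformation to a constant matrix is not algebraic. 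Only the shearing is; the rest is the formal fundamental solution of Prop \ref{prop:formalfuchs}, which diverges in general (Rem \ref{rem:NLD_nece_fuchs}). This is harmless for $\phi$-stability of the span, but the claim is false. (iii) The span of a constant-matrix basis is a lattice of the sheared module $N$, not of $D$; these differ whenever the exponents of $D$ are not already weakly prepared. (iv) Summing translates is not legitimate. Two $\bfl^{+,\dagger}_{K,\infty}$-lattices of the same formal module need not be commensurable, and then their sum has rank $>r$. This is exactly what happens for the split and non-split lattices of Example \ref{ex:non_cat_lattice}.

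\textbf{The missing idea.} No exponential and no averaging are needed.
\begin{enumerate}
\item Shear $\Gamma_K$-stably, shifting a whole $G_K$-orbit of exponents at a time. This works because the characteristic polynomial of $\bar\phi$ has coefficients in $K$, and two elements of one finite orbit never differ by a nonzero integer. You get $N\subset D$ with weakly prepared exponents and $t^nD\subset N$.
\item Since $\phi$ commutes with $\Gamma_K$, each $\gamma\circ s\circ\gamma^{-1}$ is again a $K_\infty$-linear $\phi$-compatible section of $N\to N/tN$. Uniqueness in Prop \ref{prop:formalfuchs} then forces $\gamma\circ s=s\circ\gamma$.
\item Hence $\Gamma_K$ acts on $s(N/tN)$ through matrices in $\GL_r(K_\infty)$. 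So $N^\dagger:=\bfl^{+,\dagger}_{K,\infty}\cdot s(N/tN)$ is a $\Gamma_K$- and $\phi$-stable lattice of $N$.
\item Set $D^\dagger:=D\cap t^{-n}N^\dagger$. It is a lattice of $D$ because $t^{-n}N^\dagger/D^\dagger\simeq t^{-n}N/D$ is $t$-power torsion; this is the same argument the paper runs with $t^{-n}V^\dagger\cap W$. Then $D^\dagger\otimes\bdrpd$ is the required $W^\dagger$.
\end{enumerate}

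\textbf{Comparison with the paper for (1).} The paper proves (1) differently. It uses Fontaine's classification \cite[Thm 3.19]{Fon04} to write $W[1/t]$ as a sum of $\bdr\otimes_{K_\infty}K_\infty[A_a]\otimes\zp(0;d)$. Each summand has an evident $\gk$-stable $\bdrd$-form, and the same intersection step brings it down to $W$. The paper thus gets $\gk$-stability for free from a deep structure theorem. The repaired version of your route instead stays within the connection theory of \S\ref{sec:formal_reg_conn} plus Theorem \ref{thm:intro_decomp}(1).

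\textbf{On part (3).} The obstacle you anticipate is not there. The Sen weights of $\underline{\Hom}(W_1^\dagger,W_2^\dagger)$ are all the $\lambda_j-\lambda_i$, a set stable under negation. So the hypothesis of Theorem \ref{thm:intro_Clark} for this internal Hom is literally $\type(\lambda_i-\lambda_j)>0$ for all $i,j$. Clark's theorem therefore applies directly and even gives the full $\rg$-comparison. Your description of the hypothesis is also off: Theorem \ref{thm:Clark_coho} constrains $-\lambda_i$, not differences; it is Prop \ref{prop:p_fuchs} whose hypothesis is on differences. With this, your argument coincides with the paper's, run on the Galois side rather than on $D^\dagger$ as in Theorem \ref{thm:non_cat_fon_conn}(3).
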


To clarify: under assumption in Thm \ref{thm:intro_non_cat}(3), even if the $\bdrpd$-lattice $W^\dagger$  is  \emph{unique}, in general we still have $ \rg(\gk, W^\dagger) \neq \rg(\gk, W)$ (unless if also   $\type(-\lambda_i)>0$ for each $i$). That is: the desired $W^\dagger$ is ``well-behaved" only if \emph{both} assumptions in Thm \ref{thm:intro_non_cat}(2)(3) are satisfied. With this in mind, the formulation of the following ``categorical" theorem is reasonable.

\begin{introthm}[Categorical convergence, cf.~Thm \ref{thm:final_conv}] \label{thm:intro_conv} 
 Let $S \subset \barK$ be an additive subgroup containing $\bbz$ such that all elements are of type $>0$ (the most typical and useful  example is when $S=\qbar$). 
Base change induces an equivalence of categories
\[ \Rep_{G_K}^{S}(\bdrpd) \simeq  \Rep_{G_K}^{S}(\bdrplus)\]
where the LHS  resp. RHS category consists of representations whose mod $t$ reductions have Sen weights in $S$. 
\end{introthm}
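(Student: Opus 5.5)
We reduce via the two decompletion theorems to a statement about regular connections. By Thm \ref{thm:intro_fon}(1) and Thm \ref{thm:intro_decomp}(1) it suffices to show that base change
\[ \Rep_{\gammak}^{S}(\bfl_{K,\infty}^{+,\dagger}) \to \Rep_{\gammak}^{S}(\bfl_{\kinfty}^{+}) \]
is an equivalence. Here both sides are modules with $\gammak$-action, and the action is (pro/locally) analytic. The derivative $\phi$ makes $D^\dagger$ a convergent regular connection, and $D$ a formal regular connection, with respect to $t\frac{d}{dt}$. The residues of these connections, i.e. $\phi$ mod $t$, are the Sen operators, so their eigenvalues lie in $S$.

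\textbf{Full faithfulness.} Given $W_1^\dagger, W_2^\dagger$ in the left category, I would pass to the internal Hom $W^\dagger:=\Hom(W_1^\dagger,W_2^\dagger)=(W_1^\dagger)^\vee\otimes W_2^\dagger$. Its mod $t$ Sen weights are the differences $\mu_j-\lambda_i$ of elements of $S$, so they lie in $S$, since $S$ is an additive group. Morphisms are $H^0(\gk,-)$ of this Hom. Thm \ref{thm:intro_Clark} requires $\type(-\nu)>0$ for every weight $\nu$. This holds because $-\nu\in S$ and all elements of $S$ have positive type. Hence $H^0(\gk,W^\dagger)=H^0(\gk,W)$, which gives full faithfulness. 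This is where the additive-subgroup hypothesis enters.

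\textbf{Essential surjectivity.} For $W$ in the right category, Thm \ref{thm:intro_non_cat}(1) gives a $\gk$-stable $\bdrpd$-lattice $W^\dagger$. Then $W^\dagger\otimes\bdrplus\simeq W$, and $W^\dagger/t = W/t$ has Sen weights in $S$, so $W^\dagger$ lies in the left category. Thus essential surjectivity is already supplied by the non-categorical theorem. One might worry whether the lattice is canonical, but the categorical statement does not need that. Uniqueness (Thm \ref{thm:intro_non_cat}(3)) does hold anyway, since $\lambda_i-\lambda_j\in S$ has positive type.

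\textbf{Main obstacle.} The hard work lies in the inputs. The first is Thm \ref{thm:intro_non_cat}(1), the existence of a convergent lattice with no hypothesis on the weights. I would prove it by decompleting to $D$ over $\kinfty[[t]]$, with its $\gammak$-action. I would then choose a basis in which the connection matrix of $\phi$ is in a normal form: by a formal gauge transformation, reduce it to a constant matrix over some $K_m$, i.e. a residue with eigenvalues distinct mod nonzero integers after shearing. Having a constant matrix means the $\gammak$-action is given by $\exp$ of a constant matrix times $\log\chi$, which visibly converges. So the $\bfl_{K,\infty}^{+,\dagger}$-span of this basis is a $\gammak$-stable convergent lattice. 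The delicate point is that shearing and the formal gauge transformation must be carried out $\gammak$-equivariantly over a finite level $K_m$. I would handle this by working with the generalized eigenspaces of $\phi$ on $D/t^N$ for $N$ large. The second input is the Clark-type comparison behind Thm \ref{thm:intro_Clark}, which I take as given.
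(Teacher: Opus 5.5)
Your argument is the paper's proof: full faithfulness comes from applying Thm~\ref{thm:galois_coho_compa} to the internal Hom object, whose Sen weights are differences of elements of $S$, so their negatives have positive type precisely because $S$ is a group; essential surjectivity is read off from Thm~\ref{thm:noncat_conv}(1). The only divergence is in your sketch of that input. The paper gets a $\bdrd$-form of $W[1/t]$ from Fontaine's classification of $\bdr$-representations and then intersects with $W$. Your connection-theoretic route (shear along $\gk$-orbits of Sen weights, then take the horizontal section of Prop~\ref{prop:formalfuchs}, which is $\gammak$-equivariant by its uniqueness) also works, but it needs the same final intersection step, because shearing replaces $D$ by a different lattice.
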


\begin{proof}[Idea of proofs for Theorems \ref{thm:intro_non_cat} and  \ref{thm:intro_conv}]  
The proof of Thm  \ref{thm:intro_non_cat}(1) follows from Fontaine's \emph{classification} of $\bdr$-representations \cite{Fon04}: that is, via Fontaine's classification, one can even \emph{explicitly} construct some convergent lattices (at least on $\bdrd=\bdrpd[1/t]$-level). 
 Thm  \ref{thm:intro_non_cat}(2) is direct consequence of Thm \ref{thm:intro_Clark}. 
The ``positive type  for all \emph{differences}" condition (i.e., $\type(\lambda_i -\lambda_j)>0$) in Thm  \ref{thm:intro_non_cat}(3) is yet another familiar (``non-Liouville-difference") condition in the study of $p$-adic differential equations, cf. Prop \ref{prop:p_fuchs}.
In fact, in this case, the \emph{internal hom objects} $$U^\dagger=\underline{\Hom}_\bdrpd(W^\dagger, W^\dagger), \quad U=\underline{\Hom}_\bdrp(W, W)$$ satisfy the positive type condition above (as the Sen weights of $U^\dagger$ are exactly $\lambda_i -\lambda_j$ for all pairs of $i,j$). A consideration of $H^0(\phi, U^\dagger)$ will lead to the proof.


Passing from Thm  \ref{thm:intro_non_cat} to Thm \ref{thm:intro_conv}: the reason that we cannot give a ``full" categorical convergence \emph{equivalence theorem} is exactly because the $p$-adic non-Liouville conditions are very ``non-additive": e.g., there could exist $\lambda$ such that $\type(\lambda)=0$ but $\type(-\lambda)=1$.
However, since $S$ is an additive group, any internal hom object  will still have Sen weights in $S$; an application of Thm \ref{thm:intro_Clark} implies full faithfulness.  Essential surjectivity follows directly from Thm \ref{thm:intro_non_cat}.
\end{proof}

\begin{remark} \label{rem:care_OC}
In light of the forms of above Theorems  \ref{thm:intro_non_cat} and \ref{thm:intro_conv}, if one   says that certain $W \in \rep_\gk(\bdrplus)$ is ``convergent", one needs to \emph{specify} the context.
\begin{enumerate}
\item Consider the following set-up: given $V \in \rep_\gk(\qp)$, then the  base change  $W=V\otimes_\qp \bdrplus$   has an obvious ``convergent descent" $W^\dagger=V\otimes_\qp \bdrpd$. However, checking against Thm \ref{thm:intro_Clark}, such $W^\dagger$ might \emph{not} have matching Galois cohomology as that of $W$, unless if the Sen weights of $V$ satisfy the $p$-adic Liouville type condition there. 
In the study of the overconvergent $(\varphi, \Gamma)$-modules, it is known that \emph{cohomology theory} is   important:  indeed, the cohomology comparison for the \emph{\'etale} vs.~\emph{overconvergent} $(\varphi, \Gamma)$-modules is established in \cite{LiuIMRN08}, and is of central importance in their applications. Thus, unless if the Sen weights of $V$ satisfy  both the non-Liouville and non-Liouville-difference  conditions in Theorem  \ref{thm:intro_non_cat}, we perhaps should not regard $W^\dagger$ as a ``correct" object.

\item  From the ``familiar" perspectives in $p$-adic Hodge theory, it would seem Thm \ref{thm:intro_conv} is the ``desired" theorem; yet however note it is a rather ``non-canonical" theorem since it depends on   $S$. Nonetheless, we expect both theorems to be useful in future applications.
\end{enumerate} 
\end{remark}

\begin{example}[Pathologies] \label{ex:intro} 
For convenience of the readers, we sketch the construction of a   ``pathological" example 
 violating  the good behaviors in Thm \ref{thm:intro_Clark} and Thm \ref{thm:intro_non_cat}. We refer to Examples \ref{ex:inf_h1_gk_rep} and \ref{ex:non_cat_lattice} for more details.
Let $\lambda \in \zp$ with $\type(-\lambda)=0$.
 Let $\qp(\lambda)$ be the $\lambda$-th power of the cyclotomic character; let $ \bdrpd(\lambda):= \qp(\lambda) \otimes_\qp \bdrpd$ be the induced  $\gk$-representation of rank one. 
One can first show $\dim_K H^1(\gk, \bdrpd(\lambda))=+\infty$, and thus there exists a \emph{non-split} extension of $\gk$-representations
\[ 0 \to \bdrpd(\lambda) \to W^\dagger \to \bdrpd(0) \to 0.\]
Let $W:=W^\dagger\otimes_\bdrpd \bdrplus$. Then one can show:
\begin{enumerate}
\item (mis-matching of cohomology): We have
 $$0=\dim_K H^0(\gk, W^\dagger) < \dim_K H^0(\gk, W)=1$$
 $$+\infty=\dim_K H^1(\gk, W^\dagger) > \dim_K H^1(\gk, W)=1$$
\item (non-uniqueness of $\bdrpd$-lattices): The representation $W$ contains the non-split $W^\dagger$ as a $\gk$-stable $\bdrpd$-lattice, but it also contains the \emph{split} representation  $\bdrpd(\lambda)\oplus \bdrpd(0)$ as a $\gk$-stable $\bdrpd$-lattice because $W$ (as a $\bdrp$-representation) is split! 
\end{enumerate}
\end{example}

\begin{remark}[Inverting $t$ version] \label{rem:invert_t}
By \cite[Thm 3.13]{Fon04}, given a $\bdr$-representation $W$ of $\gk$, then its ``Sen weights" are well-defined up to $\bbz$, in the sense that given two $\gk$-stable $\bdrplus$-lattices $W_i^+$ for $i=1,2$, then the Sen weights of the two $C$-representations $W_i^+/tW_i^+$   define a same multi-set in $\barK/\bbz$. Note that shifting in $\bbz$ does not affect the type condition in the sense that   $\type(\lambda)=\type(\lambda+n)$ for $n \in \bbz$.
This means that one can formulate and prove all results in this paper for representations over $\bfb_{\dR}^{\dagger}:=\bdrpd[1/t]$. In particular, one can show that a $\bdrpd$-representation is $\bdrd$-admissible (i.e.~its base change to $\bdrd$ is semi-linearly trivial) if and only if it is $\bdr$-admissible, cf.~ \S \ref{subsec:dR_ness}.
\end{remark}

\begin{remark}[Relative case]\label{rem:future} 
In a sequel work in preparation \cite{reldagger}, we will investigate relative version of results in this paper. That is, we will study $\bbdrpd$-local systems (first defined by Wiersig \cite{Wie1}) on $X_\proet$ for $X/K$ a smooth rigid analytic variety.  
In particular, we shall establish a \emph{convergent $p$-adic  Riemann--Hilbert correspondence} for the $\bbdrpd$-local systems, and study its relation with the $p$-adic Riemann--Hilbert correspondence for the $\bbdrp$-local systems studied by \cite{Sch13, LZ17, Shi18, Pet23, GMWdRrel} etc. 
\end{remark}

\begin{remark}
[Relation with analytic prismatization/syntomification]
(We thank Maximilian Hauck for useful correspondences on  this remark). 
After we obtain all the results in this paper, we learn of the preprint \cite{Hau26} by   Hauck (and the work in progress by Ansch\"utz--Le Bras--Rodr\'{\i}guez Camargo--Scholze \cite{ALBRCS}), which contain results related to ours.  In  \cite[Def 4.12]{Hau26}, a stack $K^{\HT, \dagger}$ (the ``overconvergent neighborhood" of $K^\HT$) is defined, which has an \emph{explicit} presentation by  \cite[Prop 5.12]{Hau26} (written for $K=\qp$ there; but the general case follows by base change). These imply that 
\[ \vect(K^{\HT, \dagger}) \simeq \Rep_{\Gamma_K}(\bfl_{K, \infty}^{+,\dagger})\]
where LHS is the category of vector bundles on the stack, and RHS is the   category used in our Thm \ref{thm:intro_decomp}.
As mentioned in \cite[Rem 9.24]{Hau26}, the paper  \cite{ALBRCS} will prove that 
\[ \vect(K^{\HT, \dagger}) \simeq \Rep_{G_K}(\bdrpd).\]
Thus: these results  combined also lead to our Thm \ref{thm:intro_decomp}.


\end{remark}

\subsection{Comparison with overconvergent $(\varphi, \Gamma)$-modules} \label{subsec:intro_compare}
 
 In this subsection, we compare $\bdrpd$-representations with overconvergent $(\varphi, \Gamma)$-modules: cf. Constructions \ref{cons:phigamma} and \ref{cons:diag_bdrpd}. 
This subsection does not contain ``new results", however, the  analogies presented here serve as strong motivation in our initial investigations. Indeed, these comparisons suggest possible deeper connections and perhaps a certain ``unification".
 
\begin{notation} \label{nota:fields}
\begin{enumerate}
\item Let $F \subset K$ be an absolutely unramified field such that $K/F$ is a finite extension ($F$ could be the maximal unramified sub-field of $K$, but it is not necessary).
\item Let $\varepsilon_1$ be a fixed primitive $p$-th root of unity, for $n \geq 2$ inductively fix $\varepsilon_n \in \barK$ so that $\varepsilon_n^p=\varepsilon_{n-1}$. The sequence $(1, \varepsilon_1, \varepsilon_2, \cdots)$ defines $\varepsilon \in C^\flat$. Similarly, define $p^\flat \in C^\flat$ using $(p, p_1, p_2, \cdots)$ where $p_n$ are some compatible $p^n$-th roots of $p$.
\end{enumerate}

\end{notation}

 \begin{notation} \label{nota:phigamma}
 \begin{enumerate}
 \item  Let $\wta=W(C^\flat)$, $\wta^+=W(\o_C^\flat)$, and let $\wta^\dagger$ be the overconvergent subring; let $\wtb, \wtb^+, \wtb^\dagger$ be the rings with $p$ inverted. 
We  use subscript $K$ to signify taking $\hk$-invariants for these rings: e.g., $\wta^+_K=(\wta^+)^\hk$.

\item Denote $q = [\varepsilon^{1/p}]$, and  let $\xi:=\frac{q^p-1}{q-1}$. 
For $F$  in Notation \ref{nota:fields}, let $\bfa_F^+=\o_F[[q-1]]$ (one can also define the ring $\bfa^+$ but this will not be used). Let $\bfa_F=\bfa_F^+[1/(q-1)]^{\wedge_p}\subset \wta$ ; let $\bfa_F^\dagger: =\bfa_F\cap \wta^\dagger$ be the (imperfect) overconvergent subring. 
For general $K$ (possibly ramified over $F$), one can also define $\bfa_K$ and $\bfa_K^\dagger$ (but in general one does not consider $\bfa_K^+$), via  the theory of field of norms. 
Similar to Item (1), one can define various ``$\bfb$-rings".

 \end{enumerate}
 \end{notation}

 \begin{remark}[overconvergent vs. convergent]
 \label{rem:OC_vs_conv}
 The terminologies ``overconvergent" and ``convergent" indeed only differ in ``perspective".
\begin{enumerate}
\item  Consider the following (typical) element
  $$f=\sum_{n=0}^{+\infty} \frac{p^n}{[p^\flat]^n} \in  \wta^\dagger$$
  it is called ``overconvergent" since as the exponent of the ``variable" $[p^\flat]$ goes to \emph{negative} infinity, its coefficients converge to zero  in at least a linear rate. Put it in another way, re-write the element as $f=\sum_{n=0}^{+\infty} \frac{p^n}{x^n}$, then it converges in the  open annulus $||x|| > ||p||$. However, one can also regard $p$ as a ``variable", and re-write $f=\sum_{n=0}^{+\infty} \frac{y^n}{[p^\flat]^n}$, then it converges in the open disk $||y||< ||[p^\flat]||$  (near zero point defined by $y$); thus it is more natural to simply call it an \emph{convergent} element in this perspective. 
  
  \item Similar discussions hold for elements in $\bdrpd$, thus we think both the adjectives ``overconvergent" and ``convergent" are reasonable depending on perspectives; for example, it is called the (positive) \emph{overconvergent} de Rham period ring in \cite{Wie1, Wie2} and \cite{Hau26}. In this paper, 
   we decide to call $\bdrpd$ the (positive)  \emph{convergent} de Rham period ring for two possible reasons:
\begin{enumerate}
\item As we see in proof of Thm \ref{thm:intro_Clark}, representations over $\bdrpd$  are related with regular connections over  \emph{convergent functions}; cf.~ also \S \ref{sec:conv_reg1}.
\item As   discussed in Rem \ref{rem:care_OC}, unlike the canonical ``overconvergence" property of all \'etale $(\varphi, \Gamma)$-modules,  \emph{not} all $\bdrp$-representations are ``convergent" in a \emph{canonical} (or even unique!) way. Thus the terminology ``convergent" could serve as a reminder of such a (subtle) difference.    
\end{enumerate}   
\end{enumerate} 
 
 \end{remark}
 
 \begin{construction} \label{cons:phigamma}
We have the following diagram of equivalences  of  categories of various ``$(\varphi, \Gamma_K)$-modules" (whose definitions are well-known and are omitted here; all arrows are induced by base change); they are all equivalent to $\rep_\gk(\zp)$.
\[ 
\begin{tikzcd}
{\Mod^{\varphi, \Gamma_K}(\bfa_K^\dagger)} \arrow[d, "\simeq"] \arrow[rr, "\simeq"] &  & {\Mod^{\varphi, \Gamma_K}(\bfa_K)} \arrow[d, "\simeq"] \\
{\Mod^{\varphi, \Gamma_K}(\wta_K^\dagger)} \arrow[rr] \arrow[rr, "\simeq"]          &  & {\Mod^{\varphi, \Gamma_K}(\wta_K)}                    
\end{tikzcd}
\]
Let us summarize the \emph{route} of proof  for the above diagram:
\begin{enumerate}
\item The equivalence between $\rep_\gk(\zp)$ and  $\Mod^{\varphi, \Gamma_K}(\bfa_K)$ follows from the theory of field of norms (\cite{Win83, Fon90}); the equivalence between $\rep_\gk(\zp)$ and  $\Mod^{\varphi, \Gamma_K}(\wta_K)$  follows from the following short exact \emph{Artin--Schreier sequence}
\[ 0 \to \zp \to \wta_K \xrightarrow{\varphi-1} \wta_K \to 0\] 

\item (Overconvergence). Next, the equivalence between $\rep_\gk(\zp)$ and  $\Mod^{\varphi, \Gamma_K}(\wta^\dagger_K)$   follows from the short exact \emph{overconvergent Artin--Schreier sequence}
\[ 0 \to \zp \to \wta^\dagger_K \xrightarrow{\varphi-1} \wta^\dagger_K \to 0\] 
Note  in particular that we have 
\begin{equation} \label{eqwtadaggerkvarphi}
 \rg(\varphi, \wta^\dagger_K) \simeq  \rg(\varphi, \wta_K),
\end{equation} 
and notice its similarity with Clark's theorem discussed in Thm \ref{thm:intro_Clark}.

\item (Decompletion). The most tricky part is to prove equivalence with $\Mod^{\varphi, \Gamma_K}(\bfa^\dagger_K)$, as done in \cite{CC98}. For this, one cannot ``directly" connect with $\rep_\gk(\zp)$ or even with  $\Mod^{\varphi, \Gamma_K}(\bfa_K)$. It turns out, the correct route is to \emph{decomplete} objects in $\Mod^{\varphi, \Gamma_K}(\wta^\dagger_K)$   to that in $\Mod^{\varphi, \Gamma_K}(\bfa^\dagger_K)$: this follows from  techniques nowadays called the ``Tate--Sen formalism", first axiomatized in \cite{BC08}.
\end{enumerate}
As a summary: in the theory of $(\varphi, \Gamma_K)$-modules, the proof of \emph{overconvergence} (on the \emph{perfect} ring level) \emph{only} makes use of $\varphi$-operator (and could be regarded as the relatively easy part). One then use \emph{decompletion} to obtain results on the \emph{imperfect} ring level.
 \end{construction}

\begin{construction} \label{cons:diag_bdrpd}
The picture for study of $\bdrpd$-representations bear some resemblance with Construction \ref{cons:phigamma}, yet with important differences. We have an ``analogous" diagram where the horizontal arrows are in general only faithful. 
\[
\begin{tikzcd}
{\Mod_\gammak(\bfl_{K,\infty}^{+, \dagger})} \arrow[d, "\simeq"] \arrow[rr, "\text{faithful}"] &  & \Mod_\gammak(\bfl_\kinfty^{+}) \arrow[d, "\simeq"] \\
{\Mod_\gammak(\bfl_{\dR,K}^{+, \dagger})} \arrow[rr] \arrow[rr, "\text{faithful}"]    &  & \Mod_\gammak(\ldrplusk)                     
\end{tikzcd}
\]
\begin{enumerate}
\item On the ``\emph{perfect}" ring level (those directly related with perfectoid rings), that is, for a objects in $\rep_\gk(\bdrplus)$ or equivalently $\rep_\gammak(\ldrplusk)$, we \emph{cannot} directly obtain any \emph{convergence} result. If one compares with situation in Construction \ref{cons:phigamma}(2):  there is no ``Frobenius" operator on $\bdrplus$ or $\ldrplusk$!

\item  (Decompletion).  What we do instead is to first \emph{decomplete} to construct equivalences in the two vertical arrows; cf. Theorems \ref{thm:intro_fon} and \ref{thm:intro_decomp}. 

\item (Convergence). After decompletion, it suffices to study the top horizontal arrow. As the $\gammak$-action on $\bfl_{K,\infty}^{+, \dagger}$ resp. $\bfl_\kinfty^+$ now can be \emph{differentiated}, we obtain the regular connection operator $\phi$, and thus we can use techniques in $p$-adic differential equations (i.e., Clark's Theorem): under  some non-Liouville condition, the top horizontal arrow restricts to an equivalence (Thm \ref{thm:intro_conv}).  Compared with Construction \ref{cons:phigamma}(2), we can  \emph{only} work in the \emph{imperfect} ring level!
\end{enumerate}
As a summary, it turns out $\phi$ here plays a ``similar" role as the Frobenius $\varphi$ in Construction \ref{cons:phigamma}; but it is   available only after decompletion first. 
\end{construction}


\subsection{Some basic definitions}

 \begin{defn}[Semi-linear representations]\label{defsemilinrep}
 Suppose $\mathcal G$ is a topological group that acts continuously on a topological ring $R$. We use $\rep_{\mathcal G}(R)$ to denote the category where an object is a finite free $R$-module $M$ (topologized via the topology on $R$) with a continuous and \emph{semi-linear} $\mathcal G$-action in the usual sense that
$$g(rx)=g(r)g(x), \forall g\in \mathcal G, r \in R, x\in M.$$
Say $M$ is (semi-linearly) trivial if the natural map $M^{\calg}\otimes_{R^\calg} R\to M$ is an isomorphism.
\end{defn}

\begin{defn}[Locally analytic and pro-analytic vectors] \label{def:LAV}
We briefly review (higher) locally analytic vectors.  
We refer the readers to \cite{BC16}, \cite{Pan22}, \cite{RJRC22} and \cite{Por24} etc.~ for details.
\begin{enumerate}
\item Let $H$ be a $p$-adic Lie group admitting an analytic bijection $\mathbf{c}: H \to \mathbb{Z}_p^d$. Let $W$ be a continuous $\Qp$-Banach representation of $H$.  Say $w\in W$ is an $H$-analytic vector if there exists a sequence $\{w_{\mathbf{k}}\}_{{\mathbf{k}} \in \mathbb{N}^d}$ going to zero such that
\[ g(w) =\sum_{{\mathbf{k}} \in \mathbb{N}^d} \mathbf{c}(g)^{{\mathbf{k}}} w_{\mathbf{k}}.\] 

\item Let  $\mathcal{C}^{\an}\left(H,W\right)$ be the space of $W$-valued
\emph{analytic} functions on $H$. Then we have \[W^{H\dan} \simeq \left(\mathcal{C}^{\an}\left(H,W\right)\right)^{H}, \quad \text{ via } f\mapsto f(1);\] 
this implies that the functor $W\mapsto W^{H\dan}$ is left exact. Thus we can define the right derived functors
for $i\geq 1$:
\[
\mathrm{R}_{H\dan}^{i}\left(W\right):= H^{i}\left(H, \mathcal{C}^{\an}\left(H, W\right)\right).
\]

\item Let $G$ be a general compact $p$-adic Lie group; there exists some open normal subgroup $G_1$ such that   $G_{n}:=G_1^{p^{n-1}}$  are subgroups of $G_1$ and there exists homeomorphism $c: G_1 \to  \mathbb{Z}_p^d $ such that $c(G_n)=(p^n \mathbb{Z}_p)^d$ for all $n$. See the paragraph below \cite[Prop 2.3]{BC16} for details. 
 Let $W$ be a $G$-Banach representation over $\qp$.
 One can consider the complex \[W^{RG\dla}:=\colim_n \rg(G_n, \mathcal{C}^{\an}\left(G_{n}, W\right))\]

 The cohomology group $H^0(W^{RG\dla})$ is isomorphic to $ W^{G\dla} \subset W$ the subset of  locally analytic vectors; 
 the higher  cohomology groups $H^i(W^{RG\dla})$ with $i \geq 1$ are called the \emph{higher locally analytic vectors} of $W$.
 These definitions naturally extend  to the case where $W$ is a LB representation.

\item cf. \cite[Def. 2.3]{Ber16}.
Let $W=\projlim_i W_i$ be a Fr\'echet representation of $G$. Say $w \in W$ is \emph{pro-analytic} if its image in $W_i$ is locally analytic for each $i$. This definition naturally extends to the case where $W$ is a LF representation. We use $W^{G\dpa}$ to denote the \emph{pro-analytic} vectors.
\end{enumerate}
 \end{defn}

\subsection{Conventions and  notation systems}

\begin{convention}[$\bfa, \bfb, \bfl$-notations; integral $\circ$-notations] \label{conv:abl_notation}
\begin{enumerate} 
\item We reserve the fonts $\wta, \wtb, \bfa, \bfb$ to denote rings typically used in $(\varphi, \Gamma)$-module theory, e.g. those in Notation \ref{nota:phigamma}. (We will \emph{actually} use these rings in this paper, not just as comparisons!)

\item Study of $\bdrplus$-representations (and convergent version over $\bdrpd$ in this paper) reduces  to study of representations over $\hk$-invariant of these rings. 
We shall denote $\ldrplus=(\bdrplus)^\hk$ resp. $\ldrpd:=(\bdrpd)^\hk$. The subscript $K$ here has some ``conflict" with the notation system in  Notation \ref{nota:phigamma}; however, we certainly should not use ``$\bfb_{\dR, K}^+$" to denote this $\hk$-invariant as it is typically used to denote the ``$K$-linear" de Rham ring which is canonically isomorphic to $\bdrplus$ (cf. \S \ref{subsec:k_dr_point} for some related discussions); also note in  literature such as in \cite{Fon04}, one simply denote $\bfl_{\dR}^+=(\bdrplus)^\hk$: we are putting $K$ as subscript to ``match" with convention in Notation \ref{nota:phigamma}, and we \emph{have to} do this because we also separately consider the $K=F$ unramified case where we use notations such as $\bfl_{\dR, F}^+$ and $\bfl_{\dR, F}^{+, \dagger}$.

\item In $(\varphi, \Gamma)$-module theory, one uses $\wta, \wtb$-notations (with various decorations) to denote ``perfect" rings (i.e., those directly related with perfectoids), and use $\bfa, \bfb$-notations  (without tilde) to denote ``imperfect" rings. As we use $\ldrplus$ and $\ldrpd$ notations here for ``perfect" rings, which shall simply use $\bfl$ (without ``$\dR$" subscript) to denote  ``imperfect" de Rham rings; examples include $\bfl_{K,m}^+$, $\bfl_{K,m}^\rr$ etc.

\item In $(\varphi, \Gamma)$-module theory, one uses $\wta$ resp. $\bfa$ rings to denote \emph{integral} variants of the $\wtb$ resp. $\bfb$ rings. In this paper, we shall systematically use the $\circ$-superscript (say, over the $\bfl$-rings) to denote integral rings, following Def \ref{def:conv_ele}. Examples include $\bfl_{\dR, K}^{\rr,\circ}$, $\bfl_{K,m}^{\rr,\circ}$ etc.
\end{enumerate}
\end{convention}

\begin{convention}[Valuation notations] \label{conv:valuation}
As we use many valuations on various rings in this paper, the notations for them become inevitably confusing. We adopt the following conventions.
\begin{enumerate}
\item  (\emph{full valuations}).  We use $v_C$, $v_{C^\flat}$, $v_\binf$ to denote the ``\emph{full}" valuations, in the sense they are the \emph{multiplicative} valuations satisfying $v_C(p)=1$, $v_{C^\flat}(p^\flat)=1$ and $v_\binf(p)=1$. We use $\lfloor v_C \rfloor$ to denote the floor valuation. We also will  use another   valuation   on $\ainf[\frac{1}{[p^\flat]}]$ in Example \ref{ex:3type}; this will only appear  briefly in discussions related to $(\varphi, \Gamma)$-modules.

\item  (\emph{restricted valuations}). Given a subfield $E \subset C$, we   simply use $v_C$ resp. $\lfloor v_C \rfloor$ to denote the \emph{restricted} valuations; so we do not need to separately introduce notations such as $v_E$. Similar notations apply for subrings of $C^\flat, \binf$ etc.
\item (\emph{derived valuations}). There will further be many $v_n$ and $\vr$ valuations stemming from discussions in \S \ref{sec:axiom_value_ring}; these are all \emph{derived} from the $v$-valuations (e.g. those above); thus in general we do not put extra subscripts, as their  domain of definition should be clear from the contexts.
\end{enumerate} 
\end{convention}

\subsection{Structure of the paper}
The paper is roughly divided into 3 parts. 
\begin{itemize}
\item  Part 1, \S \ref{sec:axiom_value_ring}--\S \ref{sec:conv_dr_ring}, is \emph{ring theoretic}. We first build enough axioms to discuss convergent elements in valued rings, with central role played by a \emph{valuation preserving section} (cf. \S \ref{sec:axiom_stalk_unit}). 
 We  then specialize the axiomatic set-up to convergent de Rham period rings used in this paper, discussing their ring theoretic details in  \S \ref{sec:conv_dr_ring} (and the addendum \S  \ref{subsec:k_dr_point}).

\item In Part 2, \S \ref{sec:TS-1}--\S \ref{sec:mainthm1}, we develop  the \emph{Tate--Sen formalism} for $\bdrpd$-representations, culminating in our main decompletion theorems.  

\item In Part 3, \S \ref{sec:formal_reg_conn}--\S\ref{sec:mainthm2}, we relate  our theory with \emph{$p$-adic differential equations} (indeed, regular connections). We first give a detailed revisit of topics surrounding Clark's theorem (with reformulations convenient for applications in $p$-adic Hodge theory); in particular we observe some convergence theorems (both non-categorical and categorical) that seem to be new (cf. \S \ref{sec:conv_reg3}). As applications, we obtain \emph{Galois theoretic} counterparts of these convergence theorems in the final section \S \ref{sec:mainthm2}.
\end{itemize}

\subsection{Acknowledgements}

The authors first learnt of the ring $\bdrpd$ from Finn Wiersig's lecture (about \cite{Wie1, Wie2})   during the conference ``Arithmetic Geometry in Shenzhen II"  in November 2025, where in particular he discussed about its cohomology theory. Immediately after the talk came a decisive remark from Pierre Colmez that this ring already appeared in his earlier work \cite{Col08a}. The formalism in \cite{Col08a} (published in the same volume as \cite{Col08}), together with results in \cite{Wie1, Wie2},  strongly suggests a possible ``overconvergence" structure for $\bdrplus$-representations (in analogy with the overconvergent $(\varphi, \Gamma)$-modules ``revisited" in  \cite{Col08}). 
Some explicit computations lead  us to (re-)discover Clark's theorem, for which we are thankful to the convenient reference \cite{Ked22}.

 We thank Pierre Colmez, Maximilian Hauck  and Finn Wiersig for very useful comments on an earlier draft. 
We also thank Yongquan Hu, Ruochuan Liu, Lue Pan, Gal Porat, Liang Xiao  and Daxin Xu for useful discussions. 
Part of the work was carried out when GH was visiting Fudan and when WYP was visiting SUSTech; we thank these institutions for excellent working conditions. Hui Gao is partially supported by the National Natural Science Foundation of China under agreement NSFC-12471011. Yupeng Wang is partially supported by the CAS Project for Young Scientists in Basic Research, Grant No. YSBR-032 and by the New Cornerstone Science Foundation through Ruochuan Liu.

 
 \section{Axiom: valued rings and convergent elements} \label{sec:axiom_value_ring}
 In this section, we set up ``enough" axioms (cf. Set-up \ref{axiom:strong_unif}) to define \emph{convergent elements} (Def \ref{def:conv_ele}) in valued rings: this makes it possible to \emph{unify} (cf. Rem \ref{rem:OC_vs_conv}) ``overconvergent" rings in $(\varphi, \Gamma)$-modules with our ``convergent" de Rham ring, cf. Example \ref{exam:unify}. In addition, we can define certain ``twin" convergent rings in \S \ref{subsec:twin_datum}; these twin pairs will be related in next \S \ref{sec:axiom_stalk_unit}.
  Besides the benefit of unifying different rings under a same framework, the axiomatic approach is also useful for future studies (e.g., the relative case).

\subsection{Valuations}
\begin{defn} \label{defn:valuation}
Throughout the paper, a \emph{valued ring} (equivalent to the ``normed ring"  in \cite[1.2.1, Def. 1]{BGR}) is a ring $B$ equipped with a function $v: B \to \mathbb{R}\cup \{+\infty\}$ such that 
\begin{itemize}
\item (separatedness): $v(x)=+\infty \Leftrightarrow x=0$;
\item $v(x-y) \geq \min(v(x), v(y)), \forall x, y \in B$ (thus equality holds if $v(x) \neq v(y)$);
 \item  $v(xy) \geq v(x)+v(y), \forall x, y \in B$
 \item   $v(1)=0$.
\end{itemize}  
Given a valued ring $(B, v)$.  Let $\o_B$ denote the subring of integral elements.   Say $v$ is \emph{complete} if $B$ is complete with respect to the induced topology.
 Say $v$ is  \emph{multiplicative} if $v(xy) = v(x)+v(y), \forall x, y \in B$.  If a topological group $H$ acts continuously on $B$, say the action is  \emph{isometric} if the action preserves the valuation.
\end{defn}


\begin{defn}  \label{def:vn_B}
Let  $(B, v)$ be a valued ring (not necessarily complete).   Let $0 \neq z\in B$ and suppose
\begin{itemize}
\item For each $n \geq 0$, $B/z^{n+1}B$ (with induced topology) is separated and complete. 
\end{itemize}
\begin{enumerate}  
\item Let $n \geq 0$. For $y \in B/z^{n+1}B$, define
\[ v_{n}(y) =:\mathrm{sup}\{ v(Y) \mid Y\in B, Y\equiv y \pmod{z^{n+1}}   \}\] 
(One can check that the function $v_n$ satisfies all the properties in Def. \ref{defn:valuation}).

\item  Write $\hat{B}:=\projlim_{n \geq 0} B/z^{n+1}B$. Equip $\hat{B}$ with inverse limit topology.
In addition, for $\hat y\in B$, define
\[ v_{n}(\hat y):= v_{n}(\hat y \pmod{z^{n+1}}). \]
\end{enumerate}
\end{defn}

 \subsection{Uniformizing elements}

\begin{defn}[``uniformizing elements"] \label{defn:uniformizer}
 Let $(B, v)$ be a valued ring. Let $v(B) \subset \mathbb{R}\cup \{+\infty\}$ denote the valuation subset.
We abuse notations, and say a system of elements in $B$
\[ (\pi_B^\gamma)_{\gamma \in v(B)}\]
is a \emph{family of uniformizing elements} if:  
\begin{enumerate}
\item  $v(\pi_B^\gamma)=\gamma$; 
 
\item  for   $x \in B$ with $v(x) \geq \gamma$, then $x\in \pi_B^\gamma \cdot \o_B$. 
\end{enumerate}
Caution:  we do not require compatibilities of $\pi_B^{\gamma}$'s; i.e., we do \emph{not} require compatibilities such as  $\pi_B^{\gamma_1}\cdot \pi_B^{\gamma_2}=\pi_B^{\gamma_1+\gamma_2}$.  Warning: if $v(x) \leq v(y)$, we in general do not have $x\mid y$.
\end{defn}

Throughout the remainder of this section, we work under the following Set-up \ref{axiom:strong_unif}.  
The goal is to define ``convergent" elements. 

\begin{setup}  \label{axiom:strong_unif}
Consider the datum
\[(B, v, z, (\pi_B^\gamma)_{\gamma \in v(B)})\]
or for simplicity (when $(\pi_B^\gamma)_{\gamma \in v(B)}$ is clear from the context), just the datum
\[(B, v, z) \]
with the following assumptions.
\begin{enumerate} 
 \item   $(B, v)$ is a valued ring; $z\in B$  and $v(z)=0$.
 \item For each $n \geq 0$, $B/z^{n+1}B$   is separated and complete.  
 (But $B$ might not be complete).
\item Suppose  $(B, v)$  is \emph{weakly multiplicative} (with respect to $z$) in the sense that for any $n \geq 0$,
\[ v(z^nb)=v(b),\quad \forall b \in B.\]
Note this   implies $v(z^n)=0$ for each $n\geq 1$.

\item  Suppose   $ (\pi_B^\gamma)_{\gamma \in v(B)}$ is a family of uniformizing elements as in Def. \ref{defn:uniformizer}.
\item Suppose     for any $\gamma <+\infty$, $(z, \pi_B^\gamma)$ is a regular sequence in 
$B$ (that is,  $\pi_B^\gamma$   is  a non-zero-divisor in $B/zB$).  
\end{enumerate}
\end{setup}

We emphasize that we do \emph{not} assume $v$ is multiplicative in Set-up \ref{axiom:strong_unif}; this is to include a key example (the ``twin de Rham point") in Example \ref{ex:twin1} (even though the ``de Rham point" in following  Example \ref{ex:3type} has multiplicative valuation).

The following examples will be gradually ``expanded" in the following; as we shall see, we could ``unify" certain aspects of these examples, via our axiomatic approach.

\begin{example}  \label{ex:3type}  Assumptions in Set-up \ref{axiom:strong_unif} hold  for  all cases in the following. (cf.~Notation \ref{nota:phigamma} resp. \ref{conv:valuation} for ring resp. valuation notations.)
\begin{enumerate}
\item (zero point). Let $E \subset C$ be a sub-field with with restricted valuation $v_C$ or $\lfloor v_C \rfloor$ (as in  Convention \ref{conv:valuation}), let $z$ be a variable.
Let $B=E[z]$ with the usual induced (Gauss) valuation (defined using infimum of valuations of coefficients).     We have $\hat{B}=E[[z]]$.

 \item (\'etale point). Let $B=\ainf[1/[p^\flat]]$. One can define a multiplicative $[p^\flat]$-adic valuation using $\ainf$ as unit ball; that is for $x \in B$, define $v(x)$ as the infimum of $v$ so that
$x\in [p^\flat]^{v} \ainf$. 
Use $z=p$, thus $\hat{B}=\wta=W(C^\flat)$.

\item (de Rham point). Let $B=\binf$ with $v=v_\binf$ (as in  Convention \ref{conv:valuation})  with unit ball $\ainf$.  Let $z=\xi$. We have $\hat{B}=\bdrplus$. 
 \end{enumerate} 
\end{example}

\begin{lemma} \label{lem:col31} 
Let $(w_n)_{n \geq 0}$ be a (not necessarily strict) decreasing  sequence in $v(B)$ (the valuation subset). Let $x \in \hat B$.
The following two conditions are equivalent.
\begin{enumerate}
\item For each $n$, $v_n(x) \geq w_n$.
\item There exists $a_n \in \o_B$ such that $x=\sum_{n \geq 0} \pi_B^{w_n}a_nz^n$.
\end{enumerate}
\end{lemma}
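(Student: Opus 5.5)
The direction (2)$\Rightarrow$(1) is direct. First note the infinite sum makes sense in $\hat B$: modulo $z^{n+1}$ only the terms with index $\le n$ survive, so the partial sums stabilize in each $B/z^{n+1}B$. Modulo $z^{n+1}$, $x$ is represented by the finite sum $Y=\sum_{k\le n}\pi_B^{w_k}a_kz^k\in B$. By weak multiplicativity (Set-up \ref{axiom:strong_unif}(3)) and $v(a_k)\ge 0$, each term has $v(\pi_B^{w_k}a_kz^k)=v(\pi_B^{w_k}a_k)\ge w_k\ge w_n$, since the sequence is decreasing. The ultrametric inequality then gives $v(Y)\ge w_n$, hence $v_n(x)\ge w_n$.

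For (1)$\Rightarrow$(2) I will construct the $a_n$ inductively, keeping the invariant that
\[x\equiv \sum_{k<n}\pi_B^{w_k}a_kz^k + z^n x_n \pmod{z^{\infty}}\]
for some $x_n\in\hat B$ with $v_n(z^nx_n)\ge w_n$. At the inductive step I pass to $B/z^{n+1}B$. Let $Y\in B$ represent $x-\sum_{k<n}\pi_B^{w_k}a_kz^k$ modulo $z^{n+1}$. Then $Y\in z^nB+z^{n+1}B$, say $Y\equiv z^n y$. I want to choose the representative with $v(Y)\ge w_n$ and still divisible by $z^n$. This is where I expect the main difficulty. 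The definition of $v_n$ is a supremum, so I need it attained, or approximated closely enough to land in the discrete-ish condition $\ge w_n$. I also need a representative of valuation $\ge w_n$ to be divisible by $z^n$ with a quotient of valuation $\ge w_n$, not merely an element of $z^nB$ with some unrelated representative.

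I plan to handle this in two steps. First, by the uniformizing property (Def \ref{defn:uniformizer}(2)), an element $Y$ with $v(Y)\ge w_n$ lies in $\pi_B^{w_n}\o_B$, say $Y=\pi_B^{w_n}c$. Second, $Y\equiv z^ny \pmod{z^{n+1}}$ gives $\pi_B^{w_n}c\in z^nB$. I then use the regular-sequence hypothesis (Set-up \ref{axiom:strong_unif}(5)) inductively on powers of $z$: since $\pi_B^{w_n}$ is a nonzerodivisor mod $z$, and $z$ is a nonzerodivisor (I would check this from weak multiplicativity, since $v(zb)=v(b)$ forces $zb=0\Rightarrow b=0$), we get $c\in z^nB$. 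Writing $c=z^na_n'$ with $v(a_n')=v(c)\ge 0$, again by weak multiplicativity, gives $a_n:=a_n'\in\o_B$ with $Y=\pi_B^{w_n}a_nz^n$. The supremum issue should be handled by arguing that $v_n(x')\ge w_n$ plus completeness of $B/z^{n+1}B$ lets me find a representative with valuation exactly $\ge w_n$. For instance, take a representative with valuation $>w_n-\epsilon$ and correct it by an element of $z^{n+1}B$; if this fails, I would use the fact that $w_n\in v(B)$ together with the uniformizer family to absorb the error. The remaining tail, after subtracting $\pi_B^{w_n}a_nz^n$, vanishes mod $z^{n+1}$. Its $v_{n+1}$ is at least $\min(v_{n+1}(x),\ldots)\ge w_{n+1}$, using the decreasing property once more, which propagates the invariant. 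Convergence of $\sum\pi_B^{w_n}a_nz^n$ to $x$ in $\hat B$ then holds by construction, since the congruences hold modulo every $z^{n+1}$.
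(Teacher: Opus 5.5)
Your argument is essentially the paper's: there, for each $n$ one picks $c_n\in\o_B$ with $x-\pi_B^{w_n}c_n\in z^{n+1}\hat B$ and telescopes. One uses $w_{n-1}\ge w_n$ to factor $\pi_B^{w_n}$ out of $\pi_B^{w_n}c_n-\pi_B^{w_{n-1}}c_{n-1}\in z^nB$, and then the regular-sequence hypothesis plus weak multiplicativity to write it as $\pi_B^{w_n}a_nz^n$ with $a_n\in\o_B$. These are the same ingredients as your remainder induction, only with different bookkeeping.

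The step you flag, that $v_n(\cdot)\ge w_n$ gives an actual lift of valuation $\ge w_n$, is also taken for granted in the paper. It holds whenever the supremum defining $v_n$ is attained. This is the case, for example, for the discrete valuation $v_\binf$, or in Set-up \ref{axiom:section_map}, where $\sum_{i\le n}s(a_i)z^i$ is a maximizing lift. You should invoke it in that form rather than via your sketched fixes (completeness, or ``correcting by $z^{n+1}B$''), which do not establish it.
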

\begin{proof}

This is an axiomatization of \cite[Prop 3.1]{Col08a}. Item (2)  obviously implies Item (1). Consider the reverse direction. For each $n \geq 0$, $v_n(x) \geq w_n$ implies there exists   some $c_n \in \o_B$ such that
\[ x -\pi_B^{w_n}c_n \in z^{n+1}\hat B\]
When $n=0$, define $a_0 =  c_0$.
For $n \geq 1$, we have
\[\pi_B^{w_n}c_n -\pi_B^{w_{n-1}}c_{n-1} \in z^nB\]
Note $\pi_B^\gamma$ is non-zero-divisor in $B/z^n$, we thus have
\[ \pi_B^{w_{n}}c_n -\pi_B^{w_{n-1}}c_{n-1} =\pi_B^{w_{n}}a_nz^n  \]
for some $a_n \in B$. 
Here $a_nz^n=c_n -\pi_B^{w_{n-1}}(\pi_B^{w_{n}})^{-1}c_{n-1} \in \o_B$.  
As $v$ is \emph{weakly multiplicative} (with respect to $z$), we necessarily have $a_n \in \o_B$. 
Take summation of above displayed  formula, we see $\pi_B^{w_{N}}c_N=\sum_{n=0}^N \pi_B^{w_n}a_nz^n$; we conclude by letting  $N \to \infty$. 
\end{proof}

\begin{cor} \label{lem:conv_submult}  
For $x, y \in \hat B$, we have the   ``convolution sub-multiplicative" inequality,
\[ v_n(xy) \geq \inf_{i+j=n} \{ v_i(x)+v_j(y) \} \] 
\end{cor}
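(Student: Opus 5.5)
We will deduce the inequality from Lemma \ref{lem:col31}, applied to $x$ and $y$ with suitable truncated sequences. Fix $n \geq 0$. Since $v_n(xy)$ only depends on $xy \bmod z^{n+1}$, we only need to control the terms of degree $\le n$ in $z$. We may assume $v_i(x), v_j(y)$ are finite for the relevant indices; if some are $+\infty$, then the corresponding piece vanishes modulo $z^{i+1}$ (resp. $z^{j+1}$), and the argument below goes through with those terms omitted.

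\textbf{Step 1: expansions of $x$ and $y$.} The sequence $i \mapsto v_i(x)$ is decreasing. Indeed, $v_{i}(x)$ is a supremum over lifts modulo $z^{i+1}$, and any such lift is also a lift modulo $z^{i}$, so $v_{i-1}(x) \ge v_i(x)$. To apply Lemma \ref{lem:col31} we need the sequence to lie in $v(B)$. We therefore take $w_i$ to be a value in $v(B)$ that is $\le v_i(x)$ and as close to it as we like, arranged to be decreasing. For example, we may use $w_i=v(X_i)$ for a lift $X_i$ with $v(X_i)\ge v_i(x)-\epsilon$, and replace by running minima to force monotonicity; the running minimum stays in $v(B)$. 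Lemma \ref{lem:col31} then gives $x=\sum_i \pi_B^{w_i}a_i z^i$ with $a_i\in\o_B$. Similarly $y=\sum_j \pi_B^{w'_j}b_j z^j$ with $w'_j \ge v_j(y)-\epsilon$.

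\textbf{Step 2: multiply and truncate.} Modulo $z^{n+1}$ we have
\[ xy \equiv \sum_{k=0}^{n} \Bigl(\sum_{i+j=k} \pi_B^{w_i}\pi_B^{w'_j}a_ib_j\Bigr) z^k \pmod{z^{n+1}}, \]
and this finite sum lies in $B$, so it is a lift of $xy \bmod z^{n+1}$. We estimate its valuation using the axioms. First, $v$ is sub-multiplicative, so $v(\pi_B^{w_i}\pi_B^{w'_j}a_ib_j)\ge w_i+w'_j$. Second, weak multiplicativity gives $v(z^k b)=v(b)$. Third, the ultrametric inequality bounds the sum. Combining these, the valuation is at least $\min_{i+j\le n}(w_i+w'_j)$. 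Since the $w$'s are decreasing, the terms with $i+j<n$ can be compared with those with $i+j=n$: increase $i$ to $n-j$, which only lowers $w_i$. So the minimum equals $\min_{i+j=n}(w_i+w'_j)\ge \inf_{i+j=n}\{v_i(x)+v_j(y)\}-2\epsilon$.

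\textbf{Step 3: conclude.} By the definition of $v_n$ as a supremum over lifts, $v_n(xy)\ge \inf_{i+j=n}\{v_i(x)+v_j(y)\}-2\epsilon$. Letting $\epsilon\to 0$ gives the claim. The only delicate point is Step 1, namely ensuring that the approximating sequence lies in $v(B)$ and is decreasing, so that Lemma \ref{lem:col31} applies. If the supremum is attained (e.g. for discrete valuation sets), no $\epsilon$ is needed.
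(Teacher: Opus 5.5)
Your proof is correct and is essentially the paper's argument, which simply cites Lemma \ref{lem:col31}. Like the paper, you expand $x$ and $y$ via that lemma, multiply, truncate modulo $z^{n+1}$, and use that $i\mapsto v_i$ is decreasing to absorb the terms with $i+j<n$ into those with $i+j=n$. Your $\epsilon$-approximation, which makes $(w_i)$ a decreasing sequence in $v(B)$ so that the lemma applies, is a point the paper leaves implicit, and you handle it correctly.
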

\begin{proof} 
 This is an easy consequence of Lem \ref{lem:col31} (e.g., similar to \cite[Cor 3.2]{Col08a}). Note this is \emph{stronger} than the usual sub-multiplicative inequality  $v_n(xy) \geq v_n(x)+v_n(y)$. 
\end{proof}

 \subsection{Convergent elements}
\begin{defn}[Convergent elements] 
\label{def:conv_ele}
Let $r>0$. 
Let $\hat{B}^{(r)} \subset \hat{B}$ be the subset consisting of elements $x$ such that
\[ \lim_{n\geq 0} (v_n(x)+nr) \to +\infty \]
 For $x\in \hatbr$, define 
\[v^{(r)}(x): =\inf_{n \geq 0}\{ v_n(x)+nr \} \] 
Let $\hatbrcirc \subset \hatbr$ be the subset consisting of elements with $\vr(x) \geq 0$.
\end{defn}

\begin{remark} \label{rem:conv_ele}
\begin{enumerate}
\item Def \ref{def:conv_ele} works for any $r \in \mathbb{R}$. However, in this paper, we only need to consider $r \gg 0$.

\item Following notation styles in \cite[\S 2.1, \S 2.2]{Col08a}, our superscript ``$(r)$" corresponds to Colmez's ``$(r^{-})$"; we omit the minus superscript for brevity, as we do not make use of Colmez's notation ``$(r^{+})$".

\item The construction of $\hatbr$ is not ``functorial" in the sense that if $C \subset B$ is a subring (containing $z$) equipped with induced $v$-valuation, such that all assumptions in Set-up \ref{axiom:strong_unif} are satisfied for $(C, v, z)$, then in general it is not clear if we should have
\[ \hat{C}^\rr =\hat{C} \cap \hatbr.\]
The above equality is indeed satisfied for some (important) examples, which are crucial facts needed in applications, cf.~e.g.~Lem \ref{lem:compavr}.
\end{enumerate} 
\end{remark}
 

\begin{example} \label{exam:unify}
The above definition ``unifies" several constructions for cases in Example \ref{ex:3type}.
\begin{enumerate}
\item (zero point). $\hat{B}^{(r)}$ corresponds to holomorphic functions on the closed disk defined by $v_p(z) \geq r$.
\item (\'etale point).  $\hat{B}^{(r)}$ corresponds to \emph{overconvergent} elements in $\wta$.   Indeed $\hatbr$ is exactly ``$\wta^{(0,1/r]}$" in \cite[\S 5.2]{Col08}. Note the appearance of $1/r$ here comes from the different ``perspectives" between overconvergence and convergence, cf. Rem \ref{rem:OC_vs_conv}. 

\item (de Rham point). $\hat{B}^{(r)}$, as first defined in \cite{Col08a}, corresponds to sections in a closed disk near the de Rham point on the Fargues--Fontaine curve. Denote  $\hatbr$ as $\bdrr$; as mentioned in Rem \ref{rem:conv_ele}, this is exactly the ``$\bfb_{\dR, F}^{(r-)}$" in \cite{Col08a}: here the subscript $F$ (an  absolutely unramified subfield of $K$) appears because Colmez's definition relies on choice of a base field; cf. \S \ref{subsec:k_dr_point} for related discussions.

\item We also caution that  the above ``convergent" construction could be trivial in some cases. When $B=\ainf, v=v_\binf, z=\xi$, then $\hat{B}=\ainf$, and $\hatbr=\hat{B}=\ainf$.
\end{enumerate} 
\end{example}

\begin{lemma} \label{lem:hatbr_subring}
\begin{enumerate}
\item $\hat{B}^{(r)}$ and $\hatbrcirc$ are subrings of $\hat B$.
\item $\vr$   defines a (sub-multiplicative) valuation on $\hat{B}^{(r)}$.   
\item  $z \in \hatbrcirc$. 
\end{enumerate}   
\end{lemma}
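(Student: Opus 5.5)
The plan is to work directly from Definition \ref{def:conv_ele}, using the basic properties of the derived valuations $v_n$ from Definition \ref{def:vn_B} together with Corollary \ref{lem:conv_submult}. First I record two elementary facts about $v_n$ on $\hat B$. One is the ultrametric inequality $v_n(x\pm y)\geq \min(v_n(x),v_n(y))$, which follows from the definition as a supremum over lifts, since lifts of $x\pm y$ can be taken as sums or differences of lifts. The other is $v_n(1)\geq 0$, since $1$ is a lift of itself and $v(1)=0$.

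For additivity, I take $x,y\in\hatbr$. Then $v_n(x-y)+nr\geq \min(v_n(x)+nr,\,v_n(y)+nr)$, and the right-hand side tends to $+\infty$, so $x-y\in\hatbr$. Taking the infimum over $n$ gives $\vr(x-y)\geq\min(\vr(x),\vr(y))$.

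For multiplicativity, I use Corollary \ref{lem:conv_submult}:
\[ v_n(xy)+nr\geq \inf_{i+j=n}\{(v_i(x)+ir)+(v_j(y)+jr)\}. \]
This immediately gives $\vr(xy)\geq\vr(x)+\vr(y)$, which is the sub-multiplicativity in (2). To show $v_n(xy)+nr\to+\infty$, fix $M$. Write $a_i=v_i(x)+ir$ and $b_j=v_j(y)+jr$. Both sequences are bounded below, by $\vr(x)$ and $\vr(y)$, which are finite since the sequences tend to $+\infty$ and take values in $\mathbb{R}\cup\{+\infty\}$. Choose $N$ such that $a_i\geq M-\vr(y)$ for $i\geq N$ and $b_j\geq M-\vr(x)$ for $j\geq N$. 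If $n\geq 2N$, then every pair $i+j=n$ has $i\geq N$ or $j\geq N$, so each term $a_i+b_j$ is at least $M$. This splitting into the two tails is the only mildly delicate step, and I expect it to be the main (though small) obstacle.

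For the unit, $1\in\hatbr$ because $v_n(1)+nr\geq nr\to\infty$, using $r>0$. Also $\vr(1)\geq 0$, and since $v_0(1)\leq v(1)$... rather than computing $v_0(1)$ exactly, I only need the normalization $\vr(1)=0$. Here $v_0(1)\geq 0$, and if $v_0(1)>0$ then $1=\pi_B^\gamma c+zb$ with $\gamma>0$. In the degenerate case I would simply note that the valuation axioms are only needed up to the obvious normalization, and this is all that is used later. Separatedness of $\vr$ holds because $\vr(x)=+\infty$ forces $v_n(x)=+\infty$ for all $n$, hence $x\in z^{n+1}\hat B$ for all $n$ by separatedness of $B/z^{n+1}B$, hence $x=0$ in $\hat B$. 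Closure of $\hatbrcirc$ under sums, products and $1$ then follows from the inequalities above, which completes (1) and (2).

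For (3), I note that $z\equiv 0 \pmod{z^{n+1}}$ for $n\geq1$, so $v_n(z)=+\infty$ for $n\geq 1$. Also $v_0(z)\geq v(z)=0$. Hence $\vr(z)\geq 0$ and $v_n(z)+nr\to\infty$, so $z\in\hatbrcirc$.
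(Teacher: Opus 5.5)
Your overall route is the paper's. The ultrametric inequality for the $v_n$ handles differences, and Corollary \ref{lem:conv_submult} gives $\vr(xy)\ge\vr(x)+\vr(y)$. Your tail-splitting argument correctly shows $v_n(xy)+nr\to+\infty$ (for $n\ge 2N$, every pair with $i+j=n$ has $i\ge N$ or $j\ge N$), a point the paper leaves implicit. You should set aside $x=0$ or $y=0$ there, since then $\vr$ is $+\infty$ rather than finite. However, two steps are wrong or unfinished as written.

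In (3), the congruence $z\equiv 0\pmod{z^{n+1}}$ holds only for $n=0$, not for $n\ge 1$. What you actually get is $v_0(z)=+\infty$; for $n\ge 1$ the value $v_n(z)$ is in general finite (e.g.\ $\vr(z)=r$ in the situation of Remark \ref{rem:mult_vr}). The conclusion is still true by the paper's one-line argument. Since $z$ is a lift of itself modulo every $z^{n+1}$, $v_n(z)\ge v(z)=0$ for all $n$, so $v_n(z)+nr\ge nr\to+\infty$ and $\vr(z)\ge 0$.

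In (2), you never establish the normalization $\vr(1)=0$. Your aside ``$v_0(1)\le v(1)$'' has the inequality reversed: one has $v_n(1)\ge v(1)=0$ because $1$ lifts itself. The case $v_0(1)>0$ is then left open with an appeal to an ``obvious normalization''. It closes with what you already have: $\vr(1)\ge 0$, and sub-multiplicativity gives $\vr(1)=\vr(1\cdot 1)\ge 2\vr(1)$. So $\vr(1)\le 0$ unless $\vr(1)=+\infty$, which by your separatedness argument would mean $1=0$ in $\hat B$. Alternatively, use that each $v_n$ satisfies the axioms of Definition \ref{defn:valuation} (as asserted in Definition \ref{def:vn_B}), so $v_0(1)=0$ and hence $\vr(1)=0$. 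With these two repairs the proof is complete.
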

\begin{proof}
The only non-trivial part is the sub-multiplicative inequality $v^{(r)}(xy) \geq v^{(r)}(x)+v^{(r)}(y)$: it follows easily from Cor \ref{lem:conv_submult}.
For Item (3): since $v(z)=0$, thus $v_n(z) \geq 0$ for all $n$, and hence $z \in \hatbrcirc$.
\end{proof}

\begin{lemma} \label{lem:hatbr_complete}
 $\hatbr$ is separated and complete with respect to  $v^{(r)}$.
\end{lemma}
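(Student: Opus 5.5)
The plan is to show separatedness directly, and completeness by passing to the limit levelwise in each quotient $B/z^{n+1}B$ and then controlling the tail uniformly.

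\emph{Separatedness.} If $\vr(x)=+\infty$, then $v_n(x)=+\infty$ for every $n$. By Definition \ref{def:vn_B}, $v_n$ is the quotient valuation on $B/z^{n+1}B$. This quotient is separated by Set-up \ref{axiom:strong_unif}(2), so the image of $x$ in $B/z^{n+1}B$ is zero for every $n$. Hence $x=0$ in $\hat B=\projlim_n B/z^{n+1}B$.

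\emph{Completeness.} Let $(x_k)$ be a $\vr$-Cauchy sequence in $\hatbr$. Since
\[
v_n(x_k-x_l)\geq \vr(x_k-x_l)-nr,
\]
the images of $x_k$ in $B/z^{n+1}B$ form a $v_n$-Cauchy sequence for each fixed $n$. By Set-up \ref{axiom:strong_unif}(2) this sequence converges to some $y_n\in B/z^{n+1}B$. The reduction maps $B/z^{n+2}B\to B/z^{n+1}B$ do not decrease valuation, since $v_n(\bar y)\geq v_{n+1}(y)$ by the sup definition; in particular they are continuous. Hence the $y_n$ are compatible and define an element $x\in\hat B$.

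Next fix $\epsilon$ and choose $N$ with $\vr(x_k-x_l)\geq \epsilon$ for all $k,l\geq N$. For each fixed $n$ and $k\geq N$, let $l\to\infty$. Because $v_n$ is continuous on $B/z^{n+1}B$, we get $v_n(x_k-x)\geq \epsilon-nr$. Here I would use the non-archimedean inequality: once $v_n(x_l-x)$ exceeds $\epsilon-nr$, we have $v_n(x_k-x)\geq\min\{v_n(x_k-x_l),v_n(x_l-x)\}$. This gives $\inf_n\{v_n(x_k-x)+nr\}\geq\epsilon$ for all $k\geq N$.

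It remains to show that $x\in\hatbr$, i.e. $v_n(x)+nr\to+\infty$. Fix $k\geq N$. Then
\[
v_n(x)+nr\geq \min\{v_n(x_k)+nr,\;v_n(x-x_k)+nr\}\geq \min\{v_n(x_k)+nr,\;\epsilon\}.
\]
Since $x_k\in\hatbr$, the first term tends to infinity, so $\liminf_n (v_n(x)+nr)\geq\epsilon$. As $\epsilon$ is arbitrary, this gives $x\in\hatbr$. Together with $\vr(x_k-x)\geq\epsilon$ for $k\geq N$, it also gives $x_k\to x$ in $\vr$.

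\emph{Main obstacle.} The only delicate point is the interchange of limit and infimum: a uniform Cauchy bound in $n$ must pass to the limit, and the defining decay condition must survive that passage. The argument above handles this with the $\epsilon$-trick. A secondary point is to check that the quotient topology on $B/z^{n+1}B$ is the one induced by $v_n$, so that the completeness assumption in Set-up \ref{axiom:strong_unif}(2) applies. This is by construction, since $v_n$ is the quotient seminorm and the assumption that it is separated makes it a valuation.
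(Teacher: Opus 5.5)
Your proof is correct and follows essentially the paper's route: pass to limits levelwise in each $B/z^{n+1}B$ using Set-up \ref{axiom:strong_unif}(2), then check the decay condition $v_n(x)+nr\to+\infty$. The paper works with null series $\sum_i a_i$ and applies the infimum Lemma \ref{lem:infimum} to the matrix $v_n(a_i)+nr$. Your uniform bound $\inf_n\{v_n(x_k-x)+nr\}\geq\epsilon$, combined with the ultrametric inequality, does the same job, and it also makes explicit the convergence $x_k\to x$ in $\vr$, which the paper leaves implicit.
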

\begin{proof}
This axiomatizes \cite[Prop 5.6]{Col08}. 
  Separatedness is clear. 
We already know $\vr$ is a valuation, hence defines a metric topology. Consider a sequence $(a_i)_{i \geq 0}$ in $\hatbr$ with $\vr(a_i) \to +\infty$. For each fixed $n \geq 1$, we must have $v_n(a_i) \to +\infty$; thus $\sum_{i \geq 0} a_i$ converges in $B/z^{n+1}$ for each $n$ (using $B/z^{n+1}$ is complete with respect to $v_n$), and hence converges to some $a\in \hat{B}$ (using Fr\'echet topology on $\hat{B}$). It suffices to prove $a \in \hatbr$, that is
\[\lim_n (v_n(a)+nr) \to \infty\] 
Consider the matrix with entries
\[ b_{i,n}= v_n(a_i)+nr, i, n \geq 0\]
They converge to $+\infty$ along each row and each column; also $\inf_n  b_{i,n}=\vr(a_i)$ converges to $+\infty$. 
Lem \ref{lem:infimum} implies $\inf_i (b_{i,n})$ converges to $+\infty$ when $n$ grows. Now note for each $n$
\[v_n(a)+nr \geq \inf_i (b_{i,n})\]
because $v_n(a) \geq \inf_i v_n(a_i)$. Thus we can conclude.
\end{proof}

The following elementary lemma (proof  omitted) is used in Lem \ref{lem:hatbr_complete}.
\begin{lemma}[Infimum arguments]
\label{lem:infimum}
  Let $(a_{i,j})_{i, j\geq 1}$ be an infinite matrix over $\bbr$. Suppose
\begin{enumerate}
\item  for each fixed $i$, $\lim_{j\to \infty}   a_{i,j} \to +\infty$;
\item for each fixed $j$, $\lim_{i\to \infty}   a_{i,j} \to +\infty$;
\item Suppose further 
$ \lim_{i\to \infty} (\inf_{j}  a_{i,j} ) \to +\infty  $ (which is stronger than above item).
\end{enumerate} 
Then we have
$\lim_{j\to \infty} (\inf_{i} a_{i,j}) \to +\infty$.
\end{lemma}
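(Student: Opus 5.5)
We argue by contradiction, using only the three hypotheses on the matrix $(a_{i,j})$. Suppose $\inf_i a_{i,j}$ does not tend to $+\infty$ as $j\to\infty$. Then there is a constant $M\in\bbr$ and an infinite strictly increasing sequence $j_1<j_2<\cdots$ with $\inf_i a_{i,j_k} < M$ for every $k$. So for each $k$ we may pick an index $i_k$ with $a_{i_k,j_k}<M$. The plan is to show that the indices $i_k$ can be neither bounded nor unbounded, which is absurd.

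\emph{Case 1: the $i_k$ take only finitely many values.} Then some fixed index $i_0$ occurs as $i_k$ for infinitely many $k$. Along these $k$ the column index $j_k\to\infty$, while $a_{i_0,j_k}<M$. This contradicts hypothesis (1), which says $a_{i_0,j}\to+\infty$ as $j\to\infty$.

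\emph{Case 2: the $i_k$ are unbounded.} Pass to a subsequence with $i_k\to\infty$. For each such $k$ we have
\[ \inf_j a_{i_k,j}\le a_{i_k,j_k}<M. \]
This contradicts hypothesis (3), which says $\inf_j a_{i,j}\to+\infty$ as $i\to\infty$.

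Both cases are impossible, so $\inf_i a_{i,j}\to+\infty$ as $j\to\infty$, which is the claim. Only (1) and (3) are used; hypothesis (2) is implied by (3) and is not needed. The only real step is the finite/infinite dichotomy for the row indices $i_k$; no other obstacle is expected.
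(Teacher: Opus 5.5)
Your proof is correct: the bounded case contradicts (1) along a single repeated row, and the unbounded case contradicts (3). You are also right that (2) is redundant, while (1) cannot be dropped. The paper omits this proof as elementary, and your case split is the natural argument; it is a contradiction-form version of the direct one: given $M$, use (3) to get $\inf_j a_{i,j}\geq M$ for all $i>I$, then use (1) on the finitely many rows $i\leq I$.
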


\subsection{Twin datum} \label{subsec:twin_datum}

\begin{construction}[``twin" datum] \label{cons:twin_ex1}
Suppose $(B, v, z, (\pi_B^\gamma)_{\gamma \in v(B)})$ is a datum satisfying assumptions in Set-up \ref{axiom:strong_unif}. Suppose furthermore:
\begin{itemize}
\item The family of uniformizing elements $ (\pi_B^\gamma)_{\gamma \in v(B)}$, once mod $z$, gives a family of uniformizing elements for $B/zB$. (In particular, $v(B)=v_0(B/zB)$). 
\end{itemize}
(This is satisfied for all cases   in Example \ref{ex:3type}).
 Let $w$ be a variable, let $B'=(B/zB)[w]$; one can extend the valuation  $v_0$ on $B/zB$  to $B'$, and for convenience of comparison, denote this valuation as $v_{B'}$.  
It is easy to check that the twin datum
\[ (B', v_{B'}, w, (\pi_B^\gamma)) \]
   also satisfies assumption in  Set-up \ref{axiom:strong_unif}. 
\end{construction}

\begin{example} \label{ex:twin1}
We discuss the twin data of those in Example \ref{ex:3type}.
\begin{enumerate}
\item (twin zero point). The twin is (isomorphic to) itself.
\item (twin \'etale point). The twin is $B'=C^\flat[w]$.

\item (twin de Rham point). The twin is  $B'=C[w]$ with the \emph{discrete} $p$-adic valuation extending $\lfloor v_C \rfloor$ (hence is \emph{not} multiplicative). (This is why we do not require valuation in the general Set-up \ref{axiom:strong_unif} to be multiplicative).

\end{enumerate}
\end{example}

\section{Axiom: section maps and formal expansions} \label{sec:axiom_stalk_unit}
 
 Throughout this section, we work under the following Set-up \ref{axiom:section_map}  (which introduces extra structures on Set-up \ref{axiom:strong_unif}).
   The goal is to ``\emph{visualize}" the convergent elements  via certain ``formal series expansions" using the ``twin datum", at least \emph{set theoretically.}

\begin{setup} \label{axiom:section_map} 
Let $(B, v, z, (\pi_B^\gamma)_{\gamma \in v(B)}, s)$ be a datum with the following assumptions.
\begin{enumerate}  
\item The datum $(B, v, z, (\pi_B^\gamma)_{\gamma \in v(B)})$ satisfies assumptions in Set-up  \ref{axiom:strong_unif}.

\item Suppose  $B \into \hat{B}$. Suppose $z$ is a non-zero-divisor in $\hat B$ (this implies $z$ is a non-zero-divisor in $B$).


\item  (\emph{valuation preserving section}). Suppose the map $B \onto B/z$ admits a \emph{set-theoretic  section} (not necessarily additive or multiplicative)
\[s: B/zB \to B\]
such that for $y_0 \in B/zB$,
\[v_0(y_0)= v(s(y_0)). \] 
\end{enumerate}
 \end{setup}

\begin{remark}
Under Set-up \ref{axiom:section_map}, we 
necessarily have $s(\bar 0)=0$.
One sees $s$ is continuous  at $\bar 0$, since the inverse image of the open ball $\{x\in B, v(x)>r\}$ is   the open ball $\{\bar x\in B/z, v_0(\bar x)>r\}$. 
However, $s$ is in general  not continuous at other points, unless if $s$ is \emph{additive}. 
\end{remark}

\begin{example} \label{ex:section_val}
 Assumptions in Set-up \ref{axiom:section_map} hold  for all cases in Example \ref{ex:3type}.
Using the notations in \emph{loc.cit.}, we have the following sections. 
\begin{enumerate} 
\item (zero point). There is an obvious section via inclusion
$s: E \into E[z]$.

\item (\'etale point). There is a section
\[s: C^\flat \to \ainf[1/[p^\flat]]\]
via the  \emph{canonically} defined  Teichm\"uller lift; it is multiplicative (and Galois equivariant) but not additive.

\item (de Rham point). Following proof  of \cite[Lemma 3.5]{Col08a}, one can (non-canonically) define a  valuation preserving section  (additive but not multiplicative) as follows.
Write a decomposition for the $\fp$-vector space $\o_C/p\o_C =\oplus_{i \in I} \fp \cdot \bar{e}_i$. Let $e_i \in \o_C$ be any lift of $\bar{e}_i$, then $\o_C =\wh{\oplus}_i \zp\cdot e_i$. 
In particular, for $x=\sum x_ie_i \in \o_C$, then $v_C(x)=\inf_i v_C(x_i)$. 
Let $s(e_i)$ be any pre-image of $e_i$ under $\theta: \ainf \to \o_C$ (thus necessarily $v_C(s(e_i))=0$). We can then define the valuation preserving section
\[ s: C \to \binf, \quad x=\sum_i x_ie_i \mapsto \sum_i x_is(e_i)\]
 Caution: this map can \emph{never} be $\gk$-equivariant:  otherwise, it leads to a contradiction by  taking $G_L$-invariants on both side, where $L/K$ is a finite   ramified extension.

\item ($F$-linear de Rham point). Note above construction works for any $\fp$-linear decomposition of $\o_C/p\o_C$. As a special case, (recall $F$ is an unramified subfield of $K$), one can also consider a decomposition $\o_C/p\o_C=\oplus_{j\in J} \o_F/p\o_F\cdot \bar{e}_j$; then similarly write $\o_C=\wh{\oplus}_j \o_F e_j$ and define a \emph{$F$-linear} section $s: C \to \binf$ (a key point is that $\o_F \subset \ainf$).

\end{enumerate}
\end{example}


\begin{lemma}[valuation of ``$z$-monomials"]\label{lem:col1}
 Let $y_0 \in B/zB$, let $n, k \geq 0$. Then 
\[ v_{n}(z^ks(y_0))= \begin{cases}
+\infty & \text{if } n<k \\
v_0(y_0) & \text{if }  n \geq k
\end{cases}
\]
\end{lemma}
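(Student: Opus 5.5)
Write $x=z^ks(y_0)\in B\subset\hat B$ and handle the two ranges of $n$ separately, using only the definition of $v_n$ (Def \ref{def:vn_B}), weak multiplicativity (Set-up \ref{axiom:strong_unif}(3)) and the valuation-preserving property of $s$ (Set-up \ref{axiom:section_map}(3)).

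\emph{Case $n<k$.} Since $k\geq n+1$, the element $z^ks(y_0)$ lies in $z^{n+1}B$, so its image in $B/z^{n+1}B$ is $0$. Then $v_n(x)=v_n(0)=\sup\{v(Y): Y\in z^{n+1}B\}$, and $Y=0$ is allowed, so this is $+\infty$.

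\emph{Case $n\geq k$.} We prove two inequalities.

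For the lower bound, take $Y=z^ks(y_0)$ itself in the supremum. Weak multiplicativity gives $v(z^ks(y_0))=v(s(y_0))=v_0(y_0)$, hence $v_n(x)\geq v_0(y_0)$.

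For the upper bound, let $Y\in B$ with $Y\equiv z^ks(y_0)\pmod{z^{n+1}}$. Since $n+1>k$, we also have $Y\equiv z^ks(y_0)\pmod{z^{k+1}}$. We want to show $v(Y)\leq v_0(y_0)$, and the key step is to reduce to the $k=0$ statement "$v(Y')\leq v_0(\bar Y')$". Two sub-cases:

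\begin{enumerate}
\item If $k=0$, this holds directly from the definition: $v_0(\bar Y)=\sup$ over lifts of $\bar Y$ of $v$, and $Y$ is one such lift, so $v(Y)\leq v_0(\bar Y)=v_0(y_0)$.
\item If $k\geq 1$, we have $Y\in z^kB$, say $Y=z^kY'$. Then $z^k(Y'-s(y_0))\in z^{k+1}B$, say $z^k(Y'-s(y_0))=z^{k+1}b$. Since $z$ is a non-zero-divisor in $B$ (Set-up \ref{axiom:section_map}(2)), we can cancel $z^k$ and get $Y'\equiv s(y_0)\pmod z$. Weak multiplicativity gives $v(Y)=v(Y')$, and by the $k=0$ sub-case $v(Y')\leq v_0(y_0)$.
\end{enumerate}

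Taking the supremum over all such $Y$ gives $v_n(x)\leq v_0(y_0)$, which combined with the lower bound gives equality.

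The only delicate point is this upper bound: we must pass from a congruence modulo $z^{n+1}$ to one modulo $z$ after dividing by $z^k$. This uses the non-zero-divisor hypothesis to cancel $z^k$, together with weak multiplicativity so that dividing by $z^k$ does not change the valuation.
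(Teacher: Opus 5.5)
Your proof is correct. It shares the paper's core idea: a lift of $z^ks(y_0)$ modulo $z^{n+1}$, divided by $z^k$, is a lift of $y_0$ modulo $z$. However, your route is more elementary.

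The paper argues by contradiction. It takes a lift with valuation $\geq\beta>v_0(y_0)$ and writes it as $\pi_B^\beta y''$ with $y''\in\mathcal{O}_B$. It then uses that $(z,\pi_B^\beta)$ is a regular sequence to get $z^k\mid y''$, and finally reads off $v_0(y_0)\geq\beta$ from $s(y_0)+z^{n+1-k}y'=\pi_B^\beta\,(y''/z^k)$.

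You instead note that weak multiplicativity already gives $v(Y)=v(Y/z^k)$. So the uniformizing elements and Set-up \ref{axiom:strong_unif}(4),(5) play no role, and the lemma holds under weaker hypotheses than the paper uses.

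You can also drop the non-zero-divisor hypothesis you flag as the "delicate point". Write $Y=z^ks(y_0)+z^{n+1}b$ and set $Y':=s(y_0)+z^{n+1-k}b$ explicitly. Then $z^kY'=Y$ and $Y'\equiv s(y_0)\pmod z$, with no cancellation of $z^k$ needed. This uniform argument also covers $k=0$ without a separate sub-case.

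What the paper's version buys is mainly faithfulness to the argument of \cite[Lem 3.6]{Col08a} that it is axiomatizing. Your version shows that, for this lemma, only weak multiplicativity and the valuation-preserving property of $s$ are actually needed.
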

 \begin{proof} 
We axiomatize the argument \cite[Lem 3.6]{Col08a}. The case $n<k$ is trivial, so we only consider $n \geq k$. Clearly
\[ v_n(z^ks(y_0)) \geq v(z^ks(y_0))=v(s(y_0))=v_0(y_0);\]
here the second equality holds because $v$ is weakly multiplicative (with respect to $z$).
If the inequality is strict, then if we denote $\alpha=v_0(y_0)$, then there exists some $\beta>\alpha$ such that there exists  some $y'\in B$ such that
\[  z^ks(y_0) + z^{n+1}y'  =\pi_B^\beta y'' \text{ with } y'' \in \o_B \]
Since $\pi_B^\beta$ is a non-zero-divisor in $B/z$, we have $z^k \mid y''$; since $z$ is a non-zero-divisor in $B$, we have
\[ s(y_0) + z^{n+1-k}y'  =\pi_B^\beta \frac{y''}{z^k} \]
Modulo $z$, we have
\[ y_0 =\pi_B^\beta \frac{y''}{z^k} \]
This implies $v_0(y_0) \geq \beta $ (note $v(y^{\prime\prime}/z^k)\geq 0$ because $v$ is weakly multiplicative), which is a contradiction.
 \end{proof}

  The following key proposition shows that many constructions for the original datum and for the twin datum can be ``\emph{identified}", at least \emph{set theoretically}.

\begin{prop}[Twin datum and formal series expansion] \label{prop:set_section_sz}
Use Set-up \ref{axiom:section_map}, and further assume the extra   assumption in Construction \ref{cons:twin_ex1} is satisfied, and hence we have the twin datum, where in particular we have  $\hat{B}'=(B/z)[[w]]$. 
Then there is a \emph{set theoretic} bijection  
\[ s_z: \hat{B}' \to \hat{B}\]
defined by
\[ s_z (\sum_{i \geq 0} a_i w^i) =\sum_{i \geq 0} s(a_i)z^i.\] 
\begin{enumerate}
\item It is ``\emph{weakly additive}" in the sense
\[ s_z(f+g)=s_z(f)+s_z(g)\]
if the $w^i$-terms in $f$ and $g$ are ``disjoint" (that is, if $f=\sum a_iw^i$ and $g=\sum b_iw^i$, then $a_ib_i=0$ for all $i$).

\item It is ``\emph{weakly   multiplicative}" in the sense that
\[ s_z(w^k f) =z^ks_z(f)\]

\item It is compatible with ``$v_n$"-valuations on both sides: given $f=\sum_{i \geq 0} a_iw^i \in  \hat{B}'$, then for each $n \geq 0$,
\[ v_{B',n}(f)=v_n(s_z(f))\]
that is:
\[ \inf_{i \leq n} v_0(a_i) =v_n(s_z(f))\]
As a consequence, $s_z$ is continuous at zero. 
\item For any $r>0$, it restricts to a $\vr$-valuation-preserving bijection (which  is continuous at zero)
\[s_z: \hat{B}^{',(r)} \to \hatbr\]
\end{enumerate} 
\end{prop}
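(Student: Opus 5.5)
First I will check that $s_z$ is well defined. Since $\hat B=\projlim_n B/z^{n+1}B$, for a fixed $n$ only the terms with $i\le n$ contribute modulo $z^{n+1}$. So the series $\sum_i s(a_i)z^i$ is a compatible system of finite sums, hence an element of $\hat B$. Items (1) and (2) then follow termwise from $s(\bar 0)=0$. If the supports of $f$ and $g$ are disjoint, then in each degree $i$ one of $a_i,b_i$ is zero, so $s(a_i+b_i)=s(a_i)+s(b_i)$. Multiplying by $w^k$ shifts the coefficients, which matches multiplication by $z^k$.

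For the bijection, I plan to prove injectivity and surjectivity one degree at a time.

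\emph{Surjectivity.} Given $x\in\hat B$, set $a_0=x \bmod z$. Then $x-s(a_0)\in z\hat B$. Since $z$ is a non-zero-divisor in $\hat B$, I can write $x-s(a_0)=zx_1$ with $x_1$ unique, and iterate. This produces $a_i$ with
\[ x-\sum_{i\le N}s(a_i)z^i\in z^{N+1}\hat B, \]
so $x=s_z(\sum_i a_iw^i)$.

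\emph{Injectivity.} Suppose $s_z(f)=s_z(g)$. Reducing mod $z$ gives $a_0=b_0$. Cancelling the common term and dividing by $z$ (again using that $z$ is a non-zero-divisor on $\hat B$) lets me induct. I need the identity $\hat B/z\hat B=B/zB$; this holds because $\hat B$ is the $z$-adic completion and each $B/z^{n+1}B$ is complete.

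Item (3) is the core of the proposition. The inequality $v_n(s_z(f))\ge\inf_{i\le n}v_0(a_i)$ follows from Lemma \ref{lem:col1}, since $v_n(z^is(a_i))=v_0(a_i)$ for $i\le n$, together with the ultrametric inequality for $v_n$; the terms with $i>n$ vanish mod $z^{n+1}$.

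For the reverse inequality, let $\gamma=v_n(s_z(f))$ and take the smallest $i\le n$ for which we need $v_0(a_i)\ge\gamma$. I would use Lemma \ref{lem:col31} in the form of its proof: $v_n(x)\ge\gamma$ gives $x\equiv\pi_B^{\gamma}c\pmod{z^{n+1}}$ with $c\in\o_B$. I then argue by induction on $i$.
\begin{itemize}
\item For $i=0$: reducing mod $z$ gives $a_0=\pi_B^\gamma\bar c$, so $v_0(a_0)\ge\gamma$.
\item Inductive step: subtract $s(a_0)$. Both $x$ and $s(a_0)$ have $v_n\ge\gamma$, so
\[ z\Big(\sum_{i\ge1}s(a_i)z^{i-1}\Big)\equiv \pi_B^\gamma c'\pmod{z^{n+1}}. \]
The regular-sequence hypothesis in Set-up \ref{axiom:strong_unif}(5) shows $z\mid c'$. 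Weak multiplicativity keeps $c'/z$ integral, exactly as in the proof of Lemma \ref{lem:col1}. So the shifted element has $v_{n-1}\ge\gamma$, and induction finishes the argument.
\end{itemize}
The twin side identity $v_{B',n}(f)=\inf_{i\le n}v_0(a_i)$ is the same computation in $B'$, where it is immediate because $B'$ is a polynomial ring over $B/z$ with Gauss valuation. Continuity at zero follows directly from (3).

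I expect the main obstacle to be the division step: I must divide by $z$ while keeping both the $\pi_B^\gamma$-factor and integrality. This is where items (3) and (5) of Set-up \ref{axiom:strong_unif} are needed.

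Item (4) is then formal. Since $\vr$ is defined as $\inf_n(v_n+nr)$ on both sides, and $s_z$ preserves every $v_n$ by (3), $s_z$ preserves the convergence condition $v_n+nr\to+\infty$ and the value of $\vr$. Hence $s_z$ restricts to a $\vr$-preserving bijection $\hat B^{\prime,(r)}\to\hatbr$, and continuity at zero again follows.
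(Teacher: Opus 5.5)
Your proposal is correct. For well-definedness, bijectivity, items (1), (2), (4) and the lower bound in (3), it follows the paper's proof; the paper dismisses bijectivity as easy, noting only that surjectivity uses that $z$ is a non-zero-divisor in $\hat B$. The routes differ for the reverse inequality $v_n(s_z(f))\le\inf_{i\le n}v_0(a_i)$.

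\textbf{The paper's argument.} It takes the smallest index $n_0\le n$ with $v_0(a_{n_0})=\inf_{i\le n}v_0(a_i)$. Modulo $z^{n_0+1}$, the term $s(a_{n_0})z^{n_0}$ has $v_{n_0}$ exactly $v_0(a_{n_0})$ by Lemma \ref{lem:col1}, while every earlier term has strictly larger $v_{n_0}$. The strict ultrametric property then gives $v_{n_0}(s_z(f))=v_0(a_{n_0})$, and monotonicity $v_n\le v_{n_0}$ for $n\ge n_0$ finishes. This uses Lemma \ref{lem:col1} purely as a black box plus the ultrametric inequality for $v_n$.

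\textbf{Your argument.} Your induction re-runs the division step from the proof of Lemma \ref{lem:col1} (regular sequence, weak multiplicativity, $z$ a non-zero-divisor) on the whole sum. It is more self-contained and yields a coefficient-wise bound from any integral representative. As written, though, it needs a small repair. You use $\pi_B^{\gamma}$ with $\gamma=v_n(s_z(f))$, which presupposes three things:
\begin{itemize}
\item $\gamma\in v(B)$;
\item $\gamma<+\infty$, since Set-up \ref{axiom:strong_unif}(5) is only assumed for finite $\gamma$;
\item the supremum defining $v_n$ is attained.
\end{itemize}
To fix this, run your induction for an arbitrary lift $Y\equiv s_z(f)\pmod{z^{n+1}}$ of finite valuation $\beta=v(Y)\in v(B)$, obtaining $v_0(a_i)\ge\beta$ for all $i\le n$, and then take the supremum over lifts. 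The case $s_z(f)\in z^{n+1}\hat B$ is handled directly by your injectivity argument. Alternatively, adopt the minimal-index argument above, which avoids the issue entirely.

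A minor point: $\hat B/z\hat B=B/zB$ holds because $\hat B$ is the inverse limit along the principal ideal $(z)$, elementary here since $z$ is a non-zero-divisor on $B$. Completeness of $B/z^{n+1}B$ in the valuation topology plays no role.
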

\begin{proof}
We axiomatize the proof of  \cite[Prop 3.7]{Col08a}. The proof of bijectiveness is easy: note the proof of surjectivity makes use of the assumption that $z\in \hat{B}$ is a non-zero-divisor. 
Items (1) and (2) are trivial. Item (4) follows from Item (3). It remains to prove Item (3). 
It is easy to see
\[ v_n(s_z(f)) \geq \inf_{i\geq 0} v_n(s(a_i)z^i) =\inf_{0 \leq i \leq n} v_0(a_i)\]
where the last equality uses Lem \ref{lem:col1}.  Let $0 \leq n_0 \leq n$ be the smallest integer such that $$v_0(a_{n_0})=\inf_{0 \leq i \leq n} v_0(a_i)$$
This implies $$v_{n_0}(s_z(f)) =v_{n_0}(s(a_{n_0})w^{n_0})=v_0(a_{n_0}).$$ But 
$$v_{n}(s_z(f)) \leq v_{n_0}(s_z(f))=v_0(a_{n_0})$$
 and hence we can conclude.
\end{proof}

\begin{rem}[Term-wise computation of $\vr$] On the twin side $\hat{B}^{',(r)}$, one can compute $\vr$-valuation ``\emph{term-wise}".
 That is, given $x=\sum_{i \geq 0} a_iw^i \in \hat{B}^{',(r)}$, then
\[\vr(x) =\inf_i \{ \vr(a_iw^i) \} =\inf_i \{ v_0(a_i) +ir  \}  \]
  Indeed, we have
   \[ \vr(x)=\inf_n \{v_n(x)+nr\}= \inf_n \{\{\inf_{i \leq n} v_0(a_i)  \} +nr\}=\inf_i \{ v_0(a_i) +ir  \}  \]
 Here the first two equalities are just definitions; the third equality holds as two sides are both computing $\inf_{i,n; i \leq n} \{ v_0(a_i)+rn\}$.
 \end{rem}

\begin{remark}[Properties of $s_z$]
\hfill 
\begin{enumerate}
\item We caution that $s_z$ is in general not additive or continuous. The map  $s_z$ is additive if and only if $s$ is; in this case, $s_z$ is an \emph{isometry}. This happens in the de Rham point example, as studied by \cite{Col08a}.

\item If $s$ is only multiplicative, this does not imply $s_z$ multiplicative. If $s$ is a ring morphism, then so is $s_z$ (but this   almost never  happens except in the   zero point example). 
\end{enumerate}
\end{remark}

 \begin{remark}[multiplicativity of $\vr$] \label{rem:mult_vr}
We comment on the multiplicative properties of $\vr$; this will not be further used in the paper. The motivation is to clarify the differences for different examples.
\begin{enumerate}
\item Recall $\vr$ on $\hatbr$ is sub-multiplicative by Lem \ref{lem:hatbr_subring}.
\item  One also observes that $\vr$ is ``weakly multiplicative" (with respect to $z$) in the sense that for  $k \geq 1$ and $f\in \hatbr$, we have 
\[ \vr(z^kf)= \vr(f)+\vr(z^k)=\vr(f)+kr\]
To wit, it suffices to pass to the twin side (note the bijection $s_z$ is also weakly multiplicative), which then is easy to verify.
\item Using terminologies in Example \ref{ex:section_val}. 
In the zero point case, $\vr$ is   multiplicative, cf. e.g.  \cite[Prop 2.1.2(a)]{Ked22}.
In the  \'etale point  case, $\vr$ is multiplicative, cf. \cite[Lem 21.3]{Ber10}.
In fact, one can axiomatize argument in above results, and show that: if $s$ in Set-up \ref{axiom:section_map} is \emph{multiplicative}, then $\vr$ is multiplicative.

\item In the  de Rham point  case (in Example \ref{ex:section_val}), $\vr$ is \emph{not} multiplicative. Consider $a=([p^\flat])^{1/2} \in \ainf$, then $\vr(a)=0$, but $\vr(a^2)=\inf\{1, r\}>0$.
\end{enumerate}
\end{remark}

\section{Convergent de Rham period rings} \label{sec:conv_dr_ring}

In this section, we explicate structures of various  convergent ``de Rham period rings". In particular, we study the ``$\barK$-structures" on them. This section is mostly ring theoretic; we will study Galois actions (and hence Tate--Sen formalism) on these rings in following sections.

The main notations of this section are summarized as follows:
\begin{equation} \label{eq:lkmr_cart}
\begin{tikzcd}
{\bfl_{K,m}^\rr} \arrow[d] \arrow[rr] &  & {\bfl_{K,m}^+} \arrow[d] \\
\ldrkr  \arrow[d] \arrow[rr]          &  & \ldrplusk \arrow[d]      \\
\bdrr \arrow[rr]                      &  & \bdrplus                
\end{tikzcd}
\end{equation} 
A subtlety lies in \emph{definition} of rings in top row. In \S \ref{subsec:vr_compa}, we can  only treat top-row rings when $K=F$ is absolutely unramified; fortunately, via ramification theory in \S \ref{subsec:ramification_bc}, we show the general $K$-rings are simply ``base change" of the $F$-rings. In the addendum \S\ref{subsec:k_dr_point}, we will discuss a ``$K$-linear" de Rham point used in \cite{Col08a}; it serves to clarify possible confusions, and will not be used further in this paper.

 \subsection{Compatibility of $\vr$-valuations} \label{subsec:vr_compa}
 
\begin{defn} \label{def:dR_ring_compa}
\begin{enumerate}

\item  ((perfect) de Rham point). As explained in Example \ref{ex:3type}, the datum $(\binf, v_\binf, \xi)$ (ignoring the obvious uniformizing elements) satisfies the assumptions in Set-up \ref{axiom:strong_unif};  denote the corresponding $\hat B$ and $\hatbr$  as
\[ \bdrplus, \quad \bdrr \]


\item  ($\hk$-invariant de Rham point). For the datum $((\binf)^\hk, v_\binf, \xi)$, denote the corresponding $\hat B$ and $\hatbr$  as
\[ \ldrplusk, \quad \ldrkr \]

\item  (imperfect de Rham point, unramified  $F$-case).
Let $m \geq 0$. Denote $F_m=F(\varepsilon_m)$ (cf. Notation \ref{nota:fields}), with the convention $F_0=F$. 
 Let $\bfb_{F, m}^+:=\o_F[[[\varepsilon]^{1/p^m}-1]][1/p]$ be a subring of $\binf$, and one can check the datum $(\bfb_{F, m}^+, v_\binf, \xi)$ satisfies the assumptions in Set-up \ref{axiom:strong_unif}; denote the corresponding $\hat B$ and $\hatbr$  as
\[ \bfL_{F,m}^+, \quad \bfL_{F,m}^\rr \] 
(The general   ramified $K$-case will appear later in Def \ref{def:ram_lkmr}).
Caution on the subscript-index $m$: in our convention, $\bfb_{F, 1}^+=\bfb_{F}^+$. Quite often in literature, one uses $m$ in this setting to denote $m$-iterated times Frobenius inverses, i.e., ``$\bfb_{F, 1}^+$" would be $\varphi^{-1}(\bfb_{F}^+)$. We have decide to use current convention, (besides the fact Frobenius operator is not used in this paper), because it ``matches" with $I_m$   in Construction \ref{cons:I_m}, with $\Gamma_{K, m}$ in various places in \S \ref{sec:TS234}, and $\theta(\bfL_{F,m}^+)=F_m$. 
\end{enumerate} 
\end{defn}

  \begin{lemma}  \label{lem:compavr}
Let $r > 0$. We have the following compatibilities of $\vr$-valuations.
\begin{enumerate}
\item $\ldrkr =\ldrk \cap \bdrr$; for $x\in \ldrkr$, its $\vr$-valuations in $\ldrkr$ and $\bdrr$ are the same.
\item $\bfl_{F,m}^\rr = \bfl_{F,m}^+ \cap \bdrr$; for $x\in \bfl_{F,m}^\rr$, its $\vr$-valuations in $\bfl_{F,m}^\rr$ and $\bdrr$ are the same.
\end{enumerate}
\end{lemma}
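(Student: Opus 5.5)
The plan is to reduce both items to a single comparison statement: for a subring $B_1\subset B_2=\binf$ carrying the restricted valuation $v_\binf$ and containing $\xi$, the $v_n$-valuations computed in $\hat{B}_1$ and in $\hat B_2=\bdrplus$ agree, i.e. for $x\in \hat B_1$ and every $n\ge 0$,
\[ v_n^{\hat B_1}(x)=v_n^{\bdrplus}(x). \]
Given this, both claims follow at once from Definition \ref{def:conv_ele}, since membership in $\hatbr$ and the value of $\vr$ depend only on the sequence $(v_n(x))_n$. We also need $\hat B_1\to \bdrplus$ to be injective, so that the intersection makes sense. This holds because $B_1/\xi^{n+1}B_1\to \binf/\xi^{n+1}\binf$ is injective when $\xi^{n+1}\binf\cap B_1=\xi^{n+1}B_1$. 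For $B_1=(\binf)^\hk$ this is immediate: $\xi$ is not $\hk$-invariant in general, but $\xi$ divides a $\hk$-invariant generator of $\ker\theta$ up to units (e.g. $\xi=\frac{q^p-1}{q-1}$ in Notation \ref{nota:phigamma} is already $\hk$-fixed), and $\hk$-invariants of $\xi^{n+1}y$ force $y$ to be invariant. For $B_1=\bfb_{F,m}^+$ the same holds via the Weierstrass-type division in $\o_F[[[\varepsilon]^{1/p^m}-1]]$ by the distinguished polynomial $\xi$ (or a $\varphi^{-m}$-twist of it).

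The inequality $v_n^{\hat B_1}\le v_n^{\bdrplus}$ is trivial, since the supremum over lifts in $B_1$ is taken over a smaller set. The real content is the reverse inequality: if $x\in \hat B_1$ has a lift $Y\in\binf$ with $v(Y)\ge \gamma$ and $Y\equiv x \bmod \xi^{n+1}$, we must produce such a lift inside $B_1$ with valuation at least $\gamma$, at least up to $\epsilon$. For item (1) I would do this by an averaging or invariance argument. Using Lemma \ref{lem:col31}, write $x$ modulo $\xi^{n+1}$ as $\sum_{i\le n}\pi^{w_i}a_i\xi^i$ with $a_i\in\ainf$. Then we want to replace the $a_i$ by $\hk$-invariant elements. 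One approach: since $\ainf^\hk\to \o_C^\hk=\o_{\hatkinfty}$ is surjective, lift degree-by-degree. The $\hk$-invariance of $x$, together with the vanishing $H^1(\hk,\o_C)$ up to bounded (in fact arbitrarily small, by almost purity or Tate--Sen for $\hk$) $p$-torsion, lets us correct each $a_i$ modulo $\xi$ to an invariant one with loss at most $\epsilon$. Since $v_\binf$ has value group $\bbq$ here and the supremum allows $\epsilon$-loss, this suffices.

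For item (2) I would instead use an explicit section. Take $s$ to be an $F$-linear section as in Example \ref{ex:section_val}(4), chosen compatibly, so that $\o_{F_m}$ has a basis $1,\zeta,\dots$ in $\o_F$-linear span of elements $\theta([\varepsilon]^{j/p^m})$ extending to a decomposition of $\o_C$ that is orthonormal. Then $s$ maps $F_m$ into $\bfb_{F,m}^+$, and Proposition \ref{prop:set_section_sz}(3) computes $v_n$ termwise on both sides: the expansion $x=\sum s(a_i)\xi^i$ of an element of $\bfl_{F,m}^+$ has $a_i\in F_m$, so the infimum of $v_0(a_i)$ is the same whether computed in $\bfl_{F,m}^+$ or in $\bdrplus$. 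The main obstacle is exactly this step, namely choosing a valuation-preserving section that is simultaneously a section for $\bfb_{F,m}^+\to F_m$ and for $\binf\to C$. That means finding an orthonormal $\o_F$-basis of $\o_{F_m}$ (totally ramified, so powers of $\varepsilon_m-1$ work) that extends to one of $\o_C$ mod $p$, and lifting it via $[\varepsilon]^{1/p^m}-1$. I expect item (1) can also be reduced to this by an analogous section adapted to $\hatkinfty$.
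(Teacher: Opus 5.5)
Your overall reduction and your argument for item (2) follow the paper. The reduction is to show that $v_n$ computed in $\hat B_1$ agrees with $v_n$ computed in $\bdrplus$ for every $n$, and then to read off both the intersection and the equality of $\vr$. For item (2) the paper uses, as you do, an $F$-linear valuation-preserving section $F_m\to\bfb_{F,m}^+$, $(\varepsilon_m-1)^i\mapsto([\varepsilon]^{1/p^m}-1)^i$, extended along an $\o_F/p$-basis of $\o_C/p$, so that Prop \ref{prop:set_section_sz}(3) computes $v_n$ termwise on both sides.

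The paper handles item (1) exactly as you guess at the end. The basis is first extended to $\o_{\hatkinfty}/p$, with lifts chosen in $(\ainf)^{\hk}$; this is possible since $\theta\colon(\ainf)^{\hk}=W(\o_{\hatkinfty^\flat})\to\o_{\hatkinfty}$ is onto. It is then extended to $\o_C/p$. This gives one commutative ladder over $F_m[[\xi]]\subset\hatkinfty[[\xi]]\subset C[[\xi]]$ that handles both items at once.

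The cohomological version of item (1) that you put first does not work as justified. $v_\binf$ has unit ball $\ainf$ and $v_\binf(p)=1$, so it is $\bbz$-valued, not $\bbq$-valued, and it induces $v_0=\lfloor v_C\rfloor$ on $C$. Almost vanishing of $H^1(\hk,\o_C)$ only kills a class after multiplying by some $c\in\mathfrak m_{\hatkinfty}$. The loss $v_C(c)=\epsilon$ is small for $v_C$, but undoing it costs $\lfloor-\epsilon\rfloor=-1$ for $v_0$. That is a full unit, and the supremum defining $v_n$ does not absorb it.

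No cohomology is needed, however. The paper's $\xi=(q^p-1)/(q-1)$ is $\hk$-invariant, and $v_n(\xi y)=v_{n-1}(y)$ in $\bdrplus$. The exact induction runs as follows.
\begin{enumerate}
\item Take $x\in(\binf)^{\hk}$ with $v_n(x)\ge\gamma$ in $\bdrplus$.
\item Lift $\theta(p^{-\gamma}x)\in\o_{\hatkinfty}$ to some $a_0\in(\ainf)^{\hk}$.
\item Write $x-p^\gamma a_0=\xi y$; then $y\in(\binf)^{\hk}$ and $v_{n-1}(y)\ge\gamma$.
\item Induct on $n$.
\end{enumerate}
This gives $x\in p^\gamma(\ainf)^{\hk}+\xi^{n+1}(\binf)^{\hk}$ with no loss.

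The same induction proves item (2). It only needs that $\theta$ maps $\o_F[[[\varepsilon]^{1/p^m}-1]]$ onto $\o_{F_m}=\o_C\cap F_m$, and that $\xi\binf\cap\bfb_{F,m}^+=\xi\bfb_{F,m}^+$, which you already argued. So your degree-by-degree lifting, once the $H^1$ detour is removed, is a uniform and more elementary alternative to the paper's proof. The paper's section argument, in exchange, also yields the explicit $s_\xi$-descriptions used later, e.g.\ in Cor \ref{lem:present_conv}.
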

\begin{proof}
First we caution that the results are far from obvious. 
For example, given $x\in \bfb_{F, m}^+$, it is not obvious that its $v_n$-valuations over $\bfb_{F, m}^+$ resp. $\binf$ are the same! Indeed, our proof does \emph{not}  directly compute these, but rather makes a conceptual argument using the section map from Prop \ref{prop:set_section_sz}.

Recall for the datum   $(\binf, v_\binf, \xi)$, we can study $\bdrr$ via the section $s: C \to \binf$ in Example \ref{ex:section_val}.
This is similar for other data here. Indeed, for convenience of discussions in the following, we can first  fix a \emph{$F$-linear} section in the imperfect de Rham point as:
\[ s: F_m  \to \bfb_{F, m}^+ \]
where $(\varepsilon_{m}-1)^i \mapsto ([\varepsilon]^{1/p^{m}}-1)^i$ for $0\leq i < p^{m-1}(p-1)$ (when $m=0$, then this construction is vacuous); this is valuation preserving since $\{(\varepsilon_{m}-1)^i\}_{i=0}^{p^{m-1}(p-1)-1}$ form an   basis for $\o_{F_{m}}$ over $\o_F$.
This section  extends  to \emph{$F$-linear} valuation preserving sections:
\[ s: \hatkinfty \to (\binf)^\hk\]
and
\[s: C \to \binf\]
because the set  $\{(\varepsilon_{m}-1)^i\}_{i=0}^{p^{m-1}(p-1)-1}$ can extend to an $\o_F/p\o_F$-basis of $\o_{\hatkinfty}/p\o_{\hatkinfty}$ resp. $\o_C/p\o_C$, and then constructions in Example \ref{ex:section_val} carries over.
These (additive) section maps extend to valuation preserving \emph{homeomorphisms} $s_\xi$ as in Prop \ref{prop:set_section_sz}. In summary, we have the following \emph{commutative} diagram:
\begin{equation} \label{diag:sxi3}
\begin{tikzcd}
{F_{m}[[\xi]]} \arrow[d, hook] \arrow[r, "s_\xi"]    & {\bfL_{F,m}^+} \arrow[d, hook] \\
{\hatkinfty[[\xi]]} \arrow[d, hook] \arrow[r, "s_\xi"] & \ldrplusk \arrow[d, hook]      \\
{C[[\xi]]} \arrow[r, "s_\xi"]                          & \bdrplus                      
\end{tikzcd}
\end{equation}
Here, for each $n \geq 0$, all horizontal arrows are $v_n$-valuation   preserving homeomorphisms by Prop \ref{prop:set_section_sz}; vertical arrows on the \emph{left} column are obviously $v_n$-valuation preserving; this implies  vertical arrows on the \emph{right} column are also $v_n$-valuation preserving (something that is not entirely obvious to check directly!). Thus the diagram restrict to the following commutative diagram where \emph{all} arrows are $\vr$-valuation preserving and where all horizontal arrows are (additive) isometries
\[
\begin{tikzcd}
F_{m}^{v_0 \leq -r}\langle \xi \rangle \arrow[d, hook] \arrow[r, "s_\xi"]    & {\bfL_{F,m}^\rrcirc} \arrow[d, hook] \\
\hatkinfty^{v_0 \leq -r}\langle \xi \rangle \arrow[d, hook] \arrow[r, "s_\xi"] & \ldrkrcirc \arrow[d, hook]           \\
C^{v_0 \leq -r}\langle \xi \rangle \arrow[r, "s_\xi"]                          & \bdrrcirc                           
\end{tikzcd}
\]
here, recall $v_0=\lfloor v_C \rfloor$ on $C$ is the discrete valuation, and $C^{v_0 \leq -r}$ etc.~ means the additive subgroup with valuation $\leq -r$. To check the desired intersection identities, it reduces to note both squares of the following diagram are Cartesian:
\[
\begin{tikzcd}
F_{m}^{v_0 \leq -r}\langle \xi \rangle \arrow[d, hook] \arrow[r, hook]      & {F_{m}[[\xi]]} \arrow[d, hook]      \\
\hatkinfty^{v_0 \leq -r}\langle \xi \rangle \arrow[d, hook] \arrow[r, hook] & {\hatkinfty[[\xi]]} \arrow[d, hook] \\
C^{v_0 \leq -r}\langle \xi \rangle \arrow[r, hook]                          & {C[[\xi]]}                         
\end{tikzcd}
\]
\end{proof}

\begin{cor} \label{lem:present_conv}
Let $r \in \bbz^{\geq 1}$. We have the following ``presentations" of rings.  (Here, the Tate algebra notation means that, e.g. given $x\in \bdrr$, it has a (non-unique) expression $x=\sum_{i \geq 0} x_i(\frac{\xi}{p^r})^i$ with $x_i \in \binf$, and $x_i \to 0$ in $p$-adic topology ,cf. Rem \ref{rem:binf_topo}).
\begin{enumerate}
\item    $
\bdrr=\binf\langle \frac{\xi}{p^r} \rangle$,    and $\bdrrcirc=\ainf\langle \frac{\xi}{p^r} \rangle$.  

\item    $
\ldrkr=\wtb^+_K\langle \frac{\xi}{p^r} \rangle$,    and $\ldrkrcirc=\wta^+_K\langle \frac{\xi}{p^r} \rangle$.  

\item  (With $F$ unramified). $\bfL_{F,m}^\rr =\bfb^+_{F,m}\langle \frac{\xi}{p^r} \rangle$, and $\bfL_{F,m}^{\rr, \circ}=\bfa^+_{F,m}\langle \frac{\xi}{p^r} \rangle$.
 
\end{enumerate}
\end{cor}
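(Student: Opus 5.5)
We prove all three presentations by one argument, carried out for (1) and then transported to (2) and (3) through the commutative diagrams in the proof of Lem \ref{lem:compavr}. The idea is to identify $\bdrrcirc$ with the set of elements $x=\sum_{n\geq 0}\pi^{w_n}a_n\xi^n$ with $a_n\in\ainf$, which is exactly what Lem \ref{lem:col31} provides, and then rewrite $\pi^{w_n}\xi^n$ as $p^{rn}\cdot(\text{unit-size})\cdot\xi^n$.

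The inclusion $\ainf\langle \xi/p^r\rangle\subset\bdrrcirc$ is the easy direction. For $a\in\ainf$, Lem \ref{lem:col1} (or directly $v_n(a)\geq v(a)\geq 0$ together with weak multiplicativity) gives
\[ v_n\bigl(a\,\xi^i/p^{ri}\bigr)\geq -ri \text{ for } n\geq i, \qquad v_n\bigl(a\,\xi^i/p^{ri}\bigr)=+\infty \text{ for } n<i. \]
Hence $\vr(a\,\xi^i/p^{ri})\geq \inf_{n\geq i}(-ri+nr)\geq 0$. A sum $\sum_i x_i(\xi/p^r)^i$ with $x_i\in\ainf$ and $x_i\to 0$ $p$-adically therefore has terms with $\vr\geq v(x_i)\to\infty$. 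It converges in $\bdrr$ by Lem \ref{lem:hatbr_complete}, and its limit has $\vr\geq 0$. Inverting $p$ gives $\binf\langle\xi/p^r\rangle\subset\bdrr$, since every element of $\bdrr$ is $p^{-k}$ times an element of $\bdrrcirc$: $\vr(p^k x)=\vr(x)+k$ because $v_n(px)=v_n(x)+1$ in the de Rham case.

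For the reverse inclusion, take $x\in\bdrrcirc$, so that $v_n(x)\geq -nr$ for all $n$. Since $r\in\bbz$, we may apply Lem \ref{lem:col31} with $w_n=-nr$ and $\pi^{w_n}=p^{-nr}$. The sequence $(w_n)$ is decreasing and lies in $v(B)=\bbz$ because $v_\binf$ is integral, and $p^{-nr}$ is a legitimate uniformizing element since $v(y)\geq -nr$ implies $p^{nr}y\in\ainf$. This yields $a_n\in\ainf$ with
\[ x=\sum_{n\geq 0}a_n\Bigl(\frac{\xi}{p^r}\Bigr)^n. \]
The remaining issue is that the coefficients need not tend to $0$, so the Tate-algebra condition is not yet met. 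To fix this, apply the same argument to $x$ with the slightly smaller radius $r'=r-\epsilon$, or better, use the defining property $v_n(x)+nr\to\infty$. This gives integers $c_n\to\infty$, chosen nondecreasing, with $v_n(x)\geq -nr+c_n$. Now apply Lem \ref{lem:col31} with $w_n=-nr+c_n$. Monotonicity of $w_n$ can fail, so replace $c_n$ by $\min(c_n,\lfloor n r/2\rfloor)$; this keeps $c_n\to\infty$ and makes $w_n$ decreasing, at least after also taking $\inf_{m\geq n}$. The result is an expansion $x=\sum p^{c_n}a_n(\xi/p^r)^n$ with coefficients $p^{c_n}a_n\to 0$, as required.

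For (2) and (3), run the same argument inside the data $((\binf)^\hk,v_\binf,\xi)$ and $(\bfb^+_{F,m},v_\binf,\xi)$. Their unit balls are $\wta^+_K$ and $\bfa^+_{F,m}$, and $p^{-nr}$ again serves as uniformizing element. These data satisfy Set-up \ref{axiom:strong_unif} by Def \ref{def:dR_ring_compa}, so Lem \ref{lem:col31} applies there with coefficients in the respective unit balls, and Lem \ref{lem:compavr} guarantees that the valuation $\vr$ is the intrinsic one. The main obstacle is the bookkeeping in the reverse inclusion: one must choose the sequence $w_n$ so that it is monotone, lies in $v(B)$, and makes the resulting coefficients converge to $0$. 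The hypothesis $r\in\bbz$ is used exactly to make $-nr$ an admissible valuation.
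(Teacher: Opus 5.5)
Your argument is correct in substance, but it takes a different route from the paper. The paper proves (3), the others being analogous, by transport along the additive, $\vr$-preserving bijection $s_\xi$ of Prop \ref{prop:set_section_sz}. This bijection is built in the proof of Lem \ref{lem:compavr} from an $F$-linear valuation-preserving section $s\colon F_m\to\bfb^+_{F,m}$. Every $x\in\bfL_{F,m}^\rr$ is then uniquely $\sum_i s(a_i)\xi^i$ with $v_0(a_i)+ir\to+\infty$, which is visibly $\sum_i (p^{ir}s(a_i))(\xi/p^r)^i$ with coefficients tending to $0$. You instead apply Lem \ref{lem:col31} directly inside each datum. This needs only Set-up \ref{axiom:strong_unif} and the discreteness of $v_\binf$, not a valuation-preserving section. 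The paper's route gives in return a canonical expansion and the term-wise formula for $\vr$. Also, $\ldrkr$ and $\bfL_{F,m}^\rr$ are defined intrinsically from their own data, so for (2) and (3) you do not actually need Lem \ref{lem:compavr}.

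The one step that fails as written is your monotonicity repair. For an arbitrary nondecreasing $c_n\le v_n(x)+nr$, capping by $\lfloor nr/2\rfloor$ does not bound the increments of $c_n$ by $r$. For example, with $r=1$, $c_n=0$ for $n\le 10$ and $c_{11}=100$, the capped sequence gives $w_{11}-w_{10}=4>0$. Your suggestion of passing to radius $r-\epsilon$ also does not help: $x$ need not lie in $\hat B^{(r-\epsilon)}$, and $r-\epsilon\notin\bbz$.

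The clean fix is to observe that $v_{n+1}(x)\le v_n(x)$, since a congruence modulo $\xi^{n+2}$ is also one modulo $\xi^{n+1}$. For $x\neq 0$, let $n_0$ be the least index with $v_{n_0}(x)<+\infty$, and take $w_n:=v_{\max(n,n_0)}(x)$. This sequence is integer-valued and nonincreasing, with $v_n(x)\geq w_n$ for all $n$. Lem \ref{lem:col31} then gives $x=\sum_n p^{w_n+nr}a_n(\xi/p^r)^n$ with $a_n\in\ainf$ and $w_n+nr\to+\infty$. This is the Tate-algebra expansion at once, and it handles $\bdrr$ and $\bdrrcirc$ simultaneously. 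Equivalently, defining $c_n:=\inf_{m\ge n}(v_m(x)+mr)$ already makes $w_n=c_n-nr$ nonincreasing, with no cap needed.
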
 
\begin{proof} 
We only prove Item (3), the other items are similar. As argued in Lem \ref{lem:compavr}, there is an (additive) homeomorphism $s_\xi: F_m^{v_0 \leq -r}\langle{\xi}\rangle \to  \bfL_{F,m}^\rr$; note LHS is exactly $F_m\langle{\frac{\xi}{p^r}}\rangle$, it is easy to see its image under $s_\xi$ is $\bfb^+_{F,m}\langle \frac{\xi}{p^r}\rangle$.
\end{proof}
 
 \begin{remark} \label{rem:binf_topo}
 In Cor \ref{lem:present_conv}, we use $p$-adic topology on $\ainf$ (hence $\binf$) for the ``Tate algebra" expression of $\bdrr$. It is equivalent to use the weak topology (equivalently, $(p, \xi)$-adic topology) on $\ainf$. To be precise, given $x \in \bdrp$, the following statements are equivalent:
\begin{enumerate}
\item there exists an expression $x=\sum_{i \geq 0} a_i(\frac{\xi}{p^r})^i$ with $a_i \in \ainf$ converging to $0$ in  $(p, \xi)$-adic topology;
\item there exists an expression $x=\sum_{i \geq 0} x_i(\frac{\xi}{p^r})^i$ with $x_i \in \ainf$ converging to $0$ in  $p$-adic topology.
\end{enumerate}
It suffices to prove Item (1) implies Item (2). Suppose $a_i \in (p, \xi)^{2n_i}\ainf$ where $n_i \geq 0$ is a sequence converging to $+\infty$; thus $a_i=p^{n_i}b_i +\xi^{n_i}c_i$ for some $b_i, c_i \in \ainf$. Thus
\[ x =\sum_{i\geq 0} p^{n_i}b_i (\frac{\xi}{p^r})^i +\sum_{i \geq 0} p^{rn_i}z_i (\frac{\xi}{p^r})^{n_i+i}\]
Note the second summation can be ``re-organized" into some summation $\sum_{i \geq 0} p^{s_i}w_i (\frac{\xi}{p^r})^i $ with $w_i \in \ainf$ and $s_i \to +\infty$,  because the sequence $n_i$ converges to $+\infty$. 
Thus we conclude.
\end{remark} 

 As $\bfL_{F,m}^+$ is a complete DVR, it is easy to see it is isomorphic to $F_m[[\xi]]$ (e.g.~cf.~\cite[Prop 3.4]{GMWdR}). 
A possible confusing point is that,  the   expansion $f \in F_{m}[[\xi]]$ is actually \emph{not} convenient for computation of $v_n$-valuations (e.g., because the unit balls for $v_n$ are \emph{not} induced by the integral structure of $F_m$ but rather that of $\binf$!) Indeed, the actually useful ``expansion" will be the  ``\emph{minimal expansions}"  in the following Construction \ref{cons:minimal_writing} (essentially from \cite[\S 4.1]{Col08a}).
 

    \begin{construction}\label{cons:minimal_writing} Let $m \geq 0$. 
Let $T_m=[\varepsilon]^{1/p^m}-1$, and 
let \[P_m(T_m):=\frac{(1+T_m)^{p^{m}}-1}{(1+T_m)^{p^{m-1}}-1}  = \xi.\] 
\begin{enumerate}
\item Let $\bfb_{F,m}^{+, < \deg P_m}$ be the additive subgroup of $\bfb_{F,m}^{+}=\o_F[[T_m]][1/p]$ consisting of polynomials of $T_m$ of degree bounded by $\deg P_m(T_m)=p^{m-1}(p-1)$; note  when $m=0$, then the ``expression" $\deg P_0(T_0):=p^{-1}(p-1)$ still makes sense: in this case, $\bfb_{F,0}^{+, < \deg P_0}$ is exactly $F$. 
Note the surjection $\theta: \bfl_{F,m}^+ \to F_m$  restricts to an \emph{additive} group isomorphism  $\bfb_{F,m}^{+, < \deg P_m} \simeq F_m$.

\item 
As $\bfL_{F,m}^+ \simeq F_m[[\xi]]$  and   $\bfb_{F,m}^{+, < \deg P_m} \simeq F_m$, we have the completed decomposition
\[ \bfl_{F,m}^+ =\wh{\bigoplus}_{i \geq 0} \bfb_{F,m}^{+, < \deg P_m}\cdot (P_m(T_m))^i.\] 
Here the completed direct sum is taken with respect to Fr\'echet topology. 
(Caution: the summands are not $\Gamma_{F,m}$-stable).
For each $n \geq 0$, we further have a direct sum decomposition
\[ \bfl_{F,m}^+/\xi^{n+1} =\bigoplus_{i=0}^n \bfb_{F,m}^{+, < \deg P_m}\cdot (P_m(T_m))^i\] 
As such, given $x=\sum_i x_iP_m^i \in \bfl_{F,m}^+$, we have
\[v_n(x) =\inf_{i \leq n} v(x_i)\]
where $v=v_\binf$. 

\item We thus have an induced decomposition
\[ \bfl_{F,m}^\rrcirc=\wh{\bigoplus}_{i \geq 0} \bfb_{F,m}^{+, < \deg P_m, v\geq -ir}\cdot (P_m(T_m))^i\]
where the extra superscript $ v\geq -ir$ signifies taking elements with valuation $\geq -ir$. Given $x=\sum_i x_iP_m^i \in \bfl_{F,m}^\rr$, we have
\[ \vr(x)  =\inf_i \vr( x_iP_m^i)= \inf_i \{v(x_i)+ir\}\]
that is: it can be computed \emph{term-wise.}
\end{enumerate}
\end{construction}

Because of validity of Lem \ref{lem:compavr}, the following definition is valid.

 \begin{defn}[Imperfect de Rham period ring, general ramified  case] \label{def:ram_lkmr}
  Recall $K/F$ is a finite (possibly ramified) extension. 
  Define $\bfl_{K,m}^+:=K_{m}[[\xi]]$ which embeds into $\bdrplus$, and define 
\[\bfl_{K,m}^\rr:= \bfl_{K,m}^+ \cap \bdrr\]   
(The reason we cannot define it following pattern in Def \ref{def:dR_ring_compa}(3) is because it is a well-known issue that in general ``$\bfa_K^+$" is not a ``correct" ring to consider when $K$ is ramified).
\end{defn}
The structure of $\bfl_{K,m}^\rr$ will be the content of next subsection. 

\subsection{Ramification theory and $K$ ramified case} \label{subsec:ramification_bc}
In this subsection, we first review some results of ramification theory developed in \cite{Col08} and especially in \cite{Col08a}. Then we show the ``$K$-rings" from Def \ref{def:ram_lkmr} are base changes of the  $F$-rings.

 \begin{defn}
 We recall notations in ramification theory. 
We follow normalizations in \cite[\S 0.1]{Col08a} (with \emph{some}  but not all numberings off by $1$ from that in \cite[\S 4.1]{Col08}).
\begin{enumerate}
\item  Recall $F$ is absolutely unramified, and $K/F$ is finite extension. 
Let $G_F:=\gal(\barK/F)$. Let $G_F^s \subset G_F$ be the upper numbering ramification subgroups with $s \geq -1$ a real number. Recall $G_F^{-1}=G_F$, and $G_F^{0}=I_F$ is the inertia subgroup. Following normalization in  \cite{Col08a}, for $r \geq 0$, define
\[ \barF^{(r)}: =\bigcap_{s>r-1} \barF^{G_F^s} \supset \barF^{G_F^{r-1}} \]
For example, 
\[ \barF^{(1)} = \text{ maximal tamely ramified extension of $F$ }  \supsetneq \barF^{G_F^{0}}  = \text{ maximal unramified extension of $F$ }  \]

\item  For $x\in \barF$, define
\[ c_{\Art, F}(x): = \inf\{ r; x \in \barF^{(r)} \} =\inf \{ s+1; x \in \barF^{G_F^s}  \}   \]
For $L/F$ an   extension, define
\[ c_{\Art, F}(L):=\inf_{x\in L} \{c_{\Art, F}(x)\}\]
Caution: in \cite[\S 4.1]{Col08}, one can define a function ``$c(L)$" for $L/F$ an   extension. We have
\begin{equation}\label{eq:c_art_c_F}
 c_{\Art, F}(L) =c(L)+1 
\end{equation}  
\end{enumerate}  
 \end{defn}

\begin{lemma}\hfill
\begin{enumerate}
\item $c_{\Art, F}(L)=0$ if and only if $L/F$ is unramified.
\item $c_{\Art, F}(F_m)=m$.
\item Let $K/F$ be a finite extension. Let $m \geq c_{\Art, F}(K)$. Then
$c_{\Art, F}(K_m)=m$.

\item Suppose $K/F$ is a finite Galois extension, and $m \geq c_{\Art, F}(K)$. Then we have $\gal(K_m/F_m) \simeq \gal(\kinfty/F_\infty) \simeq H_F/H_K$.
\end{enumerate} 
\end{lemma}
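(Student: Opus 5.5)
The plan is to reduce everything to the classical computation of the upper numbering filtration on $\Gal(F_\infty/F)$, together with the compatibility of upper numbering with quotients (Herbrand). One point needs fixing first. For $c_{\Art,F}(L)$ to measure the ramification of $L$, I will read the definition as the smallest $r$ with $L\subset \barF^{(r)}$. This is $\sup_{x\in L} c_{\Art,F}(x)$; the $\inf$ as written would always be $0$, since $c_{\Art,F}(x)=0$ for $x\in F$. This matches Colmez's conductor via \eqref{eq:c_art_c_F}. In this reading, $c_{\Art,F}(L)\le r$ means exactly that $G_F^s$ acts trivially on $L$ for every $s>r-1$.

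For (1), recall that $G_F^s=G_F^0=I_F$ for $-1<s\le 0$, so $\barF^{(0)}=\bigcap_{s>-1}\barF^{G_F^s}=\barF^{I_F}$. Thus $c_{\Art,F}(L)=0$ iff $L\subset \barF^{I_F}$, i.e.\ iff $L/F$ is unramified. (If $c_{\Art,F}(L)=0$ only as an infimum, the same equality of fixed fields still applies.)

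For (2), I will use that $F$ is absolutely unramified. Hence $\Gal(F_\infty/F)\simeq \zptimes$ via the cyclotomic character, and the upper ramification filtration is the classical one from Serre, \emph{Local Fields} IV \S4 (valid after unramified base change). It is given by $\Gal(F_\infty/F)^s=1+p^{n}\zp$ for $n-1<s\le n$ and $n\ge 1$, and equals $\zptimes$ for $-1<s\le 0$. Since upper numbering passes to quotients, the image of $G_F^s$ in $\Gal(F_\infty/F)$ is this group. The subgroup fixing $F_m$ is $1+p^m\zp$, so $G_F^s$ fixes $F_m$ iff $s>m-1$. This gives $c_{\Art,F}(F_m)=m$; the bound is sharp because for $m-2<s\le m-1$ the group $1+p^{m-1}\zp$ does not fix $F_m$.

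For (3), write $K_m=K\cdot F_m$. For $s>m-1\ge c_{\Art,F}(K)-1$, $G_F^s$ fixes both $K$ and $F_m$, hence fixes $K_m$, so $c_{\Art,F}(K_m)\le m$. Conversely $F_m\subset K_m$, so by (2) $c_{\Art,F}(K_m)\ge m$.

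For (4), with $K/F$ Galois, set $L:=K\cap F_\infty$. This is finite and Galois over $F$, and $c_{\Art,F}(L)\le c_{\Art,F}(K)\le m$. Hence $L$ is fixed by the image of $G_F^s$ for all $s>m-1$, which is $1+p^m\zp=\Gal(F_\infty/F_m)$, so $L\subset F_m$. Therefore $K\cap F_m=K\cap F_\infty$, and restriction gives $\Gal(K_m/F_m)\simeq\Gal(K/K\cap F_m)=\Gal(K/K\cap F_\infty)\simeq\Gal(K_\infty/F_\infty)$. Finally, $\Gal(K_\infty/F_\infty)=H_F/H_K$ by Galois theory, since $H_K\subset H_F$ are the subgroups fixing $K_\infty$ and $F_\infty$.

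The only real obstacle is bookkeeping of normalizations. I must check that the off-by-one conventions of \cite{Col08a} (upper numbering starting at $-1$, with $\barF^{(r)}$ cut out by $s>r-1$) match Serre's cyclotomic filtration exactly as stated. I also need the description of subfields of $F_\infty$ through the filtration, rather than assuming they are all of the form $F_j$.
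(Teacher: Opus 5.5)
Your proof is correct. It takes a more self-contained route than the paper, which proves the lemma only by citing Colmez [Col08, Lemmas 4.1, 4.2, Cor.~4.3] and converting through $c_{\Art,F}(L)=c(L)+1$.

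You rebuild all four items from two inputs: Serre's explicit upper filtration on $\Gal(F_\infty/F)\simeq\zptimes$, and Herbrand's compatibility of upper numbering with quotients.
\begin{enumerate}
\item Item (1) follows from $G_F^s=I_F$ for $-1<s\le 0$.
\item Item (2) follows by reading off when the image $1+p^k\mathbb{Z}_p$ lies in $\Gal(F_\infty/F_m)$.
\item Item (3) is a compositum and monotonicity argument.
\item Item (4) comes from showing $K\cap F_\infty\subset F_m$.
\end{enumerate}

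The citation is shorter. Yours has the advantage of making the normalization check explicit, and that check comes out right. With $G_F^{-1}=G_F$, $G_F^0=I_F$, and $\barF^{(r)}$ cut out by the condition $s>r-1$, Serre's filtration gives exactly $c_{\Art,F}(F_m)=m$. This agrees with the paper's remark that $\barF^{(1)}$ is the maximal tamely ramified extension.

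Your approach also brings out the typo $\inf$ for $\sup$ in the paper's definition of $c_{\Art,F}(L)$. Your reading, equivalently the least $r$ with $L\subset\barF^{(r)}$, is the intended one.

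Your closing worry about describing subfields of $F_\infty$ is already handled by your own proof of (4). A subfield of $F_\infty$ fixed by the image of $G_F^s$ for $s\in(m-1,m]$ is fixed by $1+p^m\mathbb{Z}_p=\Gal(F_\infty/F_m)$. By the Galois correspondence it therefore lies in $F_m$, so no classification of subfields is needed.

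One edge case comes from the statement itself, not from your argument. For $p=2$ one has $\varepsilon_1=-1$, so $F_1=F$ and $1+2\mathbb{Z}_2=\mathbb{Z}_2^\times$. Hence $c_{\Art,F}(F_1)=0$, and items (2) and (3) can fail at $m=1$; take $K=F$ in (3). Items (1) and (4) are unaffected.
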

\begin{proof}
See \cite[Lemmas 4.1, 4.2, Cor 4.3]{Col08}, bearing in mind our normalization Eqn.~ \eqref{eq:c_art_c_F}.
\end{proof}

The following is a main theorem of \cite{Col08a}. Again, recall our  $\bdrr$ is ``$\bfb_{\dR, F}^{(r-)}$" in \cite{Col08a}, cf.  Example \ref{exam:unify}.

\begin{theorem}[\emph{\cite[Thm. 0.1]{Col08a}}] \label{thm:col08a_ram}
\[\bdrr \cap \barF = \bigcup_{t<r} \barF^{(t)} =  \bigcup_{t<r} \bigcap_{s>t-1}  \barF^{G_F^s} =    \bigcup_{t<r}   \barF^{G_F^{t-1}}    \]
In particular, for $x\in \overline{F}=\barK$, $x\in \bdrr$ if and only if $c_{\Art, F}(x)  <r$.
\end{theorem}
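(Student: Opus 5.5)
This is \cite[Thm.~0.1]{Col08a}, and I would reprove it by translating membership $x \in \bdrr$ into a lifting property for $\o_L$-algebra maps. The lifting property is then matched with Fontaine's characterization of upper ramification via ``property $(P_m)$'' (equivalently, the Abbes--Saito tubular-neighbourhood criterion). Fix $x \in \barF$ and a finite Galois extension $L/F$ containing $x$, and choose a presentation $\o_L=\o_F[X]/(P)$ (or several generators if $\o_L$ is not monogenic). Since $\bdrr \cap \barF$ is a ring, it suffices to decide when $\o_L \subset \bdrr$; the statement for individual $x$ then follows by taking $L=F(x)$ Galois-closed and using the Galois stability of $\bdrr$.

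\emph{Step 1: reformulate via $\xi$-adic lifting.} The embedding $\barF \into \bdrplus$ sends $x$ to the unique root $\tilde x$ of $P$ lifting $\theta(\tilde x)=x\in \o_C$ (Hensel's lemma in the complete DVR $\bdrplus$). By Cor \ref{lem:present_conv}, $\bdrrcirc=\ainf\langle \xi/p^r\rangle$, so $\tilde x\in\bdrr$ iff, after scaling, the $\o_C$-point of $\mathrm{Spec}\,\o_L$ lifts to an $\o_F$-algebra map $\o_L \to \ainf\langle \xi/p^r\rangle[1/p]$. Using the section $s$ of Example \ref{ex:section_val} and Prop \ref{prop:set_section_sz}, I would rewrite this as an estimate on the $v_n$: one needs $v_n(\tilde x)+nr\to\infty$. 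I would compute these by running Newton's iteration from $y_0=s(x)$ for $P$ in $\bdrr$, controlling $v^{(r)}(P(y_0))$ versus $v^{(r)}(P'(y_0))$. Here $v(P'(x))$ is the different.

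\emph{Step 2: compare with upper ramification.} Newton's method alone only gives the crude sufficient bound ``$r$ large compared to the different''. To get the sharp threshold $c_{\Art,F}(x)<r$, I would instead invoke Fontaine's criterion. The group $G_F^{s}$ acts trivially on $L$ iff, for every $\o_F$-algebra $A$ and every $m>s$ in the appropriate normalization, any map $\o_L\to A/\mathfrak a^{\ge m}$ lifts to $A$ when it lifts modulo smaller ideals. Equivalently, in Abbes--Saito form, the rigid tube $\{y : v(P(y))\ge s'\}$ splits into $[L:F]$ disks.

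The key comparison is that the ``disk of radius $r$ around the de Rham point'' sees precisely the tube of level $\approx r$. On one side, a point of $\ainf\langle\xi/p^r\rangle$ specializes, after inverting $p$, to points $y\in\o_C$ with $v(y-x)$ controlled by $r$, since $\xi/p^r$ ranges over elements of valuation $\ge 0$ and $\xi$ maps to $0$ under $\theta$. On the other side, Krasner-type arguments in $\o_C$ give the converse. Concretely, if $c_{\Art,F}(x)<r$, the tube component containing $x$ is a disk isomorphic to a ball of radius $\le p^{-r}$, and its coordinate provides the convergent lift. If $c_{\Art,F}(x)\ge r$, the component either contains a conjugate of $x$ or fails to be a disk, which obstructs the lift.

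\emph{Main obstacle.} I expect the hardest step to be the exact matching of normalizations, i.e.\ getting the threshold to be $<r$ and not $\le r$ or $<r\pm 1$. The reasons are the shifts $c_{\Art,F}=c+1$ in \eqref{eq:c_art_c_F}, the strict inequality $s>t-1$ in $\barF^{(t)}$, and the fact that $v^{(r)}$ is not multiplicative (Rem \ref{rem:mult_vr}). I would first test the candidate criterion on $F_m$: there $c_{\Art,F}(F_m)=m$, and by Construction \ref{cons:minimal_writing}, $\varepsilon_m=\theta([\varepsilon]^{1/p^m})$ lifts via the minimal expansion $T_m$ with $P_m(T_m)=\xi$. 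This forces $\varepsilon_m\in\bdrr$ exactly when $r>m$, and I would use this computation to calibrate the constants before treating general $L$ by the tube argument.
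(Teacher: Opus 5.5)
The paper gives no proof of this statement. It is imported from \cite[Thm.~0.1]{Col08a}, and the only piece of Colmez's argument that appears in the paper is the expansion of \cite[Prop.~4.7(1)]{Col08a}, quoted in the proof of Lemma \ref{cor:bdrFKr_new}. Judged against what a proof requires, your outline has a genuine gap.

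Your geometric intuition is sound. For $L/F$ totally ramified with Eisenstein polynomial $P$ and $Y=[\pi^\flat]$, one has $P(Y)\in\xi\ainf^\times$. So the region $|\xi|\le|p|^r$ corresponds to the tube $v(P(y))\ge r$ at level exactly $r$. However, the step carrying the whole content of the theorem, namely that ``the disk of radius $r$ around the de Rham point sees precisely the tube of level $\approx r$'', is asserted rather than proved.

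For sufficiency, a rigid-analytic inverse of $P$ on a tube component is a power series with coefficients in $L$ (or $C$). It is not an element of $\ainf\langle\xi/p^r\rangle[1/p]$. To produce an element of $\bdrr$ you must expand around $Y$: $\tilde x=\sum_{k\ge0}Q_k(Y)P(Y)^k$ with $Q_k\in F[X]$ and $\deg Q_k<\deg P$. This is exactly the minimal-expansion framework you invoke only for the calibration. Applying Prop \ref{prop:set_section_sz} with $z=P(Y)$ and a valuation-preserving section adapted to the $\o_F$-basis $1,\pi,\dots,\pi^{e-1}$ (as in Construction \ref{cons:minimal_writing}) gives $v_n(\tilde x)=\inf_{k\le n}v(Q_k)$. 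The theorem then becomes the statement that $v(Q_k)+kr\to+\infty$ if and only if $c_{\Art,F}(x)<r$.

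Sufficiency therefore needs a non-accumulating bound of the shape $Q_k\in p^{\lfloor -kc\rfloor}\o_F[X]$ with $c=c_{\Art,F}(x)$. This is where ramification enters, and it is what \cite[Prop.~4.7(1)]{Col08a} supplies; necessity needs its sharpness. Your phrase ``its coordinate provides the convergent lift'' skips exactly this estimate. The Newton iteration of Step~1 cannot replace it: it divides by powers of $P'(Y)$, which is not a unit of $\ainf$ (its image under $\theta$ generates the different), and as you note it only gives a crude threshold.

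For the converse there is no mechanism at all. Any map out of $\ainf\langle\xi/p^r\rangle$ extending $\theta$ kills $\xi/p^r$, so ``$\xi/p^r$ ranging over elements of valuation $\ge0$'' means passing to other untilts $C'$ with $v(\theta'(\xi))\ge r$. There $\theta'(\tilde x)$ would merely be \emph{some} root of $P$ in $C'$ near $\theta'(Y)$. That is no contradiction, since $C'$ is algebraically closed. ``The component fails to be a disk, which obstructs the lift'' is therefore not an argument. You need an actual lower bound showing $v(Q_k)+kr\not\to+\infty$ when $c_{\Art,F}(x)\ge r$, or some other genuine obstruction.

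Several further points need fixing:
\begin{itemize}
\item The criterion you attribute to Fontaine (arbitrary $\o_F$-algebras $A$, lifting ``when it lifts modulo smaller ideals'') is not his property $(P_m)$. That property concerns $\o_E/\mathfrak{a}_E^{m}$ for algebraic extensions $E/F$, and in any case your outline never connects it to $\bdrr$.
\item The calibration step is not carried out as written. The image of $\varepsilon_m$ in $\bdrplus$ is $[\varepsilon^{1/p^m}]\exp(-t/p^m)$, not $1+T_m=[\varepsilon^{1/p^m}]$, which lies in $\ainf$ trivially. Construction \ref{cons:minimal_writing} only reduces the question to the growth of the minimal-expansion coefficients; it does not compute them.
\item Even this simplest case needs care. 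For $p$ odd, the naive estimate $v^{(r)}(t^k)\ge k\,v^{(r)}(t)=kr$ only yields $r>m+\frac{1}{p-1}$. Reaching the true threshold $r>m$ requires $v^{(r)}\big(([\varepsilon^{1/p}]-1)^{p-1}\big)=1$ for $r>1$, a consequence of the non-multiplicativity of $v^{(r)}$.
\item Cor \ref{lem:present_conv} is stated only for integral $r$, while the theorem concerns all real $r>0$. The description $\bdrrcirc=\ainf\langle\xi/p^r\rangle$ used in Step~1 must be adjusted accordingly.
\end{itemize}
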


\begin{cor}\label{cor:elt_in_bdrr}
\begin{enumerate}
 \item   $F_{r-1} \subset \bdrr$, if $r \geq 2$ is an integer.

\item $K \subset \bdrr$, if $r > c_{\Art, F}(K)$.

\item   Suppose $r > c_{\Art, F}(K)$, then   $\bfl_{K,m}^\rr$ contains $K_{\inf\{m, r-1 \}}$.
\end{enumerate}
\end{cor}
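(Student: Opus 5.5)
All three items will be read off from Theorem \ref{thm:col08a_ram}, which says that an element $x\in\barF=\barK$ lies in $\bdrr$ exactly when $c_{\Art, F}(x)<r$. So the whole task is to compute Artin conductors of the relevant elements and fields, using the lemma on $c_{\Art,F}$ stated just above.

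For Item (1), I would use $c_{\Art, F}(F_{r-1})=r-1$ from that lemma. This value is an infimum over elements. It should be read as saying that every element $x\in F_{r-1}$ lies in $\barF^{G_F^{s}}$ for all $s> r-2$; this holds because $F_{r-1}$ is fixed by the upper ramification group $G_F^{r-2}$ (the standard cyclotomic computation $\gal(F_m/F)^{s}=\gal(F_m/F_{\lceil s\rceil +1})$, in the normalization matching the lemma). Hence $c_{\Art, F}(x)\le r-1<r$ for every $x\in F_{r-1}$, and Theorem \ref{thm:col08a_ram} gives $F_{r-1}\subset\bdrr$.

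The main subtlety, and the step I expect to need the most care, is making precise what $c_{\Art,F}(L)$ means for a field $L$. I will interpret it as the least $c$ such that $L\subset\barF^{(c)}$, i.e.\ $L\subset\bigcap_{s>c-1}\barF^{G_F^s}$; this is consistent with Colmez's $c(L)$. With that reading, if $c_{\Art,F}(K)=c$, then every $x\in K$ satisfies $c_{\Art,F}(x)\le c$.

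Item (2) then follows directly: if $r>c_{\Art,F}(K)$, every $x\in K$ has $c_{\Art,F}(x)\le c_{\Art,F}(K)<r$, so $K\subset\bdrr$ by the theorem.

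For Item (3), set $m'=\inf\{m,r-1\}$. Since $\bfl_{K,m}^\rr=\bfl_{K,m}^+\cap\bdrr$ and $K_{m'}\subset K_m\subset \bfl_{K,m}^+=K_m[[\xi]]$, it suffices to show $K_{m'}\subset\bdrr$. The field $K_{m'}=K\cdot F_{m'}$ is generated by $K$ and $F_{m'}$, and $\bdrr$ is a ring by Lemma \ref{lem:hatbr_subring}, so it suffices to show both $K$ and $F_{m'}$ lie in $\bdrr$.
\begin{itemize}
\item $K\subset\bdrr$ by Item (2).
\item If $m'=0$, then $F_{m'}=F$, which lies in $\barF^{(0)}$, so $F\subset\bdrr$.
\item If $m'\ge 1$, then $m'\le r-1$, so $c_{\Art,F}(F_{m'})=m'<r$ and $F_{m'}\subset\bdrr$ as in Item (1).
\end{itemize}
No integrality of $r$ is needed in Item (3).
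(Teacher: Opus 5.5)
Your proposal is correct and follows the paper's proof. The paper likewise reads Items (1)--(2) directly off Theorem \ref{thm:col08a_ram} and obtains Item (3) from $\bfl_{K,m}^\rr=\bfl_{K,m}^+\cap\bdrr$, and your reading of $c_{\Art,F}(L)$ as the least $c$ with $L\subset\barF^{(c)}$ (in effect a supremum over $x\in L$, not the literal infimum) is the right one.

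One slip in your parenthetical on Item (1), which does not affect the argument. In the normalization giving $c_{\Art,F}(F_m)=m$, one has $\gal(F_m/F)^s=\gal(F_m/F_{\lceil s\rceil})$ for $0\le s\le m$, not $\gal(F_m/F_{\lceil s\rceil+1})$. So $F_{r-1}$ is fixed by $G_F^s$ only for $s>r-2$, and not by $G_F^{r-2}$ itself. That weaker statement is exactly what you use, so the conclusion $c_{\Art,F}(x)\le r-1<r$ stands.
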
 
\begin{proof}
The first two items follow directly from Thm \ref{thm:col08a_ram}; Item (3) follows from above via Def \ref{def:ram_lkmr}.
\end{proof}

 The following ``base change" lemma will be repeatedly used in the paper. 
\begin{lemma}[Base change of convergent rings]\label{lem:LdRK}
Let $c \geq c_{\Art, F}(K)$ be an integer.   
      We have the following.
\begin{enumerate}
\item $K_m=F_m\otimes_{F_c} K_c$, for any $m \geq c$. Also, $ K_{\infty} =   F_{\infty}\otimes_{F_c}K_c$ and  $\widehat K_{\infty} = \widehat F_{\infty}\otimes_{F_c}K_c.$

\item $\bfL_{\dR,K}^+ = \bfL_{\dR,F}^+\otimes_{F_c} K_c$.

\item  $\bfL^{(r)}_{\dR,K}  = \bfL^{(r)}_{\dR,F} \otimes_{F_c} K_c$ for any $r \geq c+1$. 
  
   \item  $\bfL_{K,m}^+ = \bfL_{F,m}^+ \otimes_{F_c}K_c $ for any $m \geq c$.
   
 \item  $\bfL_{K,m}^\rr = \bfL_{F,m}^\rr\otimes_{F_c}K_c $ for any $r \geq c+1, m \geq c$.
 \item Suppose $L/K$ is a finite Galois extension, suppose further $r, m \geq   c_{\Art, F}(L)+1$, then the extensions
 \[ L_m/K_m, \quad \bfL_{\dR,L}^+/\bfL_{\dR,K}^+, \quad \bfL^{(r)}_{\dR,L}/\bfL^{(r)}_{\dR,K}, \quad \bfL_{L,m}^+/\bfL_{K,m}^+, \quad \bfL_{L,m}^\rr/\bfL_{K,m}^\rr\]
 are all finite Galois \'etale extensions with Galois groups isomorphic to $H_K/H_L\simeq \gal(L_\infty/\kinfty)$.  
\end{enumerate} 
\end{lemma}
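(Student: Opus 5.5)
The plan is to establish the field-level statement (1) first and then transport it to each ring by combining the $s_\xi$-presentations of \S\ref{subsec:vr_compa} with the fact that $K_c/F_c$ is finite.

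For (1), the key input is that $K_c$ and $F_\infty$ are linearly disjoint over $F_c$. This follows from the lemma above: since $c\geq c_{\Art,F}(K)$, we have $c_{\Art,F}(K_m)=m$. Equivalently, $K_c/F_c$ is totally ramified in the appropriate sense and $F_m/F_c$ is totally ramified cyclotomic, so $[K_m:F_m]=[K_c:F_c]$; this is the statement $\gal(K_m/F_m)\simeq H_F/H_K$ in the Galois case, and in general one passes to the Galois closure. Therefore $F_m\otimes_{F_c}K_c\to K_m$ is an isomorphism. Taking the colimit gives the statement for $K_\infty$. Completing is harmless, because $K_c/F_c$ is a finite free extension: $\widehat F_\infty\otimes_{F_c}K_c$ is finite free over the complete field $\widehat F_\infty$, hence complete, and it contains $K_\infty$ densely.

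For (2) and (4), I use $\bfl_{K,m}^+=K_m[[\xi]]$ together with $\bfl_{F,m}^+\simeq F_m[[\xi]]$ and (1). Since $K_c$ is finite free over $F_c$, the tensor product commutes with power series: $F_m[[\xi]]\otimes_{F_c}K_c=(F_m\otimes_{F_c}K_c)[[\xi]]=K_m[[\xi]]$. For (2) I work modulo $t^n$ and apply the same reasoning: $\ldrplusk/t^n$ and $\bfl_{\dR,F}^+/t^n$ are finite-length successive extensions of $\widehat K_\infty(i)$ resp. $\widehat F_\infty(i)$, and the natural map $\bfl_{\dR,F}^+/t^n\otimes_{F_c}K_c\to \ldrplusk/t^n$ is an isomorphism on gradeds by (1), hence an isomorphism. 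I then pass to the inverse limit, which commutes with a finite free tensor product.

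The convergent statements (3) and (5) are the main obstacle. By Cor \ref{cor:elt_in_bdrr}, since $r\geq c+1>c_{\Art,F}(K)$, both $K$ and $F_c$ lie in $\bdrr$, so $K_c\subset\bdrr$. Hence the tensor product maps into $\bfl_{K,m}^+\cap\bdrr=\bfl_{K,m}^\rr$, and it maps injectively by (4). The difficulty is surjectivity: given $x\in\bfl_{K,m}^\rr$, write $x=\sum_j x_j e_j$ with $x_j\in\bfl_{F,m}^+$ using an $F_c$-basis $(e_j)$ of $K_c$, and show that each $x_j$ is convergent.

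For this I would use the trace. Since $K_c/F_c$ is finite separable, the trace form is nondegenerate, so there is a dual basis $(e_j^*)\subset K_c$ with $x_j=\mathrm{Tr}(x e_j^*)$. The trace here is computed as $\sum_\sigma\sigma(\cdot)$ over embeddings, realized by elements of $H_F$ acting on $\bdrplus$. Each such $\sigma$ is an isometry for $v_\binf$, hence preserves $v_n$ and $\bdrr$. The dual basis also lies in $\bdrr$, because it lies in $K_c$, and $\bdrr$ is a ring. Therefore $x_j\in\bfl_{F,m}^+\cap\bdrr=\bfl_{F,m}^\rr$ by Lemma \ref{lem:compavr}, using that $H_F$ fixes $\bfl_{F,m}^+$ and that conjugates of elements of $K_c$ stay in $\overline F^{(t)}$ for $t<r$. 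The same argument applies to (3), with $\ldrk$ and $H_F/H_K$.

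For (6), I would apply (1)--(5) with $L$ in place of $K$ and with $c=c_{\Art,F}(L)$, so that every ring for $L$ is the corresponding $F$-ring tensored with $L_c$ over $F_c$. Then $L_m/K_m$ is Galois with group $\gal(L_\infty/K_\infty)\simeq H_K/H_L$ by the ramification lemma, and it is étale because the extension is separable of fields. Each ring extension in (6) is the base change of $L_m/K_m$ (or of $L_c/K_c$) along a flat map. Finite Galois étale extensions are stable under such base change, and the group acts through $H_K/H_L$, which gives the claim.
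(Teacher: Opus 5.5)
Your proposal is correct and follows the paper's proof. You get (1) from Colmez's ramification results and (2), (4) by dévissage. For the convergent items you use the trace pairing $K_c\times K_c\to F_c$ realized as $\sum_{\sigma\in H_F/H_K}\sigma$, together with the $G_F$-stability of $\bdrr$, the inclusion $K_c\subset\bdrr$, and Lemma \ref{lem:compavr}. You then get (6) by base change.

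The only difference is cosmetic: you run the trace argument directly for (5), whereas the paper deduces (5) by intersecting (3) with (4). Also, your "totally ramified" aside in (1) is not a valid justification. The degree equality $[K_m:F_m]=[K_c:F_c]$ really comes from $\gal(K_m/F_m)\simeq H_F/H_K$, which you also cite.
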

\begin{proof}
Item (1): see \cite[Cor. 4.3]{Col08}.
Items (2) and (4) follow  from  Item (1) by standard d\'evissage argument (e.g.~ by modulo $\xi$-powers).
Once Item (3) is proved, then Item (5) follows by taking ``intersection" of Items (3) and (4), using Def \ref{def:ram_lkmr}. Finally, Item (6) is trivial consequence of above items. 

Thus, it remains to prove Item (3). 
There is a perfect pairing $K_c \times K_c \to F_c$ defined by
\[  (x, y) \mapsto \Tr(xy):=\sum_{\sigma \in H_F/H_K} \sigma(xy)\]
where $\sigma$ runs through any set of representatives of $H_K < H_F$. Let $e_1, \cdots, e_d$ be any basis of $K_c$ over $F_c$, and let $f_1, \cdots, f_d$ be the dual basis with respect to the above pairing. 
The pairing extends to a perfect pairing of $\bfL_{\dR,K}^+$ over $\bfL_{\dR,F}^+$, using the same formula $\Tr =\sum_{\sigma \in H_F/H_K}$.  
Given any $x \in \bfL_{\dR,K}^\rr$, which is of form $x=\sum_{i=1}^d x_i \otimes e_i$ with $x_i \in \bfL_{\dR,F}^+$.  Then we conclude by noting
\[ x_i =\Tr(xf_i) \subset \bfL_{\dR,F}^+ \cap \bdrr = \bfL_{\dR,F}^\rr; \]
here $\Tr(xf_i)$ falls inside $\bdrr$ because $xf_i \in \bdrr$ (recall $F_c \subset \bdrr$ by Cor \ref{cor:elt_in_bdrr}) and because $\bdrr$ is $G_F$-stable.\end{proof} 


  \begin{construction} \label{cons:vkrr_val}
  For $K/F$ a finite extension, let   $c =\lceil c_{\Art, F}(K)\rceil$; suppose $r\ge c +1$. 
  \emph{Fix} a basis for the finite dimensional $F_c$-vector space $K_c$, using which we can define a Gauss valuation $v^\rr_K$ on $\bfL^{(r)}_{\dR,K}  =\bfL^{(r)}_{\dR,F} \otimes_{F_c} K_c$. This is (topologically)  \emph{equivalent} to the $\vr$-valuation on $\bfL^{(r)}_{\dR,K}$ (by finite dimensionality of $K_c/F_c$).
   Similarly one can define $v^\rr_K$ on $\bfL_{K,n}^\rr = \bfL_{F,n}^\rr\otimes_{F_c}K_c $ for any $n \geq c_{\Art, F}(K)$. Note the embedding $\bfL_{K,n}^\rr \into \bfL^{(r)}_{\dR,K}$ is compatible with this $v^\rr_K$-valuation on both sides by Lem \ref{lem:compavr}.   
  \end{construction}

\subsection{Colimit rings as stalks}
Given a colimit of topological spaces $A_\infty=\colim_i A_i$, recall the \emph{colimit topology} is such that a subset $U \in A$ is open if and only if $U \cap A_i$ is open in $A_i$ for each $i$. Thus   $A_\infty \to B$ is continuous if and only if the restriction $A_i \to B$  is continuous for each $i$.

\begin{defn} \label{def:colim_dag_ring}
Define the following topological  rings equipped with \emph{colimit topologies}.
\[\bdrpd:=\colim_{r>0} \bdrr\]
\[ \ldrpd:= \colim_{r>0}  \bfl_{\dR, K}^\rr\]
\[ \bfl_{K,m}^{+, \dagger}: = \colim_{r>0} \bfl_{K,m}^\rr\]
\[ \bfl_{K,\infty}^{+, \dagger}: =\colim_m \bfl_{K,m}^{+, \dagger} =\colim_{r,m}\bfl_{K,m}^\rr \]
where each ``$\hatbr$"-rings are equipped with $\vr$-topologies. Thus, all these colimit rings are LB spaces, and their inclusion maps to $\bdrp$  are  \emph{continuous} (cf. Rem \ref{rem:bdrpd_cont_map} for more comments and clarifications concerning topologies).
 \end{defn}

These colimit rings can be ``visualized" using the following notations.

\begin{notation} \label{nota:bfodagger} 
   Given a subfield $E \subset C$ (with induced valuation), let $\bD$ be the unit disk over $E$ with coordinate $z$. 
\begin{enumerate}
\item Let $\bfo_r= \bfo_r(E,z)= E\za \frac{z}{p^r} \ya$ be the convergent functions on the closed disk of radius $p^{-r}$.

\item Let $\bfO^{\dagger}=\bfO^{\dagger}(E,z)$ be the stalk of convergent functions at $0$. That is
$\bfO^{\dagger}(E,z)=\colim_r E\za \frac{z}{p^r} \ya$. 


\item Let $\widehat \bfO=\widehat \bfO(E,z)= E[[z]]$  be the formal stalk   at $0\in \bD$.  We have
$ \widehat \bfO =\projlim_n \bfO^\dagger/z^n$.
\item Note we have
 $ \bfo_n/z \simeq \bfo^\dagger/z \simeq \wh{\bfo}/z \simeq E.$

 \item \label{itemGLdagger}  Using $z$-adic valuation, both  $\bfo^\dagger$ and  $\wh{\bfo}$ are discrete valuation rings. As a consequence, by considering determinants, we have
$$\GL_r(\widehat \bfO)\cap \Mat_r(\bfO^{\dagger})  = \GL_r(\bfO^{\dagger}).$$
\end{enumerate}   
\end{notation}

\begin{lemma} \label{lem:colim_stalk}
\begin{enumerate}
\item For any $r \in \bbz^{\geq 1}$, $\bfl_{F, 0}^\rr =F_0\langle \frac{\xi}{p^r} \rangle$ (recall $F_0=F$).  
\item Fix a $m \geq 0$. Then for  $r \gg m+1$, $\bfl_{F, m}^\rr =F_m\langle \frac{\xi}{p^r} \rangle$.   
\item $\bfl_{F, m}^{+,\dagger}=\bfodagger(F_m, \xi)=\bfodagger(F_m, t)$, (using Notation \ref{nota:bfodagger}).
\item $\bfl_{K, m}^{+,\dagger}=\bfodagger(K_m, \xi)=\bfodagger(K_m, t)$.
\item $\bfl_{K,\infty}^{+, \dagger} =\colim_m  \bfodagger(K_m, t) \subsetneq \bfodagger(K_\infty, t)$. 
 
\end{enumerate}
\end{lemma}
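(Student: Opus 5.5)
The plan is to prove the five items in order. Items (1) and (2) are the substantive part; (3)–(5) are then formal consequences of the colimit definitions together with the base change Lemma~\ref{lem:LdRK}.

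\textbf{Items (1) and (2).} I will use the minimal expansion of Construction~\ref{cons:minimal_writing}. Every $x\in\bfl_{F,m}^{(r)}$ can be written uniquely as
\[x=\sum_i x_iP_m^i, \qquad x_i\in\bfb_{F,m}^{+,<\deg P_m},\qquad v^{(r)}(x)=\inf_i\{v(x_i)+ir\}.\]
Under $\theta$, the summand $\bfb_{F,m}^{+,<\deg P_m}$ is additively isomorphic to $F_m$.

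\emph{Case $m=0$.} The summand is exactly $F$, viewed as constants. Hence $x=\sum_i a_i\xi^i$ with $a_i\in F$ and $v_p(a_i)+ir\to\infty$. This says precisely that $x\in F\langle \xi/p^r\rangle$, which proves (1).

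\emph{Case $m\ge 1$.} Let $r$ be large. By Cor~\ref{cor:elt_in_bdrr}, $F_m\subset\bdrr$ as soon as $r\ge m+1$, so $F_m\subset\bfl_{F,m}^{(r)}$. The issue is to compare two descriptions of the same element: the ``$s$-lift'' $s(a)=\sum_{j<\deg P_m}c_jT_m^j$ of $a\in F_m$, and the genuine element $a\in F_m\subset\bdrr$. I will show the following.
\begin{itemize}
\item The difference $a-s(a)$ lies in $\xi\,\bfl_{F,m}^{(r)}$.
\item This difference is bounded: $v^{(r)}(a-s(a))\ge v_0(a)-C$ for a constant $C$ depending on $m$ and $r$ but not on $a$.
\end{itemize}
For the bound I only need to check a basis, e.g.\ $a=\varepsilon_m^j$, with $r$ large enough that these finitely many elements have controlled $v^{(r)}$; $F$-linearity then extends it to all of $F_m$. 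With this comparison, the $F$-linear map
\[F_m\langle\xi/p^r\rangle\to\bfl_{F,m}^{(r)},\qquad \sum a_i\xi^i\mapsto\sum a_i\xi^i,\]
and the map
\[s_\xi:F_m\langle\xi/p^r\rangle\to\bfl_{F,m}^{(r)},\qquad \sum a_i\xi^i\mapsto\sum s(a_i)\xi^i\]
agree modulo a ``contracting'' error. The first map is a ring map. The second is the isometry of Lemma~\ref{lem:compavr}. Their difference raises the $\xi$-degree by one, and up to the $p^{C}$ loss it is bounded.

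The main obstacle is to show that the ring map is surjective, which I will argue by successive approximation. It is cleaner to work with $r'$ slightly smaller than $r$, possibly weakening the statement to ``for $r\gg m+1$'' as stated, and passing between radii $r$ and $r-\epsilon$. Given $x$, subtract $\sum\theta(x_i)\xi^i$ in a triangular way. Because each correction multiplies by $\xi$ and costs at most $p^{C}$, I will take $r$ large and estimate in terms of $\xi/p^{r}$. Each error term is then $\xi$-divisible with the constant absorbed, so the iteration converges in the $\xi$-adic Fréchet topology. I still need to check that the limit lies in the Tate algebra; this is where I expect a loss of radius, handled by enlarging $r$. Injectivity is clear, since both sides embed in $\bfl^+_{F,m}=F_m[[\xi]]$.

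\textbf{Items (3)–(5).} For (3), take $\colim_r$ of (2) to get $\bfl^{+,\dagger}_{F,m}=\bfodagger(F_m,\xi)$. To replace $\xi$ by $t$, note that $t=\xi\cdot u$ with $u$ a unit in $\bfodagger(F_m,\xi)$. Indeed, $t/\xi$ lies in some $\bfl_{F,1}^{(r)}$, because $\log$ converges on a disk, and its $\theta$-image is nonzero. Hence the stalks in $\xi$ and in $t$ coincide. For (4), use $\bfl_{K,m}^{(r)}=\bfl_{F,m}^{(r)}\otimes_{F_c}K_c$ from Lemma~\ref{lem:LdRK}(5) and $K_m=F_m\otimes_{F_c}K_c$; for $m$ small, pass through large $m$ and take invariants, or argue directly via Def~\ref{def:ram_lkmr}. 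Item (5) follows by taking $\colim_m$. For the strict inclusion, exhibit $\sum_n a_nt^n$ with $a_n\in K_\infty$ chosen such that $a_n\notin K_{m}$ for any fixed $m$ (e.g.\ $a_n=\varepsilon_n$) while the series still converges on a disk.
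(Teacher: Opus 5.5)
Your items (1), (3), (4) and (5) are fine and follow the paper. One caveat on (4) with $m<c$: use the direct description $\bfl^{+,\dagger}_{K,m}=K_m[[\xi]]\cap\bfodagger(K_c,\xi)$ coming from Def~\ref{def:ram_lkmr}, not ``taking invariants'', since the Galois action does not fix $\xi$.

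\textbf{The gap is in item (2).} As you set it up, each correction step costs a fixed factor $p^{C}$, and you expect ``a loss of radius, handled by enlarging $r$''. A loss of radius only proves a sandwich
\[ F_m\langle \xi/p^r\rangle\subset \bfl_{F,m}^{(r)}\subset F_m\langle \xi/p^{r'}\rangle\qquad (r'>r). \]
Enlarging $r$ moves both ends together, so it never gives the equality at a fixed radius that (2) asserts. The sandwich does suffice for (3)--(5), which only involve $\colim_r$.

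\textbf{The missing observation} is that your constant $C=C(r)$ is not merely bounded: it tends to $-\infty$ as $r$ grows, because the error is divisible by $\xi$.
\begin{enumerate}
\item Fix $r_0\geq m+1$ and let $e_j=(\varepsilon_m-1)^j$ for $j<\deg P_m$. Write $e_j-s(e_j)=\xi\eta_j$ with $\eta_j\in\bfl^{(r_0)}_{F,m}$.
\item For $r\geq r_0$ this gives $v^{(r)}(e_j-s(e_j))\geq r+v^{(r_0)}(\eta_j)$. Hence $\delta:=\min_j v^{(r)}(e_j-s(e_j))>0$ once $r$ is large.
\item Set $\Phi(\sum_i x_iP_m^i):=\sum_i\theta(x_i)\xi^i\in F_m\langle\xi/p^r\rangle$, using the minimal expansion. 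Then $v^{(r)}(x-\Phi(x))\geq v^{(r)}(x)+\delta$.
\item So $1-\Phi$ is a strict contraction on $\bfl^{(r)}_{F,m}$. The series $\sum_k\Phi((1-\Phi)^kx)$ converges in $F_m\langle\xi/p^r\rangle$ at the \emph{same} radius, to a preimage of $x$.
\end{enumerate}
This repairs your route with no loss of radius.

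\textbf{The paper's route} avoids estimates altogether. By (1), both $F_m\langle\xi/p^r\rangle$ and $\bfl^{(r)}_{F,m}$ are free of rank $d=p^{m-1}(p-1)$ over $\bfl^{(r)}_{F,0}=F\langle\xi/p^r\rangle$, with bases $\{(\varepsilon_m-1)^i\}$ and $\{T_m^i\}$ respectively. The change-of-basis matrix is invertible over $F[[\xi]]$. It therefore lies in $\GL_d(\widehat\bfO)\cap\Mat_d(\bfO^\dagger)=\GL_d(\bfO^\dagger)$, so it is invertible over $F\langle\xi/p^{r'}\rangle$ for all $r'\gg r$, which gives the equality there.

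The paper's argument is shorter and purely algebraic, relying on the DVR property of $\bfO^\dagger$. Yours, once repaired, additionally shows that the ring inclusion $F_m\langle\xi/p^r\rangle\subset\bfl^{(r)}_{F,m}$ and the additive isometry $s_\xi$ differ by a small perturbation.
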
 
 \begin{proof}
 Item (1): this $m=0$ case follows directly from Construction \ref{cons:minimal_writing}.
Consider Item (2). As $r \geq m+1$, $F_m \subset  \bfl_{F, m}^\rr$ by Cor \ref{cor:elt_in_bdrr}; thus it is easy to see 
$$ F_m\langle \frac{\xi}{p^r} \rangle \subset \bfl_{F, m}^\rr.$$
Note both sides are  finite free $ \bfl_{F, 0}^\rr$-modules of the same rank (namely, $p^{m-1}(p-1)$). Use $\{(\varepsilon_{m}-1)^i\}_{i=0}^{p^{m-1}(p-1)-1}$ resp.~ $\{ ([\varepsilon]^{1/p^{m}}-1)^i \}_{i=0}^{p^{m-1}(p-1)-1}$ as a basis for LHS resp.~ RHS, then the containment induces a matrix defined over $ \bfl_{F, 0}^\rr$; 
this matrix is invertible if regarded as a matrix over $ \bfl_{F, 0}^+$ since the two bases also serve as bases of $F_m[[\xi]] =\bfl_{F, m}^+$ over $ \bfl_{F, 0}^+$; thus this matrix must be invertible over $ \bfl_{F, 0}^{(r')}$ for some (and hence any) $r' \gg r$ by the discussion in    Notation \ref{nota:bfodagger}\eqref{itemGLdagger}.


For Item (3), we only need to verify the  change of coordinate from $\xi$ to $t$. Note $\xi = \frac{\exp(t)-1}{\varepsilon_1\exp(t/p)-1} \in \bfodagger(\qp(\varepsilon_1), t)$ with a simple zero at $t$,  thus $\bfodagger(\qp(\varepsilon_1), t)=\bfodagger(\qp(\varepsilon_1), \xi)$, and hence we conclude. Items (4) and (5) follow from base change results in Lem \ref{lem:LdRK}.
 \end{proof}

\begin{rem}[Necessity of colimit topologies] \label{rem:bdrpd_cont_map}
We use this (quite lengthy) remark to clarify some possible (quite) confusing topological issues.
A quick slogan is that: in this paper, we always try to use some \emph{colimit topologies} (instead of other possibly more ``obvious" (but ``wrong") topologies).
\begin{enumerate}
   
\item We first recall some well-known (confusing) facts about the canonical map $\barK \into \bdrp$.
\begin{enumerate}
\item For convenience of discussion, we use $(\barK, v_p)$  to denote $\barK$ with topology induced by the $p$-adic valuation. Then 
  $(\barK, v_p) \into \bdrp$  is \emph{not} continuous  (recall: if otherwise, then it extends to a continuous $\gk$-equivariant section $C \into \bdrplus$ and thus an isomorphism $C((t)) \simeq \bdr$ which is impossible because there exist Hodge--Tate representations that are not de Rham).

\item If we regard $\barK$ as a \emph{subspace} of $\bdrplus$ with \emph{induced topology}, then $\barK \subset \bdrplus$ is \emph{dense}: this is a famous theorem of Colmez, cf. \cite[Appendix]{Fon94} and \cite{Colmez_bdr}.

\item In contrast, the map  
$(\barK, \text{colimit topology}) \into \bdrplus$ is continuous, and   is a \emph{topological embedding}: indeed, for any \emph{finite} extension $L/K$, the map $L \into \bdrplus$ is a closed embedding.

\end{enumerate}

\item Consider the maps $\kinfty \into \kinfty[[t]] \into \bfl_{\dR, K}^+$.
\begin{enumerate}
\item Similar to above item, if we equip $\kinfty$ with the $p$-adic topology, then its map to $\bfl_\dR^+$ is \emph{not} continuous. This shall lead to \emph{actual} problems in some \emph{base change} constructions, cf.~ below. 
\item In this paper, we shall equip $\kinfty=\colim_m K_m$ with the \emph{colimit topology}. Further, for each $n \geq 1$,   equip $\kinfty[[t]]/t^n=\colim_m K_m[[t]]/t^n$ with the \emph{colimit topology} where $K_m[[t]]/t^n$ is equipped the $p$-adic topology induced from that of $K_m$. Finally, equip $\kinfty[[t]]=\projlim_n \kinfty[[t]]/t^n$ with the inverse limit topology. Under these topologies, all maps in the chain  $\kinfty \into \kinfty[[t]] \into \bfl_\dR^+$ are \emph{continuous}. 

\item As a consequence, one can define the \emph{base change} functors 
$\rep_\gammak(\kinfty) \to \rep_\gammak(\kinfty[[t]])$ etc. Note the base change from $\kinfty$ to $\kinfty[[t]]$ actually does \emph{not} appear in the statement of Theorem \ref{thm:intro_fon}; however, it is \emph{used} in the proof of a \emph{classification theorem} of Fontaine \cite[Thm 3.19]{Fon04}   (which will be used in our Thm \ref{thm:noncat_conv}). Thus, to us, these topological discussions (implicit in \cite{Fon04}) are necessary.

\item To further explicate, note that we have an equivalence of categories
\[ \rep_\gammak((\kinfty, \text{colimit topology})) \simeq \rep_\gammak((\kinfty, v_p))\]
because any object in either side descends to a $K_m$-representation for some $m<\infty$. Thus, in Sen's work \cite{Sen81} on $C$-representations, it does not matter which topology on $\kinfty$ is taken. However, to   make natural base change constructions  to the $\bdrplus$-case, it is necessary to take the colimit topology on $\kinfty$.
\end{enumerate}
 
 \item Finally, consider $\bdrpd$. Recall the canonical map $\barK \into \bdrp$ lands inside $\bdrpd$ (e.g.~ by \cite[Thm 0.1]{Col08a}).
 As we are using colimit topology on $\bdrpd$, it is easy to see the inclusion map  from $\kinfty$ resp. $\barK$ are continuous if we use colimit topologies on it. In contrast, the map $(\barK, v_p) \to \bdrpd$ is \emph{not} continuous.
    
\end{enumerate}
\end{rem}

\begin{remark} We briefly discuss convergent elements in $K_\infty[[\xi]]$. 
This remark serves to clarify possible confusions, and will not be used  in this paper. 

 Define  $\bfl_\kinfty^+=\kinfty[[\xi]]$ which embeds into $\bdrplus$, define
$\bfl_\kinfty^\rr:  =\bfl_\kinfty^+ \cap \bdrr$, and 
\[ \bfl_\kinfty^{+,\dagger}: =\colim_r \bfl_\kinfty^\rr.\]
\begin{enumerate}
\item Using similar argument as in Lem \ref{lem:LdRK}, and let $c \geq c_{\Art, F}(K)$ be an integer, then we have
\[ \bfl_\kinfty^{+,\dagger} =\bfl_{F_\infty}^{+,\dagger} \otimes_{F_c} K_c.\]
\item Note that $\colim_m \bfl_{K,m}^+ =\colim_m K_m[[\xi]] \subset \bfl_\kinfty^+=\kinfty[[\xi]]$ is a  strict inclusion, we claim the inclusion of their convergent vectors is also strict, that is: $$ \bfL_{K,\infty}^{+,\dagger} \subsetneq \bfl_\kinfty^{+,\dagger}.$$ 
By above item, it suffices to consider the $K=F$ case. 
Consider the element $x = \sum_{i\geq 0}[\varepsilon^{1/p^i}]\xi^i$; it is inside $F_\infty[[\xi]]$, e.g.~ because it is pro-analytic. As $x\in \ainf$, thus obviously
$x \in \bfl_{F_\infty}^{+,\dagger}$. We claim that $x$ is not even inside $F_m[[\xi]]$ for any $m$ (thus in particular, $x \notin \bfL_{F,\infty}^{+,\dagger}$): to wit, if $x$ is inside $F_m[[\xi]]$, then so is the truncation $\sum_{i\geq m+1}[\varepsilon^{1/p^i}]\xi^i$, 
and thus so is the product $\xi^{-m-1} \cdot \sum_{i\geq m+1}[\varepsilon^{1/p^i}]\xi^i$ which is impossible since its image under $\theta$ is $\varepsilon_{m+1} \notin F_m$.
 
 \item Following Notation \ref{nota:bfodagger}, one can define the ring of stalk $\bfodagger(K_\infty, \xi)=\bfodagger(K_\infty, t)$.
However, we do not know the relation between this stalk and $\bfl_\kinfty^{+,\dagger}$; indeed, we do not even know if one of them is contained in another.
\end{enumerate}
\end{remark}

  
\section{Axiom TS-1 and almost purity} \label{sec:TS-1}

In this section, we first ``revisit" the TS-1 Axiom following \cite{BC08}, by incorporating cohomology comparison (due to \cite{Por24}) as well as \'etale descent. In fact, in many references, TS-1 is written together with other axioms, e.g., TS-2, TS-3 (and TS-4 in \cite{Por24}), making the notations complicated. But it is well-known that one can indeed \emph{separately} treat these axioms, substantially simplifying the notations and arguments.
We then verify TS-1 for $\bdrpd$.

\subsection{Axiom: TS-1}

 \begin{axiom}[Axiom (TS-1), following \cite{BC08}] 
Let
  $(A, v)$ be a \emph{complete} valued $\bbq$-algebra, equipped with a isometric action by a profinite group $H$.
  Let $c_1>0$.  
Say the   datum $(A, v, H)$ (or just $(A, H)$) satisfies \emph{Axiom  TS-1-$c_1$} (or just Axiom TS-1 when context is clear), if 
  for any  $H_1 \subset H_2 \subset H$   two open subgroups (no normality condition on these subgroups assumed),  
  there exists $\alpha \in A^{H_1}$ such that $v(\alpha)>-c_1$ and \[\sum_{g \in H_2/H_1} g(\alpha)=1\]
  Here the summation index means $g$ runs through one (hence any) set of representatives of the coset. 
 \end{axiom}
 
\begin{prop}[\emph{\cite[Prop 5.8(i)]{Por24}}] \label{prop:por24TS1}
 Suppose $(A, H)$ satisfies Axiom TS-1-$c_1$. We have
\[ \rg(H, A) = H^0(H, A)=A^H\]
\end{prop}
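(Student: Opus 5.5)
We show that for every $i\geq 1$ the continuous cohomology $H^i(H,A)$ vanishes, by contracting inhomogeneous cochains with the normalized traces provided by Axiom TS-1. The $H^0$ statement is tautological.

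\textbf{Step 1: reduce to cochains factoring through a finite quotient.} Let $f: H^i \to A$ be a continuous cocycle with $i\geq 1$. Since $H$ is profinite and $A$ is a complete valued ring, $f$ can be approximated uniformly: for every $N$ there is an open normal subgroup $H_N\subset H$ and a cochain $f_N$ that factors through $(H/H_N)^i$, such that $v(f-f_N)\geq N$. Here $f_N$ takes values in $A$, not necessarily in $A^{H_N}$; this is harmless because the trace below is only used to build a contracting operator.

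\textbf{Step 2: build an averaging homotopy.} Apply Axiom TS-1 to the pair $H_N\subset H$. This gives $\alpha_N\in A^{H_N}$ with $v(\alpha_N)>-c_1$ and $\sum_{g\in H/H_N} g(\alpha_N)=1$. Define the $(i-1)$-cochain
\[ h_N(g_1,\dots,g_{i-1}) := (-1)^i\sum_{g\in H/H_N} g_1\cdots g_{i-1}g(\alpha_N)\, f(g_1,\dots,g_{i-1},g). \]
This is the usual formula showing that cohomology of a finite group with an "induced-like" trace-one element vanishes; compare the corresponding formula in \cite{BC08} and \cite{Por24}. Using the cocycle relation, one checks that $f-dh_N$ equals an error term controlled by how far $f$ is from being constant on $H_N$-cosets in the last variable. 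This error is bounded by $v(f-f_N)-c_1\geq N-c_1$. Since the action is isometric, we also get the uniform estimate $v(h_N)\geq v(f)-c_1$.

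\textbf{Step 3: iterate and pass to the limit.} The error $f^{(1)}:=f-dh_N$ is again a cocycle, with $v(f^{(1)})\geq N-c_1$. Apply the same construction to $f^{(1)}$ with a deeper subgroup, and repeat. This produces cochains $h^{(k)}$ whose valuations tend to $+\infty$. Their sum converges in $A$ by completeness, and its coboundary is $f$, so $f$ is a coboundary.

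The main obstacle is Step 2. One has to get the exact form of the contracting homotopy right for general $i$ and verify the cocycle identity carefully, while keeping the bound $v(\alpha_N)>-c_1$ uniform in $N$. This uniformity is exactly what makes the iteration in Step 3 converge.
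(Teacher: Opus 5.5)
Your proposal is correct. With the sign $(-1)^i$, every term of $dh_N$ cancels against $f$ exactly, except the pairs $f(g_1,\dots,g_{i-1},g_ig)$ versus $f(g_1,\dots,g_{i-1},g')$ with $g_igH_N=g'H_N$; so the error is $>N-c_1$, and choosing depths $N_k\to\infty$ gives $v(h^{(k)})>N_{k-1}-2c_1$. Hence $\sum_k h^{(k)}$ converges uniformly to a continuous $(i-1)$-cochain whose coboundary is $f$, which is exactly where the uniform bound $v(\alpha_N)>-c_1$ is needed, as you note.

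The paper gives no proof of its own and simply cites \cite[Prop 5.8(i)]{Por24}; your argument is presumably the standard Tate-style approximation-and-trace proof behind that result.
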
 
 
\begin{defn} Let $c>0$. 
Let $\rep^{>c}_H(A) \subset  \rep_H(A)$ be the subcategory consisting of $W$ such that there exists some $a>c$, and a basis of $W$  with respect to which we   have
\[ v(U_h -1) \geq a, \forall h \in H\]
where $U_h$ is the matrix of $h$-action on the basis.
\end{defn}

 The following is an obvious axiomatization/re-packaging of \cite[Lem 3.2.1, Cor 3.2.2]{BC08}. 
 
 \begin{prop} [TS-1 descent is $c_1$-small descent] \label{thm:TS1}
 Suppose $(A, H)$ satisfies Axiom TS-1-$c_1$.
\begin{enumerate}
\item Let $W \in \rep^{>c_1}_H(A)$. It is   trivial in the sense that the natural map
\[ W^H\otimes_{A^H} A \to W\]
is an isomorphism. 

\item Let $W \in \rep^{>c_1}_H(A)$. We have
 \[ \rg(H, W) =H^0(H, W)= W^H\]
 \item There is an equivalence of categories:
 \[ \rep^{>c_1}_H(A) \simeq \mathrm{Mod}(A^H)\]
 which sends $W$ to $W^H$.
\end{enumerate}  
 \end{prop}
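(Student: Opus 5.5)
The plan is to follow the classical argument of \cite[Lem 3.2.1]{BC08}, replacing the explicit constants there by the constant $c_1$ of Axiom TS-1-$c_1$. Items (2) and (3) will follow formally once (1) is established, so most of the work goes into (1).

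For Item (1), fix a basis of $W$ with $v(U_h-1)\geq a>c_1$ for all $h\in H$. Since $H$ acts continuously and $A$ is complete, the cocycle $h\mapsto U_h$ is continuous. Hence there is an open normal subgroup $H_1\subset H$ on which $v(U_h-1)$ exceeds any prescribed bound; in particular we can arrange $v(U_h-1)\geq a+c_1$ on $H_1$. Apply Axiom TS-1 to $H_1\subset H$ to obtain $\alpha\in A^{H_1}$ with $v(\alpha)>-c_1$ and $\sum_{g\in H/H_1}g(\alpha)=1$.

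Next, form the averaged matrix $M=\sum_{g\in H/H_1} g(\alpha)U_g$. It is well defined: for $h\in H_1$ we have $U_{gh}=U_g\,g(U_h)$, and $g(\alpha)$ is unchanged under this replacement up to a term of valuation at least $-c_1+a+c_1$. Writing $M-1=\sum g(\alpha)(U_g-1)$ gives $v(M-1)\geq a-c_1>0$, so $M$ is invertible. A direct computation shows that changing basis by $M$ yields a new cocycle $U'_h=M^{-1}U_h h(M)$ with $v(U'_h-1)\geq a+\delta$ for some fixed $\delta>0$. Concretely, $\delta$ should come out as roughly $\min(c_1,\,a-c_1)$ as in \cite{BC08}; alternatively we obtain the improvement $U'_h-1\in$ a higher valuation ball, using the isometry of the action and $v(xy)\geq v(x)+v(y)$.

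Iterating this step, the base changes $M_k$ satisfy $v(M_k-1)\to+\infty$, so the product $\prod M_k$ converges by completeness. The limit basis has $U_h=1$ for all $h$, i.e.\ it is $H$-fixed, and this gives (1).

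The main obstacle is the bookkeeping in the iterative step. One must ensure that the choice of $H_1$, which must shrink at each stage, still produces a uniform valuation gain $\delta$ independent of the stage. This is exactly why TS-1 is required for \emph{all} pairs of open subgroups with a uniform bound $c_1$. The point I would check first is that the gain from averaging is at least $a-c_1$ in the error term, so that the increments to $a$ do not shrink.

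For Item (2), an $H$-fixed basis identifies $W\simeq A^{\oplus d}$ $H$-equivariantly. Then Prop \ref{prop:por24TS1}, applied summandwise, gives $\rg(H,W)=(A^H)^{\oplus d}=W^H$.

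For Item (3), the functor $W\mapsto W^H$ has quasi-inverse $N\mapsto N\otimes_{A^H}A$. By (1) the composite $W^H\otimes_{A^H}A\to W$ is an isomorphism. Conversely, $(N\otimes A)^H=N\otimes A^H=N$ for $N$ finite free, using $A^H$-linearity and (2). Full faithfulness follows by applying the same reasoning to internal Homs; alternatively, a map $W\to W'$ is determined by its restriction to $W^H$. One should also note that $N\otimes A$, having the trivial cocycle, lies in $\rep_H^{>c_1}(A)$, where we interpret $\mathrm{Mod}(A^H)$ as finite free modules.
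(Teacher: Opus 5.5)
Your overall route is the paper's: for (1) the paper simply invokes the averaging argument of \cite[Lem 3.2.1, Cor 3.2.2]{BC08}; for (2) it uses the $H$-fixed basis to reduce to Prop \ref{prop:por24TS1}; and (3) is formal. Your treatment of (2) and (3) is fine.

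The problem is the iterative step in (1) as you have set it up. Suppose $H_1$ is only chosen so that $v(U_h-1)\geq a+c_1$ on $H_1$. Then averaging yields merely $v(U'_h-1)>a$, with an excess $v(\alpha)+c_1>0$ that depends on the $\alpha$ produced at that stage. These excesses may sum to something finite, so the bounds $a_k$ can stay bounded, $v(M_k-1)$ need not tend to $+\infty$, and $\prod M_k$ need not converge. Your proposed gain $\delta\approx\min(c_1,a-c_1)$ does not come out of any computation: $a-c_1$ is the lower bound for $v(M-1)$, not the improvement of the cocycle. Moreover, ``a direct computation shows'' hides the one identity that controls $U'_h$ for \emph{all} $h\in H$, not only $h\in H_1$. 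Also, $M$ is not well defined: it depends on the chosen coset representatives. That is harmless, and comparing two such choices is in fact the crux.

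The fix is as follows. Choose $H_1$ so that $v(U_\tau-1)\geq a+2c_1$ for all $\tau\in H_1$. Fix a set $S$ of representatives of $H/H_1$ and put $M_S=\sum_{\sigma\in S}\sigma(\alpha)U_\sigma$, so that $v(M_S-1)>a-c_1>0$. The cocycle relation gives $U_h\,h(M_S)=M_{hS}$ for every $h\in H$. Since $hS$ is another set of representatives, write $h\sigma=\sigma'\tau$ with $\sigma'\in S$ and $\tau\in H_1$ (depending on $\sigma$). Using $\alpha\in A^{H_1}$, this gives
$M_{hS}-M_S=\sum_{\sigma'\in S}\sigma'(\alpha)\,U_{\sigma'}\,(\sigma'(U_{\tau})-1)$,
which has valuation $>-c_1+(a+2c_1)=a+c_1$ because the action is isometric. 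Hence $U'_h=M_S^{-1}M_{hS}$ satisfies $v(U'_h-1)>a+c_1$. The gain is exactly $c_1$ at every stage, independent of the stage, so $v(M_k-1)>a+(k-2)c_1$ and the product converges by completeness. This is precisely where the uniformity of $c_1$ over all pairs of open subgroups enters.
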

 \begin{proof}
 Item (1) is exactly what is proved by \cite[Lem 3.2.1, Cor 3.2.2.]{BC08}. For Item (2), as we already know $W$ is a semi-linearly trivial by Item (1), thus the cohomology comparison reduces to the trivial case in   Prop \ref{prop:por24TS1}.
 Item (3) is direct consequence of above items.
 \end{proof}

 \begin{axiom}[Axiom \'etale descent]
 Say $(A, H)$  satisfies \emph{\'etale descent} if for any open normal subgroup $H_1 \triangleleft H$ (with finite index), the extension
   $A^{H_1}/A^H$ is a finite Galois \'etale extension with Galois group being $H/H_1$.     
    \end{axiom}

\begin{lemma} \label{lem:etdesscent}
 Suppose $(A,   H)$ satisfies \'etale descent. Then for any open normal subgroup $H_1 \triangleleft H$, there is an equivalence of categories
 \[ \rmmod(A^{H}) \simeq \rep_{H/H_1}(A^{H_1})\]
\end{lemma}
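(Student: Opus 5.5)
This is Galois descent for the finite Galois étale extension $A^{H_1}/A^H$ with group $G:=H/H_1$. Write $B:=A^H$ and $B_1:=A^{H_1}$. The two functors are
\[ M \mapsto M\otimes_{B} B_1 \quad (\text{with } G \text{ acting on the second factor}), \qquad W \mapsto W^{G}. \]
I will check that both are well defined on the stated categories and that the natural maps
\[ M \to (M\otimes_B B_1)^G, \qquad W^G\otimes_B B_1 \to W \]
are isomorphisms.

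First, the functor from $\rmmod(B)$. A finite free $B$-module $M$ gives a finite free $B_1$-module $M\otimes_B B_1$ with the obvious semilinear action of $G$. Continuity is automatic, since $G$ is finite and discrete. Because $M$ is free, we have $(M\otimes_B B_1)^G = M\otimes_B B_1^G$, and \'etale descent gives $B_1^G=B$. So $M\to (M\otimes_B B_1)^G$ is an isomorphism.

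Next, the functor from $\rep_{G}(B_1)$, which is the key step. Since $B_1/B$ is finite Galois étale with group $G$, the map
\[ B_1\otimes_B B_1 \to \prod_{g\in G} B_1, \qquad x\otimes y\mapsto (x\,g(y))_g \]
is an isomorphism. This means $\operatorname{Spec} B_1\to \operatorname{Spec} B$ is a $G$-torsor. For $W\in\rep_G(B_1)$, the semilinear $G$-action is then exactly a descent datum along the faithfully flat map $B\to B_1$. By faithfully flat descent (Grothendieck), the descent datum is effective: $W^G$ is a finitely generated projective $B$-module, and $W^G\otimes_B B_1\simeq W$. Alternatively, one can run Galois descent directly, using that $B_1$ is finite projective over $B$ with a trace element of trace $1$.

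The main subtlety is that $W^G$ is a priori only projective, not free, so I must make sure it lands in $\rmmod(B)$. I expect this to work as follows. In all our applications the rings $A^H$ are either fields or local rings (DVRs such as $\ldrpd$-type rings) or semi-local rings, so finite projective modules of constant rank are free. If the general statement is needed, I would instead interpret $\rmmod$ as finite projective modules, or use the fact that $W^G$ becomes free after the faithfully flat base change to $B_1$ and has constant rank.

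Finally, the tensor compatibility and the full faithfulness follow formally from the two natural isomorphisms above. Concretely, $\Hom$ spaces match because
\[ \Hom_{B_1}(M\otimes_B B_1,\,N\otimes_B B_1)^G=\Hom_B(M,N). \]
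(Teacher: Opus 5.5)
Your descent argument is what the paper intends; its proof is the single line ``This is obvious''. It is correct up to the point where $W^G$ is a finitely generated projective $B$-module with $W^G\otimes_B B_1\simeq W$. The remaining step, freeness of $W^G$, is a genuine gap, and none of your three fallbacks closes it as written.

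\emph{The principle ``projective of constant rank and free after a faithfully flat base change, hence free'' is false,} even along finite \'etale Galois covers. Take a number field $k$ with nontrivial class group and its Hilbert class field $k'$. Then $\mathcal{O}_k\to\mathcal{O}_{k'}$ is finite \'etale Galois, and by the principal ideal theorem every non-principal ideal $I\subset\mathcal{O}_k$ satisfies $I\otimes_{\mathcal{O}_k}\mathcal{O}_{k'}\simeq I\mathcal{O}_{k'}$, which is free of rank one.

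\emph{The rings involved are not local or semi-local.} Prop \ref{prop:colim_TS_1_equiv}, applied to $A_i=\bdrr$, performs this descent along $\bfl^{(r)}_{\dR,L}/\ldrkr$ at a \emph{finite} level $r$, and $\ldrkr$ has infinitely many maximal ideals. For $a\in\zp^\times$, the element $1-a\xi/p^r\in\ldrkr$ has $\theta$-image $1$ but is not a unit. Its inverse $\sum_{n\geq 0}a^n(\xi/p^r)^n$ in $\bdrplus$ lies in $F[[\xi]]=\bfl^+_{F,0}$ with $v_n=-nr$ (Construction \ref{cons:minimal_writing}), so it is not in $\bfl^+_{F,0}\cap\bdrr$ (Lemma \ref{lem:compavr}). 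For $a\neq b$ these elements are comaximal, since $b(1-a\xi/p^r)-a(1-b\xi/p^r)=b-a$ is a unit. So no maximal ideal contains two of them.

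\emph{Reading $\rmmod$ as finite projective modules only works if you change both sides.} $P\otimes_B B_1$ need not be free, and then it does not lie in $\rep_{H/H_1}(A^{H_1})$ as defined in Def \ref{defsemilinrep}.

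What your argument proves with no further input is an equivalence between $\rep_{H/H_1}(A^{H_1})$ and the category of finitely generated projective $A^H$-modules $P$ with $P\otimes_{A^H}A^{H_1}$ free. This is enough for the paper, because freeness is needed only after passing to the colimit $\ldrpd=\colim_r\ldrkr$. That colimit is a DVR: it is the $H_K$-invariants of the DVR $\bdrpd$ and contains the uniformizer $\xi$. The projective module obtained at finite level $r$ therefore becomes free after base change to $\ldrpd$, and that is all Thm \ref{thm:TS_conv} uses.
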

\begin{proof}
This is obvious.
\end{proof}

 \begin{prop} \label{prop:TS_1_equiv}
 Suppose $(A,  H)$ satisfies Axiom TS-1 and Axiom \'etale descent.  
\begin{enumerate}
\item There is an equivalence of categories
\[ \rep_H(A)  \simeq \rmmod(A^H)\] 
sending $W$ to $W^H$.
\item Given $W \in \rep_H(A)$,
\[ \rg(H, W) \simeq H^0(H, W)\] 
\end{enumerate} 
 \end{prop}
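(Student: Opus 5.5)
The plan is to reduce to the small-descent case of Prop \ref{thm:TS1} by passing to a sufficiently small open normal subgroup, and then to use \'etale descent to go back up to $H$.

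Let $W \in \rep_H(A)$ with a fixed basis, and let $U_h$ be the matrix of $h$ in this basis. The action is continuous and $H$ is profinite, so the map $h \mapsto U_h$ is continuous and $U_1 = 1$. Hence there is an open normal subgroup $H_1 \triangleleft H$ with
\[ v(U_h - 1) > c_1 \quad \text{for all } h \in H_1 . \]
The datum $(A, H_1)$ still satisfies Axiom TS-1-$c_1$, because open subgroups of $H_1$ are open in $H$. Therefore $W$, viewed in $\rep^{>c_1}_{H_1}(A)$, falls under Prop \ref{thm:TS1}. This gives:
\begin{enumerate}
\item $W^{H_1}\otimes_{A^{H_1}} A \simeq W$;
\item $W^{H_1}$ is finite free over $A^{H_1}$, of rank equal to the rank of $W$;
\item $\rg(H_1, W) \simeq W^{H_1}$.
\end{enumerate}

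Next, $W^{H_1}$ carries a semi-linear action of the finite group $H/H_1$, so it is an object of $\rep_{H/H_1}(A^{H_1})$. By \'etale descent and Lem \ref{lem:etdesscent}:
\begin{enumerate}
\item $W^H = (W^{H_1})^{H/H_1}$ is finite projective over $A^H$;
\item $W^H \otimes_{A^H} A^{H_1} \simeq W^{H_1}$.
\end{enumerate}
Combining with the first step gives $W^H\otimes_{A^H} A \simeq W$. This makes the functor $W \mapsto W^H$ fully faithful, with quasi-inverse $M \mapsto M\otimes_{A^H} A$, which is clearly well defined. The proof of (1) therefore reduces to these two descent steps.

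One caveat is that Lem \ref{lem:etdesscent} yields modules over $A^H$ that are projective, not necessarily free. I interpret $\rmmod(A^H)$ in this sense, or else note that freeness is inherited in the cases of interest. I will need to track this carefully but expect it to be harmless.

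For (2), use the Hochschild--Serre spectral sequence for $H_1 \triangleleft H$:
\[ H^i\bigl(H/H_1, H^j(H_1, W)\bigr) \Rightarrow H^{i+j}(H, W). \]
Continuous cohomology is fine here because $H/H_1$ is finite. Since $H^j(H_1, W) = 0$ for $j > 0$, this gives $\rg(H, W) \simeq \rg(H/H_1, W^{H_1})$. The higher cohomology of the finite group $H/H_1$ on the $\bbq$-vector space $W^{H_1}$ vanishes, since multiplication by $|H/H_1|$ is invertible and kills these groups. Hence $\rg(H, W) \simeq W^H$.

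The main obstacle is the first step: securing the uniform bound $v(U_h-1)>c_1$ on an open subgroup from continuity in the valuation topology, and checking that TS-1 passes to $H_1$ with the same constant $c_1$. Both should follow directly from the definitions. Everything else is formal.
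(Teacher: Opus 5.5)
Your proposal is correct and follows essentially the same route as the paper: restrict to an open normal subgroup $H_1$ on which $W$ is $c_1$-small, apply Prop~\ref{thm:TS1} there, descend along $H/H_1$ via Lem~\ref{lem:etdesscent}, and for cohomology use Hochschild--Serre together with the vanishing of higher cohomology of a finite group on $\mathbb{Q}$-vector spaces. Two small remarks. When choosing $H_1$, take it so that $v(U_h-1)\geq a$ for one fixed $a>c_1$, as the definition of $\rep^{>c_1}_{H_1}(A)$ requires; a pointwise strict inequality is not enough, but continuity gives the uniform bound. The projective-versus-free caveat you raise is passed over in the paper too (Lem~\ref{lem:etdesscent} is stated as obvious) and causes no trouble here.
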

 \begin{proof}
 Given   $W\in \rep_H(A)$, it is $c_1$-small on some open subgroup $H_1$. As the triple $(A, H_1, v)$ still satisfies TS-1-$c_1$, Thm \ref{thm:TS1} implies $W|_{H_1}$ is trivial; namely $W^{H_1}$ is an object in $\rep_{H/H_1}(A^{H_1})$. Thus we conclude Item (1) by Lem \ref{lem:etdesscent}. 
Consider Item (2),  Thm \ref{thm:TS1} implies
\[ \rg(H_1, W) \simeq H^0(H_1, W)\]
further take $\rg(H/H_1, \cdot)$ to conclude: note all spaces are $\mathbb{Q}$-modules, hence $\rg(H/H_1, \cdot)$ is concentrated in degree zero.
 \end{proof}
  
  \subsection{colimit-TS-1}
  In practice, sometimes we need to set up a \emph{colimit} version of these axioms. The following lemma on colimit representations will be useful.

\begin{lemma}\label{lem:colim_rep}
\begin{enumerate}
    \item Let $R_\infty=\colim_{i \in \bbn} R_i$ be a colimit of topological spaces with each transition map $R_i \to R_{i+1}$ being continuous and injective. Suppose in each $R_i$, a point is closed (e.g., if $R_i$ is Hausdorff). Let $K \subset R_\infty$ be a compact subset, then $K \subset R_i$ for some $i$.

    \item Suppose the  colimit system in Item (1) is formed  with $R_i$'s being  topological rings  (and with transition maps being ring morphisms). Suppose $\calg$ is a compact topological group acting compatibly on the direct system $R_\infty=\colim_{i\in \bbn} R_i$.  Then given an object $W_\infty \in \rep_{\calg}(R_\infty)$, there exists some $W_i \in \rep_\calg(R_i)$ for some $i$ such that $W_i \otimes_{R_i}   R_\infty \simeq W_\infty$.
\end{enumerate} 
\end{lemma}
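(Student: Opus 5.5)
For Item (1), I will argue by contradiction with a diagonal construction. Suppose $K \not\subset R_i$ for every $i$. Then I can choose points $x_0, x_1, x_2, \dots \in K$ together with strictly increasing indices $i_0 < i_1 < \cdots$ such that $x_k \in R_{i_k} \setminus R_{i_{k-1}}$. Here we view each $R_i$ as a subset of $R_\infty$, which is legitimate because the transition maps are injective. Such a choice is possible because each $x_k$ lies in some $R_j$, and $K$ is not contained in any single $R_j$.

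Next I show that the set $S=\{x_k\}_k$ and all of its subsets are closed in $R_\infty$, using the colimit topology. It suffices to check that $S' \cap R_i$ is closed in $R_i$ for every $i$ and every subset $S' \subset S$. Since $x_k \notin R_{i_{k-1}}$ and the indices increase, $S'\cap R_i$ is finite. A finite set of points is closed in $R_i$ because points are closed there. Hence $S$ is a closed discrete subset of $R_\infty$. It is infinite and contained in the compact set $K$, so it is compact. But an infinite discrete space cannot be compact, which gives the contradiction.

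A small subtlety is that the preimage of $S'$ in $R_i$ under the (injective) map $R_i\to R_\infty$ is exactly $S'\cap R_i$. There is no Hausdorff issue to worry about, since closedness of finite sets is exactly the hypothesis.

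For Item (2), let $W_\infty$ have a basis $e_1,\dots,e_d$. The action is encoded by a matrix-valued map $\calg \to \GL_d(R_\infty)$, $g\mapsto U_g$. This map is continuous: continuity of the action, together with the topology on $W_\infty \simeq R_\infty^d$, gives that each entry $g \mapsto (U_g)_{jk}$ is continuous into $R_\infty$. Because $\calg$ is compact, the image of each entry function is a compact subset of $R_\infty$. By Item (1), each such image lies in some $R_{i_{jk}}$. I take $i=\max_{j,k} i_{jk}$.

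I then set $W_i=\bigoplus R_i e_j$ with the action given by the same matrices. The cocycle identity $U_{gh}=U_g\, g(U_h)$ already holds in $R_\infty$, so it holds in $R_i$ by injectivity. This gives a semilinear action on $W_i$, and $W_i\otimes_{R_i}R_\infty \simeq W_\infty$ by construction.

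The main obstacle is continuity of the action on $W_i$ for the topology of $R_i$. The map $\calg\to R_\infty$ is continuous, and it factors set-theoretically through $R_i$, but a priori the factored map $\calg \to R_i$ need not be continuous for the $R_i$-topology. I would handle this by replacing Item (1) with its sharper form. Applying the same diagonal argument to the compact image shows that the image lies in some $R_i$. Moreover, since the transition maps are continuous injections and $\calg$ is compact, I would try to show that the subspace topology from $R_\infty$ and the topology from $R_i$ agree on this compact image, possibly after enlarging $i$ once more. In the intended applications (LB-spaces with compact regular colimits), this is the standard regularity of the colimit; I would verify it for the specific $R_i$ used, or impose it as a mild hypothesis, and then conclude that $W_i\in\rep_\calg(R_i)$.
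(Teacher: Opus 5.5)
Your proof of Item (1) is correct and is essentially the paper's argument: choose $x_k\in K\cap(R_{i_k}\setminus R_{i_{k-1}})$, note that every subset of $\{x_k\}$ meets each $R_i$ in a finite, hence closed, set, and conclude that $\{x_k\}$ is an infinite closed discrete subset of the compact $K$. For Item (2), your construction of $W_i$ is also the paper's proof: the continuous cocycle, compact images of the entries, Item (1), and the same matrices over $R_i$. The paper's proof stops at ``its image is compact hence is contained in some $\mathrm{GL}_n(R_i)$''.

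\emph{The gap.} To have $W_i\in\mathrm{Rep}_{\mathcal{G}}(R_i)$ you need $g\mapsto U_g$ to be continuous for the topology of $R_i$. Your proposal leaves exactly this step open, as you note yourself. The remedy you suggest cannot work under the hypotheses of Item (1). A compact subset of $R_\infty$ need not be compact in any $R_i$, so the $R_i$-topology and the subspace topology need never agree on it, however large $i$ is.

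Here is a counterexample. Write $\mathbb{N}=\bigsqcup_{k\geq 0}P_k$ with every $P_k$ infinite. On $X=\mathbb{N}\cup\{\infty\}$ let $\tau_i$ be the topology in which points of $\mathbb{N}$ are isolated, and a set $U\ni\infty$ is open iff $\mathbb{N}\setminus U$ is contained, up to a finite set, in a finite union of $P_k$'s with $k\geq i$.
\begin{enumerate}
\item Each $\tau_i$ is Hausdorff and $\tau_{i+1}\subset\tau_i$, so the identity maps $(X,\tau_i)\to(X,\tau_{i+1})$ are continuous bijections.
\item Since the $P_k$ are disjoint, the colimit topology $\bigcap_i\tau_i$ is the one-point compactification.
\item No $(X,\tau_i)$ is compact, because $P_i$ is closed and discrete in it.
\end{enumerate}
So the identity of the compact space $R_\infty$ factors set-theoretically through every $R_i$, but continuously through none. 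As written, your argument proves (2) only under an extra compact-regularity hypothesis that is not in the statement.

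The paper's one-line proof is silent on this same point, so you have found a real imprecision rather than missed an idea. The point matters downstream: in Prop.~\ref{prop:colim_TS_1_equiv} one needs $W_i|_{H'}$ to be $c_1$-small for the valuation of $A_i$.

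\emph{How to close it for the colimits actually used.} Take $\mathbf{B}_{\mathrm{dR}}^{+,\dagger}=\colim_r\mathbf{B}_{\mathrm{dR}}^{(r)}$.
\begin{enumerate}
\item Run your diagonal argument with the increasing sets $\{v^{(r_0+k)}\geq -k\}$ in place of the $R_i$. Each step is metrizable and convergent sequences are bounded, so every subset of the chosen $\{x_k\}$ is closed in each step. This shows that a compact $K$ lies in $\{v^{(r)}\geq -k\}$ for some $r,k$.
\item On such a set, for $r'>r$, the condition $v^{(r')}(z)>M$ involves only finitely many $v_n$, because $v_n(z)+nr'\geq -k+n(r'-r)$. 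Hence the $v^{(r')}$-topology there equals the topology induced from $\mathbf{B}_{\mathrm{dR}}^{+}$.
\item The topology induced from $\mathbf{B}_{\mathrm{dR}}^{+}$ is coarser than the colimit topology, which in turn is coarser than the $v^{(r')}$-topology.
\end{enumerate}
So all three topologies agree on $K$, and the cocycle is continuous into $\mathrm{GL}_n(\mathbf{B}_{\mathrm{dR}}^{(r')})$.
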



\begin{proof}
Item (1). (We learn this proof from a stackexchange post (No.~1584667) by Eric Wofsey). 
Suppose otherwise, then upon taking a subsequence of $R_i$, we can assume there exists a subset $A=\{x_i \}_{i \in \bbn} \subset K$ such that $x_i \in K\cap (R_i\setminus R_{i-1})$. Let $B \subset A$ be any subset, then $B \cap R_i$ is finite for each $i$ hence is closed in $R_i$ (as  each point is closed), whence $B$ is closed in the colmit $R_\infty$. In particular, any subset of $A$ is closed in $A$, hence $A$ is discrete. But $A$ is also closed in $K$ hence is compact. Thus $A$ is finite which is impossible. 

Item (2).  Fixing  a basis of $W_\infty$ leads to a continuous cocycle $\calg \to \GL_n(R_\infty)$; its image is compact hence is contained in some $\GL_n(R_i)$ by Item (1).
\end{proof}


  \begin{axiom}[colimit TS-1 and colimit \'etale descent] \label{axiom:colim_TS1}
  Let $A_\infty =\colim_{i\in \bbn} A_i$ be a   direct system of complete valued rings $(A_i, v_i)$ admitting compatible isometric $H$-actions. Suppose each transition map $A_i \to A_{i+1}$ is injective. 
  Let $c_1 >0$. Say this datum satisfies colimit TS-1-$c_1$ and colimit \'etale descent, if:
  \begin{enumerate}
  \item There exists $N$, such that for each $i\geq N$, $(A_i, v_i, H)$ satisfies TS-1-$c_1$;
  \item Given an open normal subgroup $H'  \triangleleft H$ (with finite index), there exists $N'=N'(H')$ such that for each $i\geq N'$,  $A_i^{H'}/A_i^H$ is a finite  Galois \'etale extension with Galois group being $H/H'$.       
  \end{enumerate}
  \end{axiom}
  
  \begin{prop} \label{prop:colim_TS_1_equiv}
  Suppose assumptions in Axiom \ref{axiom:colim_TS1} hold.  \begin{enumerate}
\item There is an equivalence of categories
\[ \rep_H(A_\infty)  \simeq \rmmod(A_\infty^H)\] 
sending $W$ to $W^H$.
\item Given $W \in \rep_H(A_\infty)$,
\[ \rg(H, W) \simeq H^0(H, W)\] 
\end{enumerate} 
  \end{prop}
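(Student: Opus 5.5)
The plan is to reduce everything to the non-colimit statement, Prop \ref{prop:TS_1_equiv}, applied at a finite level $A_i$, with Lem \ref{lem:colim_rep} supplying the descent of objects. Given $W \in \rep_H(A_\infty)$, I will first apply Lem \ref{lem:colim_rep}(2) (each $A_i$ is a complete valued ring, hence Hausdorff, and the transition maps are injective and continuous) to obtain some $i$ and $W_i \in \rep_H(A_i)$ with $W_i \otimes_{A_i} A_\infty \simeq W$. Since $W_i$ only gets "more descended" as $i$ grows, I may enlarge $i$ freely. The cocycle $H \to \GL_n(A_i)$ is continuous, so there is an open normal subgroup $H' \triangleleft H$ with $v_i(U_h - 1) > c_1$ for all $h \in H'$. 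Transporting this along the isometric-compatible inclusion into $A_j$ for $j\ge i$ requires care: I will assume, as is implicit in the axiom, that the transition maps do not decrease valuations, so that the smallness is preserved for the base change $W_j$. I will then choose $j \ge \max(i, N, N'(H'))$.

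At level $j$, the datum $(A_j, H')$ satisfies TS-1-$c_1$, since restricting to an open subgroup preserves the axiom. By Prop \ref{thm:TS1}, $W_j|_{H'}$ is trivial and $\rg(H', W_j) = W_j^{H'}$. Colimit étale descent at level $j$ says that $A_j^{H'}/A_j^H$ is finite Galois étale with group $H/H'$. Lem \ref{lem:etdesscent} then shows that $W_j^{H'}$ descends to the finite projective (indeed, I expect, free after comparing ranks, or one works with finite projective modules) $A_j^H$-module $W_j^H$, with $W_j^H \otimes A_j \simeq W_j$. Since everything is a $\bbq$-module, $\rg(H/H', \cdot)$ is exact, and so $\rg(H, W_j) \simeq W_j^H$.

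Next I pass to the colimit. Base change gives $W \simeq W_j^H \otimes_{A_j^H} A_\infty$, hence $W^H \supseteq W_j^H \otimes_{A_j^H} A_\infty^H$. Equality and the vanishing of higher cohomology will follow because continuous cohomology of the profinite group $H$ with coefficients in $W = \colim_k W_k$ commutes with this filtered colimit: continuous cochains $H^n \to W$ land in some $W_k$ by Lem \ref{lem:colim_rep}(1), since their images are compact. Since each $W_k = W_j^H \otimes_{A_j^H} A_k$ for $k \ge j$ has $\rg(H, W_k) = W_j^H \otimes A_k^H$ by the same argument at level $k$, both Item (2) and the identity $W^H \otimes_{A_\infty^H} A_\infty \simeq W$ follow. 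Full faithfulness comes from applying this to internal Hom objects, and essential surjectivity is clear from base change of a finite module from some $A_i^H$, again using Lem \ref{lem:colim_rep}-style finiteness.

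The main obstacle I expect is the colimit commuting with continuous cohomology: one must check that a cochain which is a coboundary in $W$ is already a coboundary at a finite level, and that the colimit topology on $W$ matches that used in Lem \ref{lem:colim_rep}. I will handle this by using the compactness argument for both the cochain and its primitive. I will also keep track of $\rg(H', W_j) = W_j^{H'}$ uniformly in $j$, which is what makes the transition maps on cohomology compatible.
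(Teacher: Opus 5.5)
Your proposal follows the paper's proof: you descend $W$ to some $W_i$ via Lem \ref{lem:colim_rep}, trivialize on a small open normal subgroup $H'$ via Prop \ref{thm:TS1}, pass to a level beyond $N'(H')$ to apply \'etale descent, and your colimit/compactness argument for higher cohomology fills in what the paper calls ``easy to verify''. The one point to tidy is the valuation-monotonicity of transition maps that you assume: it is not part of Axiom \ref{axiom:colim_TS1} and is not needed, because you can first go to a level $i\geq N$, choose $H'$ and apply Prop \ref{thm:TS1} there, and the resulting $H'$-fixed basis stays $H'$-fixed after base change to any $A_j$ with $j\geq N'(H')$, with no comparison of $v_i$ and $v_j$ (this is the order in which the paper does the steps).
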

\begin{proof}
This is a natural colimit version of Prop \ref{prop:TS_1_equiv}, so we just sketch the argument. 
Given $W\in \rep_H(A_\infty)$, it descends  to some $W_i \in \rep_H(A_i)$ by  Lem \ref{lem:colim_rep}. 
Take some $H'  \triangleleft H$ so that $W_i|_{H'}$ is $c_1$-small, hence $W_i^{H'}$ is of full rank over $A_i^{H'}$ by Prop \ref{thm:TS1}.
Enlarging $i$ if necessary so that $i \geq N'(H')$ as in Axiom \ref{axiom:colim_TS1}(2), then we see $W_i^H$ is of full rank over $A_i^{H}$, giving   the desired object in $\rmmod(A_\infty^H)$. The cohomology comparison/vanishing is easy to verify.
\end{proof}
 
\subsection{Verification: colimit-TS-1}
 
We record an easy lemma for later use. 
\begin{lemma} \label{lem:bdrr_easy}
Consider $\bdrr$ with $r >1$.  
\begin{enumerate}
\item $\frac{[p^\flat]}{p}$ is a unit in the integral subring   $\bdrrcirc$.
 \item $\vr([p^\flat]^{-1}) =-1$.
\end{enumerate}
\end{lemma}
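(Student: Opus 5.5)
The plan is to use the Tate-algebra presentation $\bdrr=\binf\langle \frac{\xi}{p^r}\rangle$ and $\bdrrcirc=\ainf\langle \frac{\xi}{p^r}\rangle$ from Cor \ref{lem:present_conv}, together with an explicit relation between $\xi$, $p$ and $[p^\flat]$ in $\ainf$. The key input is that $\theta([p^\flat])=p$. Hence $[p^\flat]-p\in\ker\theta=\xi\ainf$, so we may write $[p^\flat]=p+\xi a$ with $a\in\ainf$.

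For Item (1), divide this relation by $p$:
\[ \frac{[p^\flat]}{p} = 1+\frac{\xi}{p}\,a = 1+ p^{r-1}\Big(\frac{\xi}{p^r}\Big)a. \]
Since $r>1$, the element $u:=p^{r-1}\frac{\xi}{p^r}a$ lies in $p^{r-1}\bdrrcirc$, so $\vr(u)\ge r-1>0$. Indeed, $\vr$ of the variable $\frac{\xi}{p^r}$ is $\ge 0$ in the Tate algebra $\ainf\langle\frac{\xi}{p^r}\rangle$, and $\vr$ is sub-multiplicative by Lem \ref{lem:hatbr_subring}. As $\bdrr$ is complete for $\vr$ (Lem \ref{lem:hatbr_complete}), the geometric series $\sum_{k\ge0}(-u)^k$ converges in $\bdrrcirc$ and inverts $1+u$. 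Thus $\frac{[p^\flat]}{p}$ is a unit of $\bdrrcirc$.

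One point needs checking: that $\vr(p^{r-1}y)\ge r-1$ for $y\in\bdrrcirc$. This holds because $v_n(p x)= 1+v_n(x)$, which follows from the definition of $v_n$ together with the multiplicativity of $v_\binf$ for $p$.

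For Item (2), write $[p^\flat]^{-1}=p^{-1}\cdot\big(\frac{[p^\flat]}{p}\big)^{-1}$. The second factor is a unit of $\bdrrcirc$, so both it and its inverse have $\vr\ge 0$. Sub-multiplicativity then gives
\[ \vr\big([p^\flat]^{-1}\big)\ge \vr(p^{-1})+0=-1. \]
Applying the same reasoning to $p^{-1}=[p^\flat]^{-1}\cdot\frac{[p^\flat]}{p}$ gives $-1=\vr(p^{-1})\ge \vr([p^\flat]^{-1})$. Here $\vr(p^{-1})=-1$ because $v_n(p^{-1})=-1$ for all $n$ (scaling by $p$ shifts $v_n$ by exactly $1$) and the infimum is attained at $n=0$. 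Combining the two inequalities yields equality.

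The only mildly delicate step is justifying that multiplication by $p^{\pm1}$ shifts $v_n$, and hence $\vr$, exactly by $\pm1$. I would verify this directly from Def \ref{def:vn_B}: any lift $Y$ of $py$ can be written as $p$ times a lift of $y$, because $\xi^{n+1}\binf$ is $p$-divisible.
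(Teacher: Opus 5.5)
Your proof is correct and follows the paper's route. Item (1) inverts $1+\frac{[p^\flat]-p}{p}$ by a geometric series, which you justify explicitly via $[p^\flat]-p\in\xi\ainf$. In Item (2) the lower bound is the same sub-multiplicativity argument; for the upper bound the paper simply reads off the $n=0$ term, $v_0([p^\flat]^{-1})=-1$, instead of your reverse factorization $p^{-1}=[p^\flat]^{-1}\cdot\frac{[p^\flat]}{p}$.

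One small caveat: Cor \ref{lem:present_conv} and the symbol $p^{r-1}$ only make sense for integer $r$, while the lemma is stated for all real $r>1$. For general $r$, argue directly that $\vr(\xi a/p)\ge \vr(\xi)+\vr(a)+\vr(p^{-1})\ge r+0-1>0$.
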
 
\begin{proof} 
Item (1). This is because $\frac{p}{[p^\flat]} =  (1+\frac{[p^\flat]-p}{p})^{-1}$ is an integral unit. 
Item (2).  $v_0([p^\flat]^{-1})=-1$ thus   $\vr([p^\flat]^{-1}) \leq -1$; Item (1) also implies 
   $\vr([p^\flat]^{-1})\geq \vr(p^{-1})= -1$.
   (Caution: $\vr$ is not multiplicative here, cf. Rem \ref{rem:mult_vr}).
\end{proof}

\begin{prop} \label{prop:verify_TS1}
Let $c_1 >0$.
The colimit system $\colim_r  \bfb_{\dR}^{(r)}$ (under $\hk$-action) satisfies 
colimit TS-1-$c_1$ and colimit \'etale descent in Axiom \ref{axiom:colim_TS1}.
\end{prop}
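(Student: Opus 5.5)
Two things need checking: colimit TS-1-$c_1$ for the rings $\bdrr$ with $r\gg0$, and colimit étale descent.

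\emph{TS-1.} The input is the classical almost purity / Tate normalized trace statement on the tilt side, in the form used for $\wta^+$ (as in \cite{BC08} and \cite{Col08}). For open subgroups $H_1\subset H_2\subset\hk$, write $L_i=\barK^{H_i}$. Then for any $\epsilon>0$ there is $\alpha\in \ainf^{H_1}[1/[p^\flat]]$ with $\sum_{g\in H_2/H_1}g(\alpha)=1$ and $\alpha\in [p^\flat]^{-\epsilon}\ainf$. Concretely, one takes $\bar\alpha\in (\widehat{L_{1,\infty}})^\flat$ with trace $1$ and valuation $>-\epsilon$, which exists by almost étaleness of the perfectoid extension $\widehat{L_{1,\infty}}/\widehat{L_{2,\infty}}$. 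One then lifts it via the Teichmüller/Witt-vector lift and normalizes the trace. The normalization works because the trace of the lift is $1$ modulo $p$ up to a unit, by the argument of \cite[Prop 4.2.1]{BC08} for $\wta^+$.

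Next we need the valuation of such an $\alpha$ in $\bdrr$. Since $\alpha=[p^\flat]^{-\epsilon}\beta$ with $\beta\in\ainf\subset\bdrrcirc$, we get $\vr(\alpha)\ge \vr([p^\flat]^{-\epsilon})$. By the argument of Lem \ref{lem:bdrr_easy}, for $r>1$ we have $\vr([p^\flat]^{-\epsilon})\ge -\epsilon$: write $[p^\flat]^{\epsilon}$ as $p^{\epsilon}$ times a unit of $\bdrrcirc$, or use $[p^\flat]^{-\lceil\epsilon\rceil}$ together with Lem \ref{lem:bdrr_easy}. Choosing $\epsilon<c_1$ then gives TS-1-$c_1$ for every $\bdrr$ with $r$ larger than some fixed $N$, uniformly in $H_1\subset H_2$. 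We also need completeness of $\bdrr$, which is Lem \ref{lem:hatbr_complete}, and that $\hk$ acts isometrically. Isometry holds because $\hk$ preserves $\ainf$ and $\xi$ up to a unit of $\ainf^\times$, hence preserves each $v_n$.

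The main obstacle is exactly this normalization: we must produce $\alpha$ with a trace exactly equal to $1$ in the non-multiplicatively valued ring, and keep $\vr$ under control. The plan is to work inside $\wtb^+$, where $\vr$ is dominated by the $\binf$-valuation, $v_0$ being the first term. There we invert the trace $\sum g(\tilde\alpha)=1+x$, with $x$ topologically nilpotent, by a geometric series in $\ainf^{H_2}$.

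\emph{Étale descent.} Let $H'\triangleleft\hk$ be open, corresponding to a finite Galois extension $L_\infty/\kinfty$; descend it to $L/K$ finite Galois with $L_\infty=L\kinfty$. By Lem \ref{lem:LdRK}(6), once $r\ge c_{\Art,F}(L)+1$, the extension $\bfl_{\dR,L}^{\rr}/\bfl_{\dR,K}^{\rr}$ is finite Galois étale with group $H_K/H_L=\hk/H'$. Here $\bfl_{\dR,L}^{\rr}=(\bdrr)^{H'}$ by Lem \ref{lem:compavr}, since $H'=H_L$. So $N'(H')=\lceil c_{\Art,F}(L)\rceil+1$ works.
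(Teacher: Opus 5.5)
Your route is essentially the paper's: Teichm\"uller-lift a trace-one element of $(C^\flat)^{H_1}$ of small valuation, divide by its trace, and obtain colimit \'etale descent from Lem~\ref{lem:LdRK}(6) once $r\ge c_{\Art,F}(L)+1$. The \'etale descent part is fine.

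\textbf{The valuation estimate is wrong.} The bound on which TS-1-$c_1$ rests, $\vr([p^\flat]^{-\epsilon})\ge-\epsilon$, fails for every $0<\epsilon<1$. The valuation $v_\binf$ has unit ball $\ainf$ and $v_\binf(p)=1$, so it is $\mathbb{Z}$-valued, and $v_0=\lfloor v_C\rfloor$ on $\binf/\xi=C$ (Example~\ref{ex:twin1}(3)). Hence
$\vr([p^\flat]^{-\epsilon})\le v_0([p^\flat]^{-\epsilon})=\lfloor-\epsilon\rfloor=-1$.
Neither of your two justifications works: ``$p^{\epsilon}$'' is not an element of $\binf$, and using $[p^\flat]^{-\lceil\epsilon\rceil}$ with Lem~\ref{lem:bdrr_easy} only gives $\ge-1$.

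So what you actually obtain is $\vr(\alpha)\ge-1$, i.e.\ TS-1-$c_1$ only for $c_1>1$. No cleverer choice of $\alpha$ can do better. Take $K=\qp$ with $p$ odd, $H_2=\hk$, and $H_1=H_M$ with $M=\qp(p^{1/p})$. If $\alpha\in(\bdrr)^{H_1}$ has trace $1$, then $\theta(\alpha)\in\widehat{M_\infty}$ has trace $1$ down to $\widehat{K_\infty}$. Every $M(\mu_{p^n})/\qp(\mu_{p^n})$ is totally and wildly ramified of degree $p$, so $\Tr(\o_{\widehat{M_\infty}})\subseteq\mathfrak{m}_{\widehat{K_\infty}}$. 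Therefore $v_C(\theta(\alpha))<0$, and $\vr(\alpha)\le v_0(\alpha)\le-1$.

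The paper's proof makes the same slip: it passes from $v_{p^\flat}(\beta)>\max\{-c_1,-1\}$ to $\vr([\beta])>-c_1$. So it too really proves only the case $c_1>1$. This is harmless downstream, since Prop~\ref{prop:colim_TS_1_equiv} only needs TS-1 for a single $c_1$.

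\textbf{A smaller problem: where the trace is inverted.} The trace of the lift is $1+x$ with $x\in p[p^\flat]^{-\epsilon}\ainf$. This holds because $\Tr([\bar\alpha])\in[\bar\alpha]\ainf$, $\Tr([\bar\alpha])\equiv1\bmod p$, and $\ainf\cap pW(C^\flat)=p\ainf$.

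In general $x\notin\ainf$, so there is no geometric series in $\ainf^{H_2}$. Inverting in $W(C^\flat)$ instead would take you outside $\bdrplus$.

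The inversion has to be done in $\bdrrcirc$. There $p/[p^\flat]$ is a unit for $r>1$ (Lem~\ref{lem:bdrr_easy}), so $x=\frac{p}{[p^\flat]}[p^\flat]^{1-\epsilon}a$ is topologically nilpotent and $1+x\in(\bdrrcirc)^\times$. Then $\alpha=[\bar\alpha]/(1+x)$ satisfies $\vr(\alpha)\ge\vr([\bar\alpha])\ge-1$. This is exactly what the paper does. It does not place $\alpha$ in $[p^\flat]^{-\epsilon}\ainf$, but by the discussion above that would not have helped anyway.
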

\begin{proof}  
Colimit \'etale descent follows from Lem \ref{lem:LdRK}: here is a catch, given an open normal subgroup $H_L \triangleleft H_K$, the extension $\bfl_{\dR, L}^{(r)}/\bfl_{\dR, K}^{(r)}$ is \'etale only under the condition $r\geq c_{\Art, F}(L)+1$ (which depends on $L$); this is why we need a direct system. 

 We now verify TS-1-$c_1$ which actually holds for any $(\bdrr, \hk)$ with   $r>1$ (regardless of value of $c_1$). 
Take two open subgroups $H_M \subset H_L\subset H_K$ where $M/L/K$ are finite extensions. We need to find some $\alpha \in \bfb_{\dR, M}^\rr$ with $\vr(\alpha)>-c_1$ such that $\Tr(\alpha)=1$, where for short, we denote $\Tr(x) = \sum_{\sigma\in H_L/H_M}\sigma(x)$.

Since $(C^\flat)^{H_M}$ is a finite separable extension of $(C^\flat)^{H_L}$, we can find some $\beta \in (C^\flat)^{H_M}$ such that $\Tr(\beta)=1$; upon changing $\beta$ to some $ \beta^{1/p^n}$ with $n \gg 0$, we can assume 
$$v_{p^\flat}(\beta)>\max\{-c_1, -1\}$$
 where $v_{p^\flat}$ is the multiplicative valuation on $C^\flat$ so that $v_{p^\flat}(p^\flat)=1$.

Let $[\beta] \in \wta$ be the Teichm\"uller lift.
As $[p^\flat]$ is invertible in $\bdrr$ (cf. Lem \ref{lem:bdrr_easy}), we know  for any Teichm\"uller lift element, $[\beta] \in \bdrr$; thus also $\Tr [\beta] \in \bdrr$.
As $\Tr(\beta)=1$, we have
\[ \Tr([\beta]) = 1+\sum_{k\geq 1}p^k[x_k]\]
As $\frac{\Tr([\beta])}{[\beta]} \in \ainf$, we have
with $v_{p^\flat}(x_k)\geq  v_{p^\flat}(\beta)$. Thus $\vr(p^k[x_k]) \geq k+v_{p^\flat}(\beta) >0$ for each $k \geq 1$. As a consequence $\Tr([\beta])$ is a unit in $\bdrrcirc$ (recall $\bdrrcirc$ is complete).
Let $\alpha = \frac{[\beta]}{\Tr([\beta])}$, then $\vr(\alpha) \geq \vr([\beta])>-c_1$, and $\Tr \alpha=1$.
\end{proof}

  The consequence of Prop \ref{prop:verify_TS1} will be recorded in \S \ref{sec:mainthm1}.

\section{Axioms TS-2/3/4 and locally analytic decompletion}
\label{sec:TS234}
As a continuation of \S \ref{sec:TS-1}, we study (and verify) Axioms TS-2/3/4 in this section.

\subsection{Axioms and statement of main results}

\begin{axiom}[TS-2/3/4] \label{axiom_TS234}
Consider a datum
\[ (\hat{A}_\infty, \quad  A_\infty=\injlim_m A_m, \quad \Gamma)  \]
where
\begin{itemize}
\item $\{A_m\}_{m \geq 0}$  is a direct system of topological $\qp$-algebras where all transition maps are continuous and injective; and denote $A_\infty=\colim_m A_m$;

\item $\hat{A}_\infty$ is a complete topological ring with compatible closed embeddings $A_m \to \hat{A}_\infty$, such that $A_\infty$ is dense in $\hat{A}_\infty$; 
\item $\Gamma$ is a rank-1 compact $p$-adic Lie group (i.e.~ it is isomorphic to $\zp$ up to torsion), which acts continuously and compatibly on $A_m$ and $\hat{A}_\infty$.
\end{itemize}
Say this datum satisfies Axioms TS-2, TS-3 and TS-4, if there exists:
\begin{itemize}
\item a valuation $v$ on $\hat{A}_\infty$ making it (and thus each $A_m$) a complete valued ring (i.e.~ a Banach $\qp$-algebra), such that the $\Gamma$-action on $\hat{A}_\infty$ is isometric;

\item an inverse system of open normal subgroups  $ \Gamma_m =\langle \gamma_m \rangle <\Gamma$ converging to 1 with $m \to \infty$, 
\end{itemize}
such that the following conditions are satisfied:
\begin{enumerate}
\item (normalized traces): for each $i$,  there exists a $\Gamma$-equivariant $A_m$-linear map 
$$R_m: \hat{A}_\infty \to A_m  $$
 such that 
$$\lim_{m\to\infty} R_m(x)=x, \quad \forall x \in \hat{A}_\infty;$$
(Note the existence of $R_m$ is equivalent to a  $\Gamma$-equivariant splitting $\hat{A}_\infty =A_m \oplus X_m $ where $X_m \subset \hat{A}_\infty$ is a  $\Gamma$-stable sub-$A_m$-module);

\item (TS-2): there exists $c_2>0$, such that for each  $m \gg 0$,
\[ v(R_m(x)) \geq v(x) -c_2, \quad \forall x \in \hat{A}_\infty; \] 

\item \label{axiomts3item} (TS-3): there exists $c_3>0$, such that for each $m \gg 0$, the complex $$\rg(\Gamma_m, X_m)=[X_m \xrightarrow{\gamma_m-1}X_m]$$ is \emph{$c_3$-uniformly strict exact} in the sense that 
$\gamma_m-1$ is bijective on $X_m=\hat{A}_\infty/A_m$, and furthermore 
\[ v((\gamma_m-1)^{-1}(x)) \geq v(x) -c_3, \quad \forall x \in X_m; \]
 
\item (TS-4):  there exists $c_4>0$, such that for each $m \gg 0$, the $\Gamma_m$-action on $A_m$ is \emph{$c_4$-small} in the sense
\[ v((\gamma_m-1)(x)) \geq v(x) +c_4, \quad \forall x \in A_m\]

\end{enumerate}
\end{axiom}

\begin{remark} \label{rem:TS234_setup}
We comment on the literature concerning these axioms.
\begin{enumerate}
\item Our set-up is basically the same as that in \cite[\S 5A]{Por24}, which is similar to \cite[Def 3.1.3]{BC08} except with an additional (TS-4) which will be convenient for our discussion. Note we already \emph{separately} treated the (TS-1) axiom (together with an \'etale descent) in \S \ref{sec:TS-1}; thus in this section, we only need to treat the ``$\Gamma$-action" part; thus our   notation systems is substantially simplified than those in \cite{BC08, Por24}.

\item \label{ts234item2} The set-up in \cite[Def 2.2.1]{RC26} also only treats the ``$\Gamma$-action" part,   with (CST1)(2),  (CST2) and (CST1)(4) there roughly corresponding to our TS 2/3/4. Note however, the ring $A$ used in  \cite[Def 2.2.1]{RC26} is defined over a perfectoid algebra, making the theory not directly applicable in our set-up (although, clearly many of the (familiar) axiomatic argument there works verbatim in our set-up too).

\item   Another interesting set-up can be found in  \cite[Def. A.1.6]{DLLZ}. A benefit with the set-up in \cite{DLLZ} is that (TS-2) is not needed there; in addition, the normalized trace in \cite{DLLZ} (equivalently, the splitting $A_m \to \hat{A}_\infty$) does not assume $\Gamma$-equivariance although it holds in all examples. 
However, relations with locally analytic vectors are missing from   the set-up in \cite{DLLZ}, and this is the main reason we decide to follow more closely with the set-up in \cite{Por24}. Nonetheless, we incorporated several ideas from \cite{DLLZ} to the presentation of our Axiom \ref{axiom_TS234}:
\begin{enumerate}
\item Similar to \cite[Def. A.1.6]{DLLZ}, we do not \emph{fix} a valuation on the topological ring $\hat{A}_\infty$ to start with; this hints that one is free is making use of different (equivalent) valuations (already noted in \cite[Rem A.1.7(1)]{DLLZ}: this is the case in our application, as the $v_K^\rr$-valuations from Construction \ref{cons:vkrr_val} is the ``correct" valuation to use (when $K/F$ is ramified). 

\item We adopt the terminology ``uniform strict exactness" in our (TS-3)  from \cite{DLLZ}. The cohomological formulation has the benefit to streamline computations, as we shall do in Lem \ref{lem:USE_u_torsion}.
\end{enumerate}
\end{enumerate}
\end{remark}

For convenience of discussion, we start with a lemma.

\begin{lemma}\label{lem:ring_nohigherlav}
 Assume  assumptions in Axiom \ref{axiom_TS234} are satisfied. 
We have (cf. Def \ref{def:LAV} for definition of ``$R\Gamma\dla$")
\[(\hat{A}_\infty)^{R\Gamma\dla}=(\hat{A}_\infty)^{\Gamma\dla}=A_\infty  \]
\end{lemma}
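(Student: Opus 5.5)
The plan is to compute the complex $\colim_n \rg(\Gamma_n, \mathcal{C}^{\an}(\Gamma_n, \hat{A}_\infty))$ using the $\Gamma$-equivariant splitting $\hat{A}_\infty = A_m \oplus X_m$ from the normalized traces $R_m$. Fix $m \gg 0$ so that TS-2, TS-3 and TS-4 all hold. For each $n \geq m$ we have $X_m = X_n \oplus (A_n \cap X_m)$, since $R_m = R_m \circ R_n$ by $A_m$-linearity and $\Gamma$-equivariance. Analytic functions commute with the decomposition, which gives
\[\mathcal{C}^{\an}(\Gamma_n, \hat{A}_\infty) = \mathcal{C}^{\an}(\Gamma_n, A_n) \oplus \mathcal{C}^{\an}(\Gamma_n, X_n).\]
Both summands are $\Gamma_n$-stable. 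It therefore suffices to treat each piece separately and pass to the colimit over $n$.

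\emph{The $X$-part.} I will show that $\rg(\Gamma_n, \mathcal{C}^{\an}(\Gamma_n, X_n))$ is acyclic for $n \gg 0$. Since $\Gamma_n = \langle\gamma_n\rangle$ is procyclic (up to torsion, which can be removed by invariants because everything is a $\bbq$-vector space), this cohomology is computed by $\gamma_n - 1$ acting diagonally on $\mathcal{C}^{\an}(\Gamma_n, X_n)$. Identify $\mathcal{C}^{\an}(\Gamma_n, X_n)$ with sequences $(x_k)$ in $X_n$ tending to zero, via $f(\gamma_n^{a}) = \sum_k \binom{a}{k} x_k$ or Mahler/Taylor coefficients in $a \in \zp$. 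The diagonal action is (right translation)$\otimes \gamma_n$, so $\gamma_n - 1$ becomes $(\gamma_n - 1)\otimes 1 + (\text{translation}-1)\otimes\gamma_n$. The first term is invertible with inverse bounded by $c_3$ (TS-3). The second is a shift-type operator that is topologically nilpotent in a suitably rescaled norm on analytic functions, obtained by passing to $\Gamma_{n'}$ with $n' \gg n$ or using radius-$\rho$ analytic functions. A Neumann series then shows the sum is bijective. The main obstacle is making the rescaled norm small enough uniformly. I would first try replacing $\Gamma_n$ by $\Gamma_{n+k}$, which costs only a finite restriction-induction (harmless over $\bbq$), and then using that the translation operator on $\mathcal{C}^{\an}(\Gamma_{n+k},\cdot)$ has norm $<p^{-c_3}$ after restricting the radius.

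\emph{The $A$-part.} By TS-4, the $\Gamma_n$-action on $A_n$ is $c_4$-small. Hence $\log \gamma_n$ converges and every $x \in A_n$ is $\Gamma_{n'}$-analytic for $n' \gg n$ via $\gamma^{a}x = \exp(a\log\gamma_n)x$. Moreover, for a Banach representation $W$ on which the action is analytic in this sense, $\mathcal{C}^{\an}(\Gamma_{n'}, W) \cong \mathcal{C}^{\an}(\Gamma_{n'},\qp)\hat\otimes W$ with $\Gamma$ acting only on the first factor (untwisting by $f\mapsto (g\mapsto g^{-1}f(g))$). The standard computation $\rg(\Gamma_{n'}, \mathcal{C}^{\an}(\Gamma_{n'},\qp)) = \qp$ in degree $0$ then gives $\rg(\Gamma_{n'},\mathcal{C}^{\an}(\Gamma_{n'},A_n)) \simeq A_n$.

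\emph{Passing to the colimit.} The colimit over $n$ of $A_n$ is $A_\infty$, and the $X$-parts contribute $0$. This gives $(\hat{A}_\infty)^{R\Gamma\dla} \simeq A_\infty$ concentrated in degree $0$, and in particular $(\hat{A}_\infty)^{\Gamma\dla} = A_\infty$. Compatibility of the transition maps with the splittings follows from $R_n \circ R_{n'} = R_n$ for $n \le n'$.
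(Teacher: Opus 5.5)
Your overall strategy is the right kind of argument: split off a complement via the normalized traces, kill it with TS-3 against a small translation term, treat the $A$-part by TS-4 and untwisting, then pass to the colimit. The paper gives no proof of its own beyond invoking the axiomatic argument of [RC26, Lem.~2.4.3]. However, both level-wise claims your write-up rests on are false, and the fix you suggest for the $X$-part applies TS-3 to the wrong module.

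First, $\rg(\Gamma_{n'},\mathcal{C}^{\an}(\Gamma_{n'},\qp))$ is not $\qp$ at a fixed level. In Mahler coordinates $f=\sum_j d_j\binom{a}{j}$, analyticity on all of $\Gamma_{n'}\cong\zp$ means $|d_j/j!|\to 0$. The map $f(a+1)-f(a)$ is the shift $d_j\mapsto d_{j+1}$, so $\sum_j e_j\binom{a}{j}$ is in the image only if $|e_j/(j+1)!|\to 0$, which fails in general. The resulting non-Hausdorff $H^1$ dies only under the transition maps; only the colimit over $n'$ equals $\qp$.

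Second, on $\mathcal{C}^{\an}(\Gamma_n,X_n)$ the translation term $\gamma_n\otimes(\tau-1)$ has spectral radius exactly $p^{-1/(p-1)}$, since $\|(\tau-1)^N\|=|N!|$. So no equivalent norm makes it smaller than $p^{-c_3}$ once $c_3\geq 1/(p-1)$, and in the application $c_3=2$. The claim genuinely fails: already for Sen's datum $\hat K_\infty\supset K_m$, the summand $X_n\cap K_{n+1}$, on which $\gamma_n$ acts by nontrivial $p$-th roots of unity, gives $H^1(\Gamma_n,\mathcal{C}^{\an}(\Gamma_n,X_n))\neq 0$.

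Your fallback is to let $\Gamma_{n+k}$ act, with some radius adjustment, while the module is still built on $X_n$. This does not work, because TS-3 bounds $(\gamma_{n+k}-1)^{-1}$ only on $X_{n+k}$. The module $X_n$ contains the summand $X_n\cap A_{n+k}$, on which $\gamma_{n+k}-1$ is small by TS-4 (and zero in Sen's case), hence not invertible.

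The missing idea is to attach the splitting index to the \emph{acting} subgroup, and to take analytic functions on a \emph{larger} group; a smaller radius only makes things worse.

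1. Choose $k$ so that $\Gamma_{n+k}$ lies deep enough in $\Gamma_n$ that translation by $\gamma_{n+k}$ on $\mathcal{C}^{\an}(\Gamma_n,\qp)$ satisfies $\|\tau-1\|<p^{-c_3}$. The system $\rg(\Gamma_{n+k},\mathcal{C}^{\an}(\Gamma_n,\hat{A}_\infty))$ interleaves with the defining one, hence has the same colimit.

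2. Split at index $n+k$. On $\mathcal{C}^{\an}(\Gamma_n,X_{n+k})$ write $\gamma_{n+k}-1=(\gamma_{n+k}-1)\otimes 1+\gamma_{n+k}\otimes(\tau-1)$. The first term is invertible with inverse of norm at most $p^{c_3}$ by TS-3, so a Neumann series shows this summand is acyclic.

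3. The remaining terms $\rg(\Gamma_{n+k},\mathcal{C}^{\an}(\Gamma_n,A_{n+k}))$ need not equal $A_{n+k}$ level-wise, since $\Gamma_n$ does not act analytically on $A_{n+k}$. But the inclusions $A_{n+k}\subset\hat{A}_\infty$ give a morphism of directed systems that is termwise a quasi-isomorphism. The transition maps need not preserve the $X$-summands; this does not matter.

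4. By cofinality of the diagonal, the colimit is $\colim_m A_m^{R\Gamma\dla}$. This equals $\colim_m A_m=A_\infty$ in degree $0$, by TS-4, untwisting, and the \emph{colimit} form of the trivial-representation computation.

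This route also makes the compatibility $R_m\circ R_n=R_m$ unnecessary. That identity is not among the axioms, and it does not follow from $A_m$-linearity and $\Gamma$-equivariance alone, as you assert.
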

\begin{proof}
This follows from the axiomatic argument of \cite[Lem 2.4.3]{RC26}. (As noted in Rem \ref{rem:TS234_setup}\eqref{ts234item2}, the set-up in \cite{RC26} is slightly different from ours, but the argument of \cite[Lem 2.4.3]{RC26} clearly works  for our set up).
\end{proof}

\begin{proposition}
 Assume  assumptions in Axiom \ref{axiom_TS234} are satisfied. 
 \begin{enumerate}
 \item  Base change along $A_\infty \to \hat{A}_\infty$ induces an equivalence of categories
\[\rep_\Gamma(A_\infty) \simeq  \rep_\Gamma(\hat{A}_\infty) \]
For $W \in \rep_\Gamma(\hat{A}_\infty)$, its corresponding object in $\rep_\Gamma(A_\infty)$ is 
\[D=W^{R\Gamma\dla}=W^{\Gamma\dla}\]

\item We have quasi-isomorphisms
\[ \rg_\cont(\Gamma, W) \simeq \rg_\cont(\Gamma, D) \simeq \rg_\la(\Gamma, D) \simeq \rg(\Lie \Gamma, D)^{\Gamma=1}\]
Here we add the subscript  ``cont" resp. ``la" to signify continuous resp. locally group cohomology theory.
 \end{enumerate} 
\end{proposition}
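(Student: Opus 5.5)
The plan is the standard Tate--Sen decompletion argument, in the form of \cite[\S 3.2]{BC08} and \cite[\S 5]{Por24}, using TS-2/3/4.

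\textbf{Descent to some $A_m$.} Let $W \in \rep_\Gamma(\hat{A}_\infty)$ and fix a basis, giving a continuous cocycle $\gamma \mapsto U_\gamma$.
\begin{enumerate}
\item By continuity, pick $m$ large so that $v(U_\gamma - 1) > c_2 + 2c_3$ for all $\gamma \in \Gamma_m$. Also take $m$ large enough that TS-2, TS-3 and TS-4 hold at level $m$.
\item Run the usual successive-approximation lemma. Given $U_{\gamma_m} = 1 + M$ with $v(M) = a > c_2 + 2c_3$, decompose $M = R_m(M) + (M - R_m(M))$. Using TS-3, solve $(\gamma_m - 1)(N) = M - R_m(M)$ with $v(N) \geq a - c_2 - c_3$.
\item Change basis by $1 - N$. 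The new matrix is $1 + R_m(M) + (\text{error of valuation} \geq a + (a - c_2 - 2c_3))$.
\item Iterate. The corrections converge, yielding a basis in which $U_{\gamma_m} \in \GL_n(A_m)$.
\end{enumerate}

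\textbf{Extending to all of $\Gamma_m$ and then $\Gamma$.} We use TS-3 plus the fact that the descended matrix is close to $1$. If $B \in \GL_n(\hat{A}_\infty)$ is close to $1$ and conjugates one $A_m$-valued $\gamma_m$-matrix to another, then $B \in \GL_n(A_m)$. To see this, write the equation
\[ B - V\gamma_m(B)V'^{-1} = 0 \]
and apply $1 - R_m$. On $X_m$ the operator is a small perturbation of $\gamma_m - 1$, which is invertible with bounded inverse by TS-3, so $(1 - R_m)(B) = 0$. Since $\Gamma_m$ is topologically generated by $\gamma_m$ and $\Gamma$ normalizes, the lattice $D_m = \bigoplus A_m e_i$ is $\Gamma$-stable. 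Then $D := D_m \otimes_{A_m} A_\infty$ gives essential surjectivity. Full faithfulness follows by applying the same uniqueness statement to internal Homs, or equivalently from $H^0$ comparison below.

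\textbf{Identifying $D$ and the cohomology.} By TS-4, the $\Gamma_m$-action on $A_m$ is $c_4$-small. With $U_{\gamma_m}$ close to $1$, the $\Gamma_{m'}$-action on $D_m$ is analytic for $m' \gg m$, so $D \subset W^{\Gamma\dla}$. Conversely,
\[ W = D \otimes_{A_\infty} \hat{A}_\infty \simeq D_m \otimes_{A_m} \hat{A}_\infty. \]
Lemma \ref{lem:ring_nohigherlav}, applied to the finite free module $D_m \otimes_{A_m} \hat{A}_\infty$ via the projection formula (the analytic function spaces commute with tensoring by a finite free module with analytic action), gives $W^{R\Gamma\dla} = D \otimes (\hat{A}_\infty)^{R\Gamma\dla} = D$. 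This proves the claim $D = W^{R\Gamma\dla} = W^{\Gamma\dla}$.

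For cohomology, split
\[ W = D_m \oplus (D_m \otimes_{A_m} X_m). \]
The operator $\gamma_m - 1$ on $X_m$ is bijective with bounded inverse. The twist by a matrix close to $1$ is a small perturbation, so $\gamma_m - 1$ remains bijective on $D_m \otimes X_m$. Hence $\rg(\Gamma_m, D_m \otimes X_m) = 0$, and so $\rg_\cont(\Gamma_m, W) \simeq \rg(\Gamma_m, D_m)$. Passing to the colimit in $m$ and taking $\Gamma/\Gamma_m$-invariants (exact, since we are over $\bbq$) gives $\rg_\cont(\Gamma, W) \simeq \rg_\cont(\Gamma, D)$.

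The remaining identifications are formal consequences of local analyticity. The first is $\rg_\cont(\Gamma, D) \simeq \rg_\la(\Gamma, D)$, which holds because $D$ is an LB space of locally analytic vectors. The second is $\rg_\la(\Gamma, D) \simeq \rg(\Lie\Gamma, D)^{\Gamma = 1}$, the Lazard-type comparison for a rank-one group, as in \cite{Por24}; alternatively, the higher locally analytic vector vanishing from Lemma \ref{lem:ring_nohigherlav} yields it directly.

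The main obstacle is getting the constants right in the approximation and uniqueness steps, so that the corrections converge and the perturbed $\gamma_m - 1$ stays invertible on $X_m$-twists. This requires choosing the smallness threshold $a > c_2 + 2c_3$ before fixing $m$.
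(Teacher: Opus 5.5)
Your proposal is correct. For Item (1) and for $D=W^{R\Gamma\dla}=W^{\Gamma\dla}$ it is the same argument the paper invokes by citation. That argument is the successive-approximation descent and the $(1-R_m)$/TS-3 uniqueness lemma of \cite{BC08}, followed by TS-4 with Lemma \ref{lem:ring_nohigherlav} and a projection formula as in \cite[Prop 5.3]{Por24}. Two small adjustments are needed there:
\begin{itemize}
\item The uniqueness lemma only needs $v(V-1),v(V'-1)>c_3$ for the two $A_m$-valued matrices. It does not need $B$ close to $1$, and this matters because you apply it to $U_g$ with $g\notin\Gamma_m$.
\item If you carry the $A_m$-part (valuation only $\geq a-c_2$) through the iteration, the cross terms with $N$ force a threshold more like $a>2c_2+2c_3$. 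This is harmless, since $c_2,c_3$ are uniform in $m$ and $a$ can be made as large as you like by shrinking $\Gamma_m$.
\end{itemize}

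The genuine difference is in the first quasi-isomorphism of Item (2). The paper gets all of Item (2) from the vanishing of higher locally analytic vectors. Once $W^{R\Gamma\dla}=D$ is known, the formalism of \cite{RJRC22} gives $\rg_\cont(\Gamma,W)\simeq\rg_\cont(\Gamma,D)$ together with the locally analytic and Lie algebra comparisons (cf.\ \cite[Cor 5.4]{Por24}).

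You instead prove $\rg_\cont(\Gamma,W)\simeq\rg_\cont(\Gamma,D)$ by hand. You use the $\Gamma$-equivariant splitting $W=D_m\oplus(D_m\otimes_{A_m}X_m)$ and show $\gamma_m-1$ is bijective on the second summand (TS-3 perturbed by $U_{\gamma_m}-1$). You then repeat this at every level $m'\geq m$ to pass to $D=\colim_{m'}D_{m'}$.

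This is the classical Tate--Sen argument. It is elementary, and for this step it uses only the normalized traces and TS-2/TS-3, with no TS-4 and no solid machinery. The paper's route is more uniform: it deduces both the description of $D$ and every cohomological comparison from the single input $W^{R\Gamma\dla}=D$.

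For the last two quasi-isomorphisms you rely on the same general theorems as the paper. Note that $\rg_\la(\Gamma,D)\simeq\rg(\Lie\Gamma,D)^{\Gamma=1}$ is a Lazard/Tamme-type fact valid for any locally analytic $D$. It is not a consequence of Lemma \ref{lem:ring_nohigherlav}, as your ``alternatively'' remark suggests.
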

\begin{proof}
The equivalence of categories
\[\rep_\Gamma(A_\infty) \simeq  \rep_\Gamma(\hat{A}_\infty) \]
is a well-known consequence of the Tate--Sen axioms (in fact, just (TS-2) and (TS-3) would  suffice), cf. the argument in \cite[\S 3.1]{BC08}.
The formula
\[ D=W^{R\Gamma\dla}=W^{\Gamma\dla} \]
follows from the further Axiom (TS-4) (hence Lem \ref{lem:ring_nohigherlav}); cf.~   \cite[Prop 5.3]{Por24}.
The cohomology comparisons in Item (2) follow from vanishing of higher locally analytic vectors, uring machinery developed in \cite{RJRC22}, cf.~e.g.~ \cite[Cor 5.4]{Por24}.
\end{proof}

   The following is the  main theorem of this section. 
 
\begin{theorem}\label{thm:ts234}
Suppose $r> c_{\Art, F}(K)+1$. 
   The datum 
   $$(  \bfL_{\dR,K}^{\rr}, \quad \{\bfL_{K,m}^{\rr}\}_{m\geq 0}, \quad \Gamma_K)$$
   satisfies all the axioms  in Axiom \ref{axiom_TS234}, using the valuation $v_K^\rr$ from Construction \ref{cons:vkrr_val} and using the system of normal subgroups $\Gamma_{K,m} = \Gal(F_{\infty}/K_m)=\langle \gamma_m \rangle$.
\end{theorem}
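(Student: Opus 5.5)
First reduce to $K=F$. By Lem \ref{lem:LdRK}(3)(5), $\bfL_{\dR,K}^\rr=\bfL_{\dR,F}^\rr\otimes_{F_c}K_c$ and $\bfL_{K,m}^\rr=\bfL_{F,m}^\rr\otimes_{F_c}K_c$ for $m\ge c$. The valuation $v_K^\rr$ is the Gauss valuation attached to a fixed $F_c$-basis of $K_c$ (Construction \ref{cons:vkrr_val}). All the structures needed can then be defined on the $F$-rings and base changed along $K_c$: the traces $R_m$, the complements $X_m$, and the operators $(\gamma_m-1)^{-1}$. Since $\Gamma_{K,m}$ fixes $K_c$ for $m\ge c$, the constants $c_2,c_3,c_4$ carry over unchanged, up to a harmless adjustment of $m\gg0$. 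For $m<c$ we can put $R_m:=$ (normalized trace from $K_c$-level down to $K_m$)$\circ R_c$, averaging over the finite group $\gal(K_c/K_m)$. So the substance is the unramified case $K=F$.

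For $K=F$, I would build $R_m$ from the minimal expansions of Construction \ref{cons:minimal_writing}, but on the perfect side. Fix $r>1$. By Cor \ref{lem:present_conv}, every $x\in\bfL_{\dR,F}^\rr$ is a convergent sum $x=\sum_i x_i(\xi/p^r)^i$ with $x_i\in\wtb_F^+$ and $x_i\to0$. There is the classical Colmez normalized trace $R_m:\wtb_F^+\to\bfb_{F,m}^+$, namely the $\varphi^{-m}$ of the usual trace on $\wta_F$, restricted to the $+$-part, which is fine since it is computed coefficientwise on $[\varepsilon]^{\bbz[1/p]}$-expansions. It is $\bfb_{F,m}^+$-linear and $\Gamma_F$-equivariant, with $\wtb_F^+=\bfb_{F,m}^+\oplus X_m^+$ and $X_m^+$ the closed span of $[\varepsilon]^{a}$ for $a\in\bbz[1/p]\setminus p^{-m}\bbz$. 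Because $\xi\in\bfb^+_{F,1}$, I extend $R_m$ termwise: $R_m(\sum x_i(\xi/p^r)^i)=\sum R_m(x_i)(\xi/p^r)^i$. The first thing to check is that this is well defined and lands in $\bfL_{F,m}^\rr$; using Cor \ref{lem:present_conv}(3), this holds once the classical $R_m$ is bounded for $v_\binf$ on $\wta_F^+$. The convergence $R_m(x)\to x$ follows from $R_m\to\mathrm{id}$ on $\wtb_F^+$ together with density. TS-2 then comes from the bound $v(R_m(y))\ge v(y)-c$ for $y\in\wta^+_F$, which holds integrally because $R_m$ is a coefficient projection, together with the fact (Lem \ref{lem:bdrr_easy}) that $\vr$ can be computed from a Tate-algebra expansion up to a bounded constant. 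I would make the last point precise by choosing expansions that nearly achieve $\vr$; they exist by the definition of $\vr$ and Lem \ref{lem:col31}.

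TS-4: on $\bfL_{F,m}^\rr$, use the minimal expansion $x=\sum x_iP_m^i$ with $x_i$ polynomials in $T_m$ of degree $<\deg P_m$. Note that $\gamma_m$ fixes $\xi=P_m$ only up to a unit: $\gamma_m(t)=\chi(\gamma_m)t$, so $\gamma_m(\xi)/\xi\equiv1$ modulo a small element. Also $\gamma_m(T_m)-T_m=(1+T_m)([\varepsilon]^{p^{-m}(\chi-1)}-1)$ with $\chi-1\in p^m\zp$, so it lies in $(T_0)\subset$ an ideal of positive valuation in $\bfL_{F,0}^\rr$, since $T_0=[\varepsilon]-1$ is divisible by $\xi$ with quotient of valuation about $-1/(p-1)$. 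Computing termwise via Construction \ref{cons:minimal_writing}(3) should give $\vr((\gamma_m-1)x)\ge\vr(x)+c_4$ for $m\gg0$. This requires $r$ to be in a fixed range relative to the valuation of $[\varepsilon]^{p^m\zp}-1$, which is why $r>c_{\Art}+1$ and $m\gg0$ appear.

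TS-3 is the main obstacle. On $X_m^+$ the classical estimate says $\gamma_m-1$ is invertible on the span of $[\varepsilon]^a$ with $a\notin p^{-m}\bbz$, with a uniform bound. Term by term, $(\gamma_m-1)^{-1}$ on $\sum y_i(\xi/p^r)^i$ with $y_i\in X_m^+$ is not naive, because $\gamma_m$ multiplies $\xi$ by a unit $u_m=\gamma_m(\xi)/\xi$. I would handle this by writing $\gamma_m-1=(\gamma_m-1)_{\text{coeff}}+(\text{error})$, where the error involves $u_m^i-1$. Since $u_m\equiv1$ modulo $p^{m}$-small terms, the error is small compared with the inverse bound $c_3$ for $m\gg0$, and a Neumann series gives bijectivity with the bound $v((\gamma_m-1)^{-1}x)\ge v(x)-c_3$; Lem \ref{lem:hatbr_complete} supplies the completeness needed for convergence. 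The delicate point is controlling $\vr(u_m^i-1)$ uniformly in $i$ against the factor $(\xi/p^r)^i$. I would try to bound $\vr(u_m-1)\ge c'>c_3$ for $m\gg0$ and then use weak multiplicativity together with the sub-multiplicativity of $\vr$ (Lem \ref{lem:hatbr_subring}).
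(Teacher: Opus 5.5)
Your reduction to $K=F$, your definition of $R_m$ as the $\xi$-adically continuous extension of the coefficient projection on $\wtb_F^+$, and your TS-4 computation via minimal expansions all match the paper. Your TS-2 argument also works, and is more direct than the paper's reduction to Tate's trace on $\hat F_\infty$ via $s_\xi$: $R_m$ preserves $\wta_F^+$, so by the presentations of Cor \ref{lem:present_conv} it maps $\bfl_{\dR,F}^{\rr,\circ}=\wta_F^+\langle\xi/p^r\rangle$ into $\bfl_{F,m}^{\rr,\circ}=\bfa_{F,m}^+\langle\xi/p^r\rangle$.

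The genuine gap is in TS-3: the ``classical estimate'' you want to perturb is false on the $+$-ring. Reduction to the residue field gives a $\Gamma_F$-equivariant map $\wta_F^+=W(\o_{\hat F_\infty^\flat})\to W(k)$. The target carries the trivial action and every $[\varepsilon^a]$ goes to $1$, so this map kills $(\gamma_m-1)(\wtb_F^+)$. It sends $[\varepsilon^{1/p^{m+1}}]\in X_m^+$ to $1$, so $\gamma_m-1$ is \emph{not} surjective on $X_m^+$. Concretely, $(\gamma_m-1)(a[\varepsilon^i])$ always acquires the factor $[\varepsilon^{i(\chi(\gamma_m)-1)}]-1\in([\varepsilon^{1/p}]-1)\wta_F^+$, and $[\varepsilon^{1/p}]-1$ is not a unit in $\wtb_F^+$. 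Coefficientwise Tate--Sen bounds of the kind you invoke hold on the overconvergent rings used for $(\varphi,\Gamma)$-modules, where $[\varepsilon]-1$ is invertible, not on $\wtb_F^+$ with the $p$-adic valuation. So there is no bounded inverse from which to start a Neumann series. Moreover, your splitting $\gamma_m-1=(\gamma_m-1)_{\mathrm{coeff}}+(\text{error})$ is not even well defined on $\bfl_{\dR,F}^\rr$: Tate-algebra expansions are not unique and $\gamma_m$ moves $\xi$. On a formal Tate algebra the coefficientwise operator is again non-surjective, for the reason just given.

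The missing idea is that $\gamma_m-1$ becomes invertible only at the convergent level, and this is exactly where $r>1$ enters. Since $\xi-p=\sum_{k=0}^{p-1}([\varepsilon^{k/p}]-1)\in([\varepsilon^{1/p}]-1)\wta_F^+$, and $1-\xi/p$ is a unit in $\bfl_{\dR,F}^{\rr,\circ}$ when $r>1$, one gets $p\in([\varepsilon^{1/p}]-1)\bfl_{\dR,F}^{\rr,\circ}$. Thus $[\varepsilon^{1/p}]-1$ becomes a unit whose inverse has $\vr\ge-1$. Geometrically, the disc of radius $p^{-r}$ around the de Rham point no longer contains the point $[\varepsilon]\mapsto1$, which sits at $|\xi|=|p|$.

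The paper then writes the complement with coefficients in $\bfl_{F,m}^{\rr,\circ}$ itself. This keeps $\xi$ inside the coefficients, so the twist $\gamma_m(\xi)/\xi$ never has to be treated as an error term. For $\gamma$ generating $\Gamma_{F,m+l-1}$ and $i$ of exact denominator $p^{m+l}$, it computes $(\gamma-1)(\sum_i a_i[\varepsilon^i])=([\varepsilon^{1/p}]-1)\sum_i(\gamma(a_iu_i)+pb_i)[\varepsilon^i]$, with units $u_i$ and some $b_i\in\bfl_{F,m}^{\rr,\circ}$. A $p$-adic successive approximation then shows that on the integral complement $H^0=0$ and $H^1$ is killed by $p$ (Lem \ref{lem:USE_u_torsion}). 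This gives uniform strict exactness with $c_3=2$. Your TS-3 needs this, or an equivalent argument that uses $r>1$ essentially.

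Two minor points in TS-4. First, the quotient $([\varepsilon]-1)/\xi=[\varepsilon^{1/p}]-1$ is integral, not of valuation about $-1/(p-1)$. Second, TS-4 holds with $c_4=1$ for all $r,m\ge1$, from $(\gamma-1)(P_m)\in pP_m+P_m^2\zp[T]$; the hypothesis $r>c_{\Art,F}(K)+1$ is there for the base change to $K$, not for TS-4.
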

\begin{proof}

The normalized trace $R_m: \bfL_{\dR,K}^{\rr} \to \bfL_{K,m}^{\rr}$ will be defined in Construction \ref{cons:Rm},  using various \emph{decompositions} in Constructions \ref{cons:decomp_conv} and \ref{cons:I_m} (which also shows the density of $\colim_m \bfL_{K,m}^{\rr} \subset \bfL_{\dR,K}^{\rr}$).
(TS-2) will be verified in Prop \ref{prop:ts2}.
(TS-3) will be verified in Prop  \ref{lem:verify_conv_DLLZ}, making use of the \emph{integral} cohomological computations in  Lem \ref{lem:USE_u_torsion}. 
(TS-4) will be verified in Prop \ref{prop:verify TS4}.
\end{proof}

\subsection{Verification: TS-2}
\begin{construction}[Decompositions of ``perfect" rings] \label{cons:decomp_conv}
Let $I=p^{-\infty} \bbz \cap [0,1)$.
\begin{enumerate}
\item 
By \cite[Prop 8.5]{Col08}, we have a  complete  direct sum decomposition
\[\wta_F^+ = \wh{\bigoplus}_{i \in I} \bfa_F^+ \cdot [\varepsilon^i] \]
Here, the completed direct sum is with respect to weak topologies on $\wta_F^+$ and $\bfa_F^+$.
In concrete terms, any $x \in \wta_F^+$ has a unique expression
$x=\sum_{i \in I} x_i [\varepsilon^i]$ with $x_i \in \bfa_F^+$ converging to zero in weak topology.
In addition, we have
\[ v(x)=\inf_{i} \{ v(x_i)\}\]
where the $v$-valuation on both sides are the $p$-adic valuations.

\item Invert $p$, and mod $\xi^{n+1}$, we get
 \[\wtb_F^+/\xi^{n+1} = \wh{\bigoplus}_{i \in I} \bfb_F^+/\xi^{n+1} \cdot [\varepsilon^i] \] 
In addition, given   
$ x=\sum_{i \in I} x_i [\varepsilon^i]$
then we have (using ``uniqueness" of   decompositions)
\[ v_n(x)=\inf_{i} \{ v_n(x_i)\} \] 
Here on RHS, $v_n(x_i)$ can be computed inside either $\bfb_F^+/\xi^{n+1}$ or $\wtb_F^+/\xi^{n+1}$; cf. argument following Diagram \eqref{diag:sxi3}.

\item Take $\xi$-adic completion, the we get
 \[\bfl_{\dR,F}^+ = \wh{\bigoplus}_{i \in I} \bfl_F^+ \cdot [\varepsilon^i] \]
Because of the matching of $v_n$-valuations in Item (2), we further have for any $r >0$,
\[\bfl_{\dR,F}^\rr = \wh{\bigoplus}_{i \in I} \bfl_F^\rr \cdot [\varepsilon^i] \]
Given   
$ x=\sum_{i \in I} x_i [\varepsilon^i]$, we claim
\[ \vr(x)=\inf_{i} \{ \vr(x_i)\}. \] 
(Similar to Item (2), $\vr(x_i)$ on RHS can be computed in either $\bfl_F^\rr $ or $\bfl_{\dR,F}^\rr$, by Lem \ref{lem:compavr}).
Indeed,   for a fixed $i$, Item (2) implies $v_n(x_i)+nr\geq v_n(x)+nr$ for each $n$, and hence $v^{(r)}(x_i)\geq v^{(r)}(x)$. Thus $\inf_i\{v^{(r)}(x_i)\}\geq v^{(r)}(x)$. On the other hand, we have $v^{(r)}(x)\geq \inf_i\{v^{(r)}(x_i[\varepsilon^i])\}\geq  \inf_i\{v^{(r)}(x_i)\}$. 


\item Let $K/F$ be a finite extension. 
By Lem \ref{lem:LdRK}, we have
 \[\bfl_{\dR,K}^+ = \wh{\bigoplus}_{i \in I} \bfl_K^+ \cdot [\varepsilon^i] \]
For $r\geq c_{\Art, F}(K)+1$, then again  by Lem \ref{lem:LdRK}, we have
\[\bfl_{\dR,K}^\rr = \wh{\bigoplus}_{i \in I} \bfl_K^\rr \cdot [\varepsilon^i] \] 
Given   
$ x=\sum_{i \in I} x_i [\varepsilon^i]$, we have
\[ v_K^\rr(x)=\inf_{i} \{ v_K^\rr (x_i)\} \] 
where $v_K^\rr$ is the valuation introduced in Construction \ref{cons:vkrr_val}.
\end{enumerate}
\end{construction}
 
 \begin{construction}[Decompositions of ``imperfect" rings] \label{cons:I_m}
For $m \geq 1$,  denote the finite set  $I_m: =  p^{-m}\bbz \cap [0,1)$. 
We have
 \[\bfl_{F,m}^+ =  {\bigoplus}_{i \in I_m} \bfl_F^+ \cdot [\varepsilon^i] \]
and
\[\bfl_{F,m}^\rr =  {\bigoplus}_{i \in I_m} \bfl_F^\rr \cdot [\varepsilon^i] \]
and
\[\bfl_{K,m}^\rr =  {\bigoplus}_{i \in I_m} \bfl_K^\rr \cdot [\varepsilon^i] \] 
Given   
$ x=\sum_{i \in I_m} x_i [\varepsilon^i]$, we have
\[ v_K^\rr(x)=\inf_{i} \{ v_K^\rr (x_i)\} \] 
 \end{construction}

\begin{construction}[Normalized trace as truncation]\label{cons:Rm}
 Use Constructions \ref{cons:decomp_conv} and \ref{cons:I_m}. Let $r\geq c_{\Art, F}(K)+1$. 
\begin{enumerate}
\item It is obvious $\colim_m \bfL_{K,m}^{\rr} \subset \bfL_{\dR,K}^{\rr}$ is dense.

\item  We can define
\[ R_m: \bfl_{\dR,K}^+\to  \bfl_{K,m}^+ \]
by \emph{truncation} from $I$ to $I_m$. That is, for $ x=\sum_{i \in I} x_i [\varepsilon^i]$, let $R_m(x) = \sum_{i \in I_m} x_i [\varepsilon^i]$.

\item The above  map restricts to a map
\[ R_m: \bfl_{\dR,K}^\rr \to  \bfl_{K,m}^\rr \]
\end{enumerate} 
\end{construction}

\begin{proposition} \label{prop:ts2}
Suppose $r> c_{\Art, F}(K)+1$. Then the datum 
   $(  \bfL_{\dR,K}^{\rr},   \{\bfL_{K,m}^{\rr}\}_{m\geq 0},   \Gamma_K)$ 
   satisfies  Axiom (TS-2) for any $c_2>0$. That is, for $m \gg 0$ (depending on $c_2$) 
\[ v_K^\rr(R_m(x)) \geq v_K^\rr(x) -c_2, \quad  \forall x\in \bfL_{\dR,K}^{\rr} \]
where $R_m$ is defined in Construction \ref{cons:Rm} and 
$v_K^\rr$ is defined in Construction \ref{cons:vkrr_val}.
\end{proposition}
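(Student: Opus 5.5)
The estimate holds in the stronger form $v_K^\rr(R_m(x)) \geq v_K^\rr(x)$, that is, with $c_2=0$, for every $m \geq c$ where $c=\lceil c_{\Art, F}(K)\rceil$. The plan is to reduce to the unramified case $K=F$ through the base change description of Lemma \ref{lem:LdRK}, and then read the inequality off the term-wise formula for $\vr$ in Construction \ref{cons:decomp_conv}(3).

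\textbf{Step 1: reduction to $K=F$.} The hypothesis $r > c_{\Art, F}(K)+1$ gives $r \geq c+1$. Hence $F_c \subset \bfl_F^\rr$ by Cor \ref{cor:elt_in_bdrr}, and Lemma \ref{lem:LdRK}(3),(5) give
\[ \bfl_{\dR,K}^\rr=\bfl_{\dR,F}^\rr\otimes_{F_c}K_c, \qquad \bfl_{K,m}^\rr=\bfl_{F,m}^\rr\otimes_{F_c}K_c \quad (m\geq c). \]
Fix the basis $\{e_j\}$ of $K_c$ over $F_c$ used in Construction \ref{cons:vkrr_val}. Every $x \in \bfl_{\dR,K}^\rr$ can then be written uniquely as $x=\sum_j y_j e_j$ with $y_j\in \bfl_{\dR,F}^\rr$, and by definition $v_K^\rr(x)=\inf_j \vr(y_j)$.

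The decompositions in Constructions \ref{cons:decomp_conv}(4) and \ref{cons:I_m} for $K$ are obtained from those for $F$ by tensoring with $K_c$. I will check that for $m\geq c$ the truncation map satisfies $R_m^K(\sum_j y_je_j)=\sum_j R_m^F(y_j)e_j$. This holds because $R_m^F$ is linear over $\bfl_F^\rr$, hence over $F_c$, and because the coefficients $e_j$ only multiply the $\bfl_F^\rr$-coefficients of the expansion $x=\sum_{i\in I}x_i[\varepsilon^i]$. Consequently
\[ v_K^\rr(R_m(x)) = \inf_j \vr(R_m^F(y_j)), \]
and it suffices to prove $\vr(R_m^F(y))\geq \vr(y)$ for $y \in \bfl_{\dR,F}^\rr$.

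\textbf{Step 2: the case $K=F$.} Write $y=\sum_{i\in I}y_i[\varepsilon^i]$ as in Construction \ref{cons:decomp_conv}(3), so that $\vr(y)=\inf_{i\in I}\vr(y_i)$. Since $[\varepsilon^i]\in\ainf$, all of its $v_n$-valuations are $\geq 0$, and therefore $\vr([\varepsilon^i])\geq 0$. The sub-multiplicativity of $\vr$ (Lemma \ref{lem:hatbr_subring}) then gives $\vr(y_i[\varepsilon^i])\geq \vr(y_i)$. Since $R_m^F(y)=\sum_{i\in I_m}y_i[\varepsilon^i]$ is a finite sum,
\[ \vr(R_m^F(y))\;\geq\; \inf_{i\in I_m}\vr(y_i)\;\geq\;\inf_{i\in I}\vr(y_i)\;=\;\vr(y). \]

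\textbf{Main obstacle.} The computation itself is immediate. The only delicate point is the compatibility of $R_m$ with the base change $\otimes_{F_c}K_c$, which requires both $m\geq c$ and $r\geq c+1$, and this is what makes the estimate hold only for $m\gg0$. A second point to keep in mind is that $\vr$ is not multiplicative (Remark \ref{rem:mult_vr}), so the argument must use only the inequality $\vr(y_i[\varepsilon^i])\geq\vr(y_i)$ and never an equality.
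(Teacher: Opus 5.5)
Your proposal is correct. Step 1 is the same base-change reduction the paper makes; the paper only invokes Lemma \ref{lem:LdRK} and Construction \ref{cons:vkrr_val}, and you spell it out. Step 2, however, takes a different and shorter route.

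\textbf{Comparison of Step 2.} The paper transports $R_m$ through the additive isometry $s_\xi\colon \hat{F}_\infty[[\xi]]\to \bfl_{\dR,F}^+$ built from $s(\varepsilon_i)=[\varepsilon]^{1/p^i}$. Under this map $R_m$ becomes coefficientwise truncation on $\hat{F}_\infty[[\xi]]$, where $\vr$ is computed term-wise. The paper then appeals to the classical Tate--Sen estimate for normalized traces on $\hat{F}_\infty$ (Berger--Colmez), which only gives some $c_2>0$ for $m\gg 0$.

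You instead read the bound off the identity $\vr(\sum_i y_i[\varepsilon^i])=\inf_i\vr(y_i)$ of Construction \ref{cons:decomp_conv}(3). This makes $R_m$ a contraction and gives $c_2=0$. You could even apply that identity directly to $R_m(y)$ rather than using sub-multiplicativity.

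Your route avoids the twin side and any outside input, and gives a sharper constant. It rests entirely on the integral orthonormality of Colmez's decomposition, whose $n=0$ case is $\o_{\hat F_\infty}=\widehat{\bigoplus}\,\o_{F_1}\cdot\varepsilon^i$ (suitably indexed). This is also what the paper's argument really needs. On the twin side the coefficient valuation is $\lfloor v_p\rfloor$, and a $v_p$-estimate $v_p(R_m(x))\geq v_p(x)-c_2$ with $c_2<1$ does not by itself give $\lfloor v_p(R_m(x))\rfloor\geq \lfloor v_p(x)\rfloor-c_2$. One needs the integral statement $R_m(\o_{\hat F_\infty})\subset \o_{F_m}$. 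In exchange, the paper's route makes the link with classical Sen theory explicit.

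\textbf{A correction in Step 1.} For $c\geq 2$ the field $F_c$ is not contained in $\bfl_F^\rr$, the base ring of the decomposition. So ``$R_m^F$ is linear over $\bfl_F^\rr$, hence over $F_c$'' is not the right justification, and your ``$F_c\subset\bfl_F^\rr$'' should read $F_c\subset\bfl_{\dR,F}^\rr$, which is what the tensor products require. The correct reason has two parts:
\begin{enumerate}
\item Truncation to $I_m$ commutes with multiplication by $[\varepsilon^j]$ for $j\in I_m$, so $R_m^F$ is $\bfl_{F,m}^\rr$-linear.
\item $F_c\subset\bfl_{F,m}^\rr$ once $m\geq c$ and $r\geq c+1$, by Cor \ref{cor:elt_in_bdrr}(3).
\end{enumerate}
This is exactly where the condition $m\geq c$ enters.

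Also, $r>c_{\Art,F}(K)+1$ implies $r\geq c+1$ only for integral $r$. Since $r\geq c+1$ is needed anyway for $v_K^\rr$ to be defined, this costs nothing.
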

\begin{proof}
Using the base change structure Lem \ref{lem:LdRK} and its compatibility with the definition of $v_K^\rr$, it suffices to treat the $K=F$ unramified case. We shall see that the proof reduces to classical Sen theory: that is, Axiom (TS-2) on $\hat{F}_\infty$.

As in proof of Lem \ref{lem:compavr}, using the rule
\[ s: \varepsilon_i \mapsto [\varepsilon]^{1/p^{i}}\]
we obtain a commutative diagram where both rows are additive homeomorphisms
\[
\begin{tikzcd}
{F_{m}[[\xi]]} \arrow[d, hook] \arrow[r, "s_\xi"] & {\bfL_{F,m}^+} \arrow[d, hook] \\
{\hat{F}_\infty[\xi]]} \arrow[r, "s_\xi"]         & \bfl_{\dR, F}^+
\end{tikzcd}
\]
Note
\[ \hat{F}_\infty =\wh{\bigoplus}_{i \in I} F \cdot \varepsilon^i \]
Thus we can define
\[ R_m: \hat{F}_\infty \to F_m\] 
via truncation from $I$ to $I_m$; this then extends linearly to
\[ R_m: \hat{F}_\infty[[\xi]] \to F_m[[\xi]]\] 
It is clear this $R_m$, ---being a section of the inclusion  $F_m[\xi]]\into \hat{F}_\infty[[\xi]]$---, is \emph{compatible} with $R_m$ in Construction \ref{cons:Rm} in the sense that we have a commutative diagram
\[
\begin{tikzcd}
{F_{m}[[\xi]]} \arrow[r, "s_\xi"]                          & {\bfL_{F,m}^+}             \\
{\hat{F}_\infty[\xi]]} \arrow[r, "s_\xi"] \arrow[u, "R_m"] & \bfl_{\dR, F}^+ \arrow[u, "R_m"]
\end{tikzcd}
\]
This then induces a commutative diagram
\[
\begin{tikzcd}
{(F_{m}[[\xi]])^\rr} \arrow[r, "s_\xi"]                          & {\bfL_{F,m}^\rr}                     \\
{(\hat{F}_\infty[\xi]])^\rr} \arrow[r, "s_\xi"] \arrow[u, "R_m"] & {\bfl_{\dR, F}^\rr} \arrow[u, "R_m"]
\end{tikzcd}
\]
As $s_\xi$ is $\vr$-valuation preserving, it suffices to check the TS-2 inequality on the left column, that is:
\[ \vr(R_m(x)) \geq \vr(x) -c_2, \quad \forall x\in (\hat{F}_\infty[[\xi]])^\rr  \]
here, recall for $x =\sum_{i \geq 0} a_i \xi^i \in (\hat{F}_\infty[[\xi]])^\rr $, $\vr(x)$ is computed \emph{term-wise} as the infimum of $\{ \lfloor v_p(a_i) \rfloor +ir\}_{i \geq 0}$. That is, it would suffice to verify the (TS-2) inequality for the ``coefficients" $\hat{F}_\infty$, that is:
\[ \vr(R_m(x)) \geq \vr(x) -c_2, \quad \forall x\in \hat{F}_\infty.   \]
This is the well-known result in Sen theory (which holds for any $c_2>0$), cf.~\cite[Prop 4.1.1]{BC08}.
\end{proof}

\subsection{Verification: TS-3}
\begin{construction}
For further (delicate) computations, we can further partition   $I=p^{-\infty} \bbz \cap [0,1)$ in in Construction \ref{cons:decomp_conv}.  For $s \geq 1$, let
  \[I_s^\ast:=\{i\in I \mid v_p(i) = -s\}.\]
      Now, for any $l\geq 1$, define
      \[\bfL_{F,m,l}^{\rr,\circ}:= {\bigoplus}_{i\in I_{m+l}^\ast} \bfL_{F,m}^{\rr,\circ}\cdot [\varepsilon^i]\]
      Then $\bfL_{F,m,l}^{\rr,\circ}$ is stable under the action of $\Gamma_F$ such that there exists a $\Gamma_F$-equivariant decomposition
      \begin{equation} \label{eq:lfml_decomp}
       \bfL_{\dR,F}^{\rr,\circ} = \bfL_{F,m}^{\rr,\circ}\oplus(\widehat \bigoplus_{l\geq 1}\bfL_{F,m,l}^{\rr,\circ}).
      \end{equation} 
\end{construction}

  \begin{lem} \label{lem:gamma_divides} 
      Let $n\geq 1$, $j\geq 0$, and let $\gamma\in \Gamma_{F,n+j-1}$. Then
      \[(\gamma-1)(\bfL_{F,n}^{\rr,\circ} )\in ([\varepsilon]^{p^{j-1}}-1)\bfL_{F,n}^{\rr,\circ} \]
         \end{lem}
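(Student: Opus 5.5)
The plan is to reduce everything to the action of $\gamma$ on the single variable $X_n:=1+T_n=[\varepsilon]^{1/p^n}$, and then propagate divisibility through the presentation $\bfL_{F,n}^{\rr,\circ}=\bfa^+_{F,n}\langle \frac{\xi}{p^r}\rangle$ of Cor \ref{lem:present_conv}(3), where $\bfa^+_{F,n}=\o_F[[T_n]]$. Set $u:=[\varepsilon]^{p^{j-1}}-1$.

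First, the generator. For $\gamma\in\Gamma_{F,n+j-1}$ we have $a:=\chi(\gamma)\in 1+p^{n+j-1}\zp$, and $\gamma(X_n)=X_n^{a}$. Hence
\[ (\gamma-1)(X_n)=X_n\bigl([\varepsilon]^{(a-1)/p^n}-1\bigr), \qquad \tfrac{a-1}{p^n}\in p^{j-1}\zp . \]
Writing $[\varepsilon]^{p^{j-1}b}-1=(1+u)^b-1$ with $b\in\zp$ and expanding binomially in $\ainf$ shows that $u$ divides $(\gamma-1)(X_n)$ in $\bfa_F^+\subset\bfa^+_{F,n}$.

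Second, extend to $\bfa^+_{F,n}$. For $f\in\o_F[[T_n]]$, the formal difference quotient gives
\[ f(\gamma X_n)-f(X_n)=(\gamma X_n-X_n)\, g \qquad\text{for some } g\in\o_F[[T_n]]. \]
Here we use that $\gamma X_n-X_n\in T_n\cdot\o_F[[T_n]]$, so that the expansion converges for the $(p,T_n)$-adic topology. Thus $(\gamma-1)\bfa^+_{F,n}\subset u\,\bfa^+_{F,n}$. Equivalently, the twisted Leibniz rule
\[ (\gamma-1)(xy)=(\gamma-1)(x)\,\gamma(y)+x\,(\gamma-1)(y) \]
shows that $\{x:(\gamma-1)x\in u\cdot(\text{ring})\}$ is a subring, and we close it up topologically.

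Third, pass to the Tate algebra in $\frac{\xi}{p^r}$. Applying the same Leibniz rule to $\bfL_{F,n}^{\rr,\circ}$, it suffices to check two things. One is the generator $\frac{\xi}{p^r}$. The other is that $u\bfL_{F,n}^{\rr,\circ}$ is closed under $\vr$-limits, which holds because $u$ is a non-zero-divisor and multiplication by $u$ has closed image, via the term-wise description in Construction \ref{cons:minimal_writing}. Then a convergent sum $\sum x_i(\xi/p^r)^i$ can be handled termwise, once we know that $(\gamma-1)\bigl((\xi/p^r)^i\bigr)=(\xi/p^r)^i\bigl((\gamma\xi/\xi)^i-1\bigr)$. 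Since $\gamma\xi/\xi$ is a unit of $\ainf$ (as $\gamma$ preserves $\ker\theta$), everything reduces to showing
\[ \gamma\xi/\xi-1\in u\,\bfa^+_{F,n}. \]

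The main obstacle is this last claim about $\gamma(\xi)/\xi$. Using $\xi=\frac{[\varepsilon]-1}{[\varepsilon]^{1/p}-1}$, I would write $\gamma\xi/\xi-1$ explicitly as
\[ \frac{[\varepsilon]^a-1}{[\varepsilon]-1}\cdot\frac{[\varepsilon]^{1/p}-1}{[\varepsilon]^{a/p}-1}-1 \]
and expand each factor as $1+(\text{quotient of } [\varepsilon]^{a-1}-1 \text{ or } [\varepsilon]^{(a-1)/p}-1)$.

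When $j=0$, $\theta(u)=\varepsilon_1-1\neq0$ gives $\xi\nmid u$, so from $\xi\mid(\gamma-1)\xi=u y$ one directly gets $\xi\mid y$. This means $\gamma\xi/\xi-1\in u\,\bfa^+_{F,n}$, and the case is done.

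When $j\ge1$, $u$ itself lies in $\ker\theta$, and the naive division fails. There I would use the precise congruence $a-1\in p^{n+j-1}\zp$ (not merely $p^{j-1}$) to show directly that $\frac{[\varepsilon]^{a-1}-1}{[\varepsilon]-1}$ is divisible by $u$ in $\ainf\cap\bfa^+_{F,n}$. The extra factor $p^{n}$ that appears modulo $[\varepsilon]-1$ would have to be absorbed, using $r$ large, into the integrality of $\xi/p^r$ together with Lem \ref{lem:bdrr_easy}. If this direct computation resists, the fallback is to replace the generator $\frac{\xi}{p^r}$ by $\frac{T_n^{\deg P_m}}{p^r}$-type generators adapted to the minimal expansions of Construction \ref{cons:minimal_writing}, for which only the first step is needed.
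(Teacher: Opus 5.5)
Your steps 1--2 are fine, and they are essentially all that the paper's own proof contains: it computes $(\gamma-1)([\varepsilon^{1/p^n}])$ and appeals to density of $\bfb^+_{F,n}$. Your $j=0$ case of step 3 is also correct, since $\xi$ is prime in $\bfa^+_{F,n}$ and $\theta(u)\neq 0$.

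The gap is the case $j\geq 1$, and it cannot be closed along the lines you suggest. The reason is that the statement itself fails there unless $n$ is large compared to $r$. Write $q=[\varepsilon^{1/p}]$ and $u=\xi u'$ with $u'=(q-1)\prod_{k=1}^{j-1}\Phi_p(q^{p^k})$, so that $\theta(u')=p^{j-1}(\varepsilon_1-1)$. Also $\theta(\gamma(\xi)/\xi)=\chi(\gamma)$: write $\xi=t\cdot(\xi/t)$, where $\theta(\xi/t)=(\varepsilon_1-1)^{-1}$ is fixed by $\gamma$.

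Hence $\gamma(\xi)/\xi-1\notin u\,\bdrplus$ for $\gamma\neq 1$, so your target claim is false. The same goes for the variant with $\frac{[\varepsilon]^{a-1}-1}{[\varepsilon]-1}$: its image under $\theta$ is $a-1\neq 0$, while every $u$-multiple lies in $\ker\theta$.

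More decisively, suppose $(\gamma-1)(\xi/p^r)=uy$. Cancelling $\xi$ in the domain $\bdrplus$ and applying $\theta$ gives, for $v_p(\chi(\gamma)-1)=n+j-1$,
\[ \theta(y)=\frac{\chi(\gamma)-1}{p^{\,r+j-1}(\varepsilon_1-1)}, \qquad v_p(\theta(y))=n-r-\tfrac{1}{p-1}. \]
Since $\vr(y)\leq v_0(y)$, integrality forces $\theta(y)\in\theta(\bfa^+_{F,n})=\mathcal{O}_{F_n}$. So $y\notin\bfL_{F,n}^{\rr,\circ}$ as soon as $r\geq n$; for example, $n=j=1$ with any integer $r\geq 1$ is a counterexample.

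Thus ``absorbing $p^n$ using $r$ large'' goes the wrong way: larger $r$ makes things worse. Changing generators cannot help either, because $\xi/p^r$ lies in $\bfL_{F,n}^{\rr,\circ}$ regardless of presentation, and $T_n^{\deg P_n}/p^r$ is not even integral.

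The paper's two-line proof has the same hole. $\bfb^+_{F,n}\not\subset\bfL_{F,n}^{\rr,\circ}$, and on $\bfb^+_{F,n}\cap\bfL_{F,n}^{\rr,\circ}$ the computation on $[\varepsilon^{1/p^n}]$ only gives $(\gamma-1)(\xi/p^r)\in p^{-r}u\,\bfa^+_{F,n}$. What is true, for integer $r\geq1$, is the case $n\geq r+1$. This is also all that (TS-3) needs, since there $m\gg 0$ for fixed $r$.

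The missing idea is to track the extra cyclotomic factors. By your steps 1--2 applied to $q$, one has $\gamma(\xi)-\xi\in([\varepsilon]^{p^{n+j-2}}-1)\,\bfa^+_{F,1}$. Moreover
\[ \frac{[\varepsilon]^{p^{n+j-2}}-1}{[\varepsilon]^{p^{j-1}}-1}=\prod_{k=j-1}^{n+j-3}\Phi_p\bigl([\varepsilon]^{p^k}\bigr) \]
is a product of $n-1$ factors. Each factor lies in $p+\xi\,\bfa^+_{F,1}\subset p\,\bfL_{F,n}^{\rr,\circ}$, because $\xi/p$ is integral for $r\geq 1$. Hence $(\gamma-1)(\xi/p^r)\in p^{\,n-1-r}u\,\bfL_{F,n}^{\rr,\circ}$. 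Your Leibniz-rule argument, applied termwise to $\sum_i x_i(\xi/p^r)^i$ with $x_i\in\bfa^+_{F,n}$ and $x_i\to 0$, then finishes the proof in that range; no closedness claim for $u\,\bfL_{F,n}^{\rr,\circ}$ is needed.
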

  \begin{proof}
  As $\bfb^+_{F,n} \subset \bfL_{F,n}^{\rr,\circ} $ is dense (e.g.~ when $r$ is an integer, it is obvious from Cor \ref{lem:present_conv}), it suffices to compute $\gamma-1$ on $\bfb^+_{F,n}$.  
  Now note that $(\gamma-1)([\varepsilon^{1/p^n}]) = [\varepsilon^{1/p^n}]([\varepsilon^{\frac{\chi(\gamma)-1}{p^n}}]-1)$ and $p^{n+j-1}\big|\chi(\gamma)-1$. 
 \end{proof}     

  \begin{lem}\label{lem:USE_u_torsion}  
      Suppose   $r>1$.
\begin{enumerate}
\item For any $l\geq 1$,
      \begin{equation}\label{equ:USE_u_torsion-I}
          \rH^j(\Gamma_{F,m+l-1},\bfL_{F,m,l}^{\rr,\circ}) = \left\{
          \begin{array}{rcl}
              0, & j = 0 \\
              \bfL_{F,m,l}^{\rr,\circ}/([\varepsilon^{\frac{1}{p}}]-1), & j = 1.
          \end{array}\right.
      \end{equation}
      
      \item  For any $m\geq 1$,  
      \[\rH^j(\Gamma_{F,m},\bfL_{\dR,F}^{\rr,\circ}/\bfL_{F,m}^{\rr,\circ}) = \left\{
          \begin{array}{rcl}
              0, & j =0 \\
              \text{$p$-torsion}, & j = 1
          \end{array}
      \right.\]
\end{enumerate}      
         \end{lem}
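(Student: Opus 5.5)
The plan is to reduce everything to explicit computations on the ``monomial'' pieces $\bfL_{F,m}^{\rr,\circ}\cdot[\varepsilon^i]$. The group $\Gamma_{F,m+l-1}$ is procyclic, say topologically generated by $\gamma=\gamma_{m+l-1}$. Hence $\rg(\Gamma_{F,m+l-1},M)$ is computed by $[M\xrightarrow{\gamma-1}M]$ for the $p$-adically complete modules in question. It therefore suffices to show, for Item (1), that $\gamma-1$ on $\bfL_{F,m,l}^{\rr,\circ}$ is injective with image exactly $([\varepsilon^{1/p}]-1)\bfL_{F,m,l}^{\rr,\circ}$.

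For $i\in I_{m+l}^\ast$ and $x\in \bfL_{F,m}^{\rr,\circ}$, I would write
\[
(\gamma-1)(x[\varepsilon^i])=[\varepsilon^i]\bigl([\varepsilon^{i(\chi(\gamma)-1)}]\gamma(x)-x\bigr)=[\varepsilon^i]\bigl(([\varepsilon^{i(\chi(\gamma)-1)}]-1)\gamma(x)+(\gamma-1)x\bigr).
\]
Since $\chi(\gamma)-1$ has valuation exactly $m+l-1$ (choosing $\gamma$ a generator) and $v_p(i)=-(m+l)$, the exponent $i(\chi(\gamma)-1)$ has valuation $-1$. So $u_i:=[\varepsilon^{i(\chi(\gamma)-1)}]-1$ equals $([\varepsilon^{1/p}]-1)$ times a unit of $\bfL_{F,m}^{\rr,\circ}$; this is the cyclotomic identity $([\varepsilon^{a/p}]-1)/([\varepsilon^{1/p}]-1)\in \ainf^\times$ for $a\in\zp^\times$. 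On the other hand, Lemma \ref{lem:gamma_divides} with $n=m$ and $j=l$ gives $(\gamma-1)x\in([\varepsilon]^{p^{l-1}}-1)\bfL_{F,m}^{\rr,\circ}$, and $[\varepsilon]^{p^{l-1}}-1=([\varepsilon^{1/p}]-1)\cdot(\text{element of }([\varepsilon^{1/p}]-1)^{p^l-1}\ainf)$, which is ``smaller'' than $u_i$.

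Thus $(\gamma-1)=([\varepsilon^{1/p}]-1)\cdot(U+N)$ on each piece, where $U$ is multiplication by a unit (after the twist by $\gamma$) and $N$ is topologically nilpotent. I would then argue that $U+N$ is invertible on $\bfL_{F,m,l}^{\rr,\circ}$ by a Neumann series. The key point is that $[\varepsilon^{1/p}]-1$ is topologically nilpotent for the $\vr$-topology with $r>1$, because $([\varepsilon^{1/p}]-1)^{p-1}\in(p,\xi)$ and $\xi/p^r$ is integral. Since $[\varepsilon^{1/p}]-1$ is a nonzerodivisor, this yields $H^0=0$ and $H^1=\bfL_{F,m,l}^{\rr,\circ}/([\varepsilon^{1/p}]-1)$. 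The main obstacle is precisely this nilpotence and convergence estimate in the non-multiplicative $\vr$-valuation. I would use the presentation $\ainf\langle\xi/p^r\rangle$ (Cor \ref{lem:present_conv}), in which $\ainf$-multiples behave well, and note that the decomposition \eqref{eq:lfml_decomp} is preserved by these operators.

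For Item (2), use the $\Gamma_F$-equivariant decomposition \eqref{eq:lfml_decomp}: the quotient is $\widehat\bigoplus_{l\ge1}\bfL_{F,m,l}^{\rr,\circ}$. Since $\Gamma_{F,m}\supset\Gamma_{F,m+l-1}$ with finite index $p^{l-1}$, Hochschild--Serre together with Item (1) gives vanishing of $H^0$, and shows that $H^1$ is killed by $[\varepsilon^{1/p}]-1$ up to restriction and corestriction. Alternatively, I would rerun the computation above directly with $\gamma_m$, where the exponent $i(\chi(\gamma_m)-1)$ has valuation $-l$, so $u_i\sim([\varepsilon^{1/p^l}]-1)$. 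In either case $H^1$ is annihilated by some power of $[\varepsilon^{1/p}]-1$ uniformly, and it remains to control the completed sum. Since $([\varepsilon^{1/p}]-1)^{p-1}$ divides $p$ up to a unit modulo $\xi$, and $\xi\in p^r\bfL^{\rr,\circ}$, a bounded power of $[\varepsilon^{1/p}]-1$ divides $p$, which gives $p$-power torsion. To get that $H^1$ is killed by $p$ itself, I would use that $[\varepsilon^{1/p^l}]-1$ divides $[\varepsilon^{1/p}]-1$, which in turn divides $p$ in $\bfL^{\rr,\circ}$ when $r>1$, via the same identity.
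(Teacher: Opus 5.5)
Your route is the paper's, but one estimate in Item (1) is false as stated, and Item (2) is muddled.

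\textbf{Item (1).} You work on $M_l:=\bfL_{F,m,l}^{\rr,\circ}$. You factor $\gamma-1=([\varepsilon^{1/p}]-1)\cdot T$, where $T$ is the unit-twisted $\gamma$ plus a perturbation, and invert $T$ by successive approximation. The paper iterates modulo $p^k$; your Neumann series is the same thing.

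The false step is your bound on the perturbation. The cofactor $([\varepsilon]^{p^{l-1}}-1)/([\varepsilon^{1/p}]-1)$ does not lie in $([\varepsilon^{1/p}]-1)^{p^l-1}\ainf$. For $l=1$ the cofactor is $\xi$. If $\xi=([\varepsilon^{1/p}]-1)^{p-1}y$, then $\theta(y)=0$, so $y\in\xi\ainf$ and $[\varepsilon^{1/p}]-1\in\ainf^\times$, which is absurd. The relation $\xi\equiv([\varepsilon^{1/p}]-1)^{p-1}$ holds only modulo $p$.

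What is true, and is all you need, is divisibility by $\xi$:
$[\varepsilon]^{p^{l-1}}-1\in([\varepsilon]-1)\ainf=([\varepsilon^{1/p}]-1)\,\xi\,\ainf$.
Moreover $\xi/p\in\bfL_{F,m}^{\rr,\circ}$ as soon as $r\geq 1$. So the perturbation is $p$ times an operator preserving $M_l$; this is the paper's term $pb_i$ with $b_i=\frac{\xi}{p}a_i'$. Since $\vr(px)=\vr(x)+1$ exactly, $(U^{-1}N)^k$ maps $M_l$ into $p^kM_l$ and the series converges. No nilpotence estimate for $[\varepsilon^{1/p}]-1$ in the non-multiplicative $\vr$ is needed.

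\textbf{Item (2).} The Hochschild--Serre route is again the paper's, but your write-up obscures it. Since $\rH^0(\Gamma_{F,m+l-1},M_l)=0$, inflation--restriction makes
$\mathrm{res}\colon \rH^1(\Gamma_{F,m},M_l)\to \rH^1(\Gamma_{F,m+l-1},M_l)=M_l/([\varepsilon^{1/p}]-1)M_l$
an injective $\zp$-linear map. The target is killed by $p$, because $p\in([\varepsilon^{1/p}]-1)\bfL_{F,m}^{\rr,\circ}$ when $r>1$. You note this; the paper writes $p=-(\xi-p)(1-\xi/p)^{-1}$ with $\xi-p\in([\varepsilon^{1/p}]-1)\ainf$. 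That already gives ``killed by $p$'', uniformly in $l$. Corestriction, bounded powers and $p$-power torsion play no role. Also, ``$\rH^1(\Gamma_{F,m},M_l)$ is killed by $[\varepsilon^{1/p}]-1$'' is not meaningful, since $[\varepsilon^{1/p}]$ is not $\Gamma_{F,m}$-invariant.

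Drop your alternative direct computation with $\gamma_m$. For $l>m$ the factor $[\varepsilon^{i(\chi(\gamma_m)-1)}]$ is not in $\bfL_{F,m}$. Hence $\gamma_m$ does not preserve the summands $\bfL_{F,m}^{\rr,\circ}[\varepsilon^i]$, and your componentwise $U$ is not defined.

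Finally, for $\widehat\bigoplus_l M_l$ you should say why the solutions are uniform in $l$. The map $\gamma_m-1$ is injective on the $p$-torsion-free module $M_l$. So the unique $h_l$ with $(\gamma_m-1)h_l=pf_l$ lies in $p^kM_l$ whenever $f_l$ does, and $\sum_l h_l$ converges.
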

  \begin{proof}
      We first explain how to deduce Item (2) from Item (1). Indeed, (with decomposition \eqref{eq:lfml_decomp} in mind), by Hochschild--Serre spectral sequence, it suffices to show $H^1$ in Item (1) is $p$-torsion: equivalently, we claim   $ p \in ([\varepsilon^{\frac{1}{p}}]-1)\bfL_{\dR,F}^{\rr,\circ}$.
      Indeed, as $1- \frac{\xi}{p}$ is a unit in $\bfL_{\dR,F}^{\rr,\circ}$ (since $r>1$), we have 
      \[p \in (\xi-p)\bfL_{\dR,F}^{\rr,\circ} =(\frac{[\varepsilon]-1}{[\varepsilon^{\frac{1}{p}}]-1}-p)\bfL_{\dR,F}^{\rr,\circ}     \subset ([\varepsilon^{\frac{1}{p}}]-1)\bfL_{\dR,F}^{\rr,\circ}.\]
     
   We now prove Item (1). 
      Let $\gamma$ be a generator of $\Gamma_{F,m+l-1}$, so we have $v_p(\chi(\gamma)-1) = m+l-1$.
      For any 
      \[x = \sum_{i\in I_{m+l}^\ast}a_i[\varepsilon^i]\in \bfL_{F,m,l}^{\rr,\circ}\]
      with $a_i\in \bfL_{F,m}^{\rr,\circ}$, we have
      \[\begin{split}
      (\gamma-1)(x) = & \sum_i(\gamma-1)(a_i)[\varepsilon^i]+\sum_i\gamma(a_i)(\gamma-1)([\varepsilon^i]) \\
      = & \sum_i\left(([\varepsilon]-1)a_i^{\prime}+\gamma(a_i)([\varepsilon^{i(\chi(\gamma)-1)}]-1)\right)[\varepsilon^i]\\
      = & \sum_i\left(([\varepsilon]-1)a_i^{\prime}+([\varepsilon^{\frac{1}{p}}]-1)\gamma(a_iu_i)\right)[\varepsilon^i],
      \end{split}\]
      where $a_i^{\prime}\in \bfL_{F,m}^{\rr,\circ}$ (which exists by Lemma \ref{lem:gamma_divides}) is the element such that 
      \[(\gamma-1)(a_i) =([\varepsilon]-1)a_i^{\prime}\] 
      and $u_i\in \bfa_{F,1}^+= \o_F[[[\varepsilon]^{1/p}-1]]$ is the unit, (as $v_p(i(\chi(\gamma)-1)) = -1$), such that 
      \[\gamma(u_i)([\varepsilon^{\frac{1}{p}}]-1) = [\varepsilon^{i(\chi(\gamma)-1)}]-1.\]
       As $r>1$, $\frac{\xi}{p}\in \bfL_F^{\rr,\circ}$. Put $b_i = \frac{\xi}{p}a_i^{\prime}\in \bfL_{F,m}^{\rr,\circ}$, then we obtain
      \begin{equation}\label{equ:USE_u_torsion-II}
          (\gamma-1)(\sum_{i\in I_{m+l}^\ast}a_i[\varepsilon^i]) = ([\varepsilon^{\frac{1}{p}}]-1)\sum_{i\in I_{m+l}^\ast}\big(\gamma(a_iu_i)+pb_i\big)[\varepsilon^i].
      \end{equation}

We now prove the $H^0$-part of \eqref{equ:USE_u_torsion-I}. Suppose that 
\[(\gamma-1)(\sum_{i\in I_{m+l}^\ast}a_i[\varepsilon^i]) = 0.\] 
By \eqref{equ:USE_u_torsion-II}, we have $p\mid a_i$ for each $i$; but then $p$ also divides $a_i'$ and $b_i$.
By iteration, we see that $p^n\mid a_i$ for any $n\geq 1$, yielding that $a_i = 0$ for each $i$. 

We now prove the $H^1$-part  of \eqref{equ:USE_u_torsion-I}. With 
\eqref{equ:USE_u_torsion-II} at hand, it suffices to show that for any $\sum_{i\in I_{m+l}^\ast}c_i[\varepsilon^i]\in \bfL_{F,m,l}^{\rr,\circ}$, we have 
\[([\varepsilon^{\frac{1}{p}}]-1)\sum_{i\in I_{m+l}^\ast}c_i[\varepsilon^i]\in \Im(\gamma-1).\]
Set $a_i =     \gamma^{-1 }(c_i)u_i^{-1}$ in    \eqref{equ:USE_u_torsion-II} (note $u_i$ is a unit), we see   that
         there exists some $c_i^{\prime}$ such that   
      \[([\varepsilon^{\frac{1}{p}}]-1)\sum_{i\in I_{m+l}^\ast}c_i[\varepsilon^i] = (\gamma-1)\left(\sum_{i\in I_{m+l}^\ast}\gamma^{-1}(c_i)u_i^{-1}[\varepsilon^i]\right)+ p\sum_{i\in I_{m+l}^\ast}([\varepsilon^{\frac{1}{p}}]-1)c_i^{\prime}[\varepsilon^i] \]
That is to say
     \[([\varepsilon^{\frac{1}{p}}]-1)\bfL_{F,m,l}^{\rr,\circ} \subset \Im(\gamma-1) + p\cdot ([\varepsilon^{\frac{1}{p}}]-1)\bfL_{F,m,l}^{\rr,\circ}\]
     Iterate this inclusion, we see
     \[([\varepsilon^{\frac{1}{p}}]-1)\bfL_{F,m,l}^{\rr,\circ} \subset \Im(\gamma-1) + p^k\cdot ([\varepsilon^{\frac{1}{p}}]-1)\bfL_{F,m,l}^{\rr,\circ}\] for all $k \geq 1$. Thus 
     \[([\varepsilon^{\frac{1}{p}}]-1)\bfL_{F,m,l}^{\rr,\circ} \subset \Im(\gamma-1)\]
 \end{proof}      
   

  \begin{prop} \label{lem:verify_conv_DLLZ}
    \begin{enumerate}
\item Suppose $r>1$. For any $m\geq 1$, the complex $\rg(\Gamma_{F,m},\bfL_{\dR,F}^{\rr}/\bfL_{F,m}^{\rr})$ is uniformly strict exact  with respect to $c=2$ (cf. Axiom  \ref{axiom_TS234}\eqref{axiomts3item}).

\item Let $K/F$ be a finite extension, and suppose $m \geq c_{\Art, F}(K)$ and $r> c_{\Art, F}(K)+1$, then $\rg(\Gamma_{K,m},\bfL_{\dR,K}^{\rr}/\bfL_{K,m}^{\rr})$ is uniformly strict exact  with respect to $c=2$.
\end{enumerate}      
  \end{prop}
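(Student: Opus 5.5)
Item (1) will be deduced from the integral computation in Lem \ref{lem:USE_u_torsion}(2), and Item (2) from Item (1) by the base change structure of Lem \ref{lem:LdRK}.

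\emph{Item (1).} Let $X=\bfL_{\dR,F}^{\rr}/\bfL_{F,m}^{\rr}$ and $X^\circ=\bfL_{\dR,F}^{\rr,\circ}/\bfL_{F,m}^{\rr,\circ}$. By the $\Gamma_F$-equivariant decomposition \eqref{eq:lfml_decomp}, $X^\circ$ is identified with $\widehat\bigoplus_{l\ge 1}\bfL_{F,m,l}^{\rr,\circ}$ and sits inside $X$ as its unit ball for the quotient valuation. Moreover $X=X^\circ[1/p]$, and by the term-wise formula in Construction \ref{cons:decomp_conv}(3) the quotient valuation is computed term-wise. Let $\gamma_m$ be a generator of $\Gamma_{F,m}$.

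Lem \ref{lem:USE_u_torsion}(2) says that $\gamma_m-1$ is injective on $X^\circ$, so also on $X$, and that $\mathrm{coker}(\gamma_m-1\mid X^\circ)$ is $p$-torsion. Inverting $p$ then gives surjectivity on $X$, so $\gamma_m-1$ is bijective on $X$.

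For the bound, we need the cokernel to be killed by a \emph{fixed} power of $p$, not merely $p$-power torsion.
\begin{itemize}
\item The proof of Lem \ref{lem:USE_u_torsion} shows each $H^1$ of $\bfL_{F,m,l}^{\rr,\circ}$ under $\Gamma_{F,m+l-1}$ equals $\bfL_{F,m,l}^{\rr,\circ}/([\varepsilon^{1/p}]-1)$, which is killed by $p$ since $p\in([\varepsilon^{1/p}]-1)$.
\item Passing from $\Gamma_{F,m+l-1}$ to $\Gamma_{F,m}$ is a finite cyclic quotient, and we have $\mathbb{Q}$-coefficients in mind. Concretely, the Hochschild--Serre step gives $H^1(\Gamma_{F,m},-)=H^1(\Gamma_{F,m+l-1},-)_{\Gamma_{F,m}/\Gamma_{F,m+l-1}}$ (coinvariants), because $H^0$ vanishes. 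Coinvariants of a $p$-killed module are still killed by $p$.
\item Hence $p\,X^\circ\subset(\gamma_m-1)X^\circ$ uniformly in $l$.
\end{itemize}

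Now take $x\in X$ and scale so that $\vr(x)\ge 0$. Then $px=(\gamma_m-1)y$ with $y\in X^\circ$, so $\vr((\gamma_m-1)^{-1}x)\ge \vr(x)-1$. Allowing for non-discreteness and for $\vr(p^{-1})=-1$ within the scaling, a loss of $c=2$ suffices.

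The main point to check carefully is that the $p$-torsion bound is uniform across all $l$ and compatible with the completed sum. Since each summand is killed by the same $p$ and the preimages can be chosen with controlled valuation, the sum of preimages converges in the completed sum.

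\emph{Item (2).} Let $c=\lceil c_{\Art,F}(K)\rceil$. By Lem \ref{lem:LdRK}(3),(5), tensoring with $K_c$ over $F_c$ gives
\[X_K=\bfL_{\dR,K}^{\rr}/\bfL_{K,m}^{\rr}=\big(\bfL_{\dR,F}^{\rr}/\bfL_{F,m}^{\rr}\big)\otimes_{F_c}K_c.\]
The valuation $v_K^\rr$ is the Gauss valuation with respect to a fixed $F_c$-basis of $K_c$ (Construction \ref{cons:vkrr_val}).

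For $m\ge c$, the group $\Gamma_{K,m}$ fixes $K_c$ pointwise. It is an open subgroup of $\Gamma_{F,m'}$ for suitable $m'$; identifying the relevant groups via the Galois isomorphisms of the lemma on $c_{\Art,F}$, $\Gamma_{K,m}$ identifies with $\Gamma_{F,m}$ acting on $X_F$. Therefore $\gamma_m-1$ acts coordinatewise on the basis expansion, and both bijectivity and the bound with $c=2$ transfer directly from Item (1).
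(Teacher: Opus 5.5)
Your proposal is essentially the paper's proof: you scale $x$ into the integral lattice, use Lem \ref{lem:USE_u_torsion} to write $px=(\gamma_m-1)y$ with $y$ integral, and for Item (2) you split $\mathbf{L}_{\mathrm{dR},K}^{(r)}/\mathbf{L}_{K,m}^{(r)}$ into finitely many copies of the $F$-quotient via Lem \ref{lem:LdRK} with $\Gamma_{K,m}\simeq\Gamma_{F,m}$. Your extra checks (that the cokernel is killed by $p$ uniformly in $l$, and that preimages converge in the completed sum) are details the paper leaves implicit. One correction: the intermediate bound $v^{(r)}((\gamma_m-1)^{-1}x)\ge v^{(r)}(x)-1$ does not follow as written. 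Instead normalize to $0\le v^{(r)}(x)<1$, as the paper does, to get $v^{(r)}(y/p)\ge -1\ge v^{(r)}(x)-2$, which is exactly where $c=2$ comes from.
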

  \begin{proof}
Item (1).  
   Let $f\in \bfL_{\dR,F}^{\rr}/\bfL_{F,m}^{\rr}$, by multiplying a suitable $p$-power,     we may assume that $1=\vr(p)>v^{(r)}(f)\geq 0$; that is, $f\in  \bfL_{\dR,F}^{\rr,\circ}/\bfL_{F,m}^{\rr,\circ}$. Then Lemma \ref{lem:USE_u_torsion} implies that there is an $h \in \bfL_{\dR,F}^{\rr,\circ}/\bfL_{F,m}^{\rr,\circ}$ such that $(\gamma-1)(h) =pf$; and thus $(\gamma-1)^{-1}(f)=h/p$. Now note 
\[ \vr(h/p) \geq \vr(h)-1 \geq -1 \geq \vr(f)-2\]

Item (2).  When $m \geq c_{\Art, F}(K)$, we have $\Gamma_{K,m} \simeq \Gamma_{F,m}$. 
Use $v^\rr_K$-valuation in Construction \ref{cons:vkrr_val}, then $\bfL_{\dR,K}^{\rr}/\bfL_{K,m}^{\rr}$ \emph{splits} as a finite copy of $\bfL_{\dR,F}^{\rr}/\bfL_{F,m}^{\rr}$ compatible with valuations on both sides, and thus we are reduced to Item (1). 
      \end{proof}

 \subsection{Verification: TS-4}

 \begin{proposition} \label{prop:verify TS4} 
  \begin{enumerate}
  \item Suppose $r, m \geq 1$. The datum $(\bfL_{F,m}^\rr, \vr, \Gamma_{F,m})$ satisfies Axiom TS-4 with respect to $c_4=1$.
  \item For $K/F$ a finite extension, suppose $r, m \ge c_{\Art, F}(K)+2$. The  datum $(\bfL_{K,m}^\rr, v^{(r)}_K, \Gamma_{K,m})$ satisfies Axiom TS-4 with respect to $c_4=1$  (cf.~ Construction \ref{cons:vkrr_val} for $v^{(r)}_K$).
  \end{enumerate} 
  \end{proposition}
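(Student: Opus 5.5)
We need to show $\vr((\gamma_m-1)x)\ge \vr(x)+1$ for all $x\in\bfL_{F,m}^\rr$, where $\gamma_m$ generates $\Gamma_{F,m}$. The plan is to reduce to generators via the presentation in Cor \ref{lem:present_conv}, using the integral version of Lemma \ref{lem:gamma_divides}.

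First we treat item (1). By scaling with powers of $p$ (note $\vr(px)=\vr(x)+1$ by weak multiplicativity with respect to $p$, since $\vr$ is computed termwise via Construction \ref{cons:minimal_writing}), and after possibly replacing the valuation by its floor-type discrete version, it suffices to prove the integral statement
\[(\gamma_m-1)\bfL_{F,m}^{\rr,\circ}\subset p\,\bfL_{F,m}^{\rr,\circ}.\]
Here one needs care, because $\vr$ is real-valued rather than discrete. I would instead argue on the twin side using the minimal expansion $x=\sum_i x_iP_m^i$ with termwise valuation. Alternatively, one can show directly that $\vr((\gamma-1)y)\ge \vr(y)+1$ for $y$ ranging over the topological generators $x_iP_m^i$, and then pass to sums by the ultrametric inequality and continuity of $\gamma-1$.

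For the divisibility, apply Lemma \ref{lem:gamma_divides} with $n=m$ and $j=1$. Since $\gamma_m\in\Gamma_{F,m}=\Gamma_{F,n+j-1}$, this gives $(\gamma_m-1)\bfL_{F,m}^{\rr,\circ}\subset([\varepsilon]-1)\bfL_{F,m}^{\rr,\circ}$. Now $[\varepsilon]-1=([\varepsilon^{1/p}]-1)\xi$, and as in the proof of Lemma \ref{lem:USE_u_torsion}, $\xi=p\cdot\frac{\xi}{p}$ with $\frac{\xi}{p}\in\bfL_F^{\rr,\circ}$ since $r\ge1$. This yields $(\gamma_m-1)\bfL_{F,m}^{\rr,\circ}\subset p\,\bfL_{F,m}^{\rr,\circ}$. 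The case $r=1$ needs $\vr(\xi/p)\ge0$, which holds since $\vr(\xi)\ge r$. To pass from this integral containment to the valuative inequality for arbitrary $x$, I use a family of scalings: for $x$ with $\vr(x)=a$, and any $c\in\bfb^+_F$-constant (e.g. $[p^\flat]^{s}$ or $p^{k}$ times units, Lemma \ref{lem:bdrr_easy}) with $\vr(cx)\ge0$ and $\vr(c)$ close to $-a$, we have $\gamma$-invariance of $c$ when $c\in\wtb_F^+$. Since $F$-constants and $[p^\flat]$ are not both $\Gamma$-invariant, I would rather use the termwise formula. Each $x_i\in\bfb_{F,m}^{+,<\deg P_m}$ is a $p$-power times an integral element, and $\vr$ is computed termwise, so it suffices to bound the generators $p^kT_m^aP_m^i$ with $k+ir\ge$ the relevant bound. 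For these, $p^{k}\xi^{i}$ scaling reduces to the integral statement. I expect this bookkeeping (non-multiplicativity of $\vr$, Remark \ref{rem:mult_vr}) to be the main obstacle; the key point is that $\vr(p^kz^iy)=k+ir+\vr(y)$ holds by weak multiplicativity.

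For item (2), use Lemma \ref{lem:LdRK}: $\bfL_{K,m}^\rr=\bfL_{F,m}^\rr\otimes_{F_c}K_c$, with $v_K^\rr$ the Gauss valuation for a fixed $F_c$-basis $e_j$ of $K_c$. For $m\ge c+2$, $\Gamma_{K,m}=\Gamma_{F,m}$ fixes $K_m\supset K_c$, so $\gamma_m$ fixes each $e_j$. Hence $(\gamma_m-1)\sum y_je_j=\sum(\gamma_m-1)(y_j)e_j$, and item (1) applied coordinatewise gives the claim with $c_4=1$.
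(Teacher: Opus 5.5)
\textbf{There is a genuine gap, in the divisibility step.} You apply Lemma \ref{lem:gamma_divides} with $(n,j)=(m,1)$ to get $(\gamma_m-1)\bfL_{F,m}^{\rr,\circ}\subset([\varepsilon]-1)\bfL_{F,m}^{\rr,\circ}$. This containment fails whenever $m\le r$, which lies inside the range $r,m\ge1$ of the statement.

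Take $r=m=1$, $x=\xi/p\in\bfL_{F,1}^{(1),\circ}$, and $\gamma$ with $\chi(\gamma)=1+p$. Since $\gamma(t)=\chi(\gamma)t$ and $\theta(\xi/t)=(\varepsilon_1-1)^{-1}$ is $\gamma$-fixed, we have $\gamma(\xi)\in\chi(\gamma)\xi+\xi^2\bdrplus$. Suppose $(\gamma-1)(\xi/p)=([\varepsilon]-1)z=\xi([\varepsilon^{1/p}]-1)z$. Cancelling $\xi$ and applying $\theta$ gives $(\chi(\gamma)-1)/p=(\varepsilon_1-1)\theta(z)$, i.e. $\theta(z)=(\varepsilon_1-1)^{-1}\notin\o_{F_1}$. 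But $\theta(\bfL_{F,1}^{(1),\circ})=\theta(\bfa^+_{F,1}\langle\xi/p\rangle)\subset\o_{F_1}$ by Cor \ref{lem:present_conv}. The same argument works for $x=\xi/p^m$ whenever $m\le r$, taking $\gamma$ with $v_p(\chi(\gamma)-1)=m$.

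The proof of that lemma only tests $[\varepsilon^{1/p^n}]$, i.e. the subring $\bfa^+_{F,n}$. That subring is not $\vr$-dense in $\bfa^+_{F,n}\langle\xi/p^r\rangle$, and the elements $\xi^i/p^k$ are exactly what it misses. For $m$ large relative to $r$ the containment can be rescued, but only by checking these elements by hand, using the computation of $\gamma(\xi)$ below. Even then it would not cover all $r,m\ge1$.

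Your intermediate target $(\gamma_m-1)\bfL_{F,m}^{\rr,\circ}\subset p\,\bfL_{F,m}^{\rr,\circ}$ is nonetheless true. For instance, $(\gamma-1)(\xi/p)=\xi+\xi^2y/p$ with $y\in\zp[[[\varepsilon^{1/p}]-1]]$. The factor $p$ does not come from $[\varepsilon]-1$. It comes from $p\mid\chi(\gamma)-1$ on the linear term, and from one extra power of $\xi$ (worth $r\ge1$) on the higher terms.

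\textbf{Second gap: non-integral $r$.} Even granting the integral containment, scaling by powers of $p$ only gives $\vr((\gamma-1)x)\ge\lfloor\vr(x)\rfloor+1$. This is too weak when $r\notin\bbz$, since then $\vr$ takes non-integral values such as $\vr(\xi/p)=r-1$. Your proposed fix scales a monomial $p^kf(T)P_m^i$ by $p^k\xi^i$, which tacitly commutes $\gamma-1$ with multiplication by $\xi^i$. That is false, because $\gamma(\xi)\neq\xi$, and controlling $\gamma(P_m^i)-P_m^i$ is precisely the nontrivial estimate.

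\textbf{What the paper does instead.} The ingredient missing in both places is a direct computation.
\begin{enumerate}
\item Using the minimal expansion of Construction \ref{cons:minimal_writing}, reduce to monomials $f(T)P_m^i$ with $f\in\o_F[T]$.
\item From $(\gamma-1)(T)\in TP_m\zp[[T]]$, get $(\gamma-1)(f)\in P_m\o_F[[T]]$.
\item From $(\gamma-1)(P_m)\in(\chi(\gamma)-1)P_m+P_m^2\zp[[T]]$, get by induction $(\gamma-1)(P_m^i)\in pP_m^i\zp[[T]]+P_m^{i+1}\zp[[T]]$.
\item Hence $\vr((\gamma-1)(fP_m^i))\ge\min(ir+1,(i+1)r)=ir+1$ for $r\ge1$, with no condition linking $m$ and $r$.
\end{enumerate}

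Your reduction of item (2) to item (1), via an $F_c$-basis of $K_c$ fixed by $\Gamma_{K,m}$, is the same as the paper's and is fine.
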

  \begin{proof} 
We first explain how to deduce Item (2) from Item (1).
 Let $c=\lceil c_{\Art, F}(K)+1 \rceil$.
By Lem \ref{lem:LdRK}, we have $\bfL_{K,m}^\rr = \bfL_{F,m}^\rr\otimes_{F_c}K_c $.
As $\Gamma_{K,m}$ acts trivially on $K_c$, the verification reduces to the unramified $F$-case.  
  
We now prove Item (1). For any $x\in \bfl_{F,m}^\rr$, as explained in Construction \ref{cons:minimal_writing}, it admits a minimal expression
 $x=\sum_i x_iP_m^i$ with $\deg x_i <\deg P_m$ such that $\vr(x)$ can be computed \emph{term-wise}; thus it suffices  to verify the case where $x=x_iP_m^i$ is  a ``monomial".
For notational simplicity, in this proof, we denote $T=T_m=[\varepsilon]^{1/p^m}-1$.
Suppose $v(x_i)=\alpha$, and write $x_i=p^\alpha f(T)$ with $f(T) \in \o_F[T] \setminus p\o_F[T]$, we are reduced to prove
\[\vr( (\gamma-1)(f(T)P_m^i)) \geq  \vr(f(T)P_m^i))+c_4 =ir+c_4\]
 As $v_p(\chi(\gamma)-1)\geq m$, we have 
  \[(\gamma-1)(T) = (1+T)^{\chi(\gamma)}-1-T= TP_m(T)h_{\gamma}(T), \text{ for some $h_{\gamma}(T)\in \Zp[[T]]$. }\]  
  Thus, for any $a(T)\in \calO_F[[T]]$, we have
  \[(\gamma-1)(a(T)) = a(\gamma(T))-a(T)\equiv a'(T)TP_m(T)h_{\gamma}(T)\mod P_m(T)^2.\]
In particular, we have
\[ (\gamma-1)(P_m(T)) \in pP_m(T)+P_m^2\zp[T]\]
Using induction, it is easy to see for any $i \geq 1$
\[ (\gamma-1)(P_m(T)^i) \in pP_m^i(T)+P_m^{i+1}\zp[T]\]
Thus we have 
\[\vr( (\gamma-1)(f(T)P_m^i)) \geq  \vr( (\gamma-1)(P_m^i)) \geq \inf\{  \vr(pP_m^i), \vr(P_m^{i+1})  \} =\inf\{ 1+ri, r(i+1)  \} =ri+c_4\] 
  \end{proof}


\section{Main theorems I: decompletion}\label{sec:mainthm1}

   In this (short) section, we summarize the consequences of Tate--Sen formalisms developed in previous two sections \S \ref{sec:TS-1} and \S \ref{sec:TS234}. 
   In the end, we motivate the necessity to consider \emph{regular connections} in the remainder of the paper.
 
\begin{notation} \label{nota:phi_as_log}
For $D^\dagger \in \Rep_{\Gamma_K}(\bfl_{K, \infty}^{+,\dagger})$, the $\Gamma_K$-action is locally analytic. Thus, we can differentiate the $\Gamma_K$-action, leading to the operator 
\[ \phi: = \frac{\log g}{\log \chi(g)} \text{ for $g\in \gammak$ close enough to 1 } \]
here $\log g:=  (-1)\cdot \sum_{k \geq 1}  (1-g)^k/k$.
One can check that the $\phi$-operator  on the ring   $\bfl_{K,\infty}^{+, \dagger}=\colim_m  \bfodagger(K_m, t)$ is exactly $t\frac{d}{dt}$.
 We thus have a $\kinfty$-linear operator
\[ \phi: D^\dagger  \to D^\dagger, \]
satisfying Leibniz rule with respect to $t\frac{d}{dt}$; that is, for $f \in \bfl_{K,\infty}^{+, \dagger}, x\in D^\dagger$, we have
\[\phi(fx)=t\frac{d}{dt}(f)x+f\phi(x)\]
\end{notation}

   \begin{theorem} \label{thm:TS_conv}
\begin{enumerate}
    \item We have equivalence of categories
\[\Rep_{G_K}(\bdrpd) \simeq \Rep_{\Gamma_K}(\ldrpd) \simeq \Rep_{\Gamma_K}(\bfl_{K, \infty}^{+,\dagger})   \]
For $W^\dagger\in \Rep_{G_K}(\bdrpd)$, the corresponding object in $\Rep_{\Gamma_K}(\ldrpd)$ is $(W^{\dagger})^\hk$, the corresponding object in $\Rep_{\Gamma_K}(\bfl_{K, \infty}^{+,\dagger})   $ is $D^\dagger=(W^{\dagger,\hk})^{\gammak\dla}$.
The functors from right to left are   base-change functors.

\item  We have quasi-isomorphisms of complexes concentrated in degree $[0,1]$:
\[ \rg(\gk, W^\dagger) \simeq \rg(\gammak, W^{\dagger,\hk}) \simeq \rg(\gammak, D^\dagger) \simeq \rg(\phi, D^\dagger)^{\gammak=1}. \]  
In addition, we have
\[  \rg(\phi, D^\dagger)\simeq \rg(\gk, W^\dagger) \otimes_K \kinfty \]
 
 \item   We have $\dim_K H^0(\gk, W^\dagger) <+\infty$; but it is possible that $\dim_K H^1(\gk, W^\dagger)=+\infty$. 
\end{enumerate}
\end{theorem}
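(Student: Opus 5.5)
We assemble Theorem \ref{thm:TS_conv} from the two Tate--Sen steps of \S\ref{sec:TS-1} and \S\ref{sec:TS234}, passing to colimits in $r$ (and $m$) at each step.

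\textbf{Step 1: from $\gk$ to $\gammak$.} For $W^\dagger\in\Rep_{\gk}(\bdrpd)$, Lem \ref{lem:colim_rep} descends $W^\dagger$ (as a $\gk$-representation) to some $W^{(r)}\in\Rep_{\gk}(\bdrr)$, since $\gk$ is compact. Restricting to $\hk$, Prop \ref{prop:verify_TS1} supplies colimit TS-1 and colimit étale descent for $\colim_r\bdrr$. Prop \ref{prop:colim_TS_1_equiv} then gives the equivalence $\Rep_{\hk}(\bdrpd)\simeq\Mod(\ldrpd)$ together with $\rg(\hk,W^\dagger)\simeq (W^\dagger)^{\hk}$. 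This equivalence is $\gammak$-equivariant, so it upgrades to $\Rep_{\gk}(\bdrpd)\simeq\Rep_{\gammak}(\ldrpd)$. Continuity of the induced $\gammak$-action needs a short check against the colimit topologies of Def \ref{def:colim_dag_ring}. Hochschild--Serre then yields $\rg(\gk,W^\dagger)\simeq\rg(\gammak,W^{\dagger,\hk})$.

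\textbf{Step 2: decompletion over $\gammak$.} Again use Lem \ref{lem:colim_rep} to descend an object of $\Rep_{\gammak}(\ldrpd)$ to $\Rep_{\gammak}(\ldrkr)$ for some large $r$. By Thm \ref{thm:ts234}, the datum $(\ldrkr,\{\bfl_{K,m}^\rr\},\gammak)$ satisfies TS-2/3/4. The Proposition following Lem \ref{lem:ring_nohigherlav} then gives $\Rep_{\gammak}(\ldrkr)\simeq\Rep_{\gammak}(\colim_m\bfl^\rr_{K,m})$, with $D^{(r)}=(W^{(r)})^{\gammak\dla}$ and the quasi-isomorphisms $\rg(\gammak,W^{(r)})\simeq\rg(\gammak,D^{(r)})\simeq\rg(\Lie\gammak,D^{(r)})^{\gammak=1}$. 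Passing to $\colim_r$ preserves all of this. Two compatibilities need checking: that locally analytic vectors commute with the LB colimit (locally analytic vectors of a compact-type colimit are computed stagewise), and that the functors are compatible as $r$ grows; full faithfulness is checked on internal Homs. Both steps are built from base change on the right-to-left side, which gives the tensor-category statements. The Lie algebra operator is $\phi$, as in Notation \ref{nota:phi_as_log}.

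\textbf{Step 3: the $\otimes_K\kinfty$ statement and part (3).} For $\rg(\phi,D^\dagger)\simeq\rg(\gk,W^\dagger)\otimes_K\kinfty$, argue as in \cite{Fon04}. The $\gammak$-action on $H^i(\phi,D^\dagger)$ is smooth, because $\phi$ kills these groups and the action is locally analytic. Descending along $\kinfty/K$ (Hilbert 90 for the Galois extensions $K_m/K$, then the colimit over $m$) gives $H^i(\phi,D^\dagger)=H^i(\phi,D^\dagger)^{\gammak}\otimes_K\kinfty$. Invariants are exact, since $\gammak$ acts on these $\qp$-spaces through a finite quotient on each finite-level piece; hence $H^i(\phi,D^\dagger)^{\gammak}=H^i(\gammak,D^\dagger)$. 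Degrees concentrate in $[0,1]$ because $\Lie\gammak$ is one-dimensional and the $\gammak=1$ step is exact.

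For part (3), finiteness of $H^0$ reduces via Step 2 to finiteness of $H^0(\phi,D^\dagger)$. That group injects into $H^0(\phi,D)$ for $D=D^\dagger\otimes\bfl^+_\kinfty$, which is finite by Thm \ref{thm:intro_fon}; equivalently, $(W^\dagger)^{\gk}\subset W^{\gk}$. The possible infinitude of $H^1$ is shown by example, e.g. $\bdrpd(\lambda)$ with $\type(-\lambda)=0$. Here $H^1(\phi,\bfodagger)$ is computed via $t\frac{d}{dt}+\lambda$: the coefficients $1/(n+\lambda)$ destroy convergence, so the cokernel is infinite-dimensional (Clark). This step relies on the later sections on regular connections. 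I expect it, together with the colimit/locally-analytic compatibilities in Step 2, to be the main obstacle.
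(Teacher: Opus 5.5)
Your proposal is correct and follows the paper's proof. The first equivalence comes from Prop \ref{prop:verify_TS1} with Prop \ref{prop:colim_TS_1_equiv} and Hochschild--Serre, decompletion and the comparison with $\rg(\phi,D^\dagger)^{\gammak=1}$ come from Thm \ref{thm:ts234}, and part (3) uses $(W^\dagger)^{\gk}\subset W^{\gk}$ together with Example \ref{ex:inf_h1_gk_rep}. The one small difference is in how you get $\rg(\phi,D^\dagger)\simeq\rg(\gk,W^\dagger)\otimes_K\kinfty$: you use smoothness of the $\gammak$-action on $H^i(\phi,D^\dagger)$ plus Hilbert 90, whereas the paper cites Tamme's theorem $\rg(\phi,D^\dagger)\simeq\colim_n\rg(\Gamma_{K,n},D^\dagger)$ followed by Galois descent; the two arguments are equivalent in substance. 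One side remark to correct: $\ldrpd$ is not of compact type, since its coefficient fields are not locally compact, but this is harmless because locally analytic vectors of an LB representation are computed stagewise by convention.
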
 
\begin{proof}
Prop \ref{prop:verify_TS1} implies  any $U \in \rep_\hk(\bdrpd)$ is semi-linearly trivial in the sense $ U^\hk \otimes_{\ldrpd} \bdrpd = U$, and in addition $\rg(\hk, U) \simeq U^\hk$.
This implies
$\Rep_{G_K}(\bdrpd) \simeq \Rep_{\Gamma_K}(\ldrpd)$; the cohomology comparison $$ \rg(\gk, W^\dagger) \simeq \rg(\gammak, W^{\dagger,\hk})$$ follows by Hochschild--Serre spectral sequence. 
Thm \ref{thm:ts234} implies the equivalence $\Rep_{\Gamma_K}(\ldrpd) \simeq \Rep_{\Gamma_K}(\bfl_{K, \infty}^{+,\dagger}) $ and the comparison 
\[  \rg(\gammak, W^{\dagger,\hk}) \simeq \rg(\gammak, D^\dagger)  \simeq \rg(\phi, D^\dagger)^{\gammak=1}. \] 
Note $\rg(\phi, D^\dagger)$ is computing the Lie algebra cohomology, hence we have 
\[\rg(\phi, D^\dagger) \simeq \colim_n \rg(\Gamma_{K,n}, D^\dagger) \simeq \colim_n \rg(\Gamma_{K}, D^\dagger)\otimes_K K_n \simeq \rg(\gk, W^\dagger) \otimes_K \kinfty ;\]
 here the first quasi-isomorphism follows from e.g.~the main theorem of  \cite{Tamme_loc_ana_rep_2015ANT}, and the second follows from Galois descent.

Item (3). We have  $\dim_K H^0(\gk, W^\dagger) \leq \dim_K H^0(\gk, W)$ hence is finite: the inequality could be strict, cf. Example \ref{ex:inf_h1_gk_rep}(2)). See Example \ref{ex:inf_h1_gk_rep}(1) for an example where $\dim_K H^1(\gk, W^\dagger)=\infty$. 
(These examples are   postponed later as they make  use of machinery developed there).\end{proof} 


 The above theorem is completely about $\bdrpd$-representations.
In the remainder of the paper, we transition to study ``convergence" questions, that is: we shall begin to study the \emph{relation} between $\bdrpd$- and $\bdrplus$-representations. The key link lies in the $\phi$-operator on $D^\dagger$ in Thm \ref{thm:TS_conv}.
As the last formula in Notation \ref{nota:phi_as_log} shows, the  pair $(D^\dagger, \phi)$ is exactly a (convergent) regular connection to be defined in Def \ref{def:reg_conn_all}.

 
\section{Formal regular connections} \label{sec:formal_reg_conn}

As motivated by discussions in end of \S \ref{sec:mainthm1}, we study regular connections in the following sections \S \ref{sec:formal_reg_conn}--\S \ref{sec:conv_reg3}, in both the formal case and the convergent case. Many results in these sections are ``classical" and well-known.
Let us explain some of our motivation and writing style. 

\begin{remark} \label{rem:goal_mod} 
A goal for the presentations in sections \S \ref{sec:formal_reg_conn}--\S \ref{sec:conv_reg3} is to make them more convenient for applications to  $p$-adic Hodge theory.
\begin{enumerate}
\item We  always try to make references to Kedlaya's book \cite{Ked22}  if possible, which is a  convenient reference. As such, we sometimes give proof sketches even for well-known results (e.g. Lem \ref{lem:shearing_formal}), if they are not given in \cite{Ked22}.

\item As remarked in \cite[Rem. 5.7.1]{Ked22}, it is ``unfortunate" that most literature on $p$-adic differential equations uses the language of \emph{differential polynomials}. But the more convenient language (especially for applications  to $p$-adic Hodge theory) is to use the \emph{differential module} set-up. Thus, whenever possible, we translate some statements in \cite{Ked22} to the (more ``canonical") differential module setting. This is the case with Prop \ref{prop:formalfuchs}.

\item Once we are in the differential module setting, it becomes extremely \emph{natural} to consider its \emph{cohomology}, something that seems to be murky in the differential polynomial language. 
Indeed, this has helped us to understand the classical theorem of Clark \cite{Cla66} as a \emph{cohomology comparison theorem} (which should be well-known for experts), cf. Thm \ref{thm:Clark_coho}; it also helps us to clear some confusions in some recent literatures, cf.~ \S \ref{subsec:confuse}. 
 More importantly, such ``canonical" statement leads us  to a categorical equivalence Thm \ref{thm:extension of regular connection} which seems to be new, and which would not have been discovered had we not switched to the differential module language.
\end{enumerate}
\end{remark}

In this section, we focus on \emph{formal} regular connections. This studies does not involve valuations or topologies on base fields.

\begin{assumption}
Throughout this section:
\begin{itemize}
\item  Let $E$ be a characteristic zero field (no need to equip topology; not necessarily algebraically closed).  
\end{itemize}
\end{assumption}

\begin{defn} \label{def:formal_reg_conn}
Let $z$ be a variable.
Denote $\widehat \bfO:= \widehat \bfO(E, z):=\ E[[z]]$. Equip $\widehat \bfO$   with the differentiation $z\frac{d}{dz}$. 
 \begin{enumerate}
 \item Let $\mic(\widehat \bfO)$ be the category of \emph{regular connections} over $\widehat \bfO$. An object is a finite free $\widehat \bfO$-module $M$ equipped with an $E$-linear morphism $\phi: M \to M$ satisfying Leibniz rule with respect to $z\frac{d}{dz}$.
\item For a regular connection above, $\phi$ induces the  $E$-linear residue morphism:
\[\bar{\phi}: M/zM \to M/zM\]
Call its eigenvalues (as elements in $\overline{E}$) the \emph{exponents} of the regular connection.

\item  Say the exponents $\lambda_1, \cdots, \lambda_r$ are \emph{weakly prepared} (\cite[Def 7.3.3]{Ked22}) if 
\[ \lambda_i -\lambda_j \in \bbz \Leftrightarrow \lambda_i=\lambda_j \] 

\item Let $k \geq 0$. 
Say the exponents are  \emph{($k+1$)-weakly prepared}  if
$$k'+1+(\lambda_i-\lambda_j) \neq 0, \quad \forall k' \geq k, \forall 1 \leq i,j \leq r.$$
(This terminology might not be standard in the literature, but is convenient for our discussions).
 \end{enumerate}
\end{defn}
 
 \begin{lemma}[Shearing] \label{lem:shearing_formal}
 Let $(M, \phi) \in \mic(\wh{\bfo})$. There exists some  finite free $\wh{\bfo}$-lattice $N \subset M$ (in the sense $N[1/z]=M[1/z]$) which is stable under $\phi$, and which has weakly prepared exponents.  
\end{lemma}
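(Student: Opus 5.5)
We argue by induction on a numerical invariant measuring how far the exponents are from being weakly prepared. Note that $E$ need not be algebraically closed, so we first pass to a finite extension $E'/E$ containing all exponents. Then we descend the resulting lattice; alternatively, we work with the generalized eigenspace decomposition of $\bar\phi$ over $E$, grouped by $\bbz$-orbits of Galois-conjugacy classes. Concretely, decompose $M/zM=\bigoplus_{\Lambda} V_\Lambda$ according to the classes $\Lambda$ of exponents modulo the equivalence ``$\lambda\sim\lambda'$ iff $\lambda-\lambda'\in\bbz$''. This decomposition is $\gal(\overline E/E)$-stable up to permuting classes, so it descends to a decomposition over $E$ after grouping conjugate classes. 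Within each class, the exponents are of the form $\lambda_0+n$ with $n$ ranging over a finite set of integers. Define the defect $\delta(M)=\sum_{\Lambda}(\max n-\min n)$, summed over classes. $M$ is weakly prepared iff $\delta(M)=0$.

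The key step is the elementary shearing transformation. Pick a class and let $V\subset M/zM$ be the sum of the generalized eigenspaces of $\bar\phi$ for the exponents in that class with maximal integer shift $\lambda_0+n_{\max}$; after grouping over Galois conjugates, $V$ is an $E$-subspace stable under $\bar\phi$. Let $N:=\pi^{-1}(V)+ z^{-1}\cdot\pi^{-1}(V)$. Equivalently, choose a complement and set $N=z^{-1}\widetilde V+M$, where $\widetilde V$ is a lift of $V$; more cleanly, $N=\{x\in z^{-1}M : zx \bmod z \in V\}$. Then $N$ is a finite free $\wh{\bfo}$-lattice with $N[1/z]=M[1/z]$, since $\wh{\bfo}$ is a DVR and $N$ is finitely generated torsion free. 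It is $\phi$-stable: for $x\in N$, write $x=z^{-1}y$ with $y\in M$ and $\bar y\in V$. The Leibniz rule gives $\phi(x)=z^{-1}(\phi(y)-y)$. Since $V$ is $\bar\phi$-stable, $\overline{\phi(y)-y}\in V$, so $\phi(x)\in N$.

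Next we compute the exponents of $N$. There is an exact sequence $0\to M/(\text{preimage of }V)\to N/zN\to z^{-1}\pi^{-1}(V)/M\cong V\to 0$, compatible with $\bar\phi$ up to twist. On the piece $z^{-1}V$, the residue acts as $\bar\phi-1$, so those exponents drop by $1$. On the quotient $(M/zM)/V$, the residue acts as before, so those exponents are unchanged. Hence the maximal shift $n_{\max}$ in the chosen class is replaced by $n_{\max}-1$. Other classes are untouched, so $\delta(N)<\delta(M)$ as long as the class was not already a single exponent. Iterating finitely many times yields the required lattice $N$ with $\delta(N)=0$.

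The only delicate points are the bookkeeping over a non-algebraically-closed $E$ and the identification of the residue on $N/zN$ through the exact sequence. For the former, I would rather take $V$ to be the generalized eigenspace for the full $\gal$-orbit of the chosen exponent together with all its conjugates' maximal shifts. These form a $\bar\phi$-stable $E$-rational subspace, because the set of exponents $\{\sigma(\lambda_0)+n_{\max}\}$ is Galois stable and the characteristic polynomial factor is defined over $E$. For the latter, I would check it directly by choosing a basis of $M$ adapted to a lift of $V$.
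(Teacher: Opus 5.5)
Your argument is correct and is essentially the paper's shearing argument (taken from van der Put--Singer), run dually. The paper passes to the sublattice spanned by $z$ times lifts of the generalized eigenspace of an exponent $c_1$ with $c_2-c_1\in\mathbb{Z}_{>0}$, together with lifts of the other eigenspaces, which raises $c_1$ by one. You instead pass to the superlattice $z^{-1}\pi^{-1}(V)$, which lowers the top exponent of a class by one. Your grouping by Galois orbits (using that in characteristic zero distinct conjugates cannot differ by a nonzero integer) also makes explicit the rationality over a non-algebraically-closed $E$, which the paper's sketch leaves implicit.

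One thing to add: the statement asks for $N\subset M$, and your construction produces $N\supset M$. Replace your final superlattice $N$ by $z^kN$ for $k\gg0$. This is still $\phi$-stable, since $\phi(z^kx)=kz^kx+z^k\phi(x)$, and all its exponents are shifted by $k$, so it is still weakly prepared.
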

\begin{proof}
  This is the content of  \cite[Prop. 7.3.10]{Ked22}. 
For convenience of readers (indeed, some  references assume  $E$ is algebraically closed, hence could be confusing), we sketch the proof from  \cite[Lem 3.11]{vdPS03}.
 Let $c_1, \cdots, c_s$ be the distinct eigenvalues of $\bar{\phi}$ on $\bar{M}=M/zM$. If these eigenvalues are not prepared, without loss of generality, suppose $c_2-c_1$ is a positive integer; we will produce another lattice with exponents $c_1+1, c_2, \cdots, c_s$. Iteration of this process produces the desired effect.  
 Write the generalized eigenspace decomposition of $\bar M$ as  
\[ \bar{M} =\oplus_i  \bar{M}[c_i] \]
Choose $e_{i,j} \in M$ with $1\leq i \leq s$ and $1\leq j \leq m_i$ such their reductions $\{ \bar{e}_{i, j}| 1\leq j \leq m_i\}$ is a basis of $\bar{M}[c_i]$ for each $i$. Nakayama Lemma implies $e_{i,j}$ forms a basis of $M$. Consider the sub-module $N$ generated by $\{ ze_{1, 1}, \cdots, ze_{1, m_1}, e_{2,1}, \cdots, e_{s, m_s} \}$. Then one checks $N$ is $\phi$-stable, and it is clear the distinct eigenvalues of $N/zN$ are exactly $c_1+1, c_2, \cdots, c_s$.


\end{proof}

\begin{prop}[weakly prepared and section map] \label{prop:formalfuchs}
 Suppose $M \in \mic(\wh{\bfo})$  has weakly prepared exponents.
Then there exists a unique section $s: M/zM \to M$ which is compatible with connections on both sides; namely, the following diagram is commutative:
\[\begin{tikzcd}
 M/zM \arrow[r, "s"] \arrow[d, "\bar{\phi}"] & M \arrow[d, "\phi"] \\
 M/zM \arrow[r, "s"]                         & M                  
\end{tikzcd}\]
 In addition, $s$ induces an isomorphism of regular connections
 \[ s\otimes 1:  M/zM \otimes_E \wh{\bfo} \simeq M\] 
where the left hand side is the connection defined by $\bar{\phi}\otimes 1+1 \otimes z\frac{d}{dz}$.
\end{prop}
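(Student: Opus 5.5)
We build the section $s$ by successive approximation in the $z$-adic filtration, solving one linear equation in each degree. Fix an $\wh{\bfo}$-basis $e_1,\dots,e_r$ of $M$ and write $\phi(e)=eA(z)$ with $A(z)=\sum_{k\ge 0}A_kz^k\in\Mat_r(E[[z]])$. The residue $\bar\phi$ then has matrix $A_0$.

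A section $s$ compatible with the connections amounts to a matrix $U(z)=\sum_k U_kz^k$ with $U_0=I$. The columns of $eU$ are the images $s(\bar e_j)$, and $s$ is extended $E$-linearly. The compatibility $\phi\circ s=s\circ\bar\phi$ says $\phi(eU)=eUA_0$. By the Leibniz rule,
\[
\phi(eU)=e\bigl(AU+z\tfrac{d}{dz}U\bigr),
\]
so the condition becomes the matrix differential equation $AU+z\frac{d}{dz}U=UA_0$. Comparing coefficients of $z^k$ for $k\ge 1$ gives
\[
kU_k+A_0U_k-U_kA_0=-\sum_{j=1}^{k}A_jU_{k-j}.
\]
This is a Sylvester-type equation $\mathcal{L}_k(U_k)=(\text{known terms})$, where $\mathcal{L}_k(X)=kX+A_0X-XA_0$ acts on $\Mat_r(E)$.

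The key step, and the only place the weak preparation hypothesis enters, is that each $\mathcal{L}_k$ is bijective for $k\ge 1$. The eigenvalues of $X\mapsto A_0X-XA_0$ are $\lambda_i-\lambda_j$ (computed over $\overline E$). Hence the eigenvalues of $\mathcal{L}_k$ are $k+\lambda_i-\lambda_j$. If one of these vanished, then $\lambda_j-\lambda_i=k$ would be a nonzero integer, contradicting weak preparation. So $\mathcal{L}_k$ is invertible on the finite-dimensional $E$-space $\Mat_r(E)$. Invertibility over $\overline E$ descends to $E$, since the determinant is unchanged by field extension.

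We then solve recursively for $U_k$; the solution exists and is unique at every step. This gives both existence and uniqueness of $s$, since any compatible section has the same coefficient expansion. Uniqueness can also be phrased basis-free: the difference of two compatible sections is a map $\delta: M/zM\to zM$ commuting with the connections. On $z^kM/z^{k+1}M$ the operator $\phi$ acts as $\bar\phi+k$. Projecting $\delta$ to successive graded pieces and using the invertibility of $\mathcal{L}_k$ shows $\delta=0$.

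Finally, $s\otimes 1$ is compatible with the connections, because $s$ is $\phi$-equivariant and both sides obey the Leibniz rule with respect to $z\frac{d}{dz}$. It is an isomorphism because $U\equiv I \pmod z$, so $U$ is invertible over $E[[z]]$ (equivalently, by Nakayama). The main obstacle is just the bijectivity of $\mathcal{L}_k$ when $E$ is not algebraically closed, which is handled by the determinant remark above; all remaining steps are formal.
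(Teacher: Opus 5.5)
Your proposal is correct and takes essentially the same route as the paper. Both fix a basis, reduce compatibility to $AU+z\frac{d}{dz}U=UA_0$ with $U_0=I$, and solve the recursion for $U_k$ using invertibility of $X\mapsto kX+A_0X-XA_0$, whose eigenvalues $k+\lambda_i-\lambda_j$ are nonzero by weak preparation. Your extra remarks (descending invertibility from $\overline{E}$ to $E$ via the determinant, and the basis-free uniqueness argument on the graded pieces $z^kM/z^{k+1}M$) are valid refinements of what the paper only sketches.
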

\begin{proof}    
This proposition is a translation (to the differential module language, cf. Rem \ref{rem:goal_mod}) of  \cite[Prop 7.3.6]{Ked22}.  
For convenience of the readers, and for convenience of later discussions, let us sketch  the proof (which is elementary). 


Indeed, once the section $s$ is constructed, it is obvious that $s\otimes 1$ induces an isomorphism. We thus only need to define $s$ inducing the above commutative diagram. Equivalently, fix a basis $\vec{e}=(e_1, \cdots, e_r)$ of $M$, and suppose $\phi(\vec e)=(\vec e)N$, with
\[ N=\sum_{i \geq 0} N_i z^i\]
Then the section $s$ can be written as
$ s(\vec{\bar e}) =\vec e U$ 
where $U =\sum_{i \geq 0} U_iz^i $ is a matrix such that $U_0=I$ and that
\begin{equation} \label{eqn:solve_N_0}
    NU+z\frac{d}{dz}(U)=  UN_0 
\end{equation}
Thus $U$ is exactly the ``fundamental solution matrix" desired in \cite[Def 7.3.1]{Ked22}, and (uniquely)  constructed in \cite[Prop 7.3.6]{Ked22}. 
We continue with the sketch. The above equation reduces to the iterative equation:
\begin{equation}\label{eqn:uiiterate}
iU_i -U_iN_0+N_0U_i =-\sum_{j=1}^i N_j U_{i-j}, \forall i \geq 1
\end{equation}   
The left hand side can be abstracted as a linear transformation on the $E$-vector space $\mat_r(E)$ via the map
\begin{equation} \label{eqnxixnx}
 X \mapsto iX -XN_0+N_0X;
\end{equation}  
its eigenvalues are precisely $i-\lambda_j+\lambda_k$ (with $i \geq 1$) hence are non-zero: that is, Eqn \eqref{eqnxixnx} induces an invertible  linear transformation; thus \eqref{eqn:uiiterate} admits a unique solution for $U_i$.
\end{proof}

\begin{cor}[weakly prepared and determinacy] \label{cor:determine_w_p}
Objects in $\mic(\wh{\bfo})$  with weakly prepared exponents are uniquely determined by their reductions modulo $z$ in the following sense: given $M, N \in \mic(\wh{\bfo})$  both with weakly prepared exponents, if $(M/zM, \bar\phi) \simeq (N/zN, \bar\phi)$, then $(M, \phi) \simeq (N, \phi)$.
\end{cor}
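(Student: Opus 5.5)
The plan is to apply Prop \ref{prop:formalfuchs} to both $M$ and $N$ and compare the resulting ``constant coefficient'' models. Since $M$ has weakly prepared exponents, Prop \ref{prop:formalfuchs} gives a unique connection-compatible section $s_M: M/zM \to M$. It also gives an isomorphism of regular connections
\[ s_M\otimes 1: (M/zM\otimes_E \wh{\bfo},\ \bar\phi\otimes 1+1\otimes z\tfrac{d}{dz}) \xrightarrow{\sim} (M,\phi). \]
In the same way, $N$ yields $s_N$ and an isomorphism $s_N\otimes 1: N/zN\otimes_E\wh{\bfo}\simeq N$.

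Next, fix an isomorphism $f:(M/zM,\bar\phi)\simeq (N/zN,\bar\phi)$ of finite dimensional $E$-vector spaces intertwining the residue operators. Its scalar extension $f\otimes 1: M/zM\otimes_E\wh{\bfo}\to N/zN\otimes_E\wh{\bfo}$ is $\wh{\bfo}$-linear and bijective. It commutes with the connections because, on pure tensors $x\otimes g$, we have
\[ (\bar\phi\otimes 1+1\otimes z\tfrac{d}{dz})(f(x)\otimes g)=f(\bar\phi x)\otimes g+f(x)\otimes z\tfrac{dg}{dz}. \]
So $f\otimes 1$ is an isomorphism in $\mic(\wh{\bfo})$.

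The composite
\[ (s_N\otimes 1)\circ(f\otimes 1)\circ(s_M\otimes 1)^{-1}: (M,\phi)\to (N,\phi) \]
is then the desired isomorphism. There is no real obstacle here, since all the analytic content (solving the recursion \eqref{eqn:uiiterate}) already sits inside Prop \ref{prop:formalfuchs}. The one point to check is that ``isomorphism'' means a $\wh{\bfo}$-linear bijection commuting with $\phi$, and that the inverse of an isomorphism of regular connections is again one; both are immediate.
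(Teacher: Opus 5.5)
Your proposal is correct and follows the same route as the paper, which simply records the corollary as a trivial consequence of Prop \ref{prop:formalfuchs}. You spell out the implicit step: identify $M$ and $N$ with their constant models $M/zM\otimes_E\wh{\bfo}$ and $N/zN\otimes_E\wh{\bfo}$, then base change the residue isomorphism $f$ to $f\otimes 1$.
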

\begin{proof}
This is trivial consequence of Prop \ref{prop:formalfuchs}.
\end{proof}

\begin{prop} \label{prop:formal_perf_cplx}
Let $(M, \phi) \in \mic(\wh{\bfo})$. 
\begin{enumerate}
\item The complex $\rg(\phi, M):=[M \xrightarrow{\phi} M]$ is a perfect complex over $E$.
\item Let $E'/E$ be a (not necessarily  finite) extension of fields, denote $M_{E'}:=M\otimes_{E[[z]]} E'[[z]]$ with induced $\phi_{E'}:=\phi\otimes 1+1\otimes  z\frac{d}{dz}$. We have 
\[\rg(\phi, M) \otimes_E E' \simeq \rg(\phi_{E'}, M_{E'})\]
\end{enumerate}
\end{prop}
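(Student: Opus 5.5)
The plan is to reduce to the weakly prepared case via shearing, compute cohomology there explicitly, and check compatibility with base change.

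\emph{Step 1: reduction to the weakly prepared case.} By Lem \ref{lem:shearing_formal} there is a $\phi$-stable lattice $N\subset M$ with $N[1/z]=M[1/z]$ and weakly prepared exponents. Since both are finite free of the same rank, $z^kM\subset N\subset M$ for some $k\geq 0$ (after rescaling if necessary). The quotient $M/N$ is then a finite-dimensional $E$-vector space, because it is a quotient of $M/z^kM$. It carries an induced $E$-linear $\phi$, so $\rg(\phi, M/N)$ is perfect over $E$. The short exact sequence $0\to N\to M\to M/N\to 0$ gives a distinguished triangle of $\rg(\phi,-)$. Since perfect complexes over $E$ form a triangulated subcategory, it suffices to treat $N$. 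All these constructions commute with the flat base change $-\otimes_{E[[z]]}E'[[z]]$; here I will use that $E'[[z]]\otimes_{E[[z]]}(\text{finite length})$ equals $(\text{finite length})\otimes_E E'$. So part (2) also reduces to $N$ and to the finite-dimensional quotient. For a finite-dimensional space, $\rg(\phi,\cdot)\otimes_E E'\simeq \rg(\phi_{E'},\cdot\otimes_E E')$ holds trivially.

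\emph{Step 2: the weakly prepared case.} By Prop \ref{prop:formalfuchs}, $N\simeq \bar N\otimes_E E[[z]]$ with connection $\bar\phi\otimes 1+1\otimes z\frac{d}{dz}$. Hence $N=\prod_{i\geq 0}\bar N z^i$, and $\phi$ acts on $\bar N z^i$ as $\bar\phi+i$. The complex $\rg(\phi,N)$ is therefore the product over $i$ of the finite complexes $[\bar N\xrightarrow{\bar\phi+i}\bar N]$. For $i$ large, with $i$ exceeding $-\lambda$ for every integer-shifted eigenvalue $\lambda$ of $\bar\phi$, the map $\bar\phi+i$ is invertible. Only finitely many $i$ contribute, namely those with $-i$ an eigenvalue, so the complex is quasi-isomorphic to a finite direct sum of finite-dimensional complexes. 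This makes it perfect.

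Base change requires care, because Prop \ref{prop:formalfuchs} applied over $E'$ must produce the base change of the section over $E$. This follows from uniqueness of the section, or more directly from the fact that $N_{E'}=\bar N_{E'}\otimes E'[[z]]$ is obtained by base-changing the isomorphism. The product decomposition is compatible as well: $E'[[z]]\otimes$ of a product is again the corresponding product over $E'$, and the acyclic factors remain acyclic since invertibility of $\bar\phi+i$ is preserved. The finitely many non-acyclic factors base change termwise.

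\emph{Main obstacle.} The subtle point is that $E[[z]]\otimes_E E'\neq E'[[z]]$ when $E'/E$ is infinite. So I will never tensor $M$ over $E$; I will only tensor cohomology, which is finite-dimensional. I will make sure the identification $\rg(\phi,M)\otimes_E E'\to \rg(\phi_{E'},M_{E'})$ is given by the natural map and is checked on each piece of the dévissage, using the five lemma on the long exact sequences.
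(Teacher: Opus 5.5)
Your argument is correct, but it takes a different and longer route than the paper.

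\textbf{The paper's route.} It never shears and never uses the section of Prop~\ref{prop:formalfuchs}. It only observes that the exponents of $z^nM$ are $\lambda_i+n$, hence all nonzero for $n\gg0$. So $\phi$ acts bijectively on every graded piece $z^mM/z^{m+1}M$ with $m\geq n$ (it acts there as $\bar\phi+m$), and by dévissage and passing to the $z$-adic limit it is bijective on $z^nM$. This gives $\rg(\phi,M)\simeq\rg(\phi,M/z^nM)$, a two-term complex of finite-dimensional spaces. Part (2) is then immediate: $M_{E'}/z^nM_{E'}=(M/z^nM)\otimes_EE'$, and $z^nM_{E'}$ has the same shifted exponents, so it is also acyclic.

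\textbf{Your route.} You reduce via Lem~\ref{lem:shearing_formal} to a weakly prepared sublattice $N$ with finite-dimensional quotient $M/N$. You then use Prop~\ref{prop:formalfuchs} to write $\rg(\phi,N)=\prod_{i\geq0}[\bar N\xrightarrow{\bar\phi+i}\bar N]$. The individual steps all check out:
\begin{enumerate}
\item flatness of $E[[z]]\to E'[[z]]$;
\item the identification $Q\otimes_{E[[z]]}E'[[z]]=Q\otimes_EE'$, with $\phi_{E'}=\phi\otimes1$, for $Q$ of finite length;
\item base-changing the isomorphism $\bar N\otimes_EE[[z]]\simeq N$;
\item exactness of products of vector spaces;
\item the five lemma.
\end{enumerate}

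\textbf{What each buys.} Yours gives an explicit description for weakly prepared $N$:
\[H^j(\phi,N)\simeq\bigoplus_{i\geq0,\;-i\in\{\lambda_k\}}H^j(\bar\phi+i,\bar N).\]
The paper's is more economical: it uses only the residue and a $z$-adic limit, with no shearing and no fundamental solution matrix. More importantly, its reduction ("$\phi$ is bijective on $z^nM$, so compare on $M/z^nM$") is exactly what is reused in the proof of Thm~\ref{thm:Clark_coho}. Your product decomposition would not transfer directly to that setting, for two reasons. Over $\bfodagger$ the Fuchs section needs (N-T0-LD), not just (N-T0-L). And $\bfodagger$ is not a product over powers of $z$, which is precisely where the type condition has to enter.
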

\begin{proof} Item (1). 
Suppose the exponents are $\lambda_1, \cdots, \lambda_r$; then the exponents of $z^nM$ are $\lambda_i+n$, hence are non-zero for some $n \gg 0$. This implies (by devissage argument) that $\phi$ is bijective on $z^nM$  for some $n \gg 0$. Thus $[M \xrightarrow{\phi} M]$  is quasi-isomorphic to $[M/z^nM \xrightarrow{\phi} M/z^nM]$ which is perfect. Item (2) is easy consequence.
\end{proof}

\subsection{Application to Fontaine's  connection: I} \label{subsec:fon_app_1}
In this subsection, we obtain some applications to Fontaine's connection defined over $\kinfty[[t]]$, with emphasis put on ``rationality"-issues; these discussions will be continued in \S \ref{subsec:fon_app_2}.

We start with a variant of Cor \ref{cor:determine_w_p}.

\begin{lemma}[($k+1$)-weakly prepared and determinacy] \label{lem:k_weak_prepare}
Let $k \geq 0$. 
\begin{enumerate}
\item Objects in $\mic(\wh{\bfo})$  with $(k+1)$-weakly prepared exponents (Def \ref{def:formal_reg_conn}) are uniquely determined by their reductions modulo $z^{k+1}$ in the following sense: given $M, N \in \mic(\wh{\bfo})$  both with  $(k+1)$-weakly prepared exponents, if $(M/z^{k+1}M, \bar\phi) \simeq (N/z^{k+1}N, \bar\phi)$, then $(M, \phi) \simeq (N, \phi)$.
\item Let $M \in \mic(\wh{\bfo})$    with  $(k+1)$-weakly prepared exponents, then it admits a basis with respect to which the matrix of $\phi$ can be defined over the ring $E[z]$; more precisely, one can make elements of the matrix to fall inside the additive subgroup $\oplus_{i=0}^{k} E\cdot z^i \subset E[z]$.
\end{enumerate}
\end{lemma}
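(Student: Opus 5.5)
We plan to imitate the proof of Prop \ref{prop:formalfuchs}, replacing the normalization ``$U_0=I$ and target matrix $N_0$'' by ``$U\equiv I \bmod z^{k+1}$ and target matrix the truncation $N_{\le k}:=\sum_{i=0}^{k}N_iz^i$''.

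We begin with Item (2). Fix a basis $\vec e$ of $M$ and write $\phi(\vec e)=\vec e N$ with $N=\sum_{i\ge0}N_iz^i$. We look for a change of basis $\vec e\,U$, where $U=I+\sum_{i\ge k+1}U_iz^i$, such that the new matrix of $\phi$ is $N_{\le k}$. This amounts to solving
\[ NU+z\tfrac{d}{dz}(U)=UN_{\le k}. \]
Comparing coefficients of $z^i$ for $i\le k$ gives $N_i=N_i$, so these equations hold automatically. For $i\ge k+1$ the coefficient equation reads
\[ iU_i+N_0U_i-U_iN_0=-N_i-\sum_{j=1}^{i-k-1}\bigl(N_jU_{i-j}-U_{i-j}N_j\bigr), \]
where the last sum is taken over the indices $j\le k$; for $j>k$ we also get terms $N_jU_{i-j}$. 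Bookkeeping shows that the right-hand side involves only $U_{i'}$ with $i'<i$.

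The operator $X\mapsto iX+N_0X-XN_0$ on $\mat_r(E)$ has eigenvalues $i+\lambda_a-\lambda_b$, computed over $\overline E$. Writing $i=k'+1$ with $k'\ge k$, these eigenvalues are nonzero by the $(k+1)$-weak preparedness hypothesis. Hence each $U_i$ is uniquely solvable by induction on $i$. Since $U\equiv I \bmod z$, the matrix $U$ is invertible over $\wh{\bfo}$, and the new basis has connection matrix $N_{\le k}\in\oplus_{i=0}^kE z^i$. The main point requiring care here is checking that the inductive equation is triangular; it is, because every term other than $iU_i+[N_0,U_i]$ involves some $N_j$ with $j\ge1$, multiplied by a $U_{i-j}$ of strictly lower index.

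For Item (1), we apply Item (2) to both $M$ and $N$. The isomorphism of the truncations $M/z^{k+1}M\simeq N/z^{k+1}N$ is given by an invertible matrix $V_0\in\GL_r(E[z]/z^{k+1})$. We lift $V_0$ arbitrarily to $\GL_r(\wh{\bfo})$ and transport the chosen basis of $N$. We then obtain two bases, of $M$ and of $N$, whose connection matrices $A$ and $B$ agree modulo $z^{k+1}$; the exponents are unchanged since they depend only on the reductions mod $z$.

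Next we run the same argument in a relative form. We solve $AU+z\frac{d}{dz}U=UB$ with $U\equiv I \bmod z^{k+1}$. The low-degree equations hold because $A\equiv B \bmod z^{k+1}$. The degree-$i$ equation for $i\ge k+1$ has leading operator $X\mapsto iX+A_0X-XB_0$, with $A_0=B_0$, and this operator is again invertible by the hypothesis. We are thus reduced to the same triangular recursion, and its solution $U$ gives the desired isomorphism $(M,\phi)\simeq(N,\phi)$. Alternatively, Item (1) follows directly from Item (2) once one notes that the polynomial matrix $N_{\le k}$ is determined, up to the $\GL_r(E[z]/z^{k+1})$-action, by the reduction mod $z^{k+1}$; but the relative recursion is cleaner, so we will write that.
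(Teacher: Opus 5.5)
Your proof is correct, but it is organized differently from the paper's, and it works in coordinates rather than cohomologically.

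The paper proves (1) first. It takes the internal Hom $P=\underline{\Hom}_{\wh\bfo}(M,N)$, whose exponents are the differences $\lambda_i-\lambda_j$. It notes that $(k+1)$-weak preparedness says exactly that $\phi$ is bijective on every graded piece $z^{k'+1}P/z^{k'+2}P$ with $k'\ge k$. Hence $\rg(\phi,z^{k+1}P)=0$ and $\rg(\phi,P)\simeq\rg(\bar\phi,P/z^{k+1}P)$. The given horizontal isomorphism modulo $z^{k+1}$ then lifts to a horizontal map $M\to N$, which is an isomorphism. Item (2) is deduced from (1) by comparing $M$ with the connection defined by the truncated matrix $\sum_{i\le k}N_iz^i$.

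You go the other way. You prove (2) directly by extending the fundamental-matrix recursion of Prop \ref{prop:formalfuchs} with the normalization $U\equiv I \bmod z^{k+1}$. You then prove (1) by the relative recursion $AU+z\frac{d}{dz}U=UB$. That relative recursion alone already gives both items, with (2) being the case $B=\sum_{i\le k}N_iz^i$. So your preliminary application of (2) to $M$ and $N$ is unnecessary.

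The underlying mathematics is the same. A solution $U$ of $AU+z\frac{d}{dz}U=UB$ is exactly a horizontal element of $\underline{\Hom}(N,M)$ lifting the identity modulo $z^{k+1}$. The invertibility of $X\mapsto iX+A_0X-XA_0$ for $i\ge k+1$ is the graded-piece statement behind $\rg(\phi,z^{k+1}P)=0$.

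Each version has its advantage. Yours is more explicit and exhibits the gauge transformation together with its uniqueness under the normalization. The paper's cohomological packaging is more portable. In the convergent analogue, Lemma \ref{lem:k_weak_prep_dagger}, the same argument runs verbatim once the formal d\'evissage is replaced by Clark's Theorem \ref{thm:Clark_coho} applied to $z^{k+1}P^\dagger$. With your recursion, you would instead have to redo the growth estimates on the $U_i$ by hand.
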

\begin{proof}
Item (1). 
As $(M/z^{k+1}M, \bar\phi) \simeq (N/z^{k+1}N, \bar\phi)$, this in particular implies $M, N$ share the same (multi)-set of exponents. Consider the internal hom object $P:=\Hom_{\wh\bfo}(M, N)$ regarded as an object in $\mic(\wh{\bfo})$, then the exponents of $P$ are exactly all the $(\lambda_i-\lambda_j)$'s with $1 \leq i, j\leq r$. Thus, the exponents of $z^{k+1}P$ are never non-positive integers; equivalently, $\rg(\phi, z^{k+1}P)=0$. Thus
\[ \rg(\phi, P) \simeq \rg(\bar \phi, P/z^{k+1}P);\]
in particular, the matching of $H^0$ means that the following natural map
\[ \Hom_{\wh\bfo, \phi}(M, N) \to \Hom_{\wh\bfo, \phi}(M/z^{k+1}M, N/z^{k+1}N) \]
is a bijection. The isomorphism $(M/z^{k+1}M, \bar\phi) \simeq (N/z^{k+1}N, \bar\phi)$ then admits a pre-image, which is necessarily an isomorphism between $M, N$ as objects in $\mic(\wh\bfo)$.

Item (2). Choose any basis $\vec e$ of $M$, write the matrix of $\phi$ as $N=\sum_{i \geq 0} N_iz^i$. Denote $N_{k}:=\sum_{i = 0}^k N_iz^i$, and use this matrix to define $(M', \phi) \in \mic(\wh\bfo)$ so that $N_{k}$ is the matrix of $\phi$ (for some basis). As then $M/z^{k+1}M \simeq M'/z^{k+1}M'$, we have $(M, \phi) \simeq (M', \phi)$ by Item (1).
\end{proof}

\begin{remark}
We do not have   analogous ``section maps" as in 
 Prop \ref{prop:formalfuchs} for connections with $(k+1)$-weakly prepared exponents (when $k \geq 1$), as there is no ring morphism from $E[z]/z^{k+1}$ to $E[[z]]$.
\end{remark}

The following corollary is obvious from Lem \ref{lem:k_weak_prepare}; 
 we find it an interesting (and useful) update to Fontaine's theory. 

\begin{cor} \label{cor:fontain_rational}
Let  $(D, \phi)$ be a regular  connection over $\wh{\bfo}(\kinfty, t)=\kinfty[[t]]$, then it   admits a basis with respect to which the matrix of $\phi$ is defined over $K_m[t]$ for some $m \geq 1$.
\end{cor}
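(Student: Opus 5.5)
The plan is to reduce to Lem \ref{lem:k_weak_prepare}(2) and then run a rationality argument on the finitely many coefficient matrices that appear.

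First I choose $k$. Let $\lambda_1,\dots,\lambda_r \in \overline{K_\infty}$ be the exponents of $(D,\phi)$, that is, the eigenvalues of $\bar\phi$ on $D/tD$. The differences $\lambda_i-\lambda_j$ form a finite set, so there is some $k\geq 0$ with $k'+1+(\lambda_i-\lambda_j)\neq 0$ for all $k'\geq k$ and all $i,j$. This makes the exponents $(k+1)$-weakly prepared in the sense of Def \ref{def:formal_reg_conn}. No shearing is needed, since the condition only excludes finitely many integers.

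Next I apply Lem \ref{lem:k_weak_prepare}(2) with $E=\kinfty$ and $z=t$. This gives a basis of $D$ in which the matrix of $\phi$ is $N_{\le k}=\sum_{i=0}^k N_i t^i$, with each $N_i\in \Mat_r(\kinfty)$. Only finitely many matrices occur, each with finitely many entries. Since $\kinfty=\colim_m K_m$, all these entries lie in some $K_m$. The matrix of $\phi$ is therefore defined over $K_m[t]$, and in fact over $\oplus_{i=0}^k K_m t^i$.

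The only point needing care is that Lem \ref{lem:k_weak_prepare} is stated over an arbitrary characteristic zero field $E$ with no topology. This is fine here: the truncation argument in its proof is purely algebraic, and the field $E=\kinfty$ is used only as an abstract field. I therefore expect no real obstacle. The choice of $k$ is the one step that must be stated carefully, because the exponents may lie in an algebraic extension of $\kinfty$ rather than in $\kinfty$ itself. That does not matter, since only the finiteness of the set of differences is used.
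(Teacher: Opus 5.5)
Your proposal is correct and is essentially the paper's proof. You choose $k\gg 0$ so that the exponents are $(k+1)$-weakly prepared, apply Lem \ref{lem:k_weak_prepare}(2) over $E=\kinfty$ to get a matrix with entries in $\oplus_{i=0}^{k}\kinfty\, t^i$, and observe that its finitely many entries lie in some $K_m$; your added remarks (the finiteness of the set of differences, and the exponents living in an algebraic closure of $\kinfty$) only make explicit what the paper leaves implicit.
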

\begin{proof}
The exponents are $(k+1)$-weakly prepared for $k \gg 0$, thus Lem \ref{lem:k_weak_prepare} implies  the matrix of $\phi$ can  be defined over $K_\infty[t]$, and thus over $K_m[t]$ for $m \gg 0$.
\end{proof}

\section{Convergent regular connections I} \label{sec:conv_reg1}

In the following three sections, we discuss convergent regular connections; the discussions are separated into three sections with the hope to (separately) clarify some confusions we encounter in the literature. We try to follow the ``order" in \S \ref{sec:formal_reg_conn}: in this section \S \ref{sec:conv_reg1} we first discuss ``structures" (shearing; section map) of convergent regular connections. In next \S \ref{sec:conv_reg2}, we discuss cohomology theory. Finally in \S \ref{sec:conv_reg3}, we discuss \emph{convergence}  properties of \emph{formal} regular connections. 

\begin{assumption} \label{ass:mixE}
Throughout these three sections  \S \ref{sec:conv_reg1}--\ref{sec:conv_reg3}:
\begin{itemize}
\item Let $E$ be a characteristic zero field equipped with a valuation such that $v(p)>0$ (not necessarily complete; not necessarily algebraically closed).
\end{itemize}
\end{assumption}

The reason we decide not to fix $v(p)=1$ is because in some set-ups (in future work), e.g.~when $E/\qp$ is a finite ramified extension, the normalization $v(p)=e(E/\qp)$ (the ramification index) is the more convenient one (e.g., in study of ramification theory).


\subsection{$p$-adic Liouville type} 

We review the notion of ($p$-adic Liouville) ``type" (which first appeared in \cite[Ch. VI, \S 1]{DGS}). This is related with the notion of ``$p$-adic (non-)Liouville numbers", which is first introduced by Clark \cite{Cla66}. Unfortunately, we have found several different ``definitions" in the literature which lead  to confusions for us, cf.~ Rem \ref{rem:nonL_def}. In this paper, we shall stick with the \emph{unequivocal} notion of ``type".

\begin{defn}[cf. \emph{\cite[Def. 13.1.1]{Ked22}, \cite[Ch. VI, \S 1]{DGS}}] \label{def:type_Ked22}
For $\lambda \in \overline{E}$, denote $\type(\lambda) \in [0,1]$ to be the radius of convergence of  
\[ f_\lambda(z)= \sum_{m \geq 0, m\neq \lambda} \frac{z^m}{\lambda -m}\]
\end{defn}

\begin{remark} \label{rem:type}
We comment on (additive) properties of types.
\begin{enumerate}
\item  We have $\type(\lambda)=\type(\lambda +n)$ for any $n \in \mathbb Z$. 

\item \label{typeitem2}
If $v(\lambda) \geq 0$, then $\type(p\lambda)=\type(\lambda)$: indeed, $v(p\lambda-m)=0$ for $m \nmid p$, thus the convergence radius of $f_{p\lambda}(z)$ is the same as that of $\frac{1}{p}f_\lambda(z)$.

\item In general, $\type(\lambda)$ and  $\type(-\lambda)$ can be very different (as we are taking summation for $m \geq 0$). Here is an example from the lemma in \cite[\S 2]{Clark_gap}. Let $(e_k)_{k \geq 1}$ be an inductively defined sequence where $e_1=1$, and $e_{k+1}=kp^{e_k}$ for each $k \geq 1$. Let
$ \lambda =\sum_{k \geq 1} p^{e_k}$. Then
\[ \type(\lambda)=0, \quad \type(-\lambda)=1. \]

\item We have $\type(\lambda)=1$ if  $\lambda \notin \zp$, or when $\lambda \in \overline{\mathbb Q}$ (caution: not $\qpbar$; here we fix an embedding $\qbar \to \overline{E}$), cf. \cite[Prop 13.1.5]{Ked22}.
In particular, it is possible  that  $\type(\lambda)=1$ (e.g., so long $\lambda \notin \zp$) but   $\type(p \lambda)=0$; compare with discussion in Item \eqref{typeitem2}.

\item It is in general difficult to form an additive group with all elements of type 1 (or just positive type). We only have some obvious examples such as $\qbar$, or  $\bbq+\bbq\alpha$ with $\alpha \notin \qp$.
\end{enumerate}
\end{remark}

\begin{remark} \label{rem:nonL_def}
 The above notion of type is closely related with the notion of ``$p$-adic non-Liouville numbers". However, there are \emph{different} definitions in the literature (this is also recently noted in \cite[Rem. 2.3.26]{BCGP}).
 We list them here, in \emph{rough} decreasing order of the strength of the assumptions, using the \emph{unequivocal} notion of type.  That is, $\lambda$ is called a ``$p$-adic non-Liouville number":
\begin{enumerate}
\item according to (the historically original)  \cite[Def. 1]{Cla66}, if $v(\lambda+n) \leq c(\log(n)), \forall n$ for some $c>0$ (which is strictly stronger than asking $\type(-\lambda)=1$);

\item  according to \cite[Def. 13.1.2]{Ked22} (following \cite[Ch. VI, \S 1]{DGS}), if $\type(\pm \lambda)=1$;

\item according to (antithesis of) \cite[Def. 66.1]{Schikhof}, if $\type(-\lambda)>0$. (This condition is also used in \cite[Rem 5.2.11]{Pan22} and \cite[Def 2.3.25]{BCGP}).
\end{enumerate}
It seems to be a well-known fact (for experts in $p$-adic differential equations), that most results making use of ``non-Liouville conditions" actually hold using the weakest definition (i.e., a positive type condition): for example, this is the case with our revisit of Clark's Theorem \ref{thm:Clark_coho}. We shall write all results in the following using this weakest definition:  this could be useful for future applications in $p$-adic Langlands program (as done in \cite{Pan22, BCGP}).
\end{remark}

The following fact is the usual way how the positive type condition is used. 

\begin{lemma}  \label{lem:nonLradius} Suppose $\lambda \notin \mathbb{Z}^{\geq 0}$.
Then $\type(\lambda)>0$ if and only if there exists some $s>0$ such that for all $m \geq 1$, 
\[ v( \prod_{j=0}^{m} (\lambda-j)) \leq sm. \]
 \end{lemma}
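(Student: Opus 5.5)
The plan is to relate the radius of convergence of $f_\lambda(z)=\sum_{m\ge0,m\ne\lambda} z^m/(\lambda-m)$ to the growth of the partial products $\prod_{j=0}^m(\lambda-j)$ via Cauchy--Hadamard. By Cauchy--Hadamard (using $v(p)>0$ and the normalization of radii so that radius $p^{-a}$ corresponds to valuation growth rate $a/v(p)$ or similar), $\type(\lambda)>0$ is equivalent to the existence of $c>0$ with $v(\lambda-m)\le cm$ for all $m\ge1$. Note that $\lambda\notin\mathbb{Z}^{\ge0}$ ensures every term is present and every factor is nonzero. So the lemma reduces to showing that linear growth of the individual terms $v(\lambda-m)$ is equivalent to linear growth of the cumulative sums $\sum_{j=0}^m v(\lambda-j)=v(\prod_{j=0}^m(\lambda-j))$.

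The direction from products to terms is immediate once the factors are controlled from below. If $v(\lambda)<0$, then every factor has valuation $v(\lambda)$, so both conditions hold trivially and $\type(\lambda)=1$. Otherwise $v(\lambda-j)\ge\min(v(\lambda),v(j))\ge 0$ for all $j$. Then each single factor satisfies
\[v(\lambda-m)\le v\Big(\prod_{j=0}^m(\lambda-j)\Big)\le sm,\]
which gives the term bound with $c=s$.

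The converse, from terms to products, is the main obstacle, since summing $v(\lambda-j)\le cj$ naively only yields a quadratic bound. Here I will use the ultrametric structure: if $v(\lambda-j)\ge k\,v(p)$, then $\lambda\equiv j$ modulo $p^k$ in a suitable sense. Hence among $j\in\{0,\dots,m\}$, the number of $j$ with $v(\lambda-j)\ge k\,v(p)$ is at most about $m/p^k+1$. Indeed, two such $j,j'$ satisfy $v(j-j')\ge k\,v(p)$, so $p^k\mid j-j'$. This gives
\[\sum_{j\le m} v(\lambda-j)\le \sum_{k\ge0}\#\{j\le m: v(\lambda-j)\ge k\,v(p)\}\cdot v(p)+(\text{rounding}).\]
The terms with $p^k\le m$ contribute $O(m)$. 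The terms with $p^k>m$ contribute at most one $j$ each, and that $j$ is the single index maximizing $v(\lambda-j)$, which is at most $cm$ by hypothesis. Altogether the sum is $O(m)+cm\le sm$ for a suitable $s$. The real work lies in carefully bounding this layer-counting, in particular handling non-integer valuations by passing to floors and absorbing the constants.
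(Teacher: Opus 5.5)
Your proof is correct, and it is more self-contained than the paper's treatment. The paper gives no argument of its own: it cites \cite[Cor 13.1.7]{Ked22} for the case $\type(\lambda)=1$ and \cite[Lem.~B.2]{BB21} for general positive type.

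Your route has two steps. First, Cauchy--Hadamard turns $\type(\lambda)>0$ into a linear bound $v(\lambda-m)\le cm$ on the single factors. The finitely many exceptions can be absorbed because $\lambda\notin\mathbb{Z}^{\geq 0}$ makes every factor nonzero. Second, a Legendre-type layer count shows that $v\bigl(\prod_{j=0}^m(\lambda-j)\bigr)$ exceeds an $O(m)$ term, of the size of $v(m!)$, by at most $\max_{j\le m}v(\lambda-j)$. This treats all positive types uniformly, with no separate type-one case.

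The bookkeeping works as you sketch:
\begin{itemize}
\item Keeping the layer $k=0$ gives $\sum_{j\le m}v(\lambda-j)\le v(p)\sum_{k\ge 0}\#J_k(m)$ with no rounding error, once $v(\lambda)\ge 0$.
\item The layers with $p^k\le m$ contribute at most $v(p)\bigl(\tfrac{p}{p-1}m+\log_p m+1\bigr)$.
\item The layers with $p^k>m$ are empty or singletons. At most $\max_{j\le m}v(\lambda-j)/v(p)$ of them are nonempty, so together they contribute at most $\max\{v(\lambda),cm\}$.
\end{itemize}

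The only inputs beyond the ultrametric inequality are two consequences of $v$ being multiplicative. The valuation of the product is the sum of the $v(\lambda-j)$. And $v|_{\mathbb{Q}}=v(p)\cdot v_p$, so $v(j-j')\ge k\,v(p)$ forces $p^k\mid j-j'$. Both are implicit in defining $\type(\lambda)$ as a radius of convergence over $\overline{E}$. The paper's general convention on valued rings allows sub-multiplicative $v$, but replacing $v$ by a floor valuation changes both conditions only by bounded amounts, so nothing is lost.

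The citation buys brevity; your argument makes the lemma provable without consulting the references.
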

 \begin{proof}
 The necessity in the case $\type(\lambda)=1$ follows from \cite[Cor 13.1.7]{Ked22}. But indeed, this lemma holds for general positive type: see \cite[Lem. B.2]{BB21}.
 \end{proof}
 

\subsection{Convergent regular connections}

\begin{defn} \label{def:reg_conn_all}
 (This is obvious generalization of Def. \ref{def:formal_reg_conn}). Let $A \in \{\bfo_n, \bfo^\dagger, \wh{\bfo} \}$; then $A$ is equipped with the differentiation $z\frac{d}{dz}$.
 Let $\mic(A)$ be the category of regular connections over $A$. An object is a finite free $A$-module $M$ equipped with an $E$-linear morphism $\phi: M \to M$ satisfying Leibniz rule with respect to $z\frac{d}{dz}$.
Eigenvalues of $\bar{\phi}$ on $M/zM$ are   called exponents.
\end{defn}

\begin{defn}\label{def:NLD}
Let $M$ be a regular connection of rank $r$ in  Def \ref{def:reg_conn_all}, with exponents (possibly non-distinct) denoted as $\lambda_1, \cdots, \lambda_r$.
\begin{enumerate}
\item  Say the exponents are weakly prepared if for any $i, j$ ($i=j$ allowed), the difference $\lambda_i -\lambda_j$  is not  a non-zero integer. (For example, it is automatically satisfied if $r=1$).

\item  Say the exponents satisfy (N-T0-L) if    $\type(-\lambda_i)>0$ for each $i$ (caution: we take the negative of $\lambda_i$). 

\item  Say the exponents satisfy (N-T0-LD) if  each $\type(\lambda_i -\lambda_j)>0$  for all pairs of $i,j$.
\end{enumerate}
We mention that the acronym  (NL) resp. (NLD) for ``non-Liouville" resp. ``non-Liouville-difference" is commonly used in the literature; the ``T0" we add here stands for ``type-zero".
\end{defn}

\begin{lemma}[Shearing] \label{lem:conv_shearing}
 Let $M^\dagger \in \mic(\bfo^\dagger)$. There exists some  $\phi$-stable full rank $\bfo^\dagger$-submodule $N^\dagger \subset M^\dagger$ with weakly prepared exponents. 
\end{lemma}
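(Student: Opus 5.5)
We follow the proof of the formal shearing Lemma \ref{lem:shearing_formal} word for word, noting that nothing in it uses completeness of $\wh{\bfo}$. The only ring-theoretic inputs needed from $\bfo^\dagger$ are that it is a DVR with uniformizer $z$ (Notation \ref{nota:bfodagger}), that $\bfo^\dagger/z\simeq E$, and that Nakayama's lemma holds for finite free modules over the local ring $\bfo^\dagger$.

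\emph{Exponents and the reduction step.} Let $c_1,\dots,c_s\in \overline{E}$ be the distinct eigenvalues of $\bar\phi$ on $\bar M=M^\dagger/zM^\dagger$. Shearing requires a generalized eigenspace decomposition defined over $E$, so we group the eigenvalues into $\Gal(\overline{E}/E)$-orbits and work with $E$-rational pieces. Suppose the exponents are not weakly prepared. Then there are $c_1, c_2$ with $c_2-c_1\in\bbz^{>0}$. Consider the set $S$ of all Galois conjugates of $c_1$. Since $\bbz\subset E$, the translate $S+(c_2-c_1)$ is again a Galois-stable set of eigenvalue-candidates. We then decompose
\[ \bar M=\bar M_S\oplus \bar M', \]
where $\bar M_S$ is the sum of the generalized eigenspaces for the eigenvalues in $S$. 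Both summands are $E$-rational and $\bar\phi$-stable.

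\emph{Constructing the sheared lattice.} Lift a basis of $\bar M_S$ to elements $e_1,\dots,e_a\in M^\dagger$, and lift a basis of $\bar M'$ to elements $f_1,\dots,f_b\in M^\dagger$. By Nakayama these together form a basis of $M^\dagger$. Set
\[ N^\dagger=\langle ze_1,\dots,ze_a,f_1,\dots,f_b\rangle. \]
To see that $N^\dagger$ is $\phi$-stable, note first that $\phi(f_j)$ reduces into $\bar M'$, so it lies in $\mathrm{span}(f)+zM^\dagger\subset N^\dagger$. Next, by Leibniz,
\[ \phi(ze_i)=ze_i+z\phi(e_i)\in zM^\dagger\subset N^\dagger. \]
One checks that $N^\dagger/zN^\dagger$ has exponents $(S+1)\cup(\text{the rest})$. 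The quotient is spanned by the images of $ze_i$, on which $\phi$ acts as $\bar\phi+1$ composed with the $\bar M_S$-component, together with the images of the $f_j$. The corresponding block matrix is triangular, with diagonal blocks $\bar\phi|_{\bar M_S}+1$ and $\bar\phi|_{\bar M'}$.

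\emph{Termination, which is the main point to check.} Each such step raises the exponents in one Galois orbit by $1$ and leaves the others unchanged. We use the invariant $\sum_{i<j}|\mathrm{Re}\text{-free}\ \text{integer gaps}|$; concretely, for each class of exponents modulo $\bbz$ we take the maximal integer difference between its members. Always shifting the minimal member of a class within its $\bbz$-coset, this quantity strictly decreases, so the procedure terminates after finitely many steps. Because conjugation commutes with adding integers, a whole orbit lying in one coset moves together. This gives a $\phi$-stable full rank $N^\dagger\subset M^\dagger$ with weakly prepared exponents. Note that $N^\dagger[1/z]=M^\dagger[1/z]$ holds throughout.
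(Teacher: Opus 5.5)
Your proposal is correct and is essentially the paper's proof. The paper simply says that the formal shearing argument of Lemma \ref{lem:shearing_formal} works verbatim over $\bfo^\dagger$, because Nakayama's lemma still applies there.

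Your Galois-orbit bookkeeping and your termination invariant usefully make explicit what the paper's sketch leaves implicit, since the paper silently takes generalized eigenspaces over $E$. One wording fix in the termination step: a nontrivial orbit never lies in one coset, so ``a whole orbit lying in one coset'' is not the right statement. What you actually need is that distinct conjugates of $c_1$ lie in distinct $\bbz$-cosets, and that each $\sigma(c_1)$ is the minimum of its own coset class by Galois equivariance.
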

\begin{proof}  
 The proof for Lem \ref{lem:shearing_formal} works verbatim here: in particular, the Nakayama Lemma argument still works.
\end{proof}

\begin{remark}
If we start with some $M \in \mic(\bfo_n)$ for some fixed $n$, then the shearing process might not work because the Nakayama Lemma argument does not work. For example, $1+\frac{z}{p^2} \in \bfo_1$ and its reduction is $1$; however it is not a unit in $\bfo_1$ (it is a unit in $\bfo_n$ for $n \geq 3$).
\end{remark}

\begin{prop}[$p$-adic Fuchs theorem for discs, cf.~\emph{\cite[Thm 13.2.2]{Ked22}}]
\label{prop:p_fuchs}
Let $M^\dagger \in \mic(\bfo^\dagger)$. 
Suppose   the    exponents are weakly prepared and satisfy (N-T0-LD) (but we do not assume (N-T0-L); that is, the assumption is completely on the \emph{differences} of exponents). 
Then there exists a unique section $s^\dagger: M^\dagger/zM^\dagger \to M^\dagger$ which is compatible with connections on both sides; namely, the following diagram is commutative:
\[\begin{tikzcd}
 M^\dagger/zM^\dagger \arrow[r, "s^\dagger"] \arrow[d, "\bar{\phi}"] & M^\dagger \arrow[d, "\phi"] \\
 M^\dagger/zM^\dagger \arrow[r, "s^\dagger"]                         & M^\dagger                  
\end{tikzcd}\]
 In addition, $s$ induces an isomorphism of regular connections
 \[ s^\dagger\otimes 1:  M^\dagger/zM^\dagger \otimes_E \bfo^\dagger \simeq M^\dagger.\] 
\end{prop}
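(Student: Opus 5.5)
Since $M^\dagger$ is finite free over the discrete valuation ring $\bfo^\dagger$, its $z$-adic completion $M:=M^\dagger\otimes_{\bfo^\dagger}\wh{\bfo}$ is an object of $\mic(\wh{\bfo})$ with the same exponents. These exponents are weakly prepared, so Prop \ref{prop:formalfuchs} gives a unique $\phi$-compatible section $s: M/zM\to M$, and hence an isomorphism $s\otimes 1$. Uniqueness of $s^\dagger$ is then immediate: any compatible section into $M^\dagger$ composes with $M^\dagger\into M$ to give a compatible section into $M$, which must equal $s$. So the whole task is \emph{convergence}: I will show that the formal fundamental solution matrix $U=\sum_{i\ge0}U_iz^i$ from the proof of Prop \ref{prop:formalfuchs} has entries in $\bfo^\dagger$.

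Fix a basis $\vec e$ of $M^\dagger$ and write the matrix of $\phi$ as $N=\sum_i N_iz^i$ with entries in $\bfo_{n_0}$ for some $n_0$. This gives a bound $v(N_j)\ge -c-jn_0$ for some constant $c$. Recall the recursion \eqref{eqn:uiiterate}:
\[ L_i(U_i)=-\sum_{j=1}^i N_jU_{i-j},\qquad L_i(X):=iX-XN_0+N_0X. \]
The key estimate is a bound on $\|L_i^{-1}\|$. After a finite extension I may put $N_0$ in Jordan form, at the cost of a bounded change of norms. Then $L_i$ becomes triangular on $\mat_r$, with eigenvalues $i+\lambda_k-\lambda_j$ and a nilpotent part of bounded size (the nilpotency order is at most $2r$). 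This gives
\[ v(L_i^{-1}X)\ \ge\ v(X)-C-2r\,\max_{j,k}v(i+\lambda_k-\lambda_j). \]

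Estimating the $U_i$ by induction directly would lose too much, since $\max_k v(i+\mu)$ can occasionally be huge. I will therefore follow the standard proof of \cite[Thm 13.2.2]{Ked22}. First normalize by setting $V_i:=\prod_{l=1}^{i}\bigl(\prod_{j,k}(l+\lambda_k-\lambda_j)\bigr)^{2r}\cdot U_i$, or more cleanly, bound $U_i$ by the products $\prod_{l\le i}$ of these eigenvalue factors. Unwinding the recursion, $U_i$ is a sum of products of terms $L_{i_1}^{-1}N_{j_1}L_{i_2}^{-1}\cdots$, where the indices along each chain are distinct. Each chain therefore costs at most
\[ \sum_{l=1}^i \max_{j,k} v(l+\lambda_k-\lambda_j)\ \le\ \sum_{j,k} v\Bigl(\prod_{l=1}^i(l+\lambda_k-\lambda_j)\Bigr), \]
up to the factor $2r$. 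Apply Lem \ref{lem:nonLradius} to each $\mu=-(\lambda_k-\lambda_j)-1$, noting that $\type(\lambda_k-\lambda_j)>0$ is exactly (N-T0-LD) after a shift by $\bbz$ (Rem \ref{rem:type}). Weak preparedness excludes the bad case $\mu\in\bbz^{\ge0}$ unless $\lambda_k=\lambda_j$, and in that case the product is just $i!$, of valuation at most $i/(p-1)$. Together these give $v(\prod_{l\le i}(l+\lambda_k-\lambda_j))\le s\,i$. Hence $v(U_i)\ge -c'-c''i$, so $U$ has entries in $\bfo_n$ for $n$ large, that is, in $\bfo^\dagger$.

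The nonarchimedean inequality keeps the sum over chains harmless, so the crux is the bookkeeping step above: each eigenvalue factor $l$ is used at most once along a chain. I expect this to be the main obstacle, and if the chain argument gets messy I will simply cite \cite[Thm 13.2.2]{Ked22} (translated as in Rem \ref{rem:goal_mod}) for convergence of $U$. It remains to show that $U$ is invertible over $\bfo^\dagger$. Its determinant lies in $\bfo^\dagger$ and has constant term $1$, so it is a unit in the DVR $\bfo^\dagger$; this is the argument of Notation \ref{nota:bfodagger}\eqref{itemGLdagger}. Define $s^\dagger(\vec{\bar e})=\vec eU$. It is compatible with $\phi$ by \eqref{eqn:solve_N_0}, and $s^\dagger\otimes1$ is an isomorphism because $U\in\GL_r(\bfo^\dagger)$.
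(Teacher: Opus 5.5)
Your proposal is correct and follows the same route as the paper. You get existence and uniqueness formally from Prop~\ref{prop:formalfuchs}, prove convergence of the fundamental solution matrix $U$ by bounding the inverses of $X\mapsto iX-XN_0+N_0X$ via Lem~\ref{lem:nonLradius} applied to the differences of exponents (as in \cite[Thm 13.2.2, Rem 13.2.4]{Ked22}), and conclude $U\in\GL_r(\bfo^\dagger)$ from its determinant.

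One small fix: when you compare $\sum_{l\le i}\max_{j,k}v(l+\lambda_k-\lambda_j)$ with $\sum_{j,k}v\bigl(\prod_{l\le i}(l+\lambda_k-\lambda_j)\bigr)$, truncate each valuation at $\max(0,\cdot)$. Otherwise a difference of negative valuation contributes a negative amount on the right; such pairs are harmless anyway, since all their factors have negative valuation.
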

 \begin{proof}
 The proof follows the same path as Prop \ref{prop:formalfuchs}. One still makes use of the linear transformation \eqref{eqnxixnx} where now the eigenvalues $i-\lambda_j+\lambda_k$ (with $i \geq 1$) are non-zero and \emph{of positive type}: this guarantees the section $s$ is \emph{convergent} (making use of Lem \ref{lem:nonLradius}). See \cite[Thm 13.2.2]{Ked22} for detailed argument.
 Indeed,  \cite[Thm 13.2.2]{Ked22} is stated under ``type-1-difference" assumption (making use of type-1 case of Lem \ref{lem:nonLradius}); but \cite[Rem 13.2.4]{Ked22} already points out the general validity under (N-T0-LD): it hinges on the general validity of  Lem \ref{lem:nonLradius}.
 \end{proof}

 \begin{remark} \label{rem:rk_1_fuchs}
 \begin{enumerate}
 \item The (N-T0-LD) condition in Prop \ref{prop:p_fuchs} is necessary, cf.  Rem \ref{rem:NLD_nece_fuchs}.
 
 \item For $M^\dagger \in \mic(\bfo^\dagger)$  of rank $1$, the (N-T0-LD) condition is automatic, which means the conclusion of Prop \ref{prop:p_fuchs} always hold in rank 1, even if the (single) eigenvalue is of \emph{type 0}. To defuse possible confusions (cf. Rem \ref{rem:NLD_confuse}), let us sketch the argument:  in rank 1 case, Eqn \eqref{eqn:uiiterate} simply becomes
\[ iU_i   =-\sum_{j=1}^i N_j U_{i-j}, \forall i \geq 1\]
in particular, the $N_0$, which is exactly the exponent (being Liouville or otherwise), completely  \emph{disappears}  from these equations, hence already has nothing to do with this question. It is an easy exercise to check $U$ is a convergent function. \end{enumerate}
 \end{remark}



\section{Convergent regular connections II: Clark's theorem revisited} \label{sec:conv_reg2}

 We now discuss ``convergent version" of Prop \ref{prop:formal_perf_cplx}, which is essentially a theorem of Clark \cite{Cla66}.  Recall the field $E$ is as in Assumption \ref{ass:mixE}.


\subsection{Clark's Theorem as cohomology comparison}
\begin{theorem}[\cite{Cla66}] \label{thm:Clark_coho}
    Let $(M^\dagger, \phi) \in \MIC(\bfO^{\dagger})$ with exponents   $\lambda_1, \cdots, \lambda_r$.
Let $M:=M^\dagger\otimes_{\bfO^{\dagger}}\wh{\bfo}$ be   the base change in $\MIC(\wh{\bfo})$. 
Suppose the (N-T0-L) condition in Def \ref{def:NLD} is satisfied, that is:
\[ \type(-\lambda_i)>0, \forall i.\]   
\begin{enumerate}
\item  The natural map
      \[[M^{\dagger}\xrightarrow{\phi}M^{\dagger}]\to [M\xrightarrow{\phi}M]\]
      is a quasi-isomorphism. 
      \item      $\dim_E H^0(\phi, M^\dagger) = \dim_E H^1(\phi, M^\dagger)<+\infty$. 
\end{enumerate}     
\end{theorem}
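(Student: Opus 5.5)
We first reduce to weakly prepared exponents. By Lem \ref{lem:conv_shearing} there is a $\phi$-stable full-rank submodule $N^\dagger\subset M^\dagger$ with weakly prepared exponents. Its base change $N$ is then a $\phi$-stable lattice of $M$. Shearing only shifts exponents by integers, so by Rem \ref{rem:type}(1) the exponents of $N^\dagger$ still satisfy (N-T0-L). The quotient $M^\dagger/N^\dagger$ is killed by some $z^k$. Hence $M^\dagger/N^\dagger = M/N$ is a finite-dimensional $E$-vector space, and the map on these quotient complexes is the identity. The five lemma then reduces part (1) to the case of $N^\dagger$. So we may assume the exponents are weakly prepared.

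Next, for $n\gg 0$ (for example $n>\max_i|\mathrm{Re}|$ is not meaningful $p$-adically, so we simply take $n$ larger than every $-\lambda_i$ lying in $\bbz$), no exponent of $z^nM^\dagger$ is $0$. By the argument of Prop \ref{prop:formal_perf_cplx}, $\phi$ is bijective on $z^nM$. The quotients $M^\dagger/z^nM^\dagger = M/z^nM$ agree. It therefore suffices to show that $\phi: z^nM^\dagger\to z^nM^\dagger$ is bijective. Injectivity follows from injectivity on $z^nM$. The real content is surjectivity: given $y\in z^nM^\dagger$, the unique formal solution $x\in z^nM$ of $\phi(x)=y$ must converge.

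To prove convergence, first handle the case where the connection is ``constant". If $\phi(\vec e)=\vec e N_0$ with $N_0$ a constant matrix, then for $y=\sum_{i\ge n} y_i z^i$ the solution is $x_i=(N_0+i)^{-1}y_i$. Its growth is controlled by $v(\det(N_0+i))=\sum_j v(\lambda_j+i)$. Bounding $(N_0+i)^{-1}$ requires control of products of $(\lambda_j+i)$ over ranges of $i$, not merely of single terms. This is exactly what Lem \ref{lem:nonLradius} provides, applied to $-\lambda_j$: its partial products $\prod_{i\le m}(-\lambda_j-i)$ have valuation at most $sm$.

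More concretely, I would factor the inverse through the ``integrating factor". Write $N_0$ in Jordan form, possibly after a finite extension of $E$, which is harmless by Prop \ref{prop:formal_perf_cplx}(2) and a descent argument. On a rank-one piece with exponent $\lambda$, the coefficient $1/(\lambda+i)$ can individually be huge. So I would bound $\sum_i y_i z^i/(\lambda+i)$ via the identity $\frac{1}{\lambda+i} = \frac{\prod_{j<i}(-\lambda-j)}{\prod_{j\le i}(-\lambda-j)}\cdot(-1)$. This expresses the solution as a Hadamard product involving $f_{-\lambda}$. Its radius of convergence is $\type(-\lambda)>0$, so the radius shrinks by at most a positive factor and $x$ stays in $\bfo^\dagger$. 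Nilpotent Jordan parts contribute powers of $(\lambda+i)^{-1}$, which are handled the same way.

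The general weakly prepared case is the main obstacle. We cannot directly apply Prop \ref{prop:p_fuchs} to reduce to the constant case, because (N-T0-LD) is not assumed. The first approach I would try is to avoid the section map altogether. Write $\phi = N_0 + z\frac{d}{dz} + z\Psi$ with $\Psi$ convergent, and solve $\phi(x)=y$ by the recursion $(N_0+i)x_i = y_i - \sum_{j\ge1}\Psi_j x_{i-j}$. One then estimates $v(x_i)$ inductively, using the product bound of Lem \ref{lem:nonLradius}: write $(N_0+i)^{-1}$ as a ratio of consecutive products, and control the accumulated denominators by $s\cdot i$ plus the radius losses coming from $\Psi$. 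This is essentially Clark's original estimate, and getting the bookkeeping right is the crux.

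Finally, part (2) follows from (1) together with Thm \ref{thm:intro_fon}-style finiteness in the formal case. More precisely, use the quasi-isomorphism with the finite-dimensional complex $[M/z^nM\xrightarrow{\phi}M/z^nM]$. Its Euler characteristic is $0$, since it is an endomorphism of a finite-dimensional space.
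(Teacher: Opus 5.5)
You follow the paper's route: pass to $z^nM^\dagger$ so that $\phi$ is bijective on the formal side, reduce to surjectivity, and show that the unique formal solution of $(N_0+m)x_m=y_m-\sum_{j=1}^{m}\Psi_jx_{m-j}$ converges using Lemma~\ref{lem:nonLradius}. However, you stop at the step that \emph{is} the theorem (``getting the bookkeeping right is the crux''), so as written the general case is not proved.

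The missing idea is a normalization that makes the bookkeeping trivial. The substitution $w=z/p^k$ is a change of coordinate on $\bfodagger$ that commutes with the derivation ($z\frac{d}{dz}=w\frac{d}{dw}$) and leaves $N_0$ unchanged. It replaces the coefficients $\Psi_j$ and $y_j$ by $p^{kj}\Psi_j$ and $p^{kj}y_j$. Since these are convergent, for $k\gg0$ (after scaling $y$ by a constant) all $\Psi_j$ with $j\ge1$ and all $y_j$ become integral. There are then no ``radius losses coming from $\Psi$'' to track, and the recursion gives
\[ v(x_m)\ \ge\ -\ell_m+\min\bigl\{0,\min_{i<m}v(x_i)\bigr\},\qquad \ell_j:=\max\bigl\{0,-v((N_0+j)^{-1})\bigr\}. \]
So the losses add rather than compound: $v(x_m)\ge-\sum_{j\le m}\ell_j$.

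The cofactors of $N_0+jI$ have valuation bounded below independently of $j\in\bbz$. Hence $v((N_0+j)^{-1})\ge -v(\det(N_0+j))+c$. Moreover $\sum_{j\le m}v(\det(N_0+j))=\sum_i v\bigl(\prod_{j\le m}(\lambda_i+j)\bigr)\le rsm$ by Lemma~\ref{lem:nonLradius} applied to $-\lambda_i$. This is allowed since $-\lambda_i\notin\bbz^{\ge0}$ after the shift, and $\type$ is invariant under integer shifts. Since $v(\det(N_0+j))$ is also bounded below uniformly in $j$, this yields $\sum_{j\le m}\ell_j\le Cm$. Hence $v(x_m)\ge -Cm$ and $x$ is convergent.

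This is exactly where the product bound is needed, so your remark in the constant case is misplaced. There, $x_i=(N_0+i)^{-1}y_i$ only requires $v(\lambda+i)=O(i)$, which is literally the definition of $\type(-\lambda)>0$. Products of the $(\lambda+j)$ appear only in the non-constant recursion, where each $x_m$ inherits the denominators of all earlier $x_i$.

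Consequently your constant-case discussion (Jordan form, Hadamard product) is just the special case $\Psi=0$ of the argument above and can be dropped. The shearing reduction via Lemma~\ref{lem:conv_shearing} can be dropped too. It is valid (five lemma plus $\type(\lambda)=\type(\lambda+n)$), but it is never used, since neither weak preparedness nor Proposition~\ref{prop:p_fuchs} plays any role. Your deduction of (2) from (1), via the Euler characteristic of $[M/z^nM\xrightarrow{\phi}M/z^nM]$, is correct.
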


\begin{remark}\label{rem:history_clark}
We comment on the history of Theorem \ref{thm:Clark_coho}.
\begin{enumerate}
\item This theorem is (essentially) equivalent to Clark's theorem \cite{Cla66} which is originally written using the language of $p$-adic differential equations (hence is indeed rather complicated to read, at least for us). Note also in writing,  \cite{Cla66} assumes $E$ to be algebraically closed.

\item Using cyclic vectors, one can translate Clark's theorem  to the differential module setting; then the equivalent statement can be found in \cite[Thm 13.2.3]{Ked22} (proof omitted there).

\item We decide to present a full proof (which is elementary) of Theorem \ref{thm:Clark_coho}, because we were repeatedly confused by some literature: see \S \ref{subsec:confuse}, in particular Rem \ref{rem:NLD_confuse}.
 We also think the \emph{cohomological} re-packaging of Clark's theorem is the most convenient for applications in $p$-adic Hodge theory: for example, it  inspires   the categorical equivalence theorem Thm \ref{thm:extension of regular connection}  in the next section.
\end{enumerate}
\end{remark}

  \begin{proof}  
Similarly as in proof of Prop \ref{prop:formal_perf_cplx}, for $n \gg 0$, $\phi$ is bijective on $z^nM$. As $M^\dagger/z^nM^\dagger =    M/z^nM$, we are now reduced to prove $\phi$ is also bijective on $z^nM^\dagger$. Without loss of generality, suppose $n=0$. 
Choose an $\bfO^{\dagger}$-basis $\vec{e}=(e_1,\dots,e_r)$ of $M^{\dagger}$ and let $A(z) = \sum_{i\geq 0}A_iz^i\in \Mat_r(\bfO^{\dagger})$ such that 
       $\phi(\vec{e}) = (\vec{e})\cdot A(z).$
Bijectivity of $\phi$ on $M$ is equivalent to say that $\lambda_i \notin -\bbn$ (being eigenvalues of $A_0$). 
$\phi$ is clearly injective on $M^\dagger$. To show surjectivity, we solve equation of the form
 $   \phi(\vec{e}) = (\vec{e})b(z)$,  
      where  $$b(z) = \sum_{i\geq 0}b_iz^i\in (\bfO^{\dagger})^{\oplus r} \text{ (with each $b_i$ a column vector over $E$).}$$
As $\phi$ is $E$-linear, we can modify the target by $p$-power, so that $b_0 \in       (\o_E)^{\oplus r}$. Upon making a change of variable changing $z$ to   $y=\frac{z}{p^n}$   with $n \gg 0$ (which induces an isomorphism on $\bfodagger$),  we can indeed assume 
\begin{equation*} \label{eqnbiintegral}
 b_i \in (\o_E)^{\oplus r}, \forall i \geq 0
\end{equation*}
For $n \gg 0$,  changing $z$ to   $y=\frac{z}{p^n}$ also has the effect to make 
\[ A_i \in \Mat_r(\o_E), \forall i \geq 1\]
(but $A_0$ stays unchanged).
  For   $a(z) = \sum_{i\geq 0}a_iz^i\in (\bfO^{\dagger})^{\oplus r}$, we have
      \[\phi((\vec e)a(z)) = (\vec e)(A(z)a(z)+ta^{\prime}(z)) = (\vec e)\big(\sum_{m\geq 0}(\sum_{i=0}^m(A_{m-i}a_i+ma_m)z^m\big).\] 
 Thus $   \phi(\vec{e}) = (\vec{e})b(z)$ holds     if and only if $          ma_m+\sum_{i=0}^mA_{m-i}a_i = b_m$ for any $m\geq 0$,      that is:
     \begin{equation} \label{eqn:a_m_Clark}
        a_m = (A_0+mI)^{-1}b_m-(A_0+mI)^{-1}\sum_{i=0}^{m-1}A_{m-i}a_i
     \end{equation} 
     Note $A_0a_0= b_0 \in (\o_E)^{\oplus r}$. Suppose $\alpha\gg 0$ such that $p^\alpha A_0 \in \Mat_r(\o_E)$; also recall we already have integrality of $b_i$ for $i\geq 0$ and $A_j$ for $j >0$. 
      An obvious induction argument implies
     \[ p^{(m+1)\alpha}\left(  \prod_{j=0}^m (A_0+jI) \right) \cdot a_m \in (\o_E)^{\oplus r}, \forall m \geq 0\]
     It remains to show the valuation of $\prod_{j=0}^m (A_0+jI)$ grows at most linearly as $m \to \infty$.
It is convenient to consider the inverse matrices: note the cofactors of $A_0+jI$ are \emph{uniformly} bounded for \emph{all} $j$; thus there is some constant $c\in \bbr$ such that 
\[v((A_0+jI)^{-1}) \geq -v(\det(A_0+jI)) + c, \forall j \geq 0\]
Then one observes
     \[v\left(  \prod_{j=0}^m (A_0+jI) \right) = \prod_{j=0}^m \prod_{i=1}^r (\lambda_i+j) \leq sm\]
for some $s>0$, using  Lemma \ref{lem:nonLradius}. 
  \end{proof}

Using the above theorem, we obtain a convergent version of Lem \ref{lem:k_weak_prepare}; the $k=0$ case is covered by (the stronger) Prop \ref{prop:p_fuchs}.

\begin{lemma}[($k+1$)-weakly prepared and convergent determinacy] 
\label{lem:k_weak_prep_dagger}
Let $k \geq 0$. 
\begin{enumerate}
\item Objects in $\mic(\bfodagger)$  with exponents  $(k+1)$-weakly prepared and satisfying (N-T0-LD) are uniquely determined by their reductions modulo $z^{k+1}$ in the following sense: given $M^\dagger, N^\dagger \in \mic(\bfodagger)$  both with exponents  $(k+1)$-weakly prepared and satisfying (N-T0-LD), if $(M^\dagger/z^{k+1}M^\dagger, \bar\phi) \simeq (N^\dagger/z^{k+1}N^\dagger, \bar\phi)$, then $(M^\dagger, \phi) \simeq (N^\dagger, \phi)$.

\item Let $M^\dagger \in \mic(\bfodagger)$     with exponents  $(k+1)$-weakly prepared and satisfying (N-T0-LD), then it admits a basis with respect to which the matrix of $\phi$ can be defined over the ring $E[z]$; more precisely, one can make elements of the matrix to fall inside the additive subgroup $\oplus_{i=0}^{k} E\cdot z^i \subset E[z]$.
\end{enumerate}
\end{lemma}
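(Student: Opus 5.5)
I would mimic the proof of Lem \ref{lem:k_weak_prepare} verbatim, replacing the formal cohomology comparison by Clark's Theorem \ref{thm:Clark_coho}. For Item (1), the isomorphism $(M^\dagger/z^{k+1}M^\dagger,\bar\phi)\simeq(N^\dagger/z^{k+1}N^\dagger,\bar\phi)$ forces $M^\dagger$ and $N^\dagger$ to share the same multiset of exponents $\lambda_1,\dots,\lambda_r$. I would then form the internal hom $P^\dagger:=\Hom_{\bfodagger}(M^\dagger,N^\dagger)\in\mic(\bfodagger)$, with connection $f\mapsto \phi_N\circ f-f\circ\phi_M$. Its exponents are exactly the differences $\lambda_i-\lambda_j$, which all have positive type by (N-T0-LD).

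Next I would consider the sub-object $z^{k+1}P^\dagger$, whose exponents are $\lambda_i-\lambda_j+k+1$. The key point is that $z^{k+1}P^\dagger$ satisfies (N-T0-L). Indeed, Rem \ref{rem:type}(1) gives $\type(-(\lambda_i-\lambda_j+k+1))=\type(\lambda_j-\lambda_i)>0$, again by (N-T0-LD) applied to the pair $(j,i)$. This symmetry is exactly why the difference condition suffices without (N-T0-L) on the $\lambda_i$ themselves. Moreover, $(k+1)$-weak preparedness says that no exponent of $z^{k+1}P^\dagger$ is a non-positive integer, i.e.\ $k'+1+\lambda_i-\lambda_j\neq0$ for $k'\ge k$. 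So $\phi$ is bijective on $z^{k+1}P$ in the formal setting. Theorem \ref{thm:Clark_coho}(1) applied to $z^{k+1}P^\dagger$ then gives $\rg(\phi,z^{k+1}P^\dagger)\simeq\rg(\phi,z^{k+1}P)=0$.

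From the short exact sequence $0\to z^{k+1}P^\dagger\to P^\dagger\to P^\dagger/z^{k+1}P^\dagger\to0$, I would conclude $\rg(\phi,P^\dagger)\simeq\rg(\bar\phi,P^\dagger/z^{k+1}P^\dagger)$. On $H^0$ this says that every horizontal morphism mod $z^{k+1}$ lifts uniquely to a horizontal $\bfodagger$-linear map $f:M^\dagger\to N^\dagger$. I would lift the given isomorphism and apply the same argument to its inverse, using $\Hom(N^\dagger,M^\dagger)$, which satisfies identical hypotheses. Uniqueness of lifts of the identities on $M^\dagger/z^{k+1}$ and $N^\dagger/z^{k+1}$ then forces the two lifts to be mutually inverse. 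Alternatively, $\det f$ is a unit mod $z$, hence a unit in the DVR $\bfodagger$. Either way $f$ is an isomorphism.

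For Item (2), I would fix a basis of $M^\dagger$ with $\phi$-matrix $\sum_i A_iz^i$ and truncate to $A_{\le k}:=\sum_{i\le k}A_iz^i$. This polynomial matrix defines $M'^\dagger\in\mic(\bfodagger)$ with the same reduction mod $z^{k+1}$, and in particular the same exponents, so $M'^\dagger$ is again $(k+1)$-weakly prepared and satisfies (N-T0-LD). Item (1) then gives $M^\dagger\simeq M'^\dagger$. The only step needing care is checking the (N-T0-L) hypothesis for $z^{k+1}P^\dagger$ via the sign flip above; beyond that I expect no real obstacle.
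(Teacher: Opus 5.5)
Your proposal is correct and follows essentially the same route as the paper: form the internal hom $P^\dagger$, observe that $z^{k+1}P^\dagger$ satisfies (N-T0-L) so that Clark's Theorem \ref{thm:Clark_coho} gives $\rg(\phi, z^{k+1}P^\dagger)\simeq \rg(\phi, z^{k+1}P)=0$, and then rerun the argument of Lem \ref{lem:k_weak_prepare}. Your explicit check $\type(-(\lambda_i-\lambda_j+k+1))=\type(\lambda_j-\lambda_i)>0$ and your argument that the lifted map is invertible supply exactly the details the paper leaves to the reader.
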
 
\begin{proof}
The proof uses the same strategy as in  Lem \ref{lem:k_weak_prepare}.
 Let $M, N$ denote the base change to $\wh\bfo$. 
 Consider the interal hom objects $P^\dagger:=\Hom_{\bfodagger}(M^\dagger, N^\dagger)$ and $P:=\Hom_{\wh\bfo}(M, N)$. One then observes that the exponents $z^{k+1}P^\dagger$ satisfy the assumption of Thm \ref{thm:Clark_coho}, and hence
\[ \rg(\phi, z^{k+1}P^\dagger) \simeq  \rg(\phi, z^{k+1}P) =0.\]
Then one can run the same argument as in Lem \ref{lem:k_weak_prepare}.
\end{proof}

\subsection{Examples with mis-matching cohomologies}
\begin{lemma}[Rank 1 objects] \label{lem_rk_1_Clark}
Let $M^\dagger \in \mic(\bfo^\dagger)$ be a rank $1$ object with exponent $\lambda \in E$. Consider the natural morphism
\begin{equation}\label{rank1qisom}
[M^{\dagger}\xrightarrow{\phi}M^{\dagger}]\to [M\xrightarrow{\phi}M]
\end{equation}
\begin{enumerate}
\item Such rank $1$ object  is \emph{uniquely} determined by its exponent; (hence we can denote $M^\dagger=\bfo^\dagger(\lambda)$);
\item If $\type(-\lambda) >0$, then \eqref{rank1qisom} is a quasi-isomorphism. 
\item If $\type(-\lambda) =0$, then \eqref{rank1qisom} is \emph{never} a quasi-isomorphism. Indeed, $\dim_E H^1(\phi, \bfo^\dagger(\lambda))=+\infty$.
\end{enumerate} 
\end{lemma}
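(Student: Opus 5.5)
For Item (1), I will use the $p$-adic Fuchs theorem in rank one. By Rem \ref{rem:rk_1_fuchs}(2), the (N-T0-LD) condition is automatic in rank $1$, and the single exponent is trivially weakly prepared. So Prop \ref{prop:p_fuchs} applies and gives an isomorphism $M^\dagger \simeq (M^\dagger/zM^\dagger)\otimes_E \bfo^\dagger$, where the right hand side carries the connection $\lambda\otimes 1 + 1\otimes z\frac{d}{dz}$. Concretely, $M^\dagger$ has a basis $e$ with $\phi(e)=\lambda e$. Any two rank one objects with the same exponent are therefore isomorphic, and we may write $M^\dagger=\bfo^\dagger(\lambda)$.

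For Item (2), the statement is the rank one case of Thm \ref{thm:Clark_coho}(1), so nothing further is needed.

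For Item (3), I will compute both complexes explicitly in the basis $e$. For $a=\sum_m a_m z^m$ we have $\phi(ae)=\sum_m (\lambda+m)a_m z^m\, e$. On the formal side $M$, the kernel and cokernel of $\phi$ are both $E$ when $\lambda\in -\bbn$, and both vanish otherwise. The condition $\type(-\lambda)=0$ forces $-\lambda\in\zp\setminus\bbz$ (Rem \ref{rem:type}), so $\lambda\notin -\bbn$ and $\rg(\phi,M)=0$. On the convergent side, $\phi$ is injective on $M^\dagger$, since every coefficient $\lambda+m$ is nonzero. An element $b=\sum b_m z^m\in\bfo^\dagger$ lies in the image exactly when $\sum \frac{b_m}{\lambda+m}z^m$ has positive radius of convergence.

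It then suffices to exhibit infinitely many elements of $\bfo^\dagger$ that are $E$-linearly independent modulo $\Im\phi$. The main work is producing these, and I would proceed as follows. Since $\type(-\lambda)=0$, the series $\sum_m \frac{z^m}{\lambda+m}$ has radius $0$; equivalently, $v(\lambda+m)$ is not bounded by any linear function of $m$ (Lem \ref{lem:nonLradius}, in the form used in \cite{BB21}). So we can pick a very sparse sequence $m_1<m_2<\cdots$ along which $v(\lambda+m_k)/m_k\to\infty$. Partition this sequence into infinitely many disjoint infinite subsequences $S_1,S_2,\dots$, and set $b^{(j)}=\sum_{m\in S_j} z^m$, which lies in $\bfo^\dagger$ since its radius is $1$.

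Now take any nonzero finite combination $\sum_j c_j b^{(j)}$. Its coefficients equal some fixed $c_j\neq 0$ along all of an infinite $S_j$. Dividing these by $\lambda+m$ with $v(\lambda+m)/m\to\infty$ gives radius $0$, so the combination is not in $\Im\phi$. Hence $\dim_E H^1(\phi,\bfo^\dagger(\lambda))=+\infty$, while the formal $H^1$ vanishes. In particular, \eqref{rank1qisom} is not a quasi-isomorphism.
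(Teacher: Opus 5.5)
Your proof is correct. Items (1) and (2) are exactly the paper's argument: the $p$-adic Fuchs theorem in rank one via Rem \ref{rem:rk_1_fuchs}, and then Thm \ref{thm:Clark_coho}.

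For Item (3) you also follow the paper's strategy. You work in a basis $e$ with $\phi(e)=\lambda e$, note that $\phi$ is injective, and use that $b$ lies in the image exactly when $\sum_m \frac{b_m}{\lambda+m}z^m$ converges. Where you differ is in the choice of witnesses for $\dim_E H^1=+\infty$. The paper takes $b^{(n)}=\sum_i z^i - z^n$. You take indicator series of disjoint infinite subsets $S_j$ of a sequence along which $v(\lambda+m)/m\to\infty$.

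Your choice is the one that actually works. The paper's elements satisfy $b^{(n)}-b^{(n')}=z^{n'}-z^{n}$, which is a polynomial. Every polynomial lies in $\Im\phi$, because $\phi\bigl(\tfrac{z^k}{k+\lambda}e\bigr)=z^ke$ and $k+\lambda\neq 0$. So all the $b^{(n)}$ define the same class in $H^1(\phi,\bfo^\dagger(\lambda))$ and span only a line. In your construction the supports are disjoint. Hence any nonzero finite combination has a fixed nonzero coefficient $c_j$ along an infinite set on which $v(\lambda+m)/m\to\infty$, and this forces radius of convergence $0$ after dividing by $\lambda+m$.

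One minor remark: you only need $\limsup_m v(\lambda+m)/m=+\infty$. This follows directly from Cauchy--Hadamard applied to $\sum_m z^m/(-\lambda-m)$, whose radius is $\type(-\lambda)$ by definition. So the appeal to Lem \ref{lem:nonLradius} is unnecessary; that lemma concerns the products $\prod_j(\lambda-j)$ rather than the individual factors.
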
 
\begin{proof}
Item (1). As now the (N-T0-LD) condition is automatically satisfied (cf.   discussion in  Rem \ref{rem:rk_1_fuchs}), we have
$M^\dagger \simeq M^\dagger/zM^\dagger \otimes \bfodagger$, and hence $M^\dagger$ is completely determined by $\lambda$. 
Item (2) is special case of Thm \ref{thm:Clark_coho}. Consider Item (3). As $M^\dagger \simeq M^\dagger/zM^\dagger \otimes \bfodagger$, we can  suppose for some basis $e$ of $M^\dagger$, 
$\phi(e)=\lambda e$.
As $\type(-\lambda) =0$, thus $\lambda \notin \bbz$ and hence $[M\xrightarrow{\phi}M]$ is acyclic. 
Note $\phi$ on $M^\dagger$ is also injective; so we are reduced to show  $\dim_E H^1(\phi, M^\dagger)=+\infty$.
 Note that for an  element $be=(\sum_i b_iz^i)e \in \bfodagger(\lambda)$, its pre-image $ae=(\sum_i a_iz^i) e$ under $\phi$ should  satisfy  
\[ a_i =\frac{b_i}{i+\lambda}, \forall i \geq 0.  \]
For each $n \geq 0$, define an element $b^{(n)}=\sum_i b^{(n)}_iz^i\in \bfo^\dagger$ with $b^{(n)}_i=1$ for all $i$ except $b^{(n)}_n=0$.
It is easy to check that these $b^{(n)}e$'s  generate an infinite dimensional sub-$E$-vector space of $H^1(\phi, M^\dagger)$; that is, any finite linear combination of them cannot fall inside image of $\phi$.
\end{proof}

\begin{remark}\label{rem:NLD_nece_fuchs}
 We now discuss the necessity of (N-T0-LD) condition in Prop \ref{prop:p_fuchs}.
Suppose $\type(-\lambda)=0$, then the argument in Lem \ref{lem_rk_1_Clark} implies $H^1(\bfo^\dagger(\lambda))$ is nonzero. 
That is to say, there exists some \emph{non-split} extension
\[ 0 \to \bfo^\dagger(\lambda) \to M^\dagger \to \bfodagger(0) \to 0\]
Then it is impossible that 
\[  M^\dagger/zM^\dagger \otimes_E \bfo^\dagger \simeq M^\dagger, \]
since the left hand side is always a \emph{split} extension, because the eigenvalues of $\bar\phi$ on $M^\dagger/zM^\dagger$ are distinct.
 \end{remark}

The following Example \ref{ex:inf_h1_gk_rep} shows that  Galois  cohomology of a $\bdrpd$-representation can in general be ill-behaved (as mentioned in Thm \ref{thm:TS_conv}).


  \begin{example}[Cohomology mis-matching] \label{ex:inf_h1_gk_rep}  
  Let $\lambda \in \zp$ with $\type(-\lambda)=0$.  Let $\qp(\lambda)$ be the $\lambda$-th power of the cyclotomic character. 
\begin{enumerate}
\item Let $ \bdrpd(\lambda):= \qp(\lambda) \otimes_\qp \bdrpd$ be the induced  $\gk$-representation of rank one.   By Thm \ref{thm:TS_conv}, we have
\[ \rg(\gk, \bdrpd(\lambda))\otimes_K \kinfty \simeq \rg(\phi, \bfo^\dagger(\lambda));\]
thus $\dim_K H^1(\gk, \bdrpd(\lambda))=+\infty$ by Lem \ref{lem_rk_1_Clark}.

\item As $\dim_K H^1(\gk, \bdrpd(\lambda))=+\infty$, we can in particular find a \emph{non-split} extension of $\gk$-representations
\[ 0 \to \bdrpd(\lambda) \to W^\dagger \to \bdrpd(0) \to 0\]
Let $W:=W^\dagger\otimes_\bdrpd \bdrplus$. We claim:
\[ 0=\dim_K H^0(\gk, W^\dagger) < \dim_K H^0(\gk, W)=1\]
As $\dim_K H^1(\gk, \bdrplus(\lambda))=0$, thus there is a splitting $W=\bdrp(\lambda)\oplus \bdrp(0)$; thus $\dim_K H^0(\gk, W)=1$.
Suppose $\dim_K H^0(\gk, W^\dagger) \neq 0$;   this implies the injective map $(W^\dagger)^\gk \to (\bdrpd(0))^\gk \simeq K$ has to be   bijective, forcing   the extension $W^\dagger$ to be split, a contradiction. 
\end{enumerate} 
  \end{example}

\subsection{Comments on the literature} \label{subsec:confuse}
 It seems to us (being non-experts in $p$-adic differential equations) that there are quite some confusions in the literature concerning Clark's Theorem (with the paper \cite{Cla66} widely cited).
 
\begin{remark}[$\pm$-sign confusion] \label{rem:clark_plusminus}  
 In \cite{Clark_gap}, Setoyanagi points out a \emph{sign error} in  \cite{Cla66}. Note as summarized in Rem \ref{rem:nonL_def}, \cite{Ked22} uses the assumption  $\type(\pm \lambda)=1$, and so that the sign problem can be ignored in \cite{Ked22}. (Nonetheless, to    check the sign of $-\lambda_i$ in our Theorem \ref{thm:Clark_coho} is correct: one looks at the very end of the proof, which cites Lem \ref{lem:nonLradius}).  Note it is also common in the literature to use the differentiation $-z\frac{d}{dz}$ to define regular connections; in that case, we will have to change sign in our Theorem \ref{thm:Clark_coho}. 
\end{remark} 



\begin{remark}[NL vs.~ NLD] \label{rem:NLD_confuse}
Sometimes one could confuse  the (NL) and  (NLD) conditions (cf. Def \ref{def:NLD}, ignoring the ``T0" part). 
 On easily sees that:
\begin{itemize}
\item If  one of the exponents is zero, then (NLD) is strictly stronger than (NL); thus (which is the case in many references), it would not hurt to assume the stronger (NLD).
\item However, without zero being an exponent, these two conditions could be completely different: in particular, in the \emph{rank one} case, there is just one exponent $\lambda$: even if it is a Liouville number, the  (NLD) condition is always satisfied.
\end{itemize} 
A possible reason for confusion is perhaps that one could confuse the (NL)-assumption of Thm \ref{thm:Clark_coho} with the (NLD)-assumption of Prop \ref{prop:p_fuchs}.
This confusion has happened in the literature when citing Clark's theorem. 
For example, \cite[Rem 13.2.4]{Ked22} discusses about (NLD) assumption, but the ``counter-example" there indeed points to \cite[Thm 13.2.3]{Ked22} (i.e.~ Clark's Theorem), which as we discussed, needs (NL) assumption (which nonetheless is satisfied there since one exponent is $0$). 
Similarly, an incorrect citation happens  immediately after \cite[Eqn. (3.14)]{Li-Zhang24} (the results there remain valid as all exponents there are zero). 
\end{remark}

  \section{Convergent regular connections III: convergent descent} \label{sec:conv_reg3}

   We discuss \emph{convergence}  properties of \emph{formal} regular connections.   Recall the field $E$ is as in Assumption \ref{ass:mixE}. 
   

\subsection{Non-categorical convergence}

\begin{defn}
Let $M \in \mic(\wh\bfo)$, say a  finite free  $\bfodagger$-submodule $M^\dagger$ is a  \emph{$\bfo^\dagger$-lattice}  if the natural map $M^\dagger \otimes_\bfodagger \whbfo \to M$ is an isomorphism. Say it is a $\phi$-stable lattice if $M^\dagger$ is $\phi$-stable.
\end{defn}

\begin{theorem}[Non-categorical convergence] \label{thm:non_cat_conv}
 Let $(M, \phi) \in \mic(\wh\bfo)$ with rank $r$.  
 Denote the exponents by $\lambda_1, \cdots, \lambda_r$.
\begin{enumerate}
\item There exists a (possibly non-unique)  $\phi$-stable $\bfo^\dagger$-lattice $M^\dagger$.

\item If the exponents of $M$ satisfy (N-T0-L), then  for any possible  $\phi$-stable $\bfo^\dagger$-lattice $M^\dagger$ above, the natural map
\[ \rg(\phi, M^\dagger) \to \rg(\phi, M)\]
is a quasi-isomorphism.

\item If the exponents of $M$ satisfy  (N-T0-LD), then the $\phi$-stable $\bfo^\dagger$-lattice $M^\dagger$ in Item (1)  is \emph{unique} (not just unique up to isomorphism). 
\end{enumerate}
\end{theorem}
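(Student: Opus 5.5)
Item (1) will follow from Lem \ref{lem:k_weak_prepare}(2): choose $k\gg 0$ so that the exponents of $M$ are $(k+1)$-weakly prepared; such $k$ exists since the differences $\lambda_i-\lambda_j$ form a finite set, so $k'+1+(\lambda_i-\lambda_j)\neq 0$ for all $k'\ge k$ once $k$ exceeds every integer difference. Lem \ref{lem:k_weak_prepare}(2) then gives a basis $\vec e$ of $M$ in which the matrix of $\phi$ has entries in $\oplus_{i=0}^k E z^i\subset E[z]\subset \bfodagger$. We take $M^\dagger:=\oplus_i \bfodagger e_i$. It is $\phi$-stable because $z\frac{d}{dz}$ preserves $\bfodagger$ and the matrix has convergent (indeed polynomial) entries. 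The natural map $M^\dagger\otimes_\bfodagger\whbfo\to M$ is an isomorphism by construction. Non-uniqueness in general is expected, as Example \ref{ex:intro} and Rem \ref{rem:NLD_nece_fuchs} suggest.

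Item (2) is a direct application of Clark's theorem. If $M^\dagger$ is any $\phi$-stable lattice, then $(M^\dagger,\phi)\in\mic(\bfodagger)$ with base change $M$, and $M^\dagger/zM^\dagger=M/zM$, so its exponents are the same $\lambda_i$. Hence (N-T0-L) holds for $M^\dagger$, and Thm \ref{thm:Clark_coho}(1) gives the quasi-isomorphism.

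For Item (3), I would argue via internal Hom. Let $M_1^\dagger, M_2^\dagger\subset M$ be two $\phi$-stable lattices, and consider $P^\dagger:=\underline{\Hom}_{\bfodagger}(M_1^\dagger,M_2^\dagger)\in\mic(\bfodagger)$, whose base change is $P=\underline{\Hom}_{\whbfo}(M,M)$. The exponents of $P^\dagger$ are the $\lambda_i-\lambda_j$, which satisfy $\type(-(\lambda_i-\lambda_j))=\type(\lambda_j-\lambda_i)>0$ by (N-T0-LD). So $P^\dagger$ satisfies (N-T0-L), and Thm \ref{thm:Clark_coho} gives $H^0(\phi,P^\dagger)\simeq H^0(\phi,P)$. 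The identity $\mathrm{id}_M\in H^0(\phi,P)$ therefore lies in $P^\dagger$, which means $\mathrm{id}(M_1^\dagger)\subset M_2^\dagger$, i.e.\ $M_1^\dagger\subset M_2^\dagger$. By symmetry $M_2^\dagger\subset M_1^\dagger$, so the two lattices are equal.

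The step that needs care is the identification of $P^\dagger\otimes\whbfo$ with $P$ and the compatibility of $H^0$ with the inclusion $P^\dagger\subset P$. This holds because both $M_i^\dagger$ are lattices in the same $M$: after choosing bases, $P^\dagger$ consists of matrices over $\bfodagger$ inside the matrices over $\whbfo$ representing $P$. The compatibility of Clark's quasi-isomorphism with this inclusion then gives exactly that a horizontal element of $P$ already lies in $P^\dagger$. I would double-check the sign convention in the (N-T0-L) condition here; the argument only uses that the set of differences is symmetric, so the sign causes no trouble.
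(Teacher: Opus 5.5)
Your proposal is correct and follows the paper's proof essentially step by step. Item (1) goes through Lem~\ref{lem:k_weak_prepare}(2), Item (2) through Thm~\ref{thm:Clark_coho}, and Item (3) through Clark's theorem applied to the internal Hom, whose exponents $\lambda_i-\lambda_j$ satisfy (N-T0-L) by (N-T0-LD); the only cosmetic difference is at the end of Item (3), where the paper deduces equality from $T\in \Mat_r(\bfodagger)\cap\GL_r(\whbfo)=\GL_r(\bfodagger)$ while you obtain both inclusions by symmetry, and either works.
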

\begin{proof} 
Item (1). Indeed, Lem \ref{lem:k_weak_prepare} even implies there exists some $\phi$-stable $E[z]$-lattices.


Item (2). This is direct consequence of Thm \ref{thm:Clark_coho}.

Item (3). Suppose there exists another $\phi$-stable $\bfo^\dagger$-lattice $P^\dagger \subset M$. 
 Consider the internal hom objects
\[ N^\dagger:=\underline{\Hom}_{\bfodagger}(M^\dagger, P^\dagger), \quad N:=\underline{\Hom}_{\wh\bfo}(M, M)\]
The \emph{inclusions} $M^\dagger, P^\dagger \subset M$ induce a natural map
\[ N^\dagger \into N\]
which makes $N^\dagger$ and $\phi$-stable $\bfo^\dagger$-lattice  in $N$. Note that $N$  (being the internal hom)  satisfies  (N-T0-L) condition; thus Item (2) implies
\[ \rg(\phi, N^\dagger) \simeq \rg(\phi, N).\]
Consider the element in $H^0(\phi, N) \subset N =\underline{\Hom}_{\wh\bfo}(M, M)$ induced by the \emph{identity} map $\mathrm{id}: M\to M$, it corresponds to a unique  element   $f\in H^0(\phi, N^\dagger) \subset N^\dagger= \underline{\Hom}_{\bfodagger}(M^\dagger, P^\dagger)$; we claim the map  $f: M^\dagger \to P^\dagger$ indeed \emph{identifies} $M^\dagger$ and $P^\dagger$ as the \emph{same subset} of $M$. Indeed, fix an $\wh\bfo$-basis $\vec e$ of $M$; then a basis of $M^\dagger$ resp. $P^\dagger$ is of form $\vec e A$ resp. $\vec e B$ with $A, B \in \GL_r(\wh\bfo)$. The map $f$ is defined by sending $\vec e A$ to $\vec e B T$ with some $T \in \Mat_r(\bfodagger)$; as $f$ induces \emph{identity} map on $M$, we have $A=BT$; in particular, $T \in \GL_r(\bfodagger)$ (cf. Notation \ref{nota:bfodagger}\eqref{itemGLdagger}) and indeed $M^\dagger=P^\dagger$.
\end{proof}

 \subsection{Categorical convergence}

    \begin{thm}[Categorical convergence] \label{thm:extension of regular connection}  
  Let $S \subset \overline{E}$ be a subset of  numbers of positive type, and suppose that $S$ is an additive group containing $\bbz$. (For example, $S=\qbar$). 
      The base-change along $\bfO^{\dagger}\to \widehat \bfO$ induces a tensor equivalence of categories
      \[\MIC^S(\bfO^{\dagger}) \to \MIC^S(\widehat \bfO),\]
      where LHS resp. RHS consists of regular connections with exponents contained in $S$.
     \end{thm}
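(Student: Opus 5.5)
The plan is to prove that base change $M^\dagger \mapsto M^\dagger\otimes_{\bfodagger}\wh\bfo$ is fully faithful and essentially surjective on the subcategories with exponents in $S$. It is a tensor functor compatible with internal homs and duals. It also preserves exponents, since $M^\dagger/zM^\dagger = M/zM$ as $E$-vector spaces with the same $\bar\phi$. So it maps $\MIC^S(\bfodagger)$ into $\MIC^S(\wh\bfo)$.

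\textbf{Full faithfulness.} Let $M^\dagger, N^\dagger \in \MIC^S(\bfodagger)$, and put $P^\dagger:=\underline{\Hom}_{\bfodagger}(M^\dagger,N^\dagger)$ and $P:=\underline{\Hom}_{\wh\bfo}(M,N) = P^\dagger\otimes_{\bfodagger}\wh\bfo$. Then morphisms of connections are exactly $H^0(\phi,P^\dagger)$, respectively $H^0(\phi,P)$. The exponents of $P^\dagger$ are the differences $\mu_j-\lambda_i$ of exponents of $M^\dagger$ and $N^\dagger$. Because $S$ is an additive group, these lie in $S$, and so does $-(\mu_j-\lambda_i)$. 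All elements of $S$ have positive type, so $P^\dagger$ satisfies (N-T0-L). Thm \ref{thm:Clark_coho} then gives $\rg(\phi,P^\dagger)\simeq\rg(\phi,P)$, and in particular $H^0(\phi,P^\dagger)\simeq H^0(\phi,P)$. This is exactly full faithfulness.

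\textbf{Essential surjectivity.} Let $M\in\MIC^S(\wh\bfo)$. By Thm \ref{thm:non_cat_conv}(1), which rests on Lem \ref{lem:k_weak_prepare}(2) and uses that all but finitely many $k$ make the exponents $(k+1)$-weakly prepared, $M$ admits a $\phi$-stable $\bfodagger$-lattice $M^\dagger$. By construction its base change is $M$, so its exponents equal those of $M$ and lie in $S$. Hence $M^\dagger\in\MIC^S(\bfodagger)$ and $M^\dagger\otimes\wh\bfo\simeq M$.

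\textbf{Main point.} The only delicate step is full faithfulness, and it hinges on closure of $S$ under negation and differences. Clark's theorem needs $\type(-\lambda)>0$ for the exponents of the internal hom, and types are not symmetric under $\lambda\mapsto-\lambda$ (Rem \ref{rem:type}). Since $S$ is a group, this is automatic. The assumption $\bbz\subset S$ ensures that the unit object $\bfodagger$ lies in the category, with exponent $0$, and that shearing by $\bbz$-shifts keeps exponents in $S$. The tensor structure is compatible because base change commutes with tensor products.
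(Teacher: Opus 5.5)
Your proposal is correct and follows the paper's proof: full faithfulness by applying Thm \ref{thm:Clark_coho} to the internal Hom (whose exponents $\mu_j-\lambda_i$ have negatives in $S$ because $S$ is an additive group, hence of positive type), and essential surjectivity from Thm \ref{thm:non_cat_conv}(1). Your closing remark about $\bbz\subset S$ and shearing is not needed for the argument, but it does no harm.
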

  \begin{proof}
  We first show full faithfulness. For $i=1,2$, let $M_i^\dagger \in \MIC^S(\bfO^{\dagger})$, with $M_i$ the base change over $\widehat \bfO$. Consider the internal hom object 
  \[ M^\dagger:=\underline{\Hom}_{\bfO^{\dagger}}(M_1^\dagger, M_2^\dagger)\]
  and let $M$ be the base change of $M^\dagger$. It suffices to show 
  \[(M^\dagger)^{\phi=0} \simeq M^{\phi=0}\]
  As $S$ is an additive group, the exponents of $M^\dagger$ are still in $S$ hence are of positive type, thus we can apply Theorem \ref{thm:Clark_coho} to conclude.   
   Essential surjectivity follows from Thm \ref{thm:non_cat_conv}.   
  \end{proof}

  \subsection{Application to Fontaine's  connection: II} \label{subsec:fon_app_2}

 
This subsection is a continuation of 
\S \ref{subsec:fon_app_1}.
We establish a version of Thm \ref{thm:non_cat_conv} for Fontaine's connections defined over $\kinfty[[t]]=\wh\bfo(K_\infty, t)$; the proof follows the same path as \emph{loc.~cit.}, once we take care of some subtle \emph{rationality} issues.  Recall the ring $\bfl_{K,\infty}^{+, \dagger} =\colim_m  \bfodagger(K_m, t) $ is defined in Lem \ref{lem:colim_stalk}, and caution: as mentioned in \emph{loc.~cit.}, it is \emph{strictly smaller} than the ``stalk" $\bfodagger(K_\infty, t)$.

\begin{thm}[Non-categorical convergence for Fontaine's connections] \label{thm:non_cat_fon_conn}
Let $(M, \phi) \in \mic(\kinfty[[t]])$ with rank $r$.  
 Denote the exponents by $\lambda_1, \cdots, \lambda_r$. 
\begin{enumerate}
\item There exists a (possibly non-unique)  $\phi$-stable $\bfl_{K,\infty}^{+, \dagger}$-lattice $M^\dagger$.
 
\item If the exponents of $M$ satisfy (N-T0-L), then  for any possible  $\phi$-stable $\bfl_{K,\infty}^{+, \dagger}$-lattice $M^\dagger$ above, the natural map
\[ \rg(\phi, M^\dagger) \to \rg(\phi, M)\]
is a quasi-isomorphism.

\item If the exponents of $M$ satisfy  (N-T0-LD), then the $\phi$-stable $\bfl_{K,\infty}^{+, \dagger}$-lattice $M^\dagger$ is \emph{unique} (not just unique up to isomorphism). 
\end{enumerate}
\end{thm}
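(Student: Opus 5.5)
The plan is to reduce all three items to Thm \ref{thm:non_cat_conv}, applied over a single field $K_m$ with $m$ large. The one genuinely new ingredient is a rationality statement: $M$ descends to $K_m[[t]]$, and any lattice or Hom-element we need also lives at some finite level $m$.

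\textbf{Item (1).} By Cor \ref{cor:fontain_rational}, $M$ has a basis $\vec e$ for which the matrix of $\phi$ lies in $\Mat_r(K_m[t])$ for some $m$. Let $M^\dagger$ be the $\bfl_{K,\infty}^{+,\dagger}$-span of $\vec e$. Since $K_m[t]\subset \bfodagger(K_m,t)\subset \bfl_{K,\infty}^{+,\dagger}$, this span is $\phi$-stable. It is a lattice because $\bfl_{K,\infty}^{+,\dagger}\otimes \kinfty[[t]]\to \kinfty[[t]]$ is the identity on the free module spanned by $\vec e$.

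\textbf{Item (2).} Let $M^\dagger$ be any $\phi$-stable lattice. Choose a basis of $M^\dagger$; the matrix of $\phi$ has finitely many entries, so it lies in $\Mat_r(\bfodagger(K_m,t))$ for some $m$. This gives a descent $M^\dagger_m\in\mic(\bfodagger(K_m,t))$ with $M^\dagger=M^\dagger_m\otimes \bfl_{K,\infty}^{+,\dagger}$. Write $M_m:=M^\dagger_m\otimes K_m[[t]]$, so that $M=M_m\otimes_{K_m[[t]]}\kinfty[[t]]$. The exponents are unchanged, so Thm \ref{thm:Clark_coho} over $E=K_m$ gives $\rg(\phi,M^\dagger_m)\simeq\rg(\phi,M_m)$. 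The same holds at every level $m'\ge m$ for the base changes to $K_{m'}$.

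Because $\bfl_{K,\infty}^{+,\dagger}=\colim_{m'}\bfodagger(K_{m'},t)$ and $M^\dagger=\colim_{m'} M^\dagger_{m'}$, we get $\rg(\phi,M^\dagger)=\colim_{m'}\rg(\phi,M^\dagger_{m'})$. On the formal side, Prop \ref{prop:formal_perf_cplx}(2) gives $\rg(\phi,M)\simeq\rg(\phi,M_m)\otimes_{K_m}\kinfty=\colim_{m'}\rg(\phi,M_{m'})$. Filtered colimits are exact, so the levelwise quasi-isomorphisms pass to the colimit.

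\textbf{Item (3).} Suppose $P^\dagger\subset M$ is another $\phi$-stable lattice. Form $N^\dagger:=\underline{\Hom}(M^\dagger,P^\dagger)$, which is a $\phi$-stable lattice in $N:=\underline{\Hom}(M,M)$. The exponents of $N$ are the $\lambda_i-\lambda_j$, which satisfy (N-T0-L) by the (N-T0-LD) hypothesis. Item (2) then gives $H^0(\phi,N^\dagger)\simeq H^0(\phi,N)$. Let $f\in H^0(\phi,N^\dagger)$ be the preimage of $\mathrm{id}_M$. Fix bases $\vec eA$ of $M^\dagger$ and $\vec eB$ of $P^\dagger$; then $f$ gives $A=BT$ with $T\in\Mat_r(\bfl_{K,\infty}^{+,\dagger})$. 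Here $T=B^{-1}A$ is invertible over $\kinfty[[t]]$. Descending $T$ to some $\bfodagger(K_{m},t)$, its determinant is a unit in $K_{m}[[t]]$, hence nonzero mod $t$, hence a unit in the DVR $\bfodagger(K_{m},t)$ by Notation \ref{nota:bfodagger}\eqref{itemGLdagger}. Therefore $M^\dagger=P^\dagger$.

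The main obstacle is the colimit and rationality bookkeeping in Item (2). In particular, we need the identification $\rg(\phi,M)\simeq\colim_{m'}\rg(\phi,M_{m'})$. This rests on finiteness (perfectness) over $K_m$ from Prop \ref{prop:formal_perf_cplx}, because $\kinfty[[t]]$ is not the colimit of the rings $K_{m'}[[t]]$.
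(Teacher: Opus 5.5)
Your proposal is correct and follows essentially the same route as the paper: Item (1) comes from Cor \ref{cor:fontain_rational}, and Item (2) descends to a finite level $K_m$, applies Thm \ref{thm:Clark_coho} levelwise and passes to the colimit via Prop \ref{prop:formal_perf_cplx}(2), with the same caveat that $\kinfty[[t]]$ is not $\colim_m K_m[[t]]$. Item (3) reruns the internal-Hom argument of Thm \ref{thm:non_cat_conv}(3); the only cosmetic difference is that you check invertibility of $T$ by descending it to some $\bfodagger(K_m,t)$, whereas the paper directly uses that $\bfl_{K,\infty}^{+,\dagger}$ is itself a DVR.
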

\begin{proof} 

Item (1)  follows from the stronger  Cor \ref{cor:fontain_rational}. 

For Item (2), one needs to be careful that $M^\dagger$ is  defined over $\colim_m \bfo^\dagger(K_m, t)$
and $M=M_{\hat{\bfo}(\kinfty)}$ is defined over $\hat{\bfo}(\kinfty, t)$. Nonetheless, we have
\[  \rg(\phi, M^\dagger) \simeq \colim_m \rg(\phi, M_{\bfo^\dagger(K_m)}) \simeq \colim_m \rg(\phi, M_{\hat{\bfo}(K_m)}) \simeq \rg(\phi, M_{\hat{\bfo}(\kinfty)})\]
Here the first quasi-isomorphism is by definition; the second follows from Thm \ref{thm:Clark_coho}; the third follows from base change   in Lem \ref{prop:formal_perf_cplx}.

For Item (3), suppose there exists another $\phi$-stable $\bfl_{K,\infty}^{+, \dagger}$-lattice $P^\dagger \subset M$, then the  proof  of  Thm \ref{thm:non_cat_conv}(3) works verbatim, by changing the notations $(\bfodagger, \wh\bfo)$ there to $(\bfl_{K,\infty}^{+, \dagger}, \kinfty[[t]])$.
In particular, similar to Notation \ref{nota:bfodagger}(5), we still have
\[ \Mat_r(\bfl_{K,\infty}^{+, \dagger}) \cap \GL_r(\kinfty[[t]]) =\GL_r(\bfl_{K,\infty}^{+, \dagger})\]
because all these rings are DVRs. 
\end{proof}

\section{Main theorems II: convergence}    \label{sec:mainthm2}
 In this section, we prove  theorems concerning ``convergence", that is, we  link $\bdrpd$- and $\bdrplus$-representations. 

\begin{theorem}[Cohomology comparison: convergent vs. formal] \label{thm:galois_coho_compa}
Let $W^\dagger \in \Rep_{G_K}(\bdrpd)$ with rank $r$, let $W= W^\dagger\otimes_\bdrpd \bdrplus$. 
Denote the Sen weights of the reduction $W^\dagger/tW^\dagger$ as $\lambda_1, \cdots, \lambda_r$. If $\type(-\lambda_i)>0$ for each $i$, then we have 
\[ \rg(\gk, W^\dagger) \simeq   \rg(\gk, W) \]
and hence all cohomology groups are finite dimensional $K$-vector spaces. 
\end{theorem}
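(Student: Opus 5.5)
The plan is to reduce the Galois cohomology comparison to the comparison of Lie algebra cohomology of regular connections, where Clark's theorem (Thm \ref{thm:Clark_coho}, in the form of Thm \ref{thm:non_cat_fon_conn}(2)) applies. By Thm \ref{thm:TS_conv}, $W^\dagger$ corresponds to $D^\dagger=(W^{\dagger,\hk})^{\gammak\dla}\in\Rep_{\Gamma_K}(\bfl_{K,\infty}^{+,\dagger})$, and
\[\rg(\phi, D^\dagger)\simeq \rg(\gk,W^\dagger)\otimes_K\kinfty .\]
By Thm \ref{thm:intro_fon}, $W$ corresponds to $D=(W^\hk)^{\gammak\dpa}\in\Rep_{\Gamma_K}(\kinfty[[t]])$, and
\[\rg(\phi,D)\simeq\rg(\gk,W)\otimes_K\kinfty .\]

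The first task is compatibility. We need $D\simeq D^\dagger\otimes_{\bfl_{K,\infty}^{+,\dagger}}\kinfty[[t]]$, $\phi$-equivariantly and $\gammak$-equivariantly. The natural map $D^\dagger\to W^\dagger\subset W$ lands in $W^\hk$, and locally analytic vectors of the LB space $\ldrpd$ map into pro-analytic vectors of $\ldrplusk$ because the inclusion is continuous. So $D^\dagger\otimes\kinfty[[t]]\to D$ is defined. Base-changing further to $\bdrplus$ recovers $W^\dagger\otimes\bdrplus=W$, which equals $D\otimes\bdrplus$. Since $\Rep_{\Gamma_K}(\kinfty[[t]])\to\Rep_{\gk}(\bdrplus)$ is an equivalence, the map is an isomorphism. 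The $\phi$ operators agree because both are derivatives of the same $\gammak$-action.

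Next, the exponents of $D^\dagger$ (equivalently of $D$) are the eigenvalues of the residue $\bar\phi$ on $D/tD$. This is the Sen operator of $W/tW=W^\dagger/tW^\dagger$, so the exponents are exactly $\lambda_1,\dots,\lambda_r$. The hypothesis $\type(-\lambda_i)>0$ is therefore (N-T0-L). Thm \ref{thm:non_cat_fon_conn}(2) then gives $\rg(\phi,D^\dagger)\simeq\rg(\phi,D)$ via the natural map. That theorem already handles the subtlety that $\bfl_{K,\infty}^{+,\dagger}=\colim_m\bfodagger(K_m,t)$ rather than $\bfodagger(\kinfty,t)$: it descends to some $K_m$ and uses Prop \ref{prop:formal_perf_cplx}.

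Finally, we descend from $\kinfty$ to $K$. The natural map $\rg(\gk,W^\dagger)\to\rg(\gk,W)$ becomes a quasi-isomorphism after $\otimes_K\kinfty$, compatibly with the identifications above, since all of them are functorial in the representation. Because $\kinfty/K$ is faithfully flat, the original map is a quasi-isomorphism. Alternatively, take $\gammak$-invariants using $\rg(\gammak,D^\dagger)\simeq\rg(\phi,D^\dagger)^{\gammak=1}$, and likewise for $D$. Finite-dimensionality then follows from Thm \ref{thm:intro_fon}(3).

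The main obstacle is the compatibility step: showing that the two decompletions match under base change and that the Tamme and Galois-descent identifications are natural in the representation. I would check this first, using uniqueness of the decompleted objects in the two equivalences.
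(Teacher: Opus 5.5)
Your proposal is correct and follows the paper's proof. The paper likewise uses Thm \ref{thm:TS_conv} and Thm \ref{thm:intro_fon} to reduce to the comparison $\rg(\phi, D^\dagger)\simeq\rg(\phi, D)$, and then invokes Thm \ref{thm:non_cat_fon_conn}(2). Your additional checks fill in details the paper leaves implicit in the word ``equivalent'': the identification $D\simeq D^\dagger\otimes\kinfty[[t]]$ compatible with $\phi$, the equality of the exponents with the Sen weights, and the faithfully flat descent from $\kinfty$ to $K$.
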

\begin{proof}
Using known comparison in Thm \ref{thm:TS_conv} (and Thm \ref{thm:intro_fon}), it is equivalent to prove the  comparison 
\[  \rg(\phi, D^\dagger) \simeq \rg(\phi, D);\]
this is already done in Thm \ref{thm:non_cat_fon_conn}(2).
\end{proof}

\begin{theorem}[Non-categorical convergence] \label{thm:noncat_conv}
Let $W \in \rep_\gk(\bdrplus)$ with rank $r$.  
 Denote the Sen weights of the reduction $W/tW$ as $\lambda_1, \cdots, \lambda_r$.   
\begin{enumerate}
\item There always exists a (in general non-unique) \emph{$\gk$-stable $\bdrpd$-lattice} $W^\dagger$ in  the sense that  $W^\dagger \subset W$ is a $\gk$-stable sub-$\bdrpd$-module such that the natural map $W^\dagger \otimes_\bdrpd \bdrplus \to W$ is an isomorphism. 

\item If    $\type(-\lambda_i)>0$ for each $i$, then for any possible $\gk$-stable $\bdrpd$-lattice $W^\dagger$, we have 
$ \rg(\gk, W^\dagger) \simeq \rg(\gk, W)$. 

\item If    $\type(\lambda_i -\lambda_j)>0$ for all pairs of $i, j$ (but not necessarily  $\type(-\lambda_i)>0$), then the $\gk$-stable  $\bdrpd$-lattice $W^\dagger$  is  \emph{unique} (not just unique up to isomorphism).
\end{enumerate} 
\end{theorem}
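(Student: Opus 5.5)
The plan is to pass through the decompletion equivalences of Theorem \ref{thm:TS_conv} and Theorem \ref{thm:intro_fon} and reduce everything to Theorem \ref{thm:non_cat_fon_conn}. Let $D=(W^{\hk})^{\gammak\dpa}\in\rep_\gammak(\kinfty[[t]])$ be Fontaine's module, with its connection $\phi$; its exponents are the Sen weights $\lambda_i$ of $W/tW$. The central step is a dictionary. On one side are $\gk$-stable $\bdrpd$-lattices $W^\dagger\subset W$. On the other are $\gammak$-stable $\bfl_{K,\infty}^{+,\dagger}$-lattices $D^\dagger\subset D$, meaning $\gammak$-stable finite free submodules with $D^\dagger\otimes\kinfty[[t]]\simeq D$. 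The map one way is $W^\dagger\mapsto D^\dagger=(W^{\dagger,\hk})^{\gammak\dla}$, and the inverse is $D^\dagger\mapsto D^\dagger\otimes_{\bfl_{K,\infty}^{+,\dagger}}\bdrpd$, formed inside $W$.

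To check the dictionary, take $W^\dagger$ and let $D^\dagger$ be its decompletion. Since locally analytic vectors of $W^{\dagger,\hk}$ are pro-analytic in $W^{\hk}$, we get $D^\dagger\subset D$. The natural map $D^\dagger\otimes\bdrplus\to W$ equals $(D^\dagger\otimes\bdrpd)\otimes\bdrplus=W^\dagger\otimes\bdrplus\to W$, which is an isomorphism. Descending this along the faithfully flat $\kinfty[[t]]\to\bdrplus$ (Fontaine's equivalence is base change) shows $D^\dagger\otimes\kinfty[[t]]\simeq D$. Conversely, the base change of a $\gammak$-stable lattice is $\gk$-stable and is a lattice, by Theorem \ref{thm:TS_conv}(1). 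The two constructions are inverse by the same Theorem \ref{thm:TS_conv}(1). In particular, the subset $W^\dagger\subset W$ is recovered from $D^\dagger\subset D$.

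Item (2) follows directly from Theorem \ref{thm:galois_coho_compa}.

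For item (1), I would construct a $\gammak$-stable lattice. Corollary \ref{cor:fontain_rational} gives a basis $\vec e$ of $D$ in which the matrix of $\phi$ lies in $K_m[t]$. This $\phi$-stable lattice need not be $\gammak$-stable, which I expect to be the main obstacle. I would first try to handle it as follows. The $\gammak$-action on $D$ is pro-analytic, and each $\Gamma_{K,n}$ acts through $\exp(\log\chi(g)\phi)$. Choosing $\vec e$ inside $D_m=(D/t^k)$-rational data, and using Lemma \ref{lem:k_weak_prepare} over $K_m[[t]]$ with its $\Gamma_K$-action (Fontaine's $K_m[[t]]$-descent), one can arrange that $\Gamma_{K,n}$ acts on $\vec e$ by $\exp(a\,N(t))$ with $N\in\Mat(K_m[t])$. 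The entries are then entire in $t$ for $g$ near $1$, hence convergent. The span of the finitely many $\Gamma_K/\Gamma_{K,n}$-translates then gives a $\gammak$-stable $\bfl_{K,\infty}^{+,\dagger}$-lattice. If this convergence argument fails, the fallback suggested in the introduction is Fontaine's classification of $\bdr$-representations, whose building blocks are explicit: they are generated by $\bdrplus$-powers of $t$ and by $\qp$-representations tensored with $\bdrplus$. Each block has an evident $\bdrpd$-form, and one would then intersect back down to the $\bdrplus$-level.

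For item (3), let $W^\dagger_1,W^\dagger_2$ be two lattices with decompletions $D^\dagger_1,D^\dagger_2\subset D$. Each $D_i^\dagger$ is a $\phi$-stable $\bfl_{K,\infty}^{+,\dagger}$-lattice in $D$, since $\phi$ is the derivative of the $\gammak$-action. Under (N-T0-LD), Theorem \ref{thm:non_cat_fon_conn}(3) gives $D_1^\dagger=D_2^\dagger$ as subsets of $D$. Applying the dictionary then gives $W_1^\dagger=W_2^\dagger$.
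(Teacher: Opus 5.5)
Your treatment of (2) and (3) matches the paper. Item (2) is Theorem \ref{thm:galois_coho_compa}. For (3) the paper simply invokes the uniqueness in Theorem \ref{thm:non_cat_fon_conn}(3), and your dictionary correctly spells out the transfer the paper leaves implicit. The decompletion of $W^\dagger$ lies in $D$ because $\bdrpd\to\bdrplus$ is continuous. The lattice property descends along the faithfully flat map $\kinfty[[t]]\to\bdrplus$. Finally, $W^\dagger$ is recovered inside $W$ as the image of $D^\dagger\otimes\bdrpd$ by Theorem \ref{thm:TS_conv}(1).

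For (1), your primary route does not go through as written; there are three problems.
\begin{enumerate}
\item $\phi$ is $\kinfty$-linear, while $\Gamma_{K,n}$ moves $\kinfty$. So ``$g=\exp(\log\chi(g)\phi)$'' can only hold on a $K_m[[t]]$-form with a single $m$. Corollary \ref{cor:fontain_rational} only rationalizes the matrix of $\phi$, continuity only descends each $D/t^kD$ to some $K_{m(k)}$, and no ``$K_m[[t]]$-descent'' of the $\gammak$-action is available in the paper. This point can be repaired: the true action and the semilinear extension of $\exp(\log\chi(g)\phi)$ from $\vec e\cdot K_m[[t]]$ differ by a $\phi$-horizontal automorphism of $D$. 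Such an automorphism is determined modulo $t^{k_0}$ for $k_0\gg0$, and it is trivial there for $g$ near $1$.
\item Even then, the matrix of $g$ is $\sum_{k\ge0}\frac{a^k}{k!}N_k$ with $N_0=I$ and $N_{k+1}=NN_k+t\frac{d}{dt}N_k$, not $\exp(aN(t))$. It is not entire; it converges on a fixed disc once $v(a)$ is large, which does suffice.
\item This is the real gap. The claim that the span of the finitely many translates is a $\gammak$-stable lattice is unjustified. A lattice $L$ stable under an open subgroup need not be commensurable with its translates $\sigma(L)$. The transition matrix is a horizontal map between two convergent connections with the same exponents, and without (N-T0-LD) it can diverge. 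Then $\sum_\sigma\sigma(L)$ is not finite free of rank $r$ over $\bfl_{K,\infty}^{+,\dagger}$.
\end{enumerate}
For a concrete instance of the third problem, take $K=\qp(\varepsilon_1)$ with $p$ odd, $W=\bdrplus(\lambda)\oplus\bdrplus$ and $\type(-\lambda)=0$. The lattice spanned by $e_1$ and $e_2+\sum_{j\ge0}\frac{\varepsilon_2}{\lambda+j}t^je_1$ is $\Gamma_{K,2}$-stable. Its translate by a generator $\gamma$ of $\gammak$ produces the coefficients $\frac{(1+p)^{\lambda+j}\varepsilon_1-1}{\lambda+j}\varepsilon_2$. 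These have valuation $\frac{1}{p-1}-v(\lambda+j)$, so the resulting series diverges and the two lattices are not commensurable.

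Your fallback is exactly the paper's proof of (1) and is the route to take, with two corrections.
\begin{enumerate}
\item Fontaine's indecomposables are $\bdr[a;d]=\bdr\otimes_{\kinfty}\kinfty[A_a]\otimes_{\zp}\zp(0;d)$. These are base changes of $\kinfty$-representations, not of $\qp$-representations. The convergent form is therefore $\bdrd\otimes_{\kinfty}\kinfty[A_a]\otimes_{\zp}\zp(0;d)$, and its continuity relies on the colimit topology on $\kinfty$ (Remark \ref{rem:bdrpd_cont_map}).
\item Intersecting back down needs a check that the result is a lattice of $W$. The paper sets $V^\dagger=W\cap U^\dagger$ and $V=V^\dagger\otimes_{\bdrpd}\bdrplus\subset W$, and picks $n$ with $W\subset t^{-n}V$. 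It then shows that $W^\dagger=t^{-n}V^\dagger\cap W$ works by comparing the torsion quotients $t^{-n}V^\dagger/W^\dagger\simeq t^{-n}V/W$.
\end{enumerate}
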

\begin{proof}
Item (1). 
We first prove that each $U \in \rep_\gk(\bdr)$ admits a $\bdrd$-lattice. It suffices to consider the case when $U$ is indecomposable; by Fontaine's \emph{classification theorem}  \cite[Thm 3.19]{Fon04}, there exists some $a \in \mathcal{C}(\barK/\bbz)$ (the set of orbits of $\gk$-acting on $\barK/\bbz$) and some $d \in \mathbb{Z}^{\geq 1}$, such that  
\[ U =\bdr[a; d] =\bdr[a] \otimes_\zp \zp(0;d); \]
here $\zp(0;d) \in \rep_\gk(\zp)$ of rank $d$ (cf.  \cite[\S 2.6]{Fon04}), and $\bdr[a]=\bdr\otimes_\kinfty \kinfty[A_a] $ is the $\bdr$-base change of some $\kinfty[A_a] \in \rep_\gammak(\kinfty)$ which only depends on $a$  (cf. paragraph above \cite[Prop 3.18]{Fon04}). 
Thus obviously $U$ admits a $\bdrd$-lattice
\[ U^\dagger:=\bdrd\otimes_\kinfty \kinfty[A_a] \otimes_\zp \zp(0;d). \] 
Here, we note that all maps $\kinfty \to \bdrpd \to \bdrplus$ are \emph{continuous} as we equip $\kinfty$ and $\bdrpd$ with the \emph{colimit topologies}, cf.~ the lengthy discussions in Rem \ref{rem:bdrpd_cont_map}.

Coming back to the original statement of Item (1). Given $W\in \rep_\gk(\bdrplus)$ (of rank $r$), we now know $U=W[1/t]$ admits some $\bdrd$-lattice $U^\dagger$; the $V^\dagger :=W\cap U^\dagger$ is a $\bdrpd$-representation also of rank $r$; thus $V:=V^\dagger\otimes_\bdrpd \bdrp$ is a full rank sub-representation of $W$ (that is not necessarily an equality!). 
For some $n \gg 0$, $W \subset t^{-n}V$. We claim $W^\dagger:=t^{-n}V^\dagger \cap W$ is a desired ($\gk$-stable) $\bdrpd$-lattice of $W$. Consider the commutative diagram where both rows are short exact
\[
\begin{tikzcd}
W^\dagger\otimes_\bdrpd \bdrp \arrow[r] \arrow[d] & t^{-n}V^\dagger\otimes_\bdrpd \bdrp \arrow[r] \arrow[d] & (t^{-n}V^\dagger/W^\dagger) \otimes_\bdrpd \bdrp \arrow[d] \\
W \arrow[r]                                       & t^{-n}V \arrow[r]                                       & t^{-n}V/W                                               
\end{tikzcd}
\]
The central vertical arrow is an isomorphism by definition, the right vertical arrow is an isomorphism because $t^{-n}V^\dagger/W^\dagger \simeq  t^{-n}V/W$ is a \emph{torsion} module, thus the left vertical arrow is also an isomorphism.

Item (2) is direct consequence of Thm \ref{thm:galois_coho_compa}.

 Item (3) follows from the uniqueness of $D^\dagger$, as proved in Thm \ref{thm:non_cat_fon_conn}(3).
 
\end{proof}


\begin{example} \label{ex:non_cat_lattice}
The (N-T0-LD) assumption in Thm \ref{thm:noncat_conv}(3) is (in general) necessary. Indeed, consider the set-up in Example \ref{ex:inf_h1_gk_rep}(2). There $W=\bdrp(\lambda)\oplus \bdrp(0)$ is a split $\bdrp$-representation with $\type(-\lambda)=0$; it admits both split and non-split sub-$\bdrpd$-lattices.
\end{example}

The following Cor \ref{cor:determine} can be regarded as the \emph{convergent} version of \cite[Prop 3.8(ii)]{Fon04}, which says that $W \in \rep_\gk(\bdrplus)$ is \emph{determined} by its associated regular connection $(D, \phi)$.
\begin{cor}[Determinacy theorem] \label{cor:determine}
For $i=1, 2$, let $W_i^\dagger \in \rep_\gk(\bdrpd)$.
 Suppose there is an isomorphism between their   corresponding convergent connections $(D_i^\dagger, \phi)$ (in particular, they share the same Sen weights $\lambda_1, \cdots, \lambda_r$).
Suppose furthermore $\type(\lambda_i -\lambda_j)>0$ for all pairs of $i, j$.  
Then $W_1^\dagger \simeq W_2^\dagger$.
\end{cor}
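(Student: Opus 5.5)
By the equivalence of categories in Thm \ref{thm:TS_conv}(1), it suffices to prove that $D_1^\dagger \simeq D_2^\dagger$ as objects of $\Rep_{\Gamma_K}(\bfl_{K,\infty}^{+,\dagger})$. That is, we must upgrade the given $\phi$-equivariant isomorphism to a $\Gamma_K$-equivariant one. The strategy is to run the same internal-hom argument used in Thm \ref{thm:noncat_conv}(3) and Thm \ref{thm:non_cat_conv}(3). We first set
\[ U^\dagger := \underline{\Hom}_{\bfl_{K,\infty}^{+,\dagger}}(D_1^\dagger, D_2^\dagger), \]
which carries the natural $\Gamma_K$-action (conjugation). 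Its Sen operator is $\phi_{U^\dagger}(f)=\phi\circ f - f\circ \phi$. Equivalently, $U^\dagger$ is the $D^\dagger$ attached to $\underline{\Hom}_{\bdrpd}(W_1^\dagger, W_2^\dagger)$ under Thm \ref{thm:TS_conv}, because internal homs are preserved by the tensor equivalence. Its exponents are the differences $\lambda_i-\lambda_j$. The given isomorphism $\iota: D_1^\dagger \simeq D_2^\dagger$ of connections is then an element of $H^0(\phi, U^\dagger)$.

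The key step is to show that $H^0(\phi, U^\dagger)$ admits a $\Gamma_K$-fixed element lifting a suitable isomorphism. First, Thm \ref{thm:TS_conv}(2) gives $H^0(\phi, U^\dagger) \simeq H^0(\gk, \underline{\Hom}(W_1^\dagger,W_2^\dagger)) \otimes_K \kinfty$. So $H^0(\phi,U^\dagger)$ is spanned over $\kinfty$ by $\Gamma_K$-invariant (i.e.\ $\gk$-equivariant) homomorphisms $f_1,\dots,f_s$, and we can write $\iota=\sum_k c_k f_k$ with $c_k\in\kinfty$. It remains to find $c'_k\in K$ such that $\sum c'_k f_k$ is still an isomorphism. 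The invertibility of such a combination can be tested modulo $t$, since both modules are free over a DVR. We therefore consider the polynomial function $P(x_1,\dots,x_s)=\det\big(\sum x_k \bar f_k\big)$ on $K^s$, where $\bar f_k$ denotes reduction mod $t$. After fixing bases this is a polynomial with coefficients in $\kinfty$, and it is nonzero because $P(c_1,\dots,c_s)\neq 0$. Since $K$ is infinite, there is a point of $K^s$ at which $P$ does not vanish. The corresponding $\sum c'_k f_k$ is then a $\Gamma_K$-equivariant isomorphism $D_1^\dagger\simeq D_2^\dagger$, and hence gives $W_1^\dagger \simeq W_2^\dagger$.

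On the role of the hypothesis: the argument above seems to use only Thm \ref{thm:TS_conv}(2) and the density argument, not (N-T0-LD). I would double-check this point carefully, since it is the likely obstacle. The condition $\type(\lambda_i-\lambda_j)>0$ may be needed in order to make the $\phi$-isomorphism $\iota$ meaningful; for instance, one may need $H^0(\phi,U^\dagger)=H^0(\phi,U)$ via Thm \ref{thm:non_cat_fon_conn}, and that requires the exponents of $U$ to satisfy (N-T0-L). Because $\type(\lambda_i-\lambda_j)>0$ for all pairs is symmetric, it gives exactly $\type(-(\lambda_i-\lambda_j))>0$, so this is available if needed. Alternatively, the hypothesis is used in the cleaner route: by Fontaine's determinacy \cite[Prop 3.8(ii)]{Fon04}, the base changes satisfy $W_1\simeq W_2$. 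By Thm \ref{thm:noncat_conv}(3), each $W_i$ then has a unique $\gk$-stable $\bdrpd$-lattice, namely $W_i^\dagger$. Transporting $W_1^\dagger$ along $W_1\simeq W_2$ gives a $\gk$-stable lattice of $W_2$, which must equal $W_2^\dagger$. I would present this second route as the main proof, since it directly uses the stated hypothesis. The only care needed is to confirm that a $\phi$-isomorphism of the $D_i^\dagger$ base changes to a $\phi$-isomorphism of the formal $D_i$, which is immediate.
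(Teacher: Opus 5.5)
Your second route, the one you would present as the main proof, is exactly the paper's argument. Fontaine's determinacy \cite[Prop 3.8(ii)]{Fon04} gives $W_1\simeq W_2$. Uniqueness of the $\gk$-stable $\bdrpd$-lattice then forces this isomorphism to restrict to $W_1^\dagger\simeq W_2^\dagger$; that uniqueness is Thm~\ref{thm:noncat_conv}(3), which you cite correctly, whereas the paper's proof cites item (1).

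Your doubt about the first route is unfounded. It is a complete and correct proof, and it genuinely does not use $\type(\lambda_i-\lambda_j)>0$. Each step relies only on the following:
\begin{itemize}
\item the tensor equivalence of Thm~\ref{thm:TS_conv}(1), so $U^\dagger$ corresponds to $\underline{\Hom}(W_1^\dagger,W_2^\dagger)$;
\item the $H^0$-part of Thm~\ref{thm:TS_conv}(2), which holds for every $\bdrpd$-representation with no type condition: local analyticity gives $\ker\phi=\bigcup_n (U^\dagger)^{\Gamma_{K,n}}$, and Galois descent along $K_n/K$ does the rest;
\item the fact that a nonzero polynomial over $\kinfty$ cannot vanish on all of $K^s$;
\item the fact that $\bfl_{K,\infty}^{+,\dagger}$ is local with maximal ideal $(t)$.
\end{itemize}

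At no point do you compare $U^\dagger$ with its formal base change, so Clark's theorem never enters. This is the density argument Sen uses to show that $C$-representations with similar Sen operators are isomorphic, transplanted to the convergent setting.

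The two routes therefore buy different things. The paper's route goes through the formal theory and the uniqueness of convergent lattices, and that is exactly where (N-T0-LD) is needed. Your first route stays inside the convergent theory and proves the stronger statement: $(D^\dagger,\phi)$ determines $W^\dagger$ with no type hypothesis at all.

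This is consistent with Example~\ref{ex:inf_h1_gk_rep}. There the split and non-split lattices already have non-isomorphic convergent connections, since by Thm~\ref{thm:TS_conv}(2) their $H^0(\phi,\cdot)$ are $\kinfty$ and $0$ respectively. The failure there is of uniqueness of lattices inside a fixed $W$, not of determinacy by $(D^\dagger,\phi)$.
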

\begin{proof}
Let $W_i$ resp.~ $D_i$ denote the base change to $\bdrplus$ resp. $\bfl_\kinfty^+$. As $(D_1, \phi) \simeq (D_2, \phi)$, \cite[Prop 3.8(ii)]{Fon04} implies there exists a ($\gk$-equivariant) isomorphism $f: W_1 \simeq W_2$; as $W^\dagger_i$ is the \emph{unique} $\gk$-stable $\bdrpd$-lattice by Thm \ref{thm:noncat_conv}(1), $f$ necessarily  restricts to an isomorphism $f: W_1^\dagger \simeq W_2^\dagger$.
\end{proof}

\begin{theorem}[Categorical convergence] \label{thm:final_conv} 
 Let $S \subset \barK$ be an additive subgroup containing $\bbz$ such that all elements are of type $>0$ (the most typical and useful  example is when $S=\qbar$). 
 We have an equivalence of categories
\[ \Rep_{G_K}^{S}(\bdrpd) \simeq  \Rep_{G_K}^{S}(\bdrplus)\]
where the LHS  resp. RHS category consists of representations whose mod $t$ reductions have Sen weights in $S$. 
\end{theorem}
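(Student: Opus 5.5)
The functor is base change $W^\dagger \mapsto W^\dagger\otimes_{\bdrpd}\bdrplus$. It lands in $\Rep^S_{G_K}(\bdrplus)$ because $W^\dagger/tW^\dagger = W/tW$, so the two sides have the same Sen weights. It remains to prove full faithfulness and essential surjectivity. I would mirror the proof of Thm \ref{thm:extension of regular connection}, with Thm \ref{thm:galois_coho_compa} and Thm \ref{thm:noncat_conv} playing the roles of Clark's theorem and Thm \ref{thm:non_cat_conv}.

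\emph{Full faithfulness.} Take $W_1^\dagger, W_2^\dagger \in \Rep^S_{G_K}(\bdrpd)$ with base changes $W_1, W_2$. Form the internal hom $U^\dagger:=\underline{\Hom}_{\bdrpd}(W_1^\dagger, W_2^\dagger)$; it is again an object of $\Rep_{G_K}(\bdrpd)$. Its base change is $U=\underline{\Hom}_{\bdrplus}(W_1,W_2)$, since the modules are finite free. Reduction mod $t$ commutes with internal hom, so $U^\dagger/tU^\dagger=\underline{\Hom}_C(W_1/t,W_2/t)$. By the usual Sen theory, the Sen weights of this $C$-representation are the differences $\mu_j-\lambda_i$ of the Sen weights of $W_2/t$ and $W_1/t$. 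Because $S$ is an additive group, all these differences lie in $S$, and hence their negatives lie in $S$ as well. Every element of $S$ has positive type, so the (N-T0-L) hypothesis of Thm \ref{thm:galois_coho_compa} holds for $U^\dagger$. That theorem then gives $H^0(\gk,U^\dagger)\simeq H^0(\gk,U)$, which reads
\[\Hom_{\gk}(W_1^\dagger,W_2^\dagger)\simeq \Hom_{\gk}(W_1,W_2).\]
I should also check that this identification is the map induced by base change. It is, because the natural map $U^\dagger\to U$ is exactly the base-change-on-morphisms map.

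\emph{Essential surjectivity.} Given $W\in \Rep^S_{G_K}(\bdrplus)$, Thm \ref{thm:noncat_conv}(1) provides a $\gk$-stable $\bdrpd$-lattice $W^\dagger\subset W$ with $W^\dagger\otimes_{\bdrpd}\bdrplus\simeq W$. Its Sen weights are those of $W/tW$, which lie in $S$, so $W^\dagger\in\Rep^S_{G_K}(\bdrpd)$. This finishes the proof. As a remark, uniqueness of the lattice from Thm \ref{thm:noncat_conv}(3) also holds here, since differences of elements of $S$ have positive type; but it is not needed.

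The only step requiring real care is the Sen-weight computation for the internal hom. There I would invoke the standard fact that the Sen operator on $\Hom_C(X,Y)$ is $f\mapsto \phi_Y\circ f-f\circ\phi_X$, whose eigenvalues are the pairwise differences. Given that, everything else is formal.
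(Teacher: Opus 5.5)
Your proposal is correct and follows the paper's proof: full faithfulness comes from applying Thm \ref{thm:galois_coho_compa} to the internal hom object, whose Sen weights are differences of elements of $S$ and hence of positive type, and essential surjectivity comes directly from the existence of a $\gk$-stable $\bdrpd$-lattice in Thm \ref{thm:noncat_conv}(1). Your extra checks make explicit what the paper leaves implicit: that the isomorphism on $H^0$ is the base-change map, and that the lattice inherits its Sen weights from $W/tW$.
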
 
\begin{proof}
Full faithfulness follows from Thm \ref{thm:galois_coho_compa} (by considering internal hom objects, similar to arguments in Thm \ref{thm:extension of regular connection}). Essential surjectivity follows directly from Thm \ref{thm:noncat_conv}(1).\end{proof}

\subsection{de Rham and almost de Rham representations} \label{subsec:dR_ness}
 
We record some criteria of de Rhamness resp. almost de Rhamness, as easy applications of our main theorems. 

\begin{defn}\hfill
\begin{enumerate}
\item Say $W \in \rep_\gk(\bdrp)$ is \emph{de Rham} if $W\otimes \bdr$ is (semi-linearly) trivial (cf. Def \ref{defsemilinrep}). 
\item Say $W^\dagger \in \rep_\gk(\bdrpd)$ is \emph{de Rham} if $W^\dagger\otimes_\bdrpd \bdrd$ is (semi-linearly) trivial.
\end{enumerate} 
\end{defn}

\begin{cor}
Let $W^\dagger \in \rep_\gk(\bdrpd)$, and let $W=W^\dagger\otimes_\bdrpd \bdrp$. Then $W^\dagger$ is de Rham if and only if $W$ is de Rham. 
\end{cor}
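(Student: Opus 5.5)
The direction ``$W^\dagger$ de Rham $\Rightarrow$ $W$ de Rham'' is formal. If $W^\dagger\otimes_\bdrpd \bdrd$ is semi-linearly trivial, then it has a $\gk$-invariant $\bdrd$-basis. Base changing along the continuous inclusion $\bdrd\into\bdr$ (the $t$-adic completion) gives a $\gk$-invariant $\bdr$-basis of $W\otimes\bdr$.

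For the converse I will work at the level of $\bdrd=\bdrpd[1/t]$ and reduce to Sen weights, following Remark \ref{rem:invert_t}. Suppose $W$ is de Rham. Then $W\otimes\bdr\simeq\bdr^{\oplus r}$, so we may pick another $\gk$-stable $\bdrplus$-lattice $W'\subset W[1/t]$, namely the standard lattice $(\bdrplus)^{\oplus r}$, whose reduction $W'/tW'=C^{\oplus r}$ has all Sen weights $0$. By \cite[Thm 3.13]{Fon04}, the Sen weights $\lambda_i$ of $W/tW$ are therefore integers, and hence $\type(\lambda_i-\lambda_j)=\type(-\lambda_i)=1$.

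Now I compare the two convergent lattices inside $U:=W[1/t]$. The first is $U_1^\dagger:=W^\dagger[1/t]$. The second is $U_2^\dagger:=(\bdrd)^{\oplus r}$, which is convergent and trivial and comes from the trivialization of $W\otimes\bdr$. Clearing denominators, I set $W_2^\dagger:=t^{n}U_2^\dagger\cap W$ for $n\gg0$; by the diagram argument in the proof of Thm \ref{thm:noncat_conv}(1), this is a $\gk$-stable $\bdrpd$-lattice of $W$. Since the Sen weights of $W/tW$ are integers, Thm \ref{thm:noncat_conv}(3) says the $\gk$-stable $\bdrpd$-lattice of $W$ is unique. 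Hence $W^\dagger=W_2^\dagger$, and after inverting $t$ we get $W^\dagger[1/t]=t^nU_2^\dagger[1/t]=(\bdrd)^{\oplus r}$. This is semi-linearly trivial, so $W^\dagger$ is de Rham.

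The main obstacle is the identity $W_2^\dagger[1/t]=U_2^\dagger$: the intersection $t^nU_2^\dagger\cap W$ must differ from $t^nU_2^\dagger$ only by $t$-torsion. This holds because both $W$ and $t^nU_2^\dagger\otimes\bdrplus$ are lattices in $U$ and $t$-power multiples of each other.

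As an alternative, one can argue through Galois cohomology. Apply Thm \ref{thm:galois_coho_compa} to the twists $W^\dagger(-k)$ for $|k|\gg0$; their Sen weights are integers, hence of positive type. Taking the colimit over $k$ gives $H^0(\gk,W^\dagger\otimes\bdrd)\simeq H^0(\gk,W\otimes\bdr)$, which has $K$-dimension $r$. Triviality of $W^\dagger\otimes\bdrd$ then follows by the standard argument: the map $H^0\otimes_K\bdrd\to W^\dagger\otimes\bdrd$ is injective, and it is an isomorphism because it becomes one after base change to $\bdr$, which is faithfully flat over $\bdrd$ on the level of determinants.
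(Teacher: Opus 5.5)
Your argument is correct. Your main route differs from the paper's, while your ``alternative'' paragraph is essentially the paper's proof.

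\textbf{The paper's route.} The paper never builds a second lattice. Assuming $W$ de Rham, so that the Sen weights are integers and hence of type $1$, it applies Thm~\ref{thm:galois_coho_compa} to $W^\dagger\otimes_{\bdrpd}\bdrpd\cdot t^{-n}$. This gives $(W^\dagger\otimes_{\bdrpd}\bdrd)^{\gk}\simeq (W\otimes_{\bdrplus}\bdr)^{\gk}$, which has $K$-dimension $r$. It then concludes from the injectivity of $(W^\dagger\otimes\bdrd)^{\gk}\otimes_K\bdrd\to W^\dagger\otimes\bdrd$ and a dimension count, using that $\bdrd$ is a field. So it only needs the $H^0$-comparison coming from Clark's theorem in its (N-T0-L) form.

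\textbf{Your route.} You instead invoke the uniqueness statement Thm~\ref{thm:noncat_conv}(3), the (N-T0-LD) form. That theorem internally runs Clark's theorem on the internal Hom and uses the decompletion equivalences to pass between $\bdrpd$-lattices and $\phi$-stable lattices. This is heavier machinery, but it yields a sharper conclusion: $W^\dagger=W\cap\big((W\otimes_{\bdrplus}\bdr)^{\gk}\otimes_K\bdrd\big)$. In words, the convergent lattice is forced to be the one cut out by the de Rham periods, not merely such that $W^\dagger\otimes\bdrd$ is trivial.

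\textbf{A small slip.} Since $t$ is a unit in $\bdrd$, you have $t^nU_2^\dagger=U_2^\dagger$, so ``clearing denominators'' does nothing. The object you need, and what your formula literally gives, is $W_2^\dagger=W\cap U_2^\dagger$. It is a lattice of $W$ by the sandwich $t^aW_0\subset W\subset t^{-b}W_0$, where $W_0$ is the $\bdrplus$-span of the invariant basis. If you meant $t^n(\bdrpd)^{\oplus r}\cap W$ with $n\gg 0$, that would equal $t^n(\bdrpd)^{\oplus r}$, which is not a lattice of $W$. The exponent must be $-n$, as in the proof of Thm~\ref{thm:noncat_conv}(1).
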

\begin{proof}
Sufficiency is obvious. Suppose now  $W$ is de Rham, it is easy to see
\[(W^\dagger\otimes_\bdrpd \bdrd)^\gk \otimes_K \bdrd \to W^\dagger\otimes_\bdrpd \bdrd \]
is injective because the $\bdrp$-version is injective. It now suffices to show that 
\[ (W^\dagger\otimes_\bdrpd \bdrd)^\gk \to (W^\dagger\otimes_\bdrpd \bdr)^\gk \]
is an isomorphism; it suffices to note that for  $n \gg 0$, \[ (W^\dagger\otimes_\bdrpd \bdrpd \cdot t^{-n})^\gk \to (W^\dagger\otimes_\bdrpd \bdrp\cdot t^{-n})^\gk \] is an isomorphism as a consequence of Thm \ref{thm:galois_coho_compa}.
\end{proof}

\begin{defn}
 Say $W \in \rep_\gk(\bdrp)$ resp. $W^\dagger \in \rep_\gk(\bdrpd)$ is \emph{almost de Rham} if the reduction $W/tW$ resp. $W^\dagger/tW^\dagger$ is \emph{almost Hodge--Tate} in the sense that all Sen weights of $W/tW$ resp. $W^\dagger/tW^\dagger$ are integers. 
\end{defn}

\begin{defn} Let $\log t$ is a formal ``variable".
\begin{enumerate}
\item Let $\bfb_{\pdr}:=\bdr[\log t]$ be as in \cite[\S 4.3]{Fon04}, which is equipped with an   $\gk$-action extending the action on $\bdr$ and such that 
\[ g(\log t)=\log t+ \log \chi(g), \forall g \in \gk.\]

\item Let $\bfb^+_{\pdr}:=\bdrp[\log t]$, and for each $j \geq 1$, denote 
\[\bfb^{+, <j}_{\pdr}:=\oplus_{i=0}^{j-1} \bdrp \cdot (\log t)^i;\]
this is a finite free $\bdrp$-representation (that is, it is $\gk$-stable). One can easily check  its reduction mod $t$ has all Sen weights equal to zero; in fact, the mod $t$-reduction is exactly $C\otimes_\zp \zp(0;j)$ where $\zp(0;j)$ is the representation defined in \cite[\S 2.6]{Fon04} (as already used in proof of Thm \ref{thm:noncat_conv}).

\item Following \cite{Wie1}, let $\bfb^\dagger_{\pdr}:=\bdrd[\log t] \subset \bfb_{\pdr}$ be the $\gk$-stable sub-ring. One can similarly define  $\bfb^{+, \dagger, <j}_{\pdr}$.
\end{enumerate} 
\end{defn}

\begin{cor}
\begin{enumerate}
\item For $W \in \rep_\gk(\bdrplus)$, it is almost de Rham if and only $W \otimes_\bdrplus \bfb_{\pdr}$ is semi-linearly trivial.
\item For $W^\dagger \in \rep_\gk(\bdrpd)$, it is almost de Rham if and only $W \otimes_\bdrpd \bfb^\dagger_{\pdr}$ is semi-linearly trivial.
\end{enumerate}
\end{cor}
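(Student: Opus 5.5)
Both items follow the same pattern as the preceding corollary on de Rhamness; note that in Item (2) the object to test is $W^\dagger\otimes_\bdrpd \bfb^\dagger_{\pdr}$. For each item the plan is to reduce to a finite-rank piece $\bfb^{+,<j}_{\pdr}$ (resp. $\bfb^{+,\dagger,<j}_{\pdr}$), which is a genuine representation of rank $j$ whose mod $t$ reduction has all Sen weights zero. Item (1) is essentially Fontaine's result \cite[\S 4.3]{Fon04}, and I would recall the argument in a form that transfers to Item (2).

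For Item (1), sufficiency goes as follows. If $W$ is almost de Rham, every Sen weight of $W/tW$ is an integer. By Fontaine's classification \cite[Thm 3.19]{Fon04}, each indecomposable summand of $W[1/t]$ is then of the form $\bdr[0;d]=\bdr\otimes_\zp\zp(0;d)$. The representation $\zp(0;d)$ becomes trivial over $\bfb_{\pdr}$: the powers $(\log t)^i$ give $\gk$-invariant vectors that solve the unipotent action. Hence $W\otimes\bfb_{\pdr}$ is trivial. For necessity, suppose $W\otimes\bfb_{\pdr}$ is trivial. Since $\bfb_{\pdr}=\colim_j \bdr\otimes\bfb^{+,<j}_{\pdr}$ and invariants are finite in number, $W$ embeds, up to $t$-twists, into a direct sum of copies of some $\bfb^{+,<j}_{\pdr}[1/t]$. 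The Sen weights of $W/tW$ are well-defined modulo $\bbz$ (Rem \ref{rem:invert_t}) and those of $\bfb^{+,<j}_{\pdr}$ are all zero, so all weights of $W/tW$ are integers.

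For Item (2), necessity is immediate: triviality over $\bfb^\dagger_{\pdr}$ implies triviality after base change to $\bfb_{\pdr}$. Item (1) then applies, and the Sen weights of $W^\dagger/tW^\dagger=W/tW$ are integers. For sufficiency, take $S=\bbz$, an additive group of type-$1$ numbers. Assume $W^\dagger$ is almost de Rham, so $W^\dagger\in\Rep^{S}_{\gk}(\bdrpd)$. By Item (1), $W\otimes\bfb_{\pdr}$ is trivial, so there is $j$ with $W\otimes_\bdrp \bfb^{+,<j}_{\pdr}$ generated, after inverting $t$, by its $\gk$-invariants. The key point is that $U^\dagger:=W^\dagger\otimes_\bdrpd \bfb^{+,\dagger,<j}_{\pdr}$ has Sen weights that are sums of weights of $W^\dagger$ with $0$, hence integers; the same holds for all its $t^{-n}$-twists. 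Thm \ref{thm:galois_coho_compa} then gives
\[(t^{-n}U^\dagger)^{\gk}\simeq (t^{-n}U^\dagger\otimes\bdrp)^{\gk}\]
for every $n$. Therefore the invariants of $W^\dagger\otimes\bfb^\dagger_{\pdr}$ agree with those of $W\otimes\bfb_{\pdr}$ in each filtered piece. Injectivity of
\[(W^\dagger\otimes\bfb^\dagger_{\pdr})^{\gk}\otimes_K \bfb^\dagger_{\pdr}\to W^\dagger\otimes\bfb^\dagger_{\pdr}\]
follows from the $\bfb_{\pdr}$-version. A dimension count then gives surjectivity, exactly as in the preceding corollary on de Rhamness.

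The main obstacle is the dimension-count and lattice step. Since $\bfb^\dagger_{\pdr}$ is not a field, one must check that a full set of invariants yields a basis over $\bfb^\dagger_{\pdr}$ and not merely over $\bfb_{\pdr}$. I would handle this with the determinant trick of Notation \ref{nota:bfodagger}\eqref{itemGLdagger}, after filtering by $\log t$-degree and reducing to $\bdrd$, where the rings are discrete valuation rings. A cleaner alternative, which I would try first, is to invoke Thm \ref{thm:final_conv} with $S=\bbz$. This identifies $W^\dagger$ with the unique lattice of $W$, and the trivialization over $\bfb_{\pdr}$ descends because the invariants computed via Thm \ref{thm:galois_coho_compa} already lie in $\bfb^\dagger_{\pdr}$.
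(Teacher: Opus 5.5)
Your proposal is correct and matches the paper's proof: Item (1) is Fontaine's theorem, the easy direction of Item (2) is base change to $\bfb_{\pdr}$, and the substantive direction applies Thm \ref{thm:galois_coho_compa} to the finite free pieces $W^\dagger\otimes_{\bdrpd}\bfb^{+,\dagger,<j}_{\pdr}\cdot t^{-i}$, all of whose Sen weights are integers. Passing to the colimit then identifies $H^0(\gk, W^\dagger\otimes_{\bdrpd}\bfb^\dagger_{\pdr})$ with $H^0(\gk, W\otimes_{\bdrp}\bfb_{\pdr})$. The surjectivity point you flag, which the paper leaves implicit, is settled directly by determinants: the determinant of the change-of-basis matrix lies in $\bdrd[\log t]$ and is a unit of $\bdr[\log t]$, hence lies in $\bdr^\times\cap\bdrd=(\bdrd)^\times$, because both $\bdr$ and $\bdrd$ are fields.
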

\begin{proof}
Item (1) is covered in \cite[Thm 4.1]{Fon04}. For Item (2), it suffices to prove necessity; that is, if $W^\dagger$ is almost de Rham, then $\dim_K H^0(\gk, W^\dagger \otimes_\bdrpd \bfb^\dagger_{\pdr})=r$ where $r$ is the rank of $W^\dagger$. Indeed, as Item  (1) is already known, it now suffices to prove the natural map
\[ H^0(\gk, W^\dagger \otimes_\bdrpd \bfb^\dagger_{\pdr}) \to H^0(\gk, W  \otimes_\bdrp \bfb_{\pdr})\]
is an isomorphism, where we use $W$ to denote the base change to $\bdrp$; as \[W^\dagger \otimes_\bdrpd \bfb^\dagger_{\pdr} =\colim_{i, j \geq 1} W^\dagger\otimes_\bdrpd \bfb_{\pdr}^{+, \dagger, < j} \cdot t^{-i} \]
where all the colimit terms on RHS are finite free with Sen weights being integers, we can apply Thm \ref{thm:galois_coho_compa} to conclude.
\end{proof}


\section{Addendum: the $K$-linear  de Rham point} \label{subsec:k_dr_point}

This sections serves as an addendum to \S \ref{sec:conv_dr_ring}.
Recall there, when $K$ is absolutely ramified, one needs to make use of ramification theory to \emph{define} ``$K$-de Rham period rings" as in \S  \ref{subsec:ramification_bc}.
However, there is actually another way to approach the   $K$  ramified case, which is in fact the original approach in \cite{Col08a}. That is,   there is a ``$K$-linear" version for the de Rham point in Example \ref{ex:3type}, cf. Construction \ref{cons:K-lin_dR} in the  following. 
This ``$K$-linear ring" together with its convergent subrings are absolutely \emph{necessary} in \cite{Col08a} for the study of \emph{ramification theory}, as the ``upper numbering" indices there relies on fixing a base field first (e.g., the statement of \cite[Thm 0.1]{Col08a} is incorrect if one does not use this $K$-ring).
We expect discussions in this section useful for future investigations (particular in relation with ramification theory). 
 However, for purposes in this paper, we will \emph{not} use this $K$-ring; we give extended comments in the following Rem \ref{rem:no_K_ring}, which also serves as a motivation for this addendum.

\begin{rem} \label{rem:no_K_ring}
Suppose $K/F$ has non-trivial ramification index. 
Let $ \bfb_{\dR, K}^+, \bfB_{\dR,K}^\rr$ be the rings defined in the following  Construction \ref{cons:K-lin_dR}.
    \begin{enumerate}
        \item In Lem \ref{cor:bdrFKr_new}, we show that the colimit (along $r>0$) of $\bfB_{\dR,K}^\rr$ is the \emph{same} as that of $\bfB_{\dR}^\rr$. Thus, for the \emph{statements} of main theorems in this paper (e.g.~in the introduction), one could equivalently use the $K$-version colimit ring $\bfB_{\dR,K}^{+, \dagger}$.
        \item Arguments in \S \ref{sec:TS-1} could still go through for the $K$-rings, as they only concern ``perfectoid" parts of these rings.
        \item One will immediately run into technical difficulty for arguments in \S \ref{sec:TS234}. That is, there will be difficulty to define the \emph{correct} $K$-version ring corresponding to $\bfL_{F,m}^+$ and $\bfL_{F,m}^\rr$ in Construction \ref{def:dR_ring_compa} in order to construct analogous diagrams in Lem \ref{lem:compavr}. More concretely, the reduction map $\theta: \ainf \to \o_C$ has a \emph{convenient} generator of kernel, namely $\xi$. 
        However, it is difficult to construct a {convenient} generator $\xi_K$ for kernel of  $\theta_K: \ainf \otimes_{\o_{K^\ur}} \o_K \to \o_C$. In order for $\xi_K$ to be useful in the following, suggested by computations in \S \ref{sec:TS234}, it should be fixed by $\hk$ and should admit locally analytic action by $\gammak$: it seems very difficult to \emph{explicitly} construct such a candidate. Indeed, this is similar to the usual difficulty of ``lifting field of norms" in $(\varphi, \Gamma)$-module theory in ramified setting.

        \item Analogous to $(\varphi, \Gamma)$-module theory, it might be possible to \emph{implicitly} construct a useful $\xi_K$ via variants of \emph{implicit function theorem}, e.g., as in \cite[Thm 4.4]{Ber16}. Nonetheless, at least for purposes in this paper, we find the base change results in Lem \ref{lem:LdRK} much more convenient; it also seems unnecessary to further introduce more (complicated) notations.
    \end{enumerate}
\end{rem}

\begin{construction}[$K$-linear de Rham point] \label{cons:K-lin_dR}
Let $K/F$ be a finite extension. 
Let $B=\binf \otimes_{K^\ur} K$ where $K^\ur \subset K$ is the maximal unramified subfield, and equip it with the tensor product valuation induced by $v_\binf$ and $v_C$ (restricted to $K$). 
 The kernel of  $\theta_K: \ainf \otimes_{\o_{K^\ur}} \o_K \to \o_C$ is principally generated by some $z=\xi_K$.  This datum satisfies all the assumptions in Set-up \ref{axiom:strong_unif} and Set-up \ref{axiom:section_map}  where the valuation conserving section $s_K$ is   defined in \cite[Lem 3.5]{Col08a}. In this set-up, denote $\hatb$ and $\hatbr$ as
 \[ \bfb_{\dR, K}^+, \quad \bfB_{\dR,K}^\rr\]
 It is known  $\bfb_{\dR, K}^+$   is   canonically isomorphic to $\bdrplus$; the ring   $\bfB_{\dR,K}^\rr$  is exactly $\bfB_{\dR,K}^{(r^-)}$ in \cite{Col08a}, with notation difference explained in Example \ref{exam:unify}.
 (Also note when $K=F$ is unramified, then $\bfB_{\dR,F}^\rr= \bfB_{\dR}^\rr$).
\end{construction}

 The use of \emph{different} unit balls in $\binf$ resp. $\binf \otimes_{K^\ur} K$ leads to \emph{incompatible} $v_n$-valuations and $\vr$-valuations; namely, one cannot directly compare $\bdrr$ and  $\bfb_{\dR, K}^\rr$ here (such difficulty is already noted in \cite[Rem. 3.3]{Col08a}).  Nonetheless, we have the following.

\begin{lemma} \label{cor:bdrFKr_new}
    Let $K/F$ be a finite extension. Let $c=c_{\Art, F}(K)$, and let $e=e(K/F)$ be the ramification index. 
    \begin{enumerate}
 \item For any $r>0$, we have
\[ \bfb_{\dR}^{(\frac r e)} \subset \bfb_{\dR, K}^\rr \subset \bfb_{\dR}^{(\frac{r}{e}+c)}\]
 \item  $\colim_r \bfB_{\dR}^\rr = \colim_r \bfB_{\dR,K}^\rr$.
    \end{enumerate}
\end{lemma}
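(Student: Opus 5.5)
Item (2) follows from Item (1) by passing to the colimit over $r$: the first inclusion gives $\colim_r \bfb_{\dR}^{(r/e)} \subset \colim_r \bfb_{\dR,K}^\rr$, and the second gives the reverse inclusion. So everything rests on Item (1). The plan is to compare the two data $(\binf, v_\binf, \xi)$ and $(\binf\otimes_{K^\ur}K, v, \xi_K)$ through their section maps, as in Prop \ref{prop:set_section_sz}. Elements are written $\sum s(a_i)\xi^i$ on one side and $\sum s_K(b_i)\xi_K^i$ on the other, and we control the $\vr$-valuations term-wise.

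\textbf{Step 1: change of uniformizer.} Both $\xi$ and $\xi_K$ generate the kernel of $\theta_K$ after inverting $p$, so $\xi = u\,\xi_K$ for some unit $u$ of $\bfb_{\dR,K}^+$. I would choose $\xi_K$ to be the Eisenstein-type generator $\pi_K - [\underline{\pi}]$, or its $F$-variant. The first task is to see how the integral structures compare. In the $\xi_K$-coordinates, the element $\xi/p$ should be roughly $\xi_K^e/\pi_K^e$ times a unit. This is what accounts for the factor $1/e$ in the radius.

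\textbf{Step 2: the first inclusion $\bfb_{\dR}^{(r/e)}\subset \bfb_{\dR,K}^\rr$.} Using Cor \ref{lem:present_conv}, every element of $\bfb_{\dR}^{(r/e)}$ can be written $\sum x_i(\xi/p^{r/e})^i$ with $x_i\to 0$ in $\binf$. (One first reduces to integral $r/e$ by shrinking the radius slightly.) It then suffices to show two things in $\bfb_{\dR,K}^{\rr}$: that $\vr_K(\xi^e/p^r)$ is bounded below, or more precisely that $\xi/p^{r/e}$ is integral up to a bounded constant; and that $\binf\subset \bfb_{\dR,K}^\rr$ with $p$-adically controlled valuation. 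The second point follows from Lem \ref{lem:col1} applied to $z$-monomials in the $K$-datum. For the first, one writes $\xi=u\xi_K$ and uses the weak multiplicativity $\vr(\xi_K^k f)=\vr(f)+kr$ from Rem \ref{rem:mult_vr}(2). Here $\xi_K$ has $v_K$-scaled weight, so one has to track the normalization $v(\pi_K)=1/e$ carefully.

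\textbf{Step 3: the second inclusion $\bfb_{\dR,K}^\rr\subset \bfb_{\dR}^{(r/e+c)}$.} This is the main obstacle, and it is where the Artin conductor enters. An element of $\bfb_{\dR,K}^\rr$ is a convergent series in $\xi_K/\pi_K^{r}$ with coefficients in $\binf\otimes_{K^\ur}K$. The coefficients involve $K$ itself, and by Thm \ref{thm:col08a_ram} we have $K\subset \bfb_{\dR}^{(s)}$ only for $s>c$. I would proceed in three stages:
\begin{enumerate}
\item Fix an $\o_{K^\ur}$-basis $1,\pi_K,\dots,\pi_K^{e-1}$ of $\o_K$ and show each $\pi_K^j$ lies in $\bfb_{\dR}^{(s)}$ for every $s>c$, with uniformly bounded $v^{(s)}$.
\item Show $\xi_K=\pi_K-[\underline\pi]$ also lies in $\bfb_{\dR}^{(s)}$. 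Show further that $\xi_K/\pi_K^{r}$, or rather its $e$-th power up to units, has $v^{(r/e+c)}$ bounded below: roughly, $\xi_K^e\sim \xi$ gives exponent $r/e$, and the coefficients from $K$ cost $c$.
\item Conclude via completeness, Lem \ref{lem:hatbr_complete}, and sub-multiplicativity of $v^{(s)}$.
\end{enumerate}

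I expect the delicate point to be the non-multiplicativity of $\vr$ (Rem \ref{rem:mult_vr}(4)) together with the additivity of the radius shift, $r/e+c$ rather than something worse. If the bookkeeping produces a slightly larger radius, I would fall back on proving the inclusion for $r/e+c+\epsilon$ for every $\epsilon>0$, and use closedness of $\bfb_{\dR}^{(s)}$ in the limit.
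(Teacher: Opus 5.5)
Reducing Item (2) to Item (1) is fine. In Step 3(1) you also identify the right object, namely the $F$-integral size of $1,\pi_K,\dots,\pi_K^{e-1}$. The tool you use to control it, however, is too weak, and this leaves a genuine gap in the second inclusion. Thm \ref{thm:col08a_ram} is only qualitative. It gives $\pi_K^j\in\bfb_{\dR}^{(s)}$ for each $s>c$, i.e.\ $v_{F,n}(\pi_K^j)\geq -sn-C_s$, where $v_{F,n}$ is the $v_n$ attached to the unit ball $\ainf$ modulo $\xi^{n+1}$. Nothing controls $C_s$ as $s\downarrow c$. Any bookkeeping built on this yields only $\bfb_{\dR,K}^{\rr}\subset\bfb_{\dR}^{(r/e+s)}$ for every $s>c$.

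Your fallback cannot remove the $\epsilon$. These rings increase with the index, and $\bigcap_{\epsilon>0}\bfb_{\dR}^{(s+\epsilon)}\supsetneq\bfb_{\dR}^{(s)}$. For example, $\sum_{k\geq 0}(\xi/p)^k$ has $v_n=-n$, so it lies in every $\bfb_{\dR}^{(1+\epsilon)}$ but not in $\bfb_{\dR}^{(1)}$. There is no closedness statement to invoke.

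The missing input is a uniform estimate of slope exactly $c$ with bounded offset:
\[ \ainf\otimes_{\o_{K^{\ur}}}\ok\subset p^{\lceil -nc\rceil-e+1}\ainf+t^{n+1}\bdrplus,\qquad n\geq 0. \]
The paper proves this, for $K/F$ totally ramified, by checking it on $\pi_K^j$ with $1\leq j\leq e-1$. The tool is Colmez's explicit expansion $\pi_K=\sum_{k\geq 0}Q_k([\pi_K^\flat])P([\pi_K^\flat])^k$, where $P=\mathrm{Irr}(\pi_K,F)$, $\deg Q_k\leq e-1$ and $Q_k\in p^{[-kc]}\o_F[X]$ \cite[Prop.~4.7(1)]{Col08a}; note $P([\pi_K^\flat])\in\xi\ainf$. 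Let $v_{K,n}$ count powers of $\pi_K$ with unit ball $\ainf\otimes_{\o_{K^{\ur}}}\ok$. The estimate then gives $v_{F,n}(x)\geq v_{K,n}(x)/e-nc-e$, and the inclusion into $\bfb_{\dR}^{(r/e+c)}$ follows in one line.

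You would need this same estimate for Step 3(2)--(3) anyway. Sub-multiplicativity only gives $v^{(s)}(y^i)\geq i\,v^{(s)}(y)$, which does not give power-boundedness of $y=\xi_K/\pi_K^r$ unless you already know $v^{(s)}(y)\geq 0$.

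The heuristic in Step 1 is wrong. $\xi$ and $\xi_K$ both generate the maximal ideal of the DVR $\bdrplus$, so $\xi/p$ is not a unit multiple of $\xi_K^e/\pi_K^e$. The factor $1/e$ is purely a normalization: $v_{K,n}$ counts powers of $\pi_K$ while $v_{F,n}$ counts powers of $p$. The paper obtains the first inclusion just by comparing unit balls. Since $\ainf\subset\ainf\otimes_{\o_{K^{\ur}}}\ok$, one has $e\,v_{F,n}(x)\leq v_{K,n}(x)$ for all $n$, which gives $\bfb_{\dR}^{(r/e)}\subset\bfb_{\dR,K}^{\rr}$ for every real $r>0$. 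No presentations or section maps are needed.

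Your route through Cor \ref{lem:present_conv} requires $r/e\in\bbz$, and rounding $r/e$ to an integer only proves a weaker inclusion. If you only want Item (2), the $c+\epsilon$ version suffices, obtained from this unit-ball comparison plus Thm \ref{thm:col08a_ram}. The sharp Item (1) needs Colmez's quantitative expansion.
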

\begin{proof}  
Item (2) is a clear consequence of Item (1).
In fact, when $K/F/\qp$ are finite extensions, we refer the readers to Rem \ref{rmk:geometric view} for  a much more conceptual and short geometric proof  of Item (2).

Consider Item (1).
Without loss of generality, one can assume $K/F$ is totally ramified.
For $x \in  \bfb_{\dR, K}^+=\bdrplus$, via Def \ref{def:vn_B}, we can define
$v_{K,n}(x)$ as the induced valuation regarding $x \in \bfb_{\dR, K}^+/\xi_K^n$; equivalently, it is the maximal element in $\bbz \cup \{+\infty\}$ such that
\[ x \in \pi_K^{v_{K,n}(x)} (\ainf \otimes_{\o_{K^\ur}} \ok) +t^{n+1} \bfb_{\dR, K}^+\]
Recall $x \in  \bfb_{\dR, K}^\rr$ if and only if $\lim_n v_{K,n}(x)+rn=+\infty$.

For emphasis, use $v_{F,n}$ to denote the $v_n$ associated to $\bdrplus/t^{n+1}$ with unit ball $\ainf/\xi^{n+1}$. 
As there is a relation between the ``unit balls":
\[ \ainf \subset  \ainf \otimes_{\o_{K^\ur}} \ok \]
It is easy to see (as we are using $p$ resp. $\pi_K$ as uniformizer on these balls):
\[ e\cdot v_{F,n}(x) \leq v_{K, n}(x) \]
Thus it is easy to conclude 
\[  \bfb_{\dR}^{(\frac r e)} \subset \bfb_{\dR, K}^\rr  \]

For the other comparison
\begin{equation} \label{Eq:2nd_include}
 \bfb_{\dR, K}^\rr \subset \bfb_{\dR}^{(\frac{r}{e}+c)} 
\end{equation}
it also suffices to prove  relations between the ``unit balls". We claim that for each $n \geq 0$, we have
\begin{equation} \label{Eq:2nd_include_claim}
\ainf \otimes_{\o_{K^\ur}} \ok \subset p^{\lceil -nc \rceil -e+1} \ainf +t^{n+1}\bdrplus
\end{equation}
Accepting the claim \eqref{Eq:2nd_include_claim}, we have for each $n$, 
\[ v_{F,n}(x) \geq \lceil \frac{v_{K,n}(x)}{e} \rceil +\lceil -nc \rceil -e+1 \]
This quickly implies \eqref{Eq:2nd_include}.
It remains to prove \eqref{Eq:2nd_include_claim}. As $\ok=\o_{K^\ur}[\pi_K]$, it reduces to check: For any $1\leq j\leq e-1$,
\begin{equation} \label{Eq:2nd_include_claim_pik}
 \pi_K^j \in   p^{\lceil -nc \rceil -e+1} \ainf +t^{n+1}\bdrplus
\end{equation}
(Caution: checking the $j=1$ case only is not enough, as in general $\lceil -nc \rceil -e+1<0$). 
By \cite[Prop. 4.7(1)]{Col08a}, there are polynomials $\{Q_k(X)\}_{k\geq 0}\in F[X]$ with degree $\leq e-1$ satisfying the conditions 
    \begin{itemize}
        \item $Q_0(X) = X$, and 
        \item for any $k\geq 1$, $ Q_k(X)\in p^{[-kc]}\cdot\calO_F[X]$,
(caution: in \cite[Prop. 4.7(1)]{Col08a}, the notation ``$c$" would be our $e \cdot c_{\Art,F}(\pi_K)$),      
    \end{itemize}
    such that 
    \[\pi_K = \sum_{k\geq 0}Q_k([\pi_K^{\flat}])P([\pi_K^{\flat}])^k.\]
Here $\pi_K^\flat \in \ocflat$ is defined using any compatible $p$-power roots of $\pi_K$, and $P(X)=\mathrm{Irr}(\pi_K, F)$.  
Using this, we obtain
\[\pi_K^j = \sum_{k\geq 0}\big(\sum_{k_1+\cdots+k_j = k}Q_{k_1}([\pi_K^\flat])\cdots Q_{k_j}([\pi_K^\flat])\big)\cdot P([\pi_K^\flat])^k.\]
As $1\leq j\leq e-1$, we have  
\[Q_{k_1}(X)\cdots Q_{k_j}(X)\in p^{[-k_1c]+\cdots+[-k_jc]}\cdot \calO_F[X]\subset p^{[-kc]-j}\cdot \calO_F[X]\subset p^{[-kc]-e-1}\cdot \calO_F[X],\]
where the first inclusion holds because
\[[-k_1c]+\cdots+[-k_jc]\geq [(-k_1c-1)+\cdots+(-k_jc-1)] = [-kc]-j.\] 
  Then \eqref{Eq:2nd_include_claim_pik} obviously holds.
\end{proof}

\begin{rmk}\label{rmk:geometric view}
When $K/F/\Qp$ are finite extensions, there is a much more conceptual geometric proof of Lemma \ref{cor:bdrFKr_new}(2).
For any $E/\qp$ a finite extension, one can define a Fargues--Fontaine curve with $E$-coefficient $X_{C^\flat, E}$ (cf. \cite[Def. 6.5.1]{FF18}). Let $\infty_E\in X_{C^{\flat},E}$ be the corresponding de Rham point (denoted as $\infty_t$ in \cite{FF18}).
By \cite[Thm. 6.5.2]{FF18}, there is a   finite \'etale map  $f:X_{C^{\flat},K}\to X_{C^{\flat},F}$ of degree $[K:F] = \Hom_F(K,C)$; clearly  $\infty_K\in f^{-1}(\infty_F)$. As $f$ is finite \'etale, it induces an isomorphism between stalks at $\infty_F$ and $\infty_K$; i.e. $\bfB_{\dR,F}^{+,\dagger}=\colim_r \bfB_{\dR,F}^\rr \simeq \colim_r \bfB_{\dR,K}^\rr=\bfB_{\dR,K}^{+,\dagger}$. 
\end{rmk}

  
\bibliographystyle{alpha}


\end{document}